\newcommand{\pv}[1]{\ensuremath{{\mathsf{#1}}}}
\newcommand{\Om}[2]{\ensuremath{\widehat F_{\pv #2}{(#1)}}}
\newcommand{\te}[1]{\mathsf t_{#1}}
\newcommand{\be}[1]{\mathsf i_{#1}}
\newcommand{\J}{\ensuremath{\mathcal{J}}}
\newcommand{\D}{\ensuremath{\mathcal{D}}}
\newcommand{\R}{\ensuremath{\mathcal{R}}}
\renewcommand{\L}{\ensuremath{\mathcal{L}}}
\renewcommand{\H}{\ensuremath{\mathcal{H}}}
\newcommand{\K}{\ensuremath{\mathcal{K}}}
\newcommand{\Kar}{\ensuremath{\mathbb{K}}}
\newcommand{\Mor}{\ensuremath{\operatorname{Mor}}}
\newcommand{\Obj}{\ensuremath{\operatorname{Obj}}}
\newcommand{\NN}{\mathbb N}
\newcommand{\ZZ}{\mathbb Z}
\newcommand{\ci}[1]{\ensuremath{{\mathcal {#1}}}}
\newcommand{\Mir}{\ensuremath{{\operatorname{\ci {M}}}}}
\newcommand{\Sha}{\ensuremath{{\operatorname{{Sha}}}}}
\newcommand{\Lo}[1]{\ensuremath{\ci L\pv {#1}}}
\newcommand{\Cl}[1]{\ensuremath{\ci #1}}
\newcommand{\dia}{\ensuremath{\diamond}}
\newcommand\malcev{\mathop{\raise1pt\hbox{\footnotesize$\bigcirc$\kern-8pt\raise1pt
      \hbox{\tiny$m$}\kern1pt}}}
\newcommand{\li}[1]{\ensuremath{\overleftarrow{#1}.\overrightarrow{#1}}}
\newcommand{\ori}[1]{\ensuremath{\overrightarrow{#1}}}
\newcommand{\ole}[1]{\ensuremath{\overleftarrow{#1}}}
\newtheorem{Thm}{Theorem}[section]
\newtheorem{Prop}[Thm]{Proposition}
\newtheorem{Lemma}[Thm]{Lemma}
\newtheorem{Cor}[Thm]{Corollary}
\theoremstyle{definition}
\newtheorem{Fact}[Thm]{Fact}
\theoremstyle{remark}
\newtheorem{Example}[Thm]{Example}}
\theoremstyle{definition}
\theoremstyle{definition}
\newtheorem{Def}[Thm]{Definition}}
  \theoremstyle{definition}
\newtheorem{Rmk}[Thm]{Remark}
\numberwithin{equation}{section}
\begin{document}

\title[The Karoubi envelope of the mirage of a subshift]{The Karoubi envelope of the mirage of a subshift}

\thanks{%
  The work of A.~Costa
  was carried out in part at City College of New York, CUNY, whose hospitality is gratefully acknowledged, with the support of the FCT sabbatical scholarship SFRH/BSAB/150401/2019,
  and it was partially supported by the Centre for Mathematics of the University of Coimbra - UIDB/00324/2020, funded by the Portuguese Government through FCT/MCTES}

\author[A.~Costa]{Alfredo Costa}

\address{University of Coimbra, CMUC, Department of Mathematics,
  Apartado 3008, EC Santa Cruz,
  3001-501 Coimbra, Portugal.}
\email{amgc@mat.uc.pt}

\author[B.~Steinberg]{Benjamin Steinberg}

\address{Department of Mathematics, City College of New York, Convent Avenue at 138th Street, New York, New York 10031, USA.}

\email{bsteinberg@ccny.cuny.edu}

\subjclass[2010]{20M07, 37B10}

\keywords{Subshift, symbolic dynamics, free profinite semigroup, Karoubi envelope, zeta function, pseudovariety.}

\begin{abstract}
  We study a correspondence associating to each subshift $\Cl X$
  of $A^{\ZZ}$ a subcategory
  of the Karoubi envelope of the free profinite semigroup generated by $A$.
  The objects of
  this category
  are the idempotents in the mirage
  of~$\Cl X$, that is,
  in the set of pseudowords whose finite factors are blocks of~$\Cl X$.
  The natural equivalence class of
  the category
  is
  shown to be invariant under flow equivalence.
  As a corollary of our proof, we deduce the
  flow invariance of the profinite group that Almeida associated to each irreducible subshift.
  We also show, in a functorial manner, that the isomorphism class of
  the category
  is invariant under conjugacy.
  Finally, we see that the zeta function of~$\Cl X$
  is naturally encoded in
  the category.
  These results hold,  with obvious translations, for
  relatively free profinite semigroups over many pseudovarieties,
  including all of the form $\overline{\pv H}$, with $\pv H$
  a pseudovariety of groups.
\end{abstract}

\maketitle

\section{Introduction}
\label{sec:introduction}

Relatively free profinite semigroups and their elements, pseudowords,
play an important role in finite semigroup theory.
Around 2003, Almeida established
the following connection between them and
symbolic dynamics~\cite{Almeida:2003b}: in the $A$-generated relatively free profinite semigroup $\Om AV$, where $\pv V$ is a semigroup pseudovariety containing $\Lo {Sl}$, associate to each subshift $\Cl X$ of $A^{\ZZ}$ the topological closure in $\Om AV$ of the set $L(\Cl X)$ of finite blocks of $\Cl X$.
This connection proved to be very useful
for a better understanding of structural aspects
of $\Om AV$, even in the most difficult case where $\pv V$ is
the pseudovariety $\pv S$ of all finite semigroups. One of the most relevant aspects of this line of research concerned the
case of irreducible subshifts. When $\Cl X$ is irreducible,
the union of the $\J$-classes intersecting the topological closure $\overline {L(\Cl X)}\subseteq \Om AV$ contains a minimum $\Cl J$-class $J_{\pv V}(\Cl X)$,
which is a regular $\J$-class of $\Om AV$.
If $\pv V=\pv V\ast\pv D$, the corresponding Sch\"utzenberger group $G_{\pv V}(\Cl X)$,
the profinite group isomorphic to all maximal subgroups of $J_{\pv V}(\Cl X)$,
is invariant under \emph{conjugacy}~\cite{ACosta:2006}, the name given to the isomorphism
relation between topological dynamical systems.
The conjugacy invariance of $G_{\overline{\pv H}}(\Cl X)$ was crucial
to the proof in~\cite{ACosta&Steinberg:2011} that if $\pv H$ is an extension-closed pseudovariety
of groups containing infinitely many groups of prime order, and if $L(\Cl X)$ is recognized by a semigroup
of $\overline{\pv H}$, then the maximal subgroups of $J_{\overline{\pv H}}(\Cl X)$ are free pro-$\pv H$ groups of countable rank, unless $\Cl X$
is periodic, in which case $G_{\overline{\pv H}}(\Cl X)$ is free pro-aperiodic.
The profinite group $G_{\pv S}(\Cl X)$ was also identified in many instances where $\Cl X$ is minimal~\cite{Almeida:2004a,Almeida&ACosta:2013,Almeida&ACosta:2016b}, in the process being shown to sometimes not be free, although it is always projective accordingly to~\cite{Rhodes&Steinberg:2008}.
In this paper, we add information about the dynamical
meaning of $G_{\pv V}(\Cl X)$, as briefly contextualized in the following paragraphs.

Some techniques used in~\cite{ACosta:2006}
to prove the conjugacy invariance of $G_{\pv V}(\Cl X)$
were adapted in the same paper in order to obtain conjugacy invariants
encoded in the syntactic semigroup $S(\Cl X)$ of the language $L(\Cl X)$,
when $\Cl X$ is sofic (that is, when $L(\Cl X)$ is rational).
These syntactic invariants
where shown in~\cite{ACosta&Steinberg:2016}
to be invariants with respect to another
relation of significant importance in symbolic dynamics, \emph{flow equivalence}
(cf.~\cite[Section~13.6]{Lind&Marcus:1996}),
the relation, coarser than conjugacy, identifying subshifts
with suitably equivalent suspension flows (or mapping tori).
This was done by showing that those invariants
are encoded in the Karoubi envelope of $S(\Cl X)$,
a small category whose equivalence class was shown in~\cite{ACosta&Steinberg:2016} to be
a flow invariant (even if $\Cl X$ is not sofic),
and in fact, as also proved there, the best possible syntactic flow equivalence invariant for sofic systems.

The importance for semigroup theory of the Karoubi envelope $\Kar(S)$
of a semigroup $S$ became clear with Tilson's seminal paper \cite{Tilson:1987}
(there, it is denoted~$S_E$).
Inspired by~\cite{ACosta&Steinberg:2016}, we now consider the Karoubi envelope of $\Om AV$  in relation with the subshift $\Cl X$.
In fact, we view $\Kar(\Om AV)$
as a compact topological category.
In the exploration of the connections
between symbolic dynamics and free profinite semigroups,
the convenience of considering
the set of pseudowords of $\Om AV$ whose finite factors are elements of $L(\Cl X)$ soon became apparent (here, as before, $\pv V\supseteq\Lo {Sl}$)~\cite{ACosta:2006,Almeida&ACosta:2007a}.
This set, the \emph{mirage} of $\Cl X$ in $\Om AV$,
denoted $\Mir_{\pv V}(\Cl X)$, always contains $\overline{L(\Cl X)}$, but
it may contain elements not in $\overline{L(\Cl X)}$.
The arrows $(e,u,f)$ in $\Kar(\Om AV)$
such that $u\in \Mir_{\pv V}(\Cl X)$
form a compact subcategory of $\Kar(\Om AV)$,
which we call the Karoubi envelope
of the mirage of $\Cl X$ (with respect to $\pv V$),
and denote by $\Kar(\Mir_{\pv V}(\Cl X))$.

 In this paper, we show that the correspondence
 $\Cl X\mapsto \Kar(\Mir_{\pv V}(\Cl X))$
 establishes a functor from
 the category of symbolic dynamical systems to that of compact zero-dimensional  categories,
 whenever $\pv V=\pv V*\pv D$ and $\pv V\supseteq \Lo{Sl}$.
 From this functor, we get for free
 a new proof that the profinite group 
 $G_{\pv V}(\Cl X)$ is a conjugacy invariant, when $\Cl X$ is irreducible,
 $\pv V=\pv V*\pv D$ and $\pv V\supseteq \Lo {Sl}$.
 Under the additional mild assumption that~$\pv V$ is monoidal,
 we show that the natural equivalence class of $\Kar_{\pv V}(\Cl X)$
 is actually invariant under flow equivalence, deducing from
 that, in the irreducible case, the invariance under flow equivalence of the profinite group $G_{\pv V}(\Cl X)$. 

When $\Cl X$ is irreducible,
the mirage $\Mir_{\pv V}(\Cl X)$ contains a minimum $\J$-class
$\widetilde J_{\pv V}(\Cl X)$, which is regular
and therefore possesses a profinite
Sch\"utzenberger group $\widetilde G_{\pv V}(\Cl X)$
isomorphic to its maximal subgroups.
We deduce, just as for $G_{\pv V}(\Cl X)$,
that $\widetilde G_{\pv V}(\Cl X)$ is a conjugacy invariant when
$\Lo {Sl}\subseteq \pv V=\pv V*\pv D$, and that $\widetilde G_{\pv V}(\Cl X)$
is a flow equivalence invariant
when $\pv V$ is also monoidal.
It may be interesting and challenging to investigate the group
$\widetilde G_{\pv V}(\Cl X)$, which remains largely unknown when $\Cl X$ is not minimal
(in the minimal case the equality $G_{\pv V}(\Cl X)=\widetilde G_{\pv V}(\Cl X)$ holds).

The structure of $\Kar(\Mir_{\pv V}(\Cl X))$
says more about $\Cl X$ than $G_{\pv V}(\Cl X)$ or~$\widetilde G_{\pv V}(\Cl X)$.
Indeed, we show that the zeta function of
$\Cl X$ is
encoded in $\Kar(\Mir_{\pv V}(\Cl X))$, by showing
that the periodic points correspond to the objects of $\Kar(\Mir_{\pv V}(\Cl X))$ with a finite isomorphism class.
This is done by identifying the regular $\J$-classes of $\Om AV$
containing a finite number of $\ci H$-classes.

This introduction is followed by two sections
of preliminaries, about symbolic dynamics and free profinite semigroups.
Section~\ref{sec:pseud-block-codes}
gives tools for
the establishment,
in Section~\ref{sec:funct-corr-from},
of the above mentioned functor between subshifts and compact categories. The results about flow equivalence are treated in Section~\ref{sec:flow-equivalence},
(with an appendix at the end of the paper, concerning one technical consequence).
Finally, Section~\ref{sec:relat-with-zeta} deals with the connections
with the zeta function.

\section{Symbolic dynamics }
\label{sec:symbolic-dynamics}

In this section we provide a brief introduction
to symbolic dynamics. For a very developed introduction,
see the book~\cite{Lind&Marcus:1996}. In the context of this paper, the short text~\cite{ACosta:2018} might also be useful.

It is helpful to begin by recalling some terminology and notation about free semigroups. In this paper, an \emph{alphabet} will always be a finite nonempty set.
The elements of the alphabet~$A$ are the \emph{letters} of $A$.
A \emph{word} over $A$ is a finite nonempty sequence of letters of $A$.
The words over $A$ form the semigroup $A^+$, for the operation of concatenation
of words.
The free monoid $A^*$ is obtained from $A^+$ by adjoining the empty sequence
(the \emph{empty word}, here denoted by the symbol $\varepsilon$), which is the neutral element of $A^*$ for the concatenation operation.
As it is usual in the literature, the length of a word $u$ is denoted by $|u|$.

\subsection{The category of symbolic dynamical systems}
Let $A$ be an alphabet. Endow $A$ with the discrete topology, and
$A^{\ZZ}$ with the corresponding product topology.
Note that, by Tychonoff's theorem and our convention that all alphabets are finite sets, the space $A^{\ZZ}$ is compact. We assume
  that compact topological spaces are Hausdorff.
The \emph{shift map}
$\sigma_A\colon A^{\ZZ}\to A^{\ZZ}$
is the mapping defined by
\begin{equation*}
  \sigma_A((x_i)_{i\in\ZZ})=(x_{i+1})_{i\in\ZZ}.
\end{equation*}
A \emph{symbolic dynamical system}, also called a \emph{subshift}, or just
a \emph{shift}, of $A^{\ZZ}$ is a nonempty closed subset~$\Cl X$ of $A^{\ZZ}$ such that
$\sigma_A(\Cl X)=\Cl X$.
The subshifts are the objects of the category of symbolic dynamical systems. In this category, a morphism between a subshift of $A^{\ZZ}$ and a subshift of
$B^{\ZZ}$ is a continuous mapping $\varphi\colon \Cl X\to\Cl Y$ such that the diagram
\begin{equation*}
  \xymatrix{
    \Cl X\ar[r]^{\varphi}\ar[d]_{\sigma_A}&\Cl Y\ar[d]^{\sigma_B}\\
    \Cl X\ar[r]^{\varphi}&\Cl Y.
  }
\end{equation*}
commutes. In the category of symbolic dynamical systems, an isomorphism is usually called a \emph{conjugacy}, and two isomorphic subshifts are said to
be \emph{conjugate}.

A \emph{block} of a subshift $\Cl X$ is a nonempty word $u$
appearing in some element of~$\Cl X$, that is, a word
$u$ such that for some $x\in\Cl X$ and some integers $i\leq j$, the equality $u=x_ix_{i+1}\ldots x_{j-1}x_{j}$ holds. The word
$u$ may then be denoted by $x_{[i,j]}$.
We denote the set of blocks of $\Cl X$ by $L(\Cl X)$.
One has $\Cl X\subseteq \Cl Y$ if and only if $L(\Cl X)\subseteq L(\Cl Y)$,
for all subshifts $\Cl X$ and $\Cl Y$ of $A^{\ZZ}$.

The notion of block
is the groundwork for a form of producing morphisms between subshifts,
which we next describe.
For alphabets~$A$ and~$B$, and a positive integer~$N$,
take a map $\Phi\colon A^{N}\to B$, where $N$ is some positive integer.
Let~$m$ and~$n$ be nonnegative integers such that $N=m+n+1$.
In the context of this paper,
such a map is called a \emph{block map}. The integer $N$ is the \emph{window size}
of the block map.
Consider the mapping $\varphi\colon A^{\ZZ}\to B^{\ZZ}$
defined by the correspondence
\begin{equation*}
  \varphi((x_i)_{i\in\ZZ})=(\Phi(x_{[i-m,i+n]}))_{i\in\ZZ}.
\end{equation*}
We say that $\varphi$ is the \emph{sliding block code}
from $A^{\ZZ}$ to $B^{\ZZ}$ with block map $\Phi$, \emph{memory}~$m$ and \emph{anticipation}~$n$.
More generally, if the subshifts $\Cl X\subseteq A^{\ZZ}$ and
$\Cl Y\subseteq B^{\ZZ}$
are such that $\varphi(\Cl X)\subseteq \Cl Y$,
then the induced restriction
$\varphi\colon \Cl X\to \Cl Y$
is also called a sliding block code, from $\Cl X$ to $\Cl Y$,
with memory $m$ and anticipation $n$.
Note that $\varphi\colon \Cl X\to \Cl Y$
is determined by the restriction of $\Phi$ to the set of words of $L(\Cl X)$
with length $N$. 

We are now ready to state a fundamental result
of symbolic dynamics, the Curtis--Hedlund--Lyndon theorem~\cite{Hedlund:1969},
fully characterizing the morphisms of subshifts (cf.~\cite[Theorem 6.2.9]{Lind&Marcus:1996}).

\begin{Thm}\label{t:morphisms-are-sliding-block-codes}
  The morphisms between subshifts are precisely the sliding block codes.
\end{Thm}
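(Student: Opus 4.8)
The plan is to prove both inclusions separately. First I would show that every sliding block code is a morphism of subshifts. Given a sliding block code $\varphi\colon\Cl X\to\Cl Y$ with block map $\Phi\colon A^N\to B$, memory $m$, and anticipation $n$, continuity is immediate because the $i$-th coordinate of $\varphi(x)$ depends only on the finitely many coordinates $x_{i-m},\dots,x_{i+n}$ of $x$, so $\varphi$ is continuous with respect to the product topology (it is a composite of a continuous projection, the discrete map $\Phi$, and the reassembly map). Commutativity with the shift is a one-line index computation: the $i$-th coordinate of $\varphi(\sigma_A(x))$ is $\Phi((\sigma_A x)_{[i-m,i+n]}) = \Phi(x_{[i+1-m,i+1+n]})$, which is exactly the $(i+1)$-st coordinate of $\varphi(x)$, i.e. the $i$-th coordinate of $\sigma_B(\varphi(x))$. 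Hence the square commutes and $\varphi$ is a morphism.

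The substantive direction is the converse: every morphism $\varphi\colon\Cl X\to\Cl Y$ is a sliding block code. The key tool is compactness of $\Cl X$ together with the fact that the cylinder sets form a basis of clopen sets. For each $b\in B$, the set $\varphi^{-1}(\{y:y_0=b\})$ is clopen in $\Cl X$, and these sets partition $\Cl X$ as $b$ ranges over the (finite) alphabet $B$. By compactness, each such clopen set is a finite union of cylinders, so there is a single radius $N_0$ such that membership of $x$ in $\varphi^{-1}(\{y:y_0=b\})$ is determined by the word $x_{[-N_0,N_0]}$; that is, the $0$-th coordinate of $\varphi(x)$ is a function of $x_{[-N_0,N_0]}$. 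Writing this function as a block map $\Phi\colon A^{2N_0+1}\to B$ (well-defined on $L(\Cl X)$-words, extended arbitrarily), set memory $m=N_0$ and anticipation $n=N_0$. It then remains to check that for all $i$, the $i$-th coordinate of $\varphi(x)$ equals $\Phi(x_{[i-N_0,i+N_0]})$; this is where shift-commutativity is used, since $\varphi(\sigma_A^i x)_0 = \sigma_B^i(\varphi x)_0 = \varphi(x)_i$, and the left-hand side is $\Phi((\sigma_A^i x)_{[-N_0,N_0]}) = \Phi(x_{[i-N_0,i+N_0]})$.

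The main obstacle is the compactness argument producing a \emph{uniform} window size: a priori the dependence of $\varphi(x)_0$ on $x$ might require arbitrarily many coordinates for different $x$, and one must invoke compactness of $\Cl X$ (equivalently, a Lebesgue-number/finite-subcover argument applied to the open cover of $\Cl X$ by the cylinders on which $\varphi(\,\cdot\,)_0$ is constant) to extract a single $N_0$ that works everywhere. Once that uniform bound is in hand, everything else is the routine bookkeeping with indices and the observation that a function on a finite discrete space is automatically continuous. Since this is precisely the classical Curtis--Hedlund--Lyndon theorem, I would simply cite \cite{Hedlund:1969} and \cite[Theorem 6.2.9]{Lind&Marcus:1996} for the details rather than reproduce the full argument.
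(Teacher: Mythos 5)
Your argument is correct and is the standard proof of the Curtis--Hedlund--Lyndon theorem: the easy direction by the index computation and continuity of coordinate-wise dependence, and the converse by compactness of $\Cl X$ yielding a uniform window radius, then shift-equivariance to propagate from the $0$-th coordinate. The paper itself gives no proof of this statement, only the citations to \cite{Hedlund:1969} and \cite[Theorem 6.2.9]{Lind&Marcus:1996}, so your sketch (which ends by deferring to the same references) is entirely consistent with what the paper does.
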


Let us say that a block map $\Psi\colon A^N\to B$
is a \emph{central block map} if $N$ is odd.
If $N=2k+1$, then
we say that $k$ is the \emph{wing} of $\Psi$.
Given a sliding block code $\psi\colon \Cl X\to\Cl Y$,
a \emph{central block map of $\psi$} is a central block map
$\Psi\colon A^{2k+1}\to B$
for which $\psi$ has $\Psi$ as a block
map with both  memory and anticipation equal to~ $k$.

\begin{Fact}\label{fact:central-sliding-block-codes}
Every sliding block code $\psi\colon\Cl X\to\Cl Y$
has a central block map.
\end{Fact}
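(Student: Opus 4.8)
The plan is to start from an arbitrary sliding block code $\psi\colon\Cl X\to\Cl Y$ given, by Theorem~\ref{t:morphisms-are-sliding-block-codes}, by some block map $\Phi\colon A^N\to B$ with memory $m$ and anticipation $n$, so that $N=m+n+1$. The key observation is that memory and anticipation are not intrinsic to $\psi$: one can always \emph{pad} the window on either side by ignoring extra coordinates. Concretely, for any nonnegative integers $p,q$ one obtains a new block map $\Phi'\colon A^{N+p+q}\to B$ defined by $\Phi'(a_{-m-p}\cdots a_{n+q})=\Phi(a_{-m}\cdots a_n)$, which realizes the \emph{same} sliding block code $\psi$ but now with memory $m+p$ and anticipation $n+q$. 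So the goal is to choose $p$ and $q$ making the new memory and anticipation equal.

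The concrete choice: set $k=\max\{m,n\}$, and take $p=k-m$ and $q=k-n$, both nonnegative. Then the padded block map $\Psi=\Phi'\colon A^{2k+1}\to B$ has window size $2k+1$ (odd, as required for a central block map), wing $k$, and both memory and anticipation equal to $k$. By definition this $\Psi$ is a central block map of $\psi$. One should check the harmless edge case: if $N=1$ then $m=n=0$, $k=0$, and $\Psi=\Phi$ itself already works with wing $0$; the padding construction degenerates correctly. The only thing to verify in the padding step is that the induced map on bi-infinite sequences is unchanged, i.e.\ that for every $x\in A^{\ZZ}$ and every $i\in\ZZ$ one has $\Psi(x_{[i-k,i+k]})=\Phi(x_{[i-m,i+n]})$, which is immediate from the definition of $\Psi$ since it simply discards the added coordinates.

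There is essentially no obstacle here — the statement is a normalization lemma and the argument is a one-line padding trick. The only mild subtlety worth a sentence is making sure the formalism of ``$\psi$ has $\Psi$ as a block map with memory and anticipation equal to $k$'' is met on the nose: one must note that although $\psi\colon\Cl X\to\Cl Y$ is only \emph{determined} by the restriction of the block map to $L(\Cl X)$-words of the appropriate length, the padded map $\Psi$ is defined on \emph{all} of $A^{2k+1}$ and agrees with the required sliding block code when restricted to $\Cl X$, so it legitimately serves as a central block map of $\psi$ in the sense defined just above the Fact. That completes the proof.
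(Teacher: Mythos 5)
Your proposal is correct and is essentially the paper's own proof: both take $k=\max\{m,n\}$ and define $\Psi\colon A^{2k+1}\to B$ by discarding the padded coordinates, so that $\Psi(a_{-k}\cdots a_k)=\Phi(a_{-m}\cdots a_n)$. The extra remarks about the degenerate case $N=1$ and about $\Psi$ being defined on all of $A^{2k+1}$ are fine but not needed.
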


\begin{proof}
  If $\psi\colon\Cl X\to\Cl Y$
is a sliding block code with block map $\Phi\colon A^{m+n+1}\to B$, memory $m$ and anticipation $n$,
and letting $k=\max\{m,n\}$,
then the
map $\Psi\colon A^{2k+1}\to B$ defined by
\begin{equation*}
  \Psi(a_{-k}a_{-k+1}\cdots a_{-1}a_0a_1\cdots a_{k-1}a_k)
  =
  \Phi(a_{-m}a_{-m+1}\cdots a_{-1}a_0a_1\cdots a_{n-1}a_n),
\end{equation*}
where $a_i\in A$ for all $i\in\{-k,-k+1,\ldots,k-1,k\}$,
is such that $\psi$ has $\Psi$
as block map with memory $k$ and anticipation $k$.
\end{proof}

A \emph{$1$-code} is a block map having a central block map of window size~$1$
(that is, wing~$0$). A $1$-conjugacy is a $1$-code that is a conjugacy.

\begin{Rmk}
  The composition of two $1$-codes is a $1$-code.
\end{Rmk}

With the help of Theorem~\ref{t:morphisms-are-sliding-block-codes},
one gets the next useful result. In the diagram included, the
double arrow represents an isomorphism, a convention reprised throughout the paper.

\begin{Prop}[{cf.~\cite[Proposition~1.5.12]{Lind&Marcus:1996}}]\label{p:decomposition-of-morphisms}
  If $\varphi\colon \Cl X\to \Cl Y$ is a morphism of subshifts,
  then there are $1$-codes $\alpha\colon\Cl Z\to\Cl X$
  and $\beta\colon \Cl Z\to\Cl Y$, for some subshift $\Cl Z$,
  such that $\alpha$ is a conjugacy and the diagram
  \begin{equation*}
    \xymatrix{
      &\Cl Z\ar[rd]^{\beta}\ar@{=>}[ld]_{\alpha}&\\
      \Cl X\ar[rr]^{\varphi}&&\Cl Y
    }
  \end{equation*}
  commutes, that is, $\varphi=\beta\circ\alpha^{-1}$.
\end{Prop}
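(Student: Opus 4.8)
The plan is to reduce an arbitrary morphism $\varphi\colon\Cl X\to\Cl Y$ to a "higher block presentation" of $\Cl X$ on which $\varphi$ becomes a $1$-code. By Fact~\ref{fact:central-sliding-block-codes} we may assume $\varphi$ has a central block map $\Phi\colon A^{2k+1}\to B$ with memory and anticipation both equal to $k$. If $k=0$ we are already done with $\Cl Z=\Cl X$ and $\alpha$ the identity, so assume $k\ge 1$.

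\medskip

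First I would construct the candidate $\Cl Z$. Let $C=L(\Cl X)\cap A^{2k+1}$ be the set of blocks of $\Cl X$ of length $2k+1$, viewed as a new alphabet, and define $\Cl Z\subseteq C^{\ZZ}$ to be the image of $\Cl X$ under the "$(2k+1)$-block map" $\gamma\colon A^{\ZZ}\to C^{\ZZ}$ given by $\gamma((x_i)_{i\in\ZZ})=(x_{[i-k,i+k]})_{i\in\ZZ}$. One checks routinely that $\Cl Z$ is a subshift of $C^{\ZZ}$ and that $\gamma\colon\Cl X\to\Cl Z$ is a sliding block code; indeed it is a conjugacy, with inverse the $1$-code (in fact a $0$-wing block map) sending a symbol $a_{-k}\cdots a_k\in C$ to its central letter $a_0\in A$. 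Since $\gamma$ reads a symmetric window of radius $k$, its inverse has wing $0$, so setting $\alpha=\gamma^{-1}\colon\Cl Z\to\Cl X$ gives the required $1$-conjugacy.

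\medskip

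Next I would define $\beta\colon\Cl Z\to\Cl Y$ directly as a $1$-code and verify commutativity. The central block map of $\beta$ is simply $\Psi\colon C\to B$, $\Psi(a_{-k}\cdots a_k)=\Phi(a_{-k}\cdots a_k)$, a block map of window size $1$ on the alphabet $C$. For $z\in\Cl Z$, write $z=\gamma(x)$ with $x\in\Cl X$; then $(\beta(z))_i=\Psi(z_i)=\Phi(x_{[i-k,i+k]})=(\varphi(x))_i$, so $\beta\circ\gamma=\varphi$, i.e. $\beta\circ\alpha^{-1}=\varphi$. In particular $\beta(\Cl Z)=\varphi(\Cl X)\subseteq\Cl Y$, so $\beta$ is indeed a morphism of subshifts into $\Cl Y$, and by construction it is a $1$-code.

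\medskip

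The only genuinely substantive point — the "main obstacle", though it is mild — is checking that $\gamma$ is a \emph{conjugacy}, i.e. that the central-letter map is a well-defined continuous shift-commuting inverse of $\gamma$ on $\Cl Z$. This amounts to showing that every $z\in\Cl Z$ is of the form $\gamma(x)$ for a (necessarily unique) $x\in\Cl X$, which follows because consecutive symbols $z_i,z_{i+1}\in C$ overlap in a length-$2k$ block, so the doubly infinite sequence they encode is consistent and its letters read off a point of $A^{\ZZ}$ lying in $\Cl X$ (as each length-$(2k+1)$ window of it is a symbol of $C\subseteq L(\Cl X)$, and $\Cl X$ is closed and shift-invariant, this point is in $\Cl X$). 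Continuity of all maps involved is automatic from the Curtis--Hedlund--Lyndon theorem (Theorem~\ref{t:morphisms-are-sliding-block-codes}), or can be seen directly since each is a sliding block code. Everything else is bookkeeping about windows, memories and anticipations.
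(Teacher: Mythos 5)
Your proof is correct and is essentially the standard higher-block recoding argument that the paper implicitly relies on by citing Proposition 1.5.12 of Lind--Marcus (the paper gives no proof of its own). The only cosmetic remark is that since you define $\Cl Z$ as the image $\gamma(\Cl X)$, the surjectivity of $\gamma$ onto $\Cl Z$ that you flag as the ``main obstacle'' is automatic, and injectivity (hence the conjugacy property) is immediate from reading off central letters.
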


\subsection{Classification of subshifts}

We review some important classes of subshifts.

A subshift $\Cl X$ of $A^{\ZZ}$
is \emph{irreducible} if there is $x\in A^{\ZZ}$ with positive dense orbit, that is, such that $\{\sigma_A^n(x)\mid n\geq 1 \}$ is dense in~$\Cl X$.
Clearly, being irreducible is a property invariant under conjugacy.
Next is a convenient
characterization in terms of words.
Say that a subset $K$ of a semigroup~$S$ is \emph{irreducible} if, for every $u,v\in K$, there is $w\in S$ such that $uwv\in S$. It turns out that
a subshift $\Cl X$ of $A^{\ZZ}$ is irreducible if and only if $L(\Cl X)$
is irreducible in the semigroup $A^+$.

A subshift $\Cl X$ of $A^{\ZZ}$ is~\emph{sofic} when its elements are the labels of the bi-infinite paths in a fixed graph with edges labeled by letters of $A$. A sofic subshift is irreducible if and only such a graph can be chosen to be strongly connected.

\begin{Example}\label{eg:even-subshift}
  The \emph{even subshift} is the irreducible sofic subshift $\Cl X$ of $\{a,b\}^{\ZZ}$ with
  presentation given by the labeled graph in Figure~\ref{fig:even-shift}.
       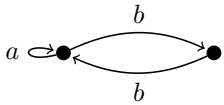
\begin{figure}[h]
     \centering\vspace{-0.2 cm}
     \begin{tikzpicture}[shorten >=1pt, node distance=2cm, on grid,initial text=,semithick]
       \tikzstyle{state}=[draw,circle,minimum size=5pt,inner sep=1,fill]
  \node[state]   (1)                {};
  \node[state]   (2) [right=of 1]   {};
  \path[->]  (1)   edge  [loop left]     node {$a$} ()
             (1)   edge [bend left=25] node [above] {$b$} (2)
             (2)   edge [bend left=25] node [below] {$b$} (1);
\end{tikzpicture}\vspace{-0.5 cm}
     \caption{The even subshift.}
     \label{fig:even-shift}
   \end{figure}
That is, when $u$ is a word over~$\{a,b\}$, one has $u\in L(\Cl X)$ if and only if $ab^na$ is not a factor of $u$
for some odd~$n$.
\end{Example}

A subshift conjugate to a sofic subshift is also sofic.
Within sofic shifts, the more salient class closed under
conjugacy is that of \emph{finite type shifts}. These are the subshifts
conjugate with \emph{edge shifts}, the latter being the subshifts presented by a labeled graph  where distinct edges have distinct labels. 
The most famous open problem of symbolic dynamics is to know if we can always decide if two given edge shifts are conjugate or not.

A conjugacy-closed class quite distinct from sofic shifts,
that has received a lot of attention in the literature
(see~\cite{Fogg:2002,Lothaire:2001}), is that of \emph{minimal subshifts}:
the subshift $\Cl X$  is minimal if, whenever $\Cl Y$ is a subshift, the inclusion  $\Cl Y\subseteq \Cl X$ implies $\Cl Y=\Cl X$.
All minimal subshifts are irreducible.

\section{Free profinite semigroups}

We assume knowledge about basic features of semigroups,
like Green's relations (a short introduction may be found in~\cite[Appendix A]{Rhodes&Steinberg:2009qt}).
In this section we quickly review some aspects of profinite semigroup theory. One of our purposes is to fix notation.  For a more paused but short introduction to the subject, see for example~\cite{Almeida:2003cshort}. The book~\cite{Rhodes&Steinberg:2009qt} is also an updated guiding reference.
We finish this section reviewing
some connections with symbolic dynamics.

\subsection{Languages and pseudovarieties}

A subset of the free semigroup~$A^+$ is called
a \emph{language} of~$A^+$. A language $L$ of $A^+$ is said to be
\emph{recognized} by a finite semigroup~$S$ if there is a
homomorphism $\varphi\colon A^+\to S$ such that
$L=\varphi^{-1}(\varphi(L))$.
Without giving details, we recall the well-known fact, not difficult to prove, that a language $L$ is recognizable in this algebraic sense if and only if
it is recognized by some finite automaton.
Hence,
  a subshift $\Cl X$ is sofic if and only if $L(\Cl X)$ is recognizable.

A \emph{pseudovariety of semigroups} is a class of finite semigroups
closed under taking subsemigroups, homomorphic images, and finitary products.
The intersection of pseudovarieties is clearly a pseudovariety,
and so we may talk of the pseudovariety generated by a class of semigroups.
In Section~\ref{sec:flow-equivalence} we shall have to restrict ourselves
to monoidal pseudovarieties, the semigroup
pseudovarieties generated by a class of finite monoids.
Here are some pseudovarieties of semigroups, relevant for this paper, with only the last three examples not being monoidal:
  \begin{itemize}
  \item The pseudovariety $\pv S$ of all finite semigroups.
    \item The pseudovariety $\pv I$ of one-element semigroups.
  \item The pseudovariety $\pv G$ of all finite groups.
   \item The pseudovariety $\pv A$ of  finite \emph{aperiodic} semigroups,
     that is, semigroups all of whose subgroups (i.e.,
     subsemigroups with group structure) are trivial.
   \item The pseudovariety $\overline{\pv H}$
     of finite semigroups whose maximal subgroups
     belong to the pseudovariety of groups $\pv H$.
  \item The pseudovariety $\pv {Sl}$
    of \emph{semilattices}, that is, commutative semigroups
    all of whose elements are idempotent.
  \item Given a pseudovariety $\pv V$, the pseudovariety
    $\Lo V$ of semigroups $S$ such that, for every idempotent $e$ of $S$,
    the subsemigroup $eSe$ belongs to $\pv V$.
  \item The pseudovariety $\pv N$ of finite \emph{nilpotent} semigroups,
    which are the finite semigroups with a zero element $0$
    such that $S^n=\{0\}$ for some $n\geq 1$.
  \item The pseudovariety $\pv D$ of finite  semigroups such that $Se=\{e\}$ for every idempotent $e$ if $S$.  
  \end{itemize}

One of the main interests of semigroup pseudovarieties is that
quite often one decides if a recognizable language $L$ satisfies a certain combinatorial property by deciding if $L$
is recognized by a semigroup from a certain pseudovariety $\pv V$.
Sometimes, these pseudovarieties are expressed
as the result of operations on other pseudovarieties.
An important example is the \emph{semidirect product} $\pv V\ast\pv W$
of two pseudovarieties $\pv V$ and $\pv W$,
the least semigroup pseudovariety containing the semidirect products of elements of $\pv V$ with  elements of~$\pv W$.
This is an associative operation on the lattice of pseudovarieties.
Another important operation, non-associative, is the \emph{Mal'cev product} $\pv V\malcev \pv W$,
briefly mentioned in one example later on,
and which is the pseudovariety generated by finite semigroups $S$
 for which there is a homomorphism $\varphi:S\to T$
 with $T\in\pv W$ and $\varphi^{-1}(e)\in \pv V$
 for every idempotent $e$ of~$T$.
The interested reader is referred
to~\cite{Rhodes&Steinberg:2009qt} for more information on these operations.

\begin{Example}
  A language $L$ of $A^+$ is said to be \emph{locally testable} if it is a finite Boolean combination of languages of the form $uA^*$, $A^*u$ and $A^*uA^*$, where $u$ denotes a (non-fixed) word of $A^+$.
  One of the first successes of finite semigroup theory was the proof
that being locally testable is a decidable property by showing
that a language is locally testable if and only if
it is recognized by a semigroup in~$\Lo {Sl}$~\cite{Brzozowski&Simon:1973,McNaughton:1974,Zalcstein:1973a,Zalcstein:1972}.
In terms of pseudovarieties, this amounts to the equality
$\Lo{Sl}=\pv {Sl}*\pv D$.
If $\Cl X$ is a subshift of $A^{\ZZ}$ of finite type, then $L(\Cl X)$ is locally testable: indeed, it is of the form $L(\Cl X)=A^+\setminus A^*WA^*$ for some finite set $W$ of words.
  Conversely, if $\Cl X$ is irreducible and
  $L(\Cl X)$ is locally testable, then
  $\Cl X$ is of finite type (see~\cite{ACosta:2007a} for a proof).
\end{Example}

\subsection{Relatively free profinite semigroups}

A \emph{compact semigroup} is a semigroup endowed with a topology for which the semigroup operation is continuous. We view finite semigroups as compact semigroups with the discrete topology.

In general, a pseudovariety of semigroups $\pv V$ is too small to contain
free objects. An
approach
commonly followed
is to find room for free objects by
considering the inverse limits of semigroups of $\pv V$, viewed as compact semigroups. These semigroups are the \emph{pro-$\pv V$ semigroups}.
Note that the semigroups from $\pv V$ are pro-$\pv V$. Conversely, finite
pro-$\pv V$ semigroups must belong to~$\pv V$. When dealing with the pseudovariety
$\pv S$ of all finite semigroups,
one uses the terminology \emph{profinite} instead of pro-$\pv S$.

If $A$ is an alphabet, then
the natural inverse limit defined by the finite quotients of $A^+$ that belong to $\pv V$ is a pro-$\pv V$ semigroup, denoted by $\Om AV$.
Our assumption that all alphabets are finite guarantees that the topology
of $\Om AV$ is metrizable.

The least closed subsemigroup of $\Om AV$
containing the image of the generating map
$\iota\colon A\to\Om AV$ is $\Om AV$. The pro-$\pv V$ semigroup $\Om AV$ is the free object
generated by $A$ in the category of pro-$\pv V$ semigroups,
as the map $\iota\colon A\to \Om AV$ satisfies the following universal property:
for every map $\varphi\colon A\to S$ into a pro-$\pv V$ semigroup, there is a
unique continuous semigroup homomorphism $\hat\varphi\colon \Om AV\to S$ such that $\hat\varphi\circ \iota=\varphi$. Hence, we say that
$\Om AV$ is the \emph{free pro-$\pv V$ semigroup generated by $A$}, or that it is the \emph{free profinite semigroup relative to $\pv V$ generated by $A$}.

Let $\pv V$ be a pseudovariety of semigroups containing
the pseudovariety $\pv N$ of finite nilpotent semigroups.
Then the unique extension of $\iota\colon A\to\Om AV$ to
a semigroup homomorphism $A^+\to\Om AV$
is an injective map, and it is from this viewpoint
that we consider $\iota$ as the inclusion and $A^+$ as a subsemigroup of $\Om AV$. 
One should bear in mind that $A^+$ is dense in~$\Om AV$. Moreover,
the hypothesis $\pv N\subseteq\pv V$
guarantees that the elements of $A^+$ are isolated in $\Om AV$.
Hence, one may view the elements
of $\Om AV$ as generalizations of finite words, for which reason we call them \emph{pseudowords}, and
we are justified to say that the elements of $A^+$ are the \emph{finite}
pseudowords of $\Om AV$, while
those of $\Om AV\setminus A^+$ are the \emph{infinite} pseudowords of $\Om AV$.

The following theorem gives us a glimpse of the reasons
why relatively free profinite semigroups and pseudowords are useful.
It essentially says that $\Om AV$ is the Stone dual of the Boolean algebra of languages recognized by semigroups of~$\pv V\supseteq \pv N$.

 \begin{Thm}[{cf.~\cite[Theorem 3.6.1]{Almeida:1994a}}]\label{t:rational-open}
   Let $\pv V$ be a pseudovariety of semigroups containing $\pv N$.
   Then a language $L\subseteq A^+$ is recognized by a semigroup of $\pv V$
   if and only if its topological closure $\overline{L}$ in
  $\Om AV$ is open, if and only if $L=K\cap A^+$
   for some clopen subset $K$ of $\Om AV$.
 \end{Thm}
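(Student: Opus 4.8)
The plan is to establish the cycle of implications: recognizability by a semigroup of $\pv V$ $\Rightarrow$ $L=K\cap A^+$ for some clopen $K$ $\Rightarrow$ $\overline L$ open $\Rightarrow$ recognizability. Throughout, I use that $\overline L$ is closed in $\Om AV$ by definition, so for $\overline L$ the conditions ``open'' and ``clopen'' coincide; I also use that $A^+$ is dense in $\Om AV$, and that the finite words form a discrete (open) subset because $\pv N\subseteq\pv V$.

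For the first two implications, suppose first that $L$ is recognized by $S\in\pv V$ via a homomorphism $\varphi\colon A^+\to S$ with $L=\varphi^{-1}(\varphi(L))$. By the universal property, $\varphi$ extends to a continuous homomorphism $\hat\varphi\colon\Om AV\to S$; as $S$ is finite and discrete, $\varphi(L)$ is clopen in $S$, hence $K:=\hat\varphi^{-1}(\varphi(L))$ is clopen in $\Om AV$ and $K\cap A^+=\varphi^{-1}(\varphi(L))=L$. Conversely, if $L=K\cap A^+$ with $K$ clopen, then---$A^+$ being dense and $K$ open---the trace $K\cap A^+$ is dense in $K$; since $K$ is also closed, $\overline L=\overline{K\cap A^+}=K$, which is open.

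The substantive implication is the last one. Suppose $\overline L$ is open, hence clopen. Here I would invoke the description of $\Om AV$ as the projective limit of the finite quotients of $A^+$ lying in $\pv V$: such a limit is a compact zero-dimensional space, and a standard compactness argument---cover the clopen (hence closed, hence compact) set $\overline L$ by basic cylinder neighbourhoods, extract a finite subcover, and use that the indexing family of finite-index congruences is directed (their intersections remain in $\pv V$, being subdirect products)---shows that $\overline L=\hat\varphi^{-1}(T)$ for a single surjective continuous homomorphism $\hat\varphi\colon\Om AV\to S$ onto some $S\in\pv V$ and some $T\subseteq S$. Put $\varphi=\hat\varphi|_{A^+}\colon A^+\to S$. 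Since $\pv N\subseteq\pv V$, the elements of $A^+$ are isolated in $\Om AV$, so every finite word lying in $\overline L$ already lies in $L$; that is, $\overline L\cap A^+=L$. Hence $L=\hat\varphi^{-1}(T)\cap A^+=\varphi^{-1}(T)$, and consequently $L=\varphi^{-1}(\varphi(L))$, so $L$ is recognized by $S\in\pv V$. The main obstacle is exactly this step: one must know that a clopen subset of $\Om AV$ is a cylinder over a single finite quotient, which is where the profinite (projective-limit) structure and the compactness of $\Om AV$ are indispensable; once that is in hand, the density of $A^+$ in $\Om AV$ and the isolation of finite words---both flowing from $\pv N\subseteq\pv V$---finish the argument.
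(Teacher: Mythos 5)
Your proof is correct; the paper itself gives no proof of this statement, citing it from Almeida's book, and your cycle of implications (recognizability $\Rightarrow$ clopen trace $\Rightarrow$ $\overline{L}$ open $\Rightarrow$ recognizability) is the standard argument found there. All the key points are in place: the continuous extension $\hat\varphi$ to handle the first implication, the density of $A^+$ for the second, and, for the third, the fact that a clopen subset of the inverse limit is pulled back from a single finite level together with the isolation of finite words (from $\pv N\subseteq\pv V$) to get $\overline{L}\cap A^+=L$.
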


Given a semigroup $S$, we denote by $S^I$ the monoid
$S\uplus \{I\}$ extending the semigroup operation of $S$ by adjoining an identity $I$. For example, $A^*$ is (isomorphic to) the monoid $(A^+)^I$.
If $S$ is a compact semigroup, then we view $S^I$
as a compact monoid extending $S$, by letting $I$ be an isolated point.
If $\varphi\colon S\to T$ is a function between semigroups,
then its extension $S^I\to T^I$
that maps $I$ to $I$, may still be denoted by~$\varphi$, in the absence of confusion.

\subsection{Pseudowords defined by subshifts}
\label{sec:conn-with-symb}

We briefly review some data relating relatively free profinite semigroups
with symbolic dynamics,
in part already met in Section~\ref{sec:introduction}, most of which is explained in~\cite[Section 3.2]{ACosta:2006} or~\cite{Almeida&ACosta:2007a}. Fix a semigroup pseudovariety $\pv V$ containing~$\Lo{Sl}$. 
The \emph{mirage} of a subshift $\Cl X\subseteq A^{\ZZ}$
is the set $\Mir_{\pv V}(\Cl X)$ of elements of $\Om AV$ whose
finite factors are in~$L(\Cl X)$.
It helps to also consider the set $\Mir_{\pv V,k}(\Cl X)$ of elements of $\Om AV$ whose
finite factors of length at most $k$ belong to~$L(\Cl X)$.
One clearly has
$\Mir_{\pv V}(\Cl X)=\bigcap_{k\geq 1}\Mir_{\pv V,k}(\Cl X)$.

\begin{Rmk}\label{rmk:mvk-is-clopen}
The set $\Mir_{\pv V,k}(\Cl X)$
is the finite intersection of subsets of $\Om AV$ of the form $\Om AV\setminus\overline{A^*uA^*}$, with $u\in A^+\setminus L(\Cl X)$ having length at most $k$.
Hence, $\Mir_{\pv V,k}(\Cl X)$ is clopen, in view of Theorem~\ref{t:rational-open},
as the locally testable language $A^*uA^*$
is recognized by a semigroup of~$\Lo {Sl}$.  
\end{Rmk}

A subset $K$ of a semigroup $S$ is said to be
\emph{factorial} if every factor of an element of $K$ belongs to $K$,
and is said to be \emph{prolongable} with respect to a
subset $A$ of~$S$ if
\mbox{$uA\cap K\neq\emptyset$} and $Au\cap K\neq\emptyset$
for each $u\in K$.
The languages of the form~$L(\Cl X)$,
with~$\Cl X$
a subshift of $A^{\ZZ}$, are precisely the nonempty languages of $A^+$
that are factorial and prolongable with respect to $A$. With routine topological arguments, one easily deduces that $\overline{L(\Cl X)}$,
$\Mir_{\pv V,k}(\Cl X)$ and $\Mir_{\pv V}(\Cl X)$
are prolongable subsets of $\Om AV$, with respect to $A$.
Note also that each of these sets
contains infinite pseudowords, for example, every accumulation point
of a sequence of words in $L(\Cl X)$ with increasing~length.

Again applying standard topological arguments,
one sees that the inclusion
$\overline{L(\Cl X)}\subseteq \Mir_{\pv V}(\Cl X)$
holds. This inclusion may be strict. In fact, it is clear that $\Mir_{\pv V,k}(\Cl X)$ and $\Mir_{\pv V}(\Cl X)$ are factorial, but the next example
shows that $\overline{L(\Cl X)}$ may not be factorial,
as seen in Example~\ref{eg:an-example-of-non-factoriality}, taken from~\cite{ACosta:2007t}. In that example, we use the notation $s^\omega$,
standard in (pro)finite semigroup theory, for the unique idempotent
in the closed subsemigroup of $S$ generated by $s$, where
$s$ is an element in a compact semigroup $S$. If $S$ is profinite, one has $s=\lim s^{n!}$.

\begin{Example}\label{eg:an-example-of-non-factoriality}
  Let $A=\{a,b,c,d\}$
  and consider the sofic subshift $\Cl X$ of $A^{\ZZ}$ presented in
  Figure~\ref{fig:not-factorial}.
     \begin{figure}[h]
       \centering
       \vspace{-0.3 cm}
     \begin{tikzpicture}[shorten >=1pt, node distance=2cm, on grid,initial text=,semithick]
       \tikzstyle{state}=[draw,circle,minimum size=5pt,inner sep=1,fill]
  \node[state]   (1)                {};
  \node[state]   (2) [right=of 1]   {};
  \node[state]   (3) [right=of 2]   {};
  \path[->]  (1)   edge  [loop left]     node {$a$} ()
             (2)   edge  [loop below]    node {$a$} ()
             (3)   edge  [loop right]    node {$a$} ()
             (1)   edge  node [below] {$b$} (2)
             (2)   edge  node [below] {$c$} (3)
             (3)   edge  [bend right=20] node [above] {$d$} (1);
\end{tikzpicture}\vspace{-0.5 cm}
     \caption{An irreducible sofic subshift.}
     \label{fig:not-factorial}
   \end{figure}
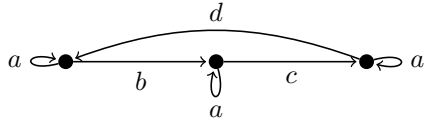
      In $\Om A{{\Lo{Sl}}}$,
   the pseudoword $v=a^{\omega}ba^{\omega}ca^{\omega}$
   belongs to $\overline{L(\Cl X)}$,
   since one clearly has $a^*ba^*ca^*\subseteq L(\Cl X)$.
   Moreover,
   in $\Om A{{\Lo{Sl}}}$
   we have $a^\omega cv=v$
   and so $cv$ is a factor of $v$. The topological closure
   of the locally testable language $K=cA^*\cap A^*bA^*\cap A^+\setminus A^*dA^*$
   is a clopen neighborhood of $cv$ (cf.~Theorem~\ref{t:rational-open}).
   Therefore, if we had $cv\in\overline{L(\Cl X)}$,
   then we would have $L(\Cl X)\cap K\neq\emptyset$,
   which is false.
\end{Example}

On the other hand, if $\pv V=\pv A\malcev\pv V$ (for example, if
$\pv V=\overline{\pv{H}}$), then $\overline{L(\Cl X)}$
is factorial~\cite{Almeida&ACosta:2007a}.
For arbitrary $\pv V$,  consider another set, the \emph{shadow of $\Cl X$}, denoted by $\Sha_{\pv V}(\Cl X)$, defined as the union of the $\J$-classes of $\Om AV$ intersecting $\overline{L(\Cl X)}$.
Note that $\Sha_{\pv V}(\Cl X)=\overline{L(\Cl X)}$ if $L(\Cl X)$ is factorial.
One has $\Sha_{\pv V}(\Cl X)\subseteq \Mir_{\pv V}(\Cl X)$,
with equality if $\Cl X$ is of finite type. The equality also holds
if $\Cl X$ is minimal, a fact recorded in Theorem~\ref{t:bijection-minimal-shifts-maximal-j-classes} below.

We already mentioned that $\Cl X$ is irreducible if and only if $L(\Cl X)$ is an irreducible subset of $A^+$.
From that, again with routine topological arguments,
one deduces that
if $\Cl X$ is irreducible then $\overline{L(\Cl X)}$,
$\Sha_{\pv V}(\Cl X)$ and $\Mir_{\pv V}(\Cl X)$ are irreducible.
If $K$ is a nonempty closed irreducible factorial subset of a compact semigroup, then
it contains a $\J$-minimum $\J$-class, which is regular, as seen in~\cite{ACosta&Steinberg:2011}. All elements of $K$ are then factors
of all elements of such $\J$-class.
Therefore, if~$\Cl X$ is irreducible,
$\Sha_{\pv V}(\Cl X)$ contains a $\J$-minimum $\J$-class $J_{\pv V}(\Cl X)$
and $\Mir_{\pv V}(\Cl X)$ contains a $\J$-minimum $\J$-class $\widetilde J_{\pv V}(\Cl X)$, both regular $\J$-classes. 

A \emph{$\J$-maximal infinite element}
of $\Om AV$ is an element $u$ of $\Om AV$ such that
\mbox{$u\leq_{\J}v$} implies $v\in A^+$.

\begin{Rmk}\label{rmk:always-a-factor}
  Every infinite pseudoword
has some infinite idempotent as a factor~\cite[Corollary 5.6.2]{Almeida:1994a},
and so every $\J$-maximal infinite element of $\Om AV$
is regular.  Moreover, every infinite pseudoword $w$ has
some $\J$-maximal infinite element
as a factor, by Zorn's Lemma,
because, by compactness, every $\leq_\J$-chain of infinite pseudowords
that are factors of $w$ clusters to an infinite pseudoword which is also a factor of $w$.
\end{Rmk}

A \emph{$\J$-maximal infinite $\J$-class} of $\Om AV$
is a $\J$-class consisting of $\J$-maximal infinite elements of $\Om AV$.

\begin{Thm}\label{t:bijection-minimal-shifts-maximal-j-classes}
  Let $\pv V$ be a pseudovariety of semigroups
  containing $\Lo{Sl}$.
  The correspondence $\Cl X\mapsto J_{\pv V}(\Cl X)$
  is a bijection from
  the set of minimal subshifts of $A^{\ZZ}$ to the set of
  $\J$-maximal infinite classes of $\Om AV$.
  Moreover,
  for every minimal subshift~$\Cl X$, the equalities
  $\Sha_{\pv V}(\Cl X)=L(\Cl X)\cup J_{\pv V}(\Cl X)=\Mir_{\pv V}(\Cl X)$  
  hold.
\end{Thm}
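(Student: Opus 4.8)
\emph{Setup and soft tools.} Write $M=\Mir_{\pv V}(\Cl X)$, $S=\Sha_{\pv V}(\Cl X)$, $L=L(\Cl X)$, and $J=J_{\pv V}(\Cl X)$; a minimal subshift is irreducible, so $J$ is the $\J$-minimum $\J$-class of $S$, meaning every element of $S$ is a factor of every element of $J$. I use freely that $S$ and $M$ are closed, that $M$ is factorial, hence a union of $\J$-classes, that $S\cap A^+=\overline L\cap A^+=L=M\cap A^+$, and the following consequences of compactness of $\Om AV$, continuity of multiplication and density of $A^+$: if $w=w_1w_2$ there is a sequence $(s_i,t_i)$ in $A^+\times A^+$ with $s_i\to w_1$, $t_i\to w_2$, $s_it_i\to w$; in particular $\overline{A^*uA^*}$ is exactly the set of pseudowords having the finite word $u$ as a factor, so ``$u$ is not a factor'' is a closed condition, and, more generally, ``having a factor lying in a prescribed closed set'' is a closed condition. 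Minimality of $\Cl X$ enters through uniform recurrence of $L$: for every $u\in L$ there is $N$ with $u$ a factor of every block of length at least $N$. Hence \emph{every infinite $w\in M$ has $L$ as its set of finite factors}: these lie in $L$ by definition of $M$, and conversely, as $w$ has finite factors of unbounded length and these are in $L$, uniform recurrence makes any prescribed $u\in L$ a factor of one of them, hence of $w$.

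\emph{The first equality $S=L\cup J$.} Fix $e_0\in J\cap\overline L$ (nonempty, since $J$ is one of the $\J$-classes meeting $\overline L$) and write $e_0=\lim b_k$ with $b_k\in L$. For any infinite $w\in M$, each $b_k$ is a finite factor of $w$ by the previous paragraph, say $w=\alpha_kb_k\beta_k$; a convergent subsequence gives $w=\alpha e_0\beta$, so $e_0$ is a factor of $w$. Now let $v$ be an infinite element of $S$. Then $v$ is $\J$-equivalent to some $v'\in\overline L$, so $v$ and $v'$ are mutual factors; applying the previous sentence to $v'$ (infinite, in $\overline L\subseteq M$) shows $e_0$ is a factor of $v'$, hence of $v$. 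On the other hand $v$, being in $S$, is a factor of $e_0$. Thus $v\,\J\,e_0$, so the infinite part of $S$ is the single $\J$-class $J$; with $S\cap A^+=L$ this is the first equality.

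\emph{The second equality and $\J$-maximality.} Since $S\subseteq M$, the equality $L\cup J=M$ amounts to putting every infinite $w\in M$ into $J$; as $e_0$ is already a factor of $w$, it remains to show $w$ is a factor of $e_0$, equivalently $w\in S$, equivalently $w\in\overline L$. This is the one deep input, namely that for a minimal subshift $\Mir_{\pv V}(\Cl X)=\overline{L(\Cl X)}$, equivalently $\varphi(\Mir_{\pv V}(\Cl X))\subseteq\varphi(L(\Cl X))$ for every continuous homomorphism $\varphi$ of $\Om AV$ onto a finite semigroup. I would take this from the structure theory of minimal subshifts in relatively free profinite semigroups~\cite{Almeida:2004a} (cf.~\cite{Almeida&ACosta:2007a,Almeida&ACosta:2016b}): for fixed $\varphi$, a pseudoword all of whose finite factors up to a length depending only on the target already shares a $\varphi$-image with a genuine block, so that, running $\varphi$ over all finite quotients while approximating $w$ inside the clopen sets $\Mir_{\pv V,k}(\Cl X)$, one forces $w\in\overline L$. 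I expect this to be the main obstacle: the soft arguments above deliver only $S=L\cup J$, and there is no purely combinatorial substitute, because for minimal subshifts a finite word can be $L$-admissible at arbitrarily large scale without being a block. Granting it, $J$ is precisely the set of infinite elements of $M$; since $M$ is factorial and a union of $\J$-classes, this class is $\J$-maximal infinite, as any infinite factor $v$ of $w\in J$ again lies in $M$ and is infinite, hence in $J$, so $v\,\J\,w$.

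\emph{The bijection.} Injectivity on minimal subshifts: from $J$ one recovers $L$ as the set of finite factors of the elements of $J$ — for $e_0\in J\cap\overline L$ this set is $L$ by the minimality argument, and any other element of $J$, being a mutual factor of $e_0$, has the same finite factors — whence $\Cl X$ is recovered. Surjectivity: given a $\J$-maximal infinite $\J$-class $J$, put $L:=\{\text{finite factors of elements of }J\}$ and let $\Cl X$ be the subshift with $L(\Cl X)=L$; this $L$ is nonempty and factorial by construction, and prolongable, because for $w\in J$ (regular, so $w^2\in J$) and $u$ a finite factor of $w$, the first letter of the suffix of $w$, or of $w^2$, following $u$ yields a finite factor $ua$ of $w$ or of $w^2\in J$, and symmetrically on the left. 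I claim $\Cl X$ is minimal: otherwise there is a subshift $\emptyset\ne\Cl Y\subsetneq\Cl X$, hence a block $u\in L\setminus L(\Cl Y)$ and blocks $v_n\in L(\Cl Y)\subseteq L$ with $|v_n|\to\infty$ and $u$ not a factor of any $v_n$; a subsequence gives $v_n\to v$ with $v$ infinite and $u$ not a factor of $v$. Writing $v_n$ as a factor of some $w_n\in J$ and each $w_n$ as a mutual factor of a fixed idempotent $e\in J$, convergent subsequences give $v$ a factor of $w:=\lim w_n$ with $w$ a mutual factor of $e$, so $w\in J$; then $v$ is an infinite factor of $w\in J$, hence $v\in J$ by $\J$-maximality; but then $v$ is a mutual factor of an element of $J$ having $u$ as a finite factor, forcing $u$ to be a factor of $v$, a contradiction. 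Since $\Cl X$ is thus minimal, every element of $J$ is infinite with finite factors in $L(\Cl X)$, hence lies in $\Mir_{\pv V}(\Cl X)=L(\Cl X)\cup J_{\pv V}(\Cl X)$ and therefore in $J_{\pv V}(\Cl X)$; so $J\subseteq J_{\pv V}(\Cl X)$, and, both being $\J$-classes, $J=J_{\pv V}(\Cl X)$.
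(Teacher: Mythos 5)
First, a point of reference: the paper does not prove this theorem. It is stated as imported from~\cite{Almeida:2004a}, with an alternative proof in~\cite{Almeida&ACosta:2007a}, so there is no in-paper argument to measure you against; I am judging your proposal on its own terms. Much of it is sound: uniform recurrence of $L(\Cl X)$ does show that every infinite element of $\Mir_{\pv V}(\Cl X)$ has exactly $L(\Cl X)$ as its set of finite factors, the compactness argument correctly makes a fixed $e_0\in J_{\pv V}(\Cl X)\cap\overline{L(\Cl X)}$ a factor of every such element, and combining this with the $\J$-minimality of $J_{\pv V}(\Cl X)$ in the shadow does give $\Sha_{\pv V}(\Cl X)=L(\Cl X)\cup J_{\pv V}(\Cl X)$. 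The injectivity and surjectivity arguments are also essentially right, granted the two equalities.

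The genuine gap is exactly where you place it, but it is larger than your write-up suggests. To get $\Mir_{\pv V}(\Cl X)=L(\Cl X)\cup J_{\pv V}(\Cl X)$ you must show that every infinite $w$ in the mirage is a \emph{factor of} $e_0$; your soft arguments only deliver the opposite divisibility $w\leq_{\J}e_0$, and this converse is the entire content of the theorem (it fails for irreducible non-minimal shifts, e.g.\ the full shift). Citing it from~\cite{Almeida:2004a} is citing the theorem you were asked to prove. Worse, the sketch you offer is not a reduction: the claim that for each finite quotient $\varphi$ there is a $k$ such that every pseudoword whose factors of length at most $k$ are blocks shares a $\varphi$-image with a block is, by the standard compactness argument applied to the decreasing clopen sets $\Mir_{\pv V,k}(\Cl X)$ and the clopen set $\varphi^{-1}(\varphi(L(\Cl X)))$, \emph{equivalent} to the inclusion $\Mir_{\pv V}(\Cl X)\subseteq\overline{L(\Cl X)}$ you are trying to prove. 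So the hard core is assumed, not proved, and no genuine proof idea for it is supplied. (Also, ``$w\in\Sha_{\pv V}(\Cl X)$ equivalently $w\in\overline{L(\Cl X)}$'' is only an implication in one direction, since the shadow may strictly contain $\overline{L(\Cl X)}$; harmless here, since you only use the sufficient direction.)

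One smaller flaw: in the prolongability step of the surjectivity argument you write ``$w$ regular, so $w^2\in J$''. That inference is false: in a stable (e.g.\ compact) semigroup $w\mathrel{\J}w^2$ holds iff $w$ lies in a subgroup, which a general regular element of $J$ need not do (and indeed for a non-periodic minimal $\Cl X$ most elements of $J_{\pv V}(\Cl X)$ satisfy $w^2\notin J_{\pv V}(\Cl X)$). The repair is easy: take $w$ to be an idempotent of $J$, which exists because a $\J$-maximal infinite class is regular (Remark~\ref{rmk:always-a-factor}); then $w^2=w\in J$ and your argument goes through.
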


Theorem~\ref{t:bijection-minimal-shifts-maximal-j-classes}
is from~\cite{Almeida:2004a}.
Another proof, substantially different, is given in~\cite{Almeida&ACosta:2007a}.
The following related proposition will be used in Section~\ref{sec:relat-with-zeta}.

\begin{Prop}
   Let $\Cl X$ be a subshift of $A^{\ZZ}$.
   Consider a pseudovariety of semigroups containing $\Lo {Sl}$.
   The $\J$-maximal infinite elements
   of $\Om AV$ contained in $\Mir_{\pv V}(\Cl X)$ are
   the $\J$-maximal infinite elements of $\Om AV$ contained
   in $\Sha_{\pv V}(\Cl X)$.
 \end{Prop}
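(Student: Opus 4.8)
The plan is to reduce the statement, via Theorem~\ref{t:bijection-minimal-shifts-maximal-j-classes}, to a comparison of block languages. Fix a $\J$-maximal infinite element $u$ of $\Om AV$. I first observe that the whole $\J$-class $J_u$ consists of $\J$-maximal infinite elements: if $v\mathrel{\J}u$, then $v$ and $u$ are mutual factors, so every infinite factor of $v$ is an infinite factor of $u$, hence $\J$-equivalent to $u$ and to $v$. By Theorem~\ref{t:bijection-minimal-shifts-maximal-j-classes} there is therefore a unique minimal subshift $\Cl Y$ of $A^{\ZZ}$ with $J_u=J_{\pv V}(\Cl Y)$. I would then show that the three conditions ``$u\in\Mir_{\pv V}(\Cl X)$'', ``$\Cl Y\subseteq\Cl X$'' and ``$u\in\Sha_{\pv V}(\Cl X)$'' are equivalent; since $\Sha_{\pv V}(\Cl X)\subseteq\Mir_{\pv V}(\Cl X)$, this establishes both inclusions of the Proposition simultaneously.

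The key lemma is that the set of finite factors of an arbitrary element $z$ of $J_{\pv V}(\Cl Y)$ equals $L(\Cl Y)$. The inclusion into $L(\Cl Y)$ holds because $z\in J_{\pv V}(\Cl Y)\subseteq\Sha_{\pv V}(\Cl Y)\subseteq\Mir_{\pv V}(\Cl Y)$, and lying in the mirage means exactly that all finite factors are in $L(\Cl Y)$. For the reverse inclusion I would use the facts recalled just before the statement: since $\Cl Y$ is minimal, hence irreducible, the set $\Mir_{\pv V}(\Cl Y)$ is nonempty, closed, irreducible and factorial, and by Theorem~\ref{t:bijection-minimal-shifts-maximal-j-classes} it equals $\Sha_{\pv V}(\Cl Y)$, so its $\J$-minimum $\J$-class is $J_{\pv V}(\Cl Y)$; therefore every element of $\Mir_{\pv V}(\Cl Y)$ is a factor of $z$, and in particular every block of $\Cl Y$, as $L(\Cl Y)\subseteq\Mir_{\pv V}(\Cl Y)$.

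Granting the lemma, applied to $z=u$, the finite factors of $u$ are precisely the words of $L(\Cl Y)$. Since $u\in\Mir_{\pv V}(\Cl X)$ means that every finite factor of $u$ belongs to $L(\Cl X)$, and since $L(\Cl Y)\subseteq L(\Cl X)$ is equivalent to $\Cl Y\subseteq\Cl X$, we obtain ``$u\in\Mir_{\pv V}(\Cl X)$'' $\iff$ ``$\Cl Y\subseteq\Cl X$''. For the shadow: if $\Cl Y\subseteq\Cl X$ then $\overline{L(\Cl Y)}\subseteq\overline{L(\Cl X)}$, so every $\J$-class meeting $\overline{L(\Cl Y)}$ meets $\overline{L(\Cl X)}$, whence $\Sha_{\pv V}(\Cl Y)\subseteq\Sha_{\pv V}(\Cl X)$ and $u\in J_{\pv V}(\Cl Y)\subseteq\Sha_{\pv V}(\Cl Y)\subseteq\Sha_{\pv V}(\Cl X)$; conversely, if $u\in\Sha_{\pv V}(\Cl X)$, pick $z\in J_u\cap\overline{L(\Cl X)}=J_{\pv V}(\Cl Y)\cap\overline{L(\Cl X)}$, so the lemma gives $L(\Cl Y)$ among the finite factors of $z$, while $z\in\overline{L(\Cl X)}\subseteq\Mir_{\pv V}(\Cl X)$ forces those factors into $L(\Cl X)$; hence $L(\Cl Y)\subseteq L(\Cl X)$, i.e.\ $\Cl Y\subseteq\Cl X$. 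This closes the chain of equivalences.

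The only point that I expect to require care is the reverse inclusion in the key lemma: one must verify that the cited statement about $\J$-minimum $\J$-classes of closed irreducible factorial subsets of a compact semigroup does apply to $\Mir_{\pv V}(\Cl Y)$, with $\J$-minimum class exactly $J_{\pv V}(\Cl Y)$ --- this is precisely where the minimality of $\Cl Y$, through the equality $\Mir_{\pv V}(\Cl Y)=\Sha_{\pv V}(\Cl Y)$, is needed. I would avoid a direct approximation argument, since writing $\Cl X$ as an intersection of subshifts of finite type $\Cl X_n$ only yields $\bigcap_n\overline{L(\Cl X_n)}=\Mir_{\pv V}(\Cl X)$, which does not see the gap between the mirage and $\overline{L(\Cl X)}$; the structural input of Theorem~\ref{t:bijection-minimal-shifts-maximal-j-classes} seems essential. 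Everything else is routine manipulation of Green's $\J$-preorder and of the definitions of mirage and shadow.
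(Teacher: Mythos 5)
Your proposal is correct and follows essentially the same route as the paper: both arguments hinge on Theorem~\ref{t:bijection-minimal-shifts-maximal-j-classes} to attach to the $\J$-class of a $\J$-maximal infinite element $u$ a minimal subshift $\Cl Y$ with $\Mir_{\pv V}(\Cl Y)=L(\Cl Y)\cup J_{\pv V}(\Cl Y)=\Sha_{\pv V}(\Cl Y)$, deduce $L(\Cl Y)\subseteq L(\Cl X)$ from $u\in\Mir_{\pv V}(\Cl X)$, and conclude $u\in\Sha_{\pv V}(\Cl Y)\subseteq\Sha_{\pv V}(\Cl X)$. Your write-up merely makes explicit a few points the paper leaves implicit (that the trivial direction follows from $\Sha_{\pv V}(\Cl X)\subseteq\Mir_{\pv V}(\Cl X)$, and the justification that the finite factors of an element of $J_{\pv V}(\Cl Y)$ are exactly $L(\Cl Y)$ via the $\J$-minimum property), so no change of method is involved.
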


 \begin{proof}
   Since the factorial set $\Mir_{\pv V}(\Cl X)$
   contains infinite pseudowords,
   we may take some $\J$-maximal infinite element $w$
   of $\Om AV$ belonging to $\Mir_{\pv V}(\Cl X)$ (cf.~Remark~\ref{rmk:always-a-factor},).
   By Theorem~\ref{t:bijection-minimal-shifts-maximal-j-classes}, there is a
   minimal subshift $\Cl Y$
   such that $w\in\Mir_{\pv V}(\Cl Y)$ and all elements of $L(\Cl Y)$
   are finite factors of $w$.
   By the definition of $\Mir_{\pv V}(\Cl X)$, we then have
   $L(\Cl Y)\subseteq L(\Cl X)$,
   whence
   $\Sha_{\pv V}(\Cl Y)\subseteq\Sha_{\pv V}(\Cl X)$.
   Looking again at Theorem~\ref{t:bijection-minimal-shifts-maximal-j-classes},
   one sees that $\Mir_{\pv V}(\Cl Y)=\Sha_{\pv V}(\Cl Y)$.
   Therefore, we have $w\in \Sha_{\pv V}(\Cl X)$.
 \end{proof}

\section{Pseudoword block codes}
\label{sec:pseud-block-codes}

In this section we present a technique
emulating for pseudowords
the sliding block code process used for bi-infinite sequences. This will permit to build in Section~\ref{sec:funct-corr-from} the functors mentioned
in Section~\ref{sec:introduction}.
This technique was applied in~\cite{ACosta:2006}, explicitly for
free profinite semigroups over $\pv S$, implicitly for free profinite semigroups
over pseudovarieties $\pv V$ such that $\pv V=\pv V\ast\pv D$
and $\pv V\supseteq \Lo {Sl}$. In Theorem~\ref{t:block-coding-pseudovarieties}
we see that these pseudovarieties
give the exact scope of validity of this technique. While
the facts in Theorem~\ref{t:block-coding-pseudovarieties} are not original, they are dispersed in the literature and may not be easily accessible
(for example, that
all pseudovarieties $\pv V$ for which the technique holds satisfy
$\pv V=\pv V\ast\pv D$ is, as far as we know, only explicitly mentioned, \emph{en passant}, in the thesis~\cite{ACosta:2007t}, written in Portuguese).

\subsection{Word and pseudoword block codes}

We use the following convenient notation:
given a word $u$ of length $n\geq 1$, over the alphabet~$A$, if $u=a_1a_2\cdots a_n$, with $a_i\in A$ for each $i\in\{1,\ldots,n\}$,
we represent by $u_{[p,q]}$ the word $a_pa_{p+1}\cdots a_{q-1}a_q$,
whenever $1\leq p\leq q\leq n$.
If $1\leq k\leq n$, then we define $\be k(u)=u_{[1,k]}$
and $\te k(u)=u_{[n-k+1,n]}$,
that is, $\be k(u)$ and $\te k(u)$
are respectively the unique prefix and the unique suffix of~$u$
with length $k$.
If $k>n$, then we let $\be k(u)=u=\te k(u)$.
Moreover, for $k=0$, we make $\be 0(u)=\varepsilon=\te 0(u)$.

If $\pv V$ contains $\Lo I$, then the maps $u\mapsto \te k(u)$
and $u\mapsto \be k(u)$, with $u\in A^*$,
admit a unique continuous extension
to maps $\be k\colon \Om AV^I\to A^*$
and $\te k\colon \Om AV^I\to A^*$, respectively, where we consider the discrete topology on $A^*$~(take~\cite[Sections 3.7 and 5.2]{Almeida:1994a} as reference, with~\cite[Section 2.5]{Almeida&ACosta:2007a} as a possible helpful text). Hence, for every pseudoword
$u\in \Om AV\setminus A^+$, the word $\be k(u)$ (respectively, $\te k(u)$)
is the unique prefix (respectively, suffix)
of $u$ which is a word of length $k$.

Given a block map $\Psi\colon A^N\to B$, we are interested in the map $\overline{\Psi}\colon A^*\to B^*$
defined as follows: if $u$ is a word of $A^*$ of length at most $N-1$ then
$\overline{\Psi}(u)=1$, and if $u=a_1\cdots a_M$ is a word of length $M\geq N$,  with $a_i\in A$ for all $i\in\{1,\ldots,M\}$,
  then we have
  \begin{equation}\label{eq:definition-of-sliding-word-code}
    \overline{\Psi}(u)=
    \Psi(u_{[1,N]})\cdot \Psi(u_{[2,N+1]})
    \cdot \Psi(u_{[3,N+2]})\cdots\Psi(u_{[M-N+1,M]}).
  \end{equation}

      \begin{Example}\label{eg:word-code-versus-block-code}
        Let $\Psi$ be a central block map $A^{2k+1}\to B$.
        Consider the sliding block code
        $\psi\colon A^{\ZZ}\to B^{\ZZ}$ having
        $\Psi$ as a central block map.
        Let $x\in A^{\ZZ}$, and $y=\psi(x)$. Then, for all $i\in \ZZ$, we have
    \begin{equation*}
     \Psi(x_{[i-k,i+k]})=y_i
\end{equation*}
and so, applying formula~\eqref{eq:definition-of-sliding-word-code},
we obtain
\begin{equation*}
  \overline{\Psi}(x_{[i-k,j+k]})=y_{[i,j]}
\end{equation*}
whenever $i,j\in\ZZ$ are such that $i\leq j$.
  \end{Example}
  
  Intuitively, what $\overline{\Psi}$ does is to ``encode''
  the word $u$ into a new word $\overline{\Psi}(u)$ of $B^*$,
  by ``reading'' the consecutive
  factors on length $N$ and assigning the corresponding letters from~$B$.
  Loosely speaking, we are coding words as we code
  elements of a subshift via block maps, for which reason we say that
  $\overline{\Psi}$ is a \emph{word block code}.  
  Theorem~\ref{t:block-coding-pseudovarieties}
  below characterizes the pseudovarieties for which
  we can extend this process in the most natural way, to what we shall call~\emph{pseudoword block codes}.
 
  In preparation
  for Theorem~\ref{t:block-coding-pseudovarieties}, we introduce some notation.
  For each alphabet~$A$ and positive integer $N$, we denote by $A^{(<N)}$
  the set of elements of~$A^*$
  with length at most $N-1$.
  We will sometimes view the set $A^N$, of words of~$A^+$ with length~$N$,
  as an alphabet of its own. Viewed as an alphabet, $A^N$
  may be denoted~$A_N$, to facilitate the understanding of the context in which
  the elements of~$A^N$ are being~seen.
 
  For the special case where $\Psi$ is the identity map $A^N\to A_N$,
  we use the notation~$\Upsilon_N$ for the corresponding  word code $\overline{\Psi}$.
  In the literature (eg.~\cite{Almeida:1994a,Almeida&Klima:2020a,Pin&Weil:2002b}), the map $\Upsilon_N$ is sometimes denoted by $\Phi_{N-1}$ or $\sigma_{N-1}$.
  These two notations are somewhat
  unfortunate in the context of this paper,
  the latter because of the standard notation for the shift map, the former
  because it is also usual, in the symbolic dynamics literature,
  to use the letter $\Phi$ to denote arbitrary block maps (see eg.~\cite{Lind&Marcus:1996}).

  In this section we work with pseudovarieties
  $\pv V$ satisfying $\pv V=\pv V\ast\pv D$ and $\Lo I\subseteq \pv V$.
  After the next theorem, we deal with them
  using their characterization
  in the theorem, without needing the original definition in terms of semidirect~products.
  
  \begin{Thm}\label{t:block-coding-pseudovarieties}
    Let $\pv V$ be a pseudovariety of semigroups
    containing $\Lo I$. The following conditions are equivalent:
    \begin{enumerate}
    \item $\pv V=\pv V\ast\pv D$;\label{item:block-coding-pseudovarieties-1}
    \item for every alphabet $A$ and every positive integer $N$,
      the word block code $\Upsilon_N\colon A^*\to (A_N)^*$
      admits a unique extension to a continuous mapping
         $\Upsilon_N^{\pv V}\colon\Om AV^I\to\Om {A_N}V^I$;\label{item:block-coding-pseudovarieties-2}     
       \item for every alphabet $A$, positive integer $N$, and
         block map $\Psi\colon A^N\to B$,
      the word block code $\overline{\Psi}\colon A^*\to B^*$
      admits a unique extension
      to a continuous mapping $\overline{\Psi}_{\pv V}\colon\Om AV^I\to\Om {B}V^I$.\label{item:block-coding-pseudovarieties-3}
    \end{enumerate}
    Moreover, assuming the equivalent
    conditions~\eqref{item:block-coding-pseudovarieties-1}-\eqref{item:block-coding-pseudovarieties-3}, and denoting by $\lambda_{\pv V}$
    the unique continuous homomorphism $\Om {A_N}V\to \Om AV$
    such that $\lambda_{\pv V}(u)=\be 1(u)$ for each $u\in A^N$:
    \begin{enumerate}[resume]
    \item we have the equality
      \begin{equation}\label{eq:block-coding-pseudovarieties}
        \lambda_{\pv V}(\Upsilon_N^{\pv V}(uv))=u
      \end{equation}
      for every
      $u\in \Om AV$ and $v\in A^{N-1}$,
      so that in particular $\Upsilon_N^{\pv V}$ is injective
      on~$\Om AV\setminus A^{(<N)}$.\label{item:block-coding-pseudovarieties-4}
    \end{enumerate}
  \end{Thm}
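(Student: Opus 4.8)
The plan is to prove the cycle $\eqref{item:block-coding-pseudovarieties-3}\Rightarrow\eqref{item:block-coding-pseudovarieties-2}\Rightarrow\eqref{item:block-coding-pseudovarieties-1}\Rightarrow\eqref{item:block-coding-pseudovarieties-3}$, and then to establish~\eqref{item:block-coding-pseudovarieties-4} directly from the construction used in the last implication. The implication $\eqref{item:block-coding-pseudovarieties-3}\Rightarrow\eqref{item:block-coding-pseudovarieties-2}$ is immediate, since $\Upsilon_N$ is the instance of $\overline\Psi$ obtained by taking $\Psi$ to be the identity map $A^N\to A_N$. Uniqueness of the extension in each case is automatic from the density of $A^+$ in $\Om AV$ together with the Hausdorff assumption, so in every implication the content is existence (i.e.\ continuity of the extension), not uniqueness.

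For $\eqref{item:block-coding-pseudovarieties-2}\Rightarrow\eqref{item:block-coding-pseudovarieties-1}$, I would argue contrapositively. The key is to express the obstruction to continuity of $\Upsilon_N$ in terms of separating pairs of words by semigroups in $\pv V$. A continuous extension of $\Upsilon_N$ to $\Om AV^I$ exists precisely when $\Upsilon_N\colon A^*\to (A_N)^*$ is uniformly continuous for the profinite metrics induced by $\pv V$ on source and target; equivalently, whenever a pair of words $u,u'$ can be made $\pv V$-indistinguishable, so can $\Upsilon_N(u),\Upsilon_N(u')$. Now $\Upsilon_N$ is, up to the length-$(N-1)$ padding, the standard construction that realizes the composition operation $\pv V\ast\pv D$: separating $\Upsilon_N(u)$ from $\Upsilon_N(u')$ in a semigroup $T\in\pv V$ amounts to separating $u$ from $u'$ in a semidirect product $T\ast D$ with $D\in\pv D_{N-1}$. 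So if $\pv V\neq\pv V\ast\pv D$, there is $N$ and a pair $u,u'$ separated by some semigroup of $\pv V\ast\pv D$ but not by any semigroup of $\pv V$, and this pair witnesses the failure of uniform continuity of $\Upsilon_N$. (Here I would lean on the description of $\pv V\ast\pv D$ via the derived category / the $\Phi_{N-1}$ transformation, e.g.\ from~\cite{Almeida:1994a,Pin&Weil:2002b}; the point is that $\Upsilon_N$ \emph{is} that transformation.)

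For $\eqref{item:block-coding-pseudovarieties-1}\Rightarrow\eqref{item:block-coding-pseudovarieties-3}$, the construction is concrete. Fix $\Psi\colon A^N\to B$ and let $m=n=0$-shifted padding be handled by the convention $\overline\Psi(u)=1$ for $|u|<N$; on long words, $\overline\Psi = \widehat\Psi^\ast\circ\Upsilon_N$, where $\widehat\Psi^\ast\colon (A_N)^*\to B^*$ is the homomorphism extending the letter map $\Psi\colon A_N\to B$. Since homomorphisms between free monoids extend continuously to the relatively free profinite monoids (the map $A_N\to\Om BV$ has a unique continuous extension $\Om{A_N}V\to\Om BV$ by the universal property), it suffices to treat $\Upsilon_N$ itself, i.e.\ to produce $\Upsilon_N^{\pv V}$. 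Given $\pv V=\pv V\ast\pv D$, any finite quotient $\eta\colon (A_N)^+\to T$ with $T\in\pv V$ pulls back along $\Upsilon_N$ to a map $A^+\to T^I$ that factors through a semigroup in $\pv V\ast\pv D\subseteq\pv V$ — this is exactly the semidirect-product bookkeeping: to know $\eta(\Upsilon_N(u))$ it is enough to know the image of $u$ in an appropriate semidirect product with the $D$-component remembering the last $N-1$ letters. Hence $\Upsilon_N$ composed with any such $\eta$ is continuous for the $\pv V$-topology on the source, and passing to the inverse limit over all $T$ gives the desired continuous $\Upsilon_N^{\pv V}\colon\Om AV^I\to\Om{A_N}V^I$. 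I expect the \textbf{main obstacle} to be precisely this step: making the informal ``$D$-component remembers the last $N-1$ letters'' argument airtight at the level of profinite semigroups — one must be careful about the monoid-vs-semigroup distinction (the $A^{(<N)}$ words map to $1$), about $\pv D$ versus $\pv D_{N-1}$, and about the fact that $\pv V\supseteq\Lo I$ guarantees the ingredients $\be k,\te k$ extend continuously (as recalled just before the theorem), which is what lets one recover the needed suffix information from a pseudoword.

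Finally, for~\eqref{item:block-coding-pseudovarieties-4}: by continuity and density of $A^+$, it is enough to verify $\lambda_{\pv V}(\Upsilon_N^{\pv V}(uv))=u$ when $u\in A^+$ and $v\in A^{N-1}$, where everything is a genuine word computation. Writing $u=a_1\cdots a_M$ and $uv=a_1\cdots a_{M+N-1}$, the word $\Upsilon_N(uv)$ has letters $(uv)_{[j,j+N-1]}$ for $j=1,\dots,M$, whose first letters $\be1$ are exactly $a_1,\dots,a_M$; applying the homomorphism $\lambda$ (which sends an $A^N$-letter to its $\be1$) yields $a_1\cdots a_M=u$. One then lifts this identity from words to pseudowords: fix $v\in A^{N-1}$, note both sides of~\eqref{eq:block-coding-pseudovarieties} are continuous functions of $u\in\Om AV$ (using that right multiplication by $v$, $\Upsilon_N^{\pv V}$, and $\lambda_{\pv V}$ are all continuous), and they agree on the dense subset $A^+$, hence everywhere. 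Injectivity of $\Upsilon_N^{\pv V}$ on $\Om AV\setminus A^{(<N)}$ is then immediate: if $\Upsilon_N^{\pv V}(w)=\Upsilon_N^{\pv V}(w')$ with $w,w'$ of ``length'' at least $N$ (i.e.\ not in $A^{(<N)}$), write $w=uv$, $w'=u'v'$ with $v,v'\in A^{N-1}$ the respective length-$(N-1)$ suffixes (available by the continuous $\te{N-1}$); comparing length-$(N-1)$ suffixes of the equal images forces $v=v'$, and then~\eqref{eq:block-coding-pseudovarieties} gives $u=\lambda_{\pv V}(\Upsilon_N^{\pv V}(uv))=\lambda_{\pv V}(\Upsilon_N^{\pv V}(u'v'))=u'$, so $w=w'$.
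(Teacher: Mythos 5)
Your plan is correct and rests on the same engine as the paper's proof: both reduce the equivalence of \eqref{item:block-coding-pseudovarieties-1} and \eqref{item:block-coding-pseudovarieties-2} to the standard characterization of $\pv V\ast\pv D_{N-1}$ in terms of the transformation $\Upsilon_N$ together with length-$(N-1)$ prefix/suffix data (Almeida's Theorem~10.6.12, i.e.\ the ``delay''/wreath-product bookkeeping you flag as the main obstacle), and both obtain \eqref{item:block-coding-pseudovarieties-3} from \eqref{item:block-coding-pseudovarieties-2} by composing with the continuous homomorphism $\Om{A_N}V\to\Om BV$ induced by the letter map $\Psi$; your treatment of \eqref{item:block-coding-pseudovarieties-4} (word computation, density, suffix comparison for injectivity) likewise matches the paper's. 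The only differences are cosmetic: you run the cycle in a different order and phrase the \eqref{item:block-coding-pseudovarieties-2}$\Rightarrow$\eqref{item:block-coding-pseudovarieties-1} direction via uniform continuity/separation rather than via comparing the kernels of $p_A^{\pv V}$ and $p_A^{\pv V\ast\pv D_{N-1}}$, which amounts to the same thing.
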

 
  Theorem~\ref{t:block-coding-pseudovarieties}
  derives from~\cite[Chapter 10]{Almeida:1994a},
  and some parts are more or less explicitly stated there.
  In the paper~\cite{Almeida&Klima:2020a}
  and in the thesis~\cite{ACosta:2007t} (written in Portuguese)
  more details are given for other parts.
  The following proof is for
  the reader's convenience, so that the proof can be found in one location.

  \begin{proof}[Proof of Theorem~\ref{t:block-coding-pseudovarieties}]
    Throughout the proof, we refer to the pseudovariety $\pv D_k$
    of finite semigroups such that $St=\{t\}$ whenever $t\in S^k$,
    where $k$ is a positive integer. By convention, one has $\pv D_0=\pv I$.
    In fact, the equality $\pv D=\bigcup_{k\geq 1}\pv D_k$ holds (cf.~\cite[Sections 10.4 and 10.6]{Almeida:1994a}), and $\pv V\ast\pv D=\bigcup_{k\geq 1}\pv V\ast\pv D_k$. We proceed in several steps.

    \eqref{item:block-coding-pseudovarieties-1}
    $\Rightarrow$
    \eqref{item:block-coding-pseudovarieties-2}: 
     The validity of this implication when $\pv V=\pv S$
     is Lemma 10.6.11 from~\cite{Almeida:1994a}.

         For each alphabet $X$, denote by
    $p_X^{\pv V}$ the unique continuous
    homomorphism from $\Om XS$ onto the $X$-generated
    profinite semigroup $\Om XV$ that extends the identity on $X$. As before, we also use the notation
    $p_X^{\pv V}$ for the extension
    $\Om XS^I\to\Om XV^I$ mapping $I$ to $I$.
    In a somewhat different language, Theorem 10.6.12 from~\cite{Almeida:1994a} affirms in particular that if $\pv W$ is a pseudovariety
     strictly containing $\Lo I$
     and such that $\pv W=\pv W\ast\pv D_{N-1}$,
     then one has
     \begin{equation}\label{eq:equivalence-to-commute}
       p_A^{\pv W\ast\pv D_{N-1}}(u)=p_A^{\pv W\ast\pv D_{N-1}}(v)
       \Leftrightarrow
       \begin{cases}
       p_{A^N}^{\pv W}(\Upsilon_N^{\pv S}(u))=
       p_{A^N}^{\pv W}(\Upsilon_N^{\pv S}(v))\\
       \be {N-1}(u)=\be {N-1}(v)\\
       \te {N-1}(u)=\te {N-1}(v)
       \end{cases}
     \end{equation}
     for all $u,v\in\Om AS$.
     But this equivalence is also valid when $\pv V=\Lo I$,
     because, in what is a well-know property of pseudowords
     (see \cite[Section 2.3]{Costa:2001a} for example\footnote{We give \cite[Section 2.3]{Costa:2001a} as a reference for this property of $\Lo I$
       for the sake of better readability, but the property was known before: in the language of pseudowords, it is implicit in~\cite[Section 3.7]{Almeida:1994a}, and in fact it amounts to the fact that $\Lo I$ is the join of $\pv D$ and its dual $\pv K$, a fact already appearing in~\cite{Eilenberg:1976}.}), when $u,v\in\Om AV$ one has $p_A^{\Lo I}(u)=p_A^{\Lo I}(v)$
     if and only if
     $i_k(u)=i_k(v)$
     and
     \mbox{$t_k(u)=t_k(v)$}
     for every positive integer $k$ (in particular,
     either $u=v$ or $u,v$ are infinite pseudowords
     with the same finite prefixes and the same finite suffixes).
     So, we may in fact suppose that $\pv V\supseteq \Lo I$.
     Taking $\pv W=\pv V$,
     and
     since $\pv V=\pv V\ast\pv D=\pv V\ast\pv D\ast\pv D_{N-1}=\pv V\ast\pv D_{N-1}$ (as $\pv D=\pv D*\pv D_{N-1}$, cf.~\cite[Sections 10.4 and 10.6]{Almeida:1994a}), we obtain the implication
     \begin{equation*}
       p_A^{\pv V}(u)=p_A^{\pv V}(v)
     \Rightarrow
     p_{A^N}^{\pv V}(\Upsilon_N^{\pv S}(u))=
       p_{A^N}^{\pv V}(\Upsilon_N^{\pv S}(v))
     \end{equation*}
     and so we may define a (unique) map
     $\Upsilon_N^{\pv V}\colon\Om AV\to \Om {A^N}V^I$
     for which the diagram
               \begin{equation}\label{eq:coding-at-different-levels}
         \begin{split}  
         \xymatrix@C=2cm{
         \Om AS\ar[r]^{\Upsilon_N^{\pv S}}
         \ar[d]_{p_A^{\pv V}}
         &\Om {A_N}S^I
         \ar[d]^{p_{A_N}^{\pv V}}
         \\
         \Om AV\ar[r]^{\Upsilon_N^{\pv V}}&
         \Om {A_N}V^I
         \\
       }
       \end{split}
     \end{equation}
     commutes. Finally, because the other maps in the diagram
     are continuous maps between compact spaces,
     one sees that $\Upsilon_N^{\pv V}$ is also continuous\footnote{The
       arguments used in the proof of this implication are basically
       the same that were used in the proof of~\cite[Lemma 2.2]{Almeida&Klima:2020a}, but there one finds the assumption that $\pv V$
       contains $\pv{Sl}$ to guarantee that $\pv V$ does contain nontrivial monoids and
       therefore is according to the statement in~\cite[Theorem 10.6.12]{Almeida:1994a}. As seen in our recapitulation of those arguments, such assumption
       is unnecessary.}.

    \eqref{item:block-coding-pseudovarieties-2}
    $\Rightarrow$
    \eqref{item:block-coding-pseudovarieties-4}:
    Consider words $v,w\in A^+$ with length $N-1$,
    and letters $a,b\in A$ such that $av=wb$.
    By definition, we have $\lambda_{\pv V}(\Upsilon_N^{\pv V}(av))=a$.
    This provides the base step for the following inductive
    argument to show the equality~\eqref{eq:block-coding-pseudovarieties} for words,
    inducting on the length of words.
    If $z\in A^+$ has length $M\geq N$, then, according
    to formula~\eqref{eq:definition-of-sliding-word-code} applied to the case where $\overline{\Psi}$ acts in $A^N$ as the identity, we have
    \begin{equation}\label{eq:detail}
      \Upsilon_N^{\pv V}(z)
      =\Upsilon_N^{\pv V}(z_{[1,M-1]})\cdot\Upsilon_N^{\pv V}(z_{[M-N+1,M]}).
    \end{equation}
    Therefore,
    for every $u\in A^+$, by putting $z=uav=uwb$ in~\eqref{eq:detail}, one has the equality  $\Upsilon_N^{\pv V}(uav)=\Upsilon_N^{\pv V}(uw)\cdot \Upsilon_N^{\pv V}(av)$, so that    
    \begin{equation*}
      \lambda_{\pv V}(\Upsilon_N^{\pv V}(uav))
      =\lambda_{\pv V}(\Upsilon_N^{\pv V}(uw))\cdot \lambda_{\pv V}(\Upsilon_N^{\pv V}(av))
      =\lambda_{\pv V}(\Upsilon_N^{\pv V}(uw))\cdot a=ua,
    \end{equation*}    
    where in the last equality we use the induction hypothesis.
    Since $A^+$ is dense in $\Om AV$,
    and $\lambda_{\pv V}\circ \Upsilon_N^{\pv V}$
    is continuous on $A^+\setminus A^{(<N)}$,
    we immediately extend the scope of
    equality \eqref{eq:block-coding-pseudovarieties}
    to every   $u\in \Om AV$ and $v\in A^{N-1}$.

    Let $w\in\Om AV\setminus A^{(<N)}$.
    Then $w=uv$, for some $u\in\Om AV^I$ and $v\in A^N$.
    Let $(w_{k})_k$ be a sequence of words of length at least $N$
    converging to $w$.
    By the continuity of $\te N$,
    we have $v=\te N(w_k)=\te N(w)$ for all sufficiently large $k$.
    If we see $v$ as letter of $A_N$,
    then, by the continuity of $\Upsilon_N^{\pv V}$,
    for all sufficiently large $k$ we have
    $v=\te 1(\Upsilon_N^{\pv V}(w_k))=\te 1(\Upsilon_N^{\pv V}(w))$.
    Therefore, if $w$ and $w'$ are elements of
    $\Om AV\setminus A^{(<N)}$ such that
    $\Upsilon_N^{\pv V}(w)=\Upsilon_N^{\pv V}(w')$,
    then $\te N(w)$ and $\te N(w')$ are the same word $v$.
    In particular, $\te {N-1}(w)$ and $\te {N-1}(w')$ are
    both equal to the word $\tilde v=\te {N-1}(v)$.
    On the other hand, we have factorizations
    $w=u\tilde v$ and $w'=u'\tilde v$, for some $u,u'\in\Om AV$.
    Since $u=\lambda_\pv V(\Upsilon_N^{\pv V}(u\tilde v))
    =\lambda_\pv V(\Upsilon_N^{\pv V}(u'\tilde v))=u'$,
    we conclude that $w=w'$ and that $\Upsilon_N^{\pv V}$ is injective.
    
    \eqref{item:block-coding-pseudovarieties-2}
    $\Rightarrow$
    \eqref{item:block-coding-pseudovarieties-3}:
    Let $\widehat{\Psi}$ be the unique continuous
    homomorphism $\Om {A_N}V\to \Om BV$
    such that $\widehat{\Psi}(u)=\Psi(u)$
    for every $u\in A_N$.
    We  also work with the extension
    $\Om {A_N}V^I\to \Om BV^I$,
    still denoted $\widehat{\Psi}$, mapping $I$ to $I$.    
    By the hypothesis that \eqref{item:block-coding-pseudovarieties-2}
    holds, the composition $\overline{\Psi}_{\pv V}=\widehat\Psi_{\pv V}\circ \Upsilon_N^{\pv V}$ is a continuous mapping from $\Om AV^I$ into $\Om BV^I$.
    One sees straightforwardly 
    by induction on the length of $u\in A^+$
    that $\overline{\Psi}_{\pv V}(u)=\overline{\Psi}(u)$
    for every $u\in A^+$.
    Since $A^*$ in dense in $\Om AV^I$,
    the mapping $\overline{\Psi}_{\pv V}$
    is the unique continuous extension of
    $\overline{\Psi}\colon A^*\to B^*$
    to a mapping $\Om AV^I\to\Om {B}V^I$.
        
    Observing that \eqref{item:block-coding-pseudovarieties-3}
    $\Rightarrow$
    (\ref{item:block-coding-pseudovarieties-2}) is trivial,
    it remains to check \eqref{item:block-coding-pseudovarieties-2}
    $\Rightarrow$
    \eqref{item:block-coding-pseudovarieties-1}.
    We shall
    use the following facts,
    valid for all pseudovarieties $\pv V$ and $\pv W$:
    \begin{itemize}
    \item  if $\pv W\supseteq \pv V$, then
    the kernel of $\pv W$ is contained in the kernel of $\pv V$ (this is because
    $\Om AV$ is a pro-$\pv W$ semigroup when $\pv V\subseteq \pv W$);
   \item     $p_X^{\pv W}$ and $p_X^{\pv V}$ have the same kernel if and only if
    $\pv V=\pv W$   (this is just a reformulation of the fact that $\pv V$ and $\pv W$ are equal if and only if they satisfy the same ``pseudoidentities'', see for example~\cite{Almeida:2003cshort} for details if necessary). 
    \end{itemize}
    Observe Diagram~\eqref{eq:coding-at-different-levels},
    which, under our assumption that \eqref{item:block-coding-pseudovarieties-2} holds, is commutative:
    indeed, the restrictions to $A^+$ of  the continuous mappings
    $p_{A_N}^{\pv V}\circ\Upsilon_N^{\pv S}$
    and $\Upsilon_N^{\pv V}\circ p_A^{\pv V}$
    clearly coincide, and $A^+$ is dense in $\Om AV$.
     As already seen in the proof
     of the implication
     \eqref{item:block-coding-pseudovarieties-1}
    $\Rightarrow$
    \eqref{item:block-coding-pseudovarieties-2},
    if $\pv W$ is a pseudovariety
    containing $\Lo I$, then the
    equivalence \eqref{eq:equivalence-to-commute}
     holds for all $u,v\in\Om AV$.
    Taking $\pv W=\pv V$,
    and using the commutativity
    of Diagram~\eqref{eq:coding-at-different-levels},
    we then get
    the equivalence
    \begin{equation}\label{eq:towards-isomorphism}
      p_A^{\pv V\ast\pv D_{N-1}}(u)=p_A^{\pv V\ast\pv D_{N-1}}(v)
      \Leftrightarrow
      \begin{cases}
      \Upsilon_{N}^{\pv V}[p_{A}^{\pv V}(u)]
      =
      \Upsilon_{N}^{\pv V}[p_{A}^{\pv V}(v)]\\
      \be{N-1}(u)=\be{N-1}(v)\\
      \be{N-1}(u)=\be{N-1}(v)
      \end{cases}
    \end{equation}
    for all $u,v\in\Om AV$.
    We claim that in fact we have
        \begin{equation*}
      p_A^{\pv V\ast\pv D_{N-1}}(u)=p_A^{\pv V\ast\pv D_{N-1}}(v)
      \Leftrightarrow
        p_{A}^{\pv V}(u) = p_{A}^{\pv V}(v).
      \end{equation*}
      Since $\pv V*\pv D_{N-1}$ contains $\pv V$,
      the direct implication is immediate. Conversely,
      suppose that $u,v\in\Om AV$ are such that $p_{A}^{\pv V}(u) = p_{A}^{\pv V}(v)$.
     Since $\pv V$ contains $\Lo I$,
     this implies $p_{A}^{\Lo I}(u) = p_{A}^{\Lo I}(v)$,
     which is the same as having $\be k(u)=\be k(v)$
     and $\te k(u)=\te k(v)$
     for every positive integer $k$.
     It then follows from~\eqref{eq:towards-isomorphism}
     that $p_A^{\pv V\ast\pv D_{N-1}}(u)=p_A^{\pv V\ast\pv D_{N-1}}(v)$.
    Therefore, $p_A^\pv V$ and $p_A^{\pv V\ast\pv D_{N-1}}$
    have the same kernel, whence $\pv V=\pv V\ast\pv D_{N-1}$.
    As this is true for all $N\geq 1$, we conclude that $\pv V=\pv V\ast\pv D$.
  \end{proof}

  Given a block map $\Psi\colon A^N\to B$,
  we say that the map $\overline{\Psi}_{\pv V}\colon \Om AV^I\to\Om BV^I$,
  introduced in Theorem~\ref{t:block-coding-pseudovarieties},
  is a \emph{pseudoword block code}. 

  The next corollary is in~\cite{Almeida&Klima:2020a}, with
  the additional hypothesis that $\pv V$ is monoidal.
  
  \begin{Cor}\label{c:cancelation-property}
    Let $\pv V$
    be a pseudovariety of semigroups
    such that $\pv V=\pv V\ast\pv D$ and $\Lo I\subseteq\pv V$.
    If $u$, $v$ are words of $A^*$ with the same length, and
    $\pi,\rho\in\Om AV$
    are such that $\pi u=\rho v$ or that $u\pi =v\rho$, then
    $u=v$ and $\pi=\rho$.
  \end{Cor}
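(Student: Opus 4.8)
The plan is to read off both cancellation statements from Theorem~\ref{t:block-coding-pseudovarieties}, using the prefix‑recovery identity~\eqref{eq:block-coding-pseudovarieties}, its left‑handed mirror, and the continuity of the truncation maps $\be k$ and $\te k$. Let $k$ be the common length of $u$ and $v$. If $k=0$ then $u=v=\varepsilon$ and the hypotheses degenerate to $\pi=\rho$, so assume $k\geq 1$ and put $N=k+1$, so that $u,v\in A^{N-1}$.

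Assume first that $\pi u=\rho v$, and call $w$ this common pseudoword. Picking sequences of words in $A^+$ tending to $\pi$ and to $\rho$ and multiplying on the right by $u$, resp.\ $v$, the continuity of $\te k\colon\Om AV^I\to A^*$ together with the equality $\te k(tu)=u$, valid for every word $t$ since $|u|=k$, gives $u=\te k(w)=v$. Hence $\pi u=\rho u$, and~\eqref{eq:block-coding-pseudovarieties}, applied with this value of $N$ (so that the word $u$, of length $N-1$, is the admissible right‑hand factor there), yields $\pi=\lambda_{\pv V}(\Upsilon_N^{\pv V}(\pi u))=\lambda_{\pv V}(\Upsilon_N^{\pv V}(\rho u))=\rho$. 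The hypothesis $u\pi=v\rho$ is handled in the same spirit: the analogous approximation argument with $\be k$ in place of $\te k$ gives $u=v$, and one is reduced to the \emph{left} cancellation $u\pi=u\rho\Rightarrow\pi=\rho$. For this I would invoke the mirror of~\eqref{eq:block-coding-pseudovarieties}: if $\mu_{\pv V}\colon\Om{A_N}V\to\Om AV$ denotes the unique continuous homomorphism sending each $z\in A^N$ to its last letter $\te 1(z)$ (extended by $\mu_{\pv V}(I)=I$), then $\mu_{\pv V}(\Upsilon_N^{\pv V}(vz))=z$ for all $v\in A^{N-1}$ and $z\in\Om AV$; applying this with $u$ in the role of $v$ and using $u\pi=u\rho$ gives $\pi=\mu_{\pv V}(\Upsilon_N^{\pv V}(u\pi))=\mu_{\pv V}(\Upsilon_N^{\pv V}(u\rho))=\rho$.

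It remains to justify the mirror identity, which I would do by the familiar two‑step pattern. On a word $vz$ with $|v|=N-1$, the map $\Upsilon_N^{\pv V}$ acts by formula~\eqref{eq:definition-of-sliding-word-code}, producing the concatenation of the consecutive length‑$N$ windows of $vz$; reading off the last letters of those windows reproduces exactly the letters of $z$ (the prefix $v$ contributes none, because the first window already ends at the first letter of $z$). Since $A^+$ is dense in $\Om AV$ and both $\mu_{\pv V}\circ\Upsilon_N^{\pv V}$ and $z\mapsto vz$ are continuous, the identity passes to all $z\in\Om AV$. The only point that genuinely needs care — the ``main obstacle,'' mild as it is — is that Theorem~\ref{t:block-coding-pseudovarieties} is phrased in a right‑biased way, so the second half of the corollary is not a formal consequence of~\eqref{eq:block-coding-pseudovarieties} and one really does need the mirror identity above; trying to avoid it by reversing words is awkward, since $\pv V$ need not be closed under passage to opposite semigroups, so I would stick with the self‑contained argument just sketched.
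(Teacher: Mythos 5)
Your proposal is correct and follows essentially the same route as the paper: extract $u=v$ from the continuity of $\te k$ (resp.\ $\be k$), then cancel via identity~\eqref{eq:block-coding-pseudovarieties} for the right-multiplication case and its left-handed dual for the other. The only difference is that the paper dispatches the second case with the phrase ``similarly, using the dual of equality~\eqref{eq:block-coding-pseudovarieties},'' whereas you explicitly state and verify that dual identity (by checking it on words and extending by density and continuity), which is a legitimate and correctly executed filling-in of the same argument.
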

 
  \begin{proof}
    Let $n$ be the length of $u$ and $v$.
    If $\pi u=\rho v$,
    then $u=\te n(\pi u)=\te n(\rho v)=v$,
    and the equality $\pi=\rho$ follows from
    equality~\eqref{eq:block-coding-pseudovarieties} in Theorem~\ref{t:block-coding-pseudovarieties} (with~\mbox{$N=n+1$}).
    The case $u\pi=v\rho$ is treated similarly, using the dual
    of  equality~\eqref{eq:block-coding-pseudovarieties}.
  \end{proof}

      If we are in the conditions of Corollary~\ref{c:cancelation-property},
      then, for each positive integer $N$,
      and pseudoword $u\in \Om AV\setminus A^+$,
    we denote by $\be N(u)^{-1}\cdot u$
    the unique pseudoword
    $u'$ such that $u=\be N(u)\cdot u'$.
    Similarly, we denote by $u\cdot \te N(u)^{-1}$
    the unique pseudoword
    $u''$ such that $u=u''\cdot\te N(u)$.

  \begin{Lemma}\label{l:continuity-of-cancelation}
    The maps $u\mapsto \be N(u)^{-1}\cdot u$
    and $u\mapsto  u\cdot \te N(u)^{-1}$
    are continuous on the space $\Om AV\setminus A^+$.
  \end{Lemma}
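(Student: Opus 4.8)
The statement asks for continuity of the two partial ``cancellation'' maps on $\Om AV\setminus A^+$. I would prove the statement for $u\mapsto \be N(u)^{-1}\cdot u$; the other map is handled by the obvious left-right dual argument (using the dual of \eqref{eq:block-coding-pseudovarieties}), so I would only remark on that at the end.

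First I would recall the structure of the map. For $u\in\Om AV\setminus A^+$, the prefix $\be N(u)$ is a word of length $N$, and by Corollary~\ref{c:cancelation-property} there is a unique $u'\in\Om AV$ with $u=\be N(u)\cdot u'$; write $\theta(u)=u'$ for this map. The key point is that the prefix map $\be N\colon \Om AV\setminus A^+\to A^N$ is \emph{locally constant}: it is continuous by Theorem~\ref{t:block-coding-pseudovarieties}(2) (more precisely by the continuity of $\Upsilon_N^{\pv V}$, from which $\be N$ is recovered, as already used in the proof of that theorem), and $A^N$ carries the discrete topology, so $\be N$ is continuous into a discrete space, hence locally constant. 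Therefore $\Om AV\setminus A^+$ decomposes into a disjoint \emph{clopen} family $\{\,U_w\mid w\in A^N\,\}$, where $U_w=\be N^{-1}(w)\cap(\Om AV\setminus A^+)$ is the set of infinite pseudowords with prefix $w$. It suffices to prove that $\theta$ restricted to each $U_w$ is continuous, and on $U_w$ we have $\theta(u)=w^{-1}u$ with $w$ now a \emph{fixed} word.

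So the heart of the matter is: for a fixed word $w\in A^N$, the map sending an infinite pseudoword $u$ with $\be N(u)=w$ to the unique $u'$ with $u=w u'$ is continuous. I would argue this by a standard compactness/sequential argument, $\Om AV$ being metrizable: take $u_k\to u$ in $U_w$, and let $u_k'=w^{-1}u_k$, so $u_k=wu_k'$. By compactness of $\Om AV$, after passing to a subsequence we may assume $u_k'\to z$ for some $z\in\Om AV$; by continuity of multiplication, $u=\lim wu_k'=wz$. Now I would note $z$ cannot be finite: if $z\in A^+$ then $u=wz\in A^+$, contradicting $u\notin A^+$. Hence $z\in\Om AV\setminus A^+$, and by the uniqueness clause of Corollary~\ref{c:cancelation-property} applied to $u=wz=w\,\theta(u)$ (both $w$-factorizations, $w$ a word), we get $z=\theta(u)$. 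Since every subsequential limit of $(u_k')$ equals $\theta(u)$, the whole sequence converges to $\theta(u)$; this gives sequential continuity, hence continuity, of $\theta$ on $U_w$, and therefore on $\Om AV\setminus A^+$.

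**The main obstacle.** I do not anticipate a serious obstacle; the real content is already packaged in Corollary~\ref{c:cancelation-property} (unique cancellation by words) and Theorem~\ref{t:block-coding-pseudovarieties} (continuity of $\Upsilon_N^{\pv V}$, hence of $\be N$). The only point requiring a little care is the one flagged above: a convergent sequence in $U_w$ could \emph{a priori} have $w^{-1}u_k$ clustering to a finite pseudoword, which would break the uniqueness argument — but this is excluded precisely because the limit $u$ is infinite, so $wz=u\notin A^+$ forces $z\notin A^+$. One should also make sure $A^+$ is dealt with correctly: on $A^+$ the map $u\mapsto \be N(u)^{-1}u$ need not even be defined (e.g.\ when $|u|\le N$), which is exactly why the statement restricts to $\Om AV\setminus A^+$; since that set is open in $\Om AV$ (the finite pseudowords are isolated, by $\pv N\subseteq\pv V$, which follows from $\Lo I\subseteq\pv V$), working there causes no trouble. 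For the map $u\mapsto u\cdot\te N(u)^{-1}$, the same proof applies verbatim after replacing $\be N$ by $\te N$ and \eqref{eq:block-coding-pseudovarieties} by its dual.
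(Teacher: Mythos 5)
Your proposal is correct and follows essentially the same route as the paper: both arguments use the continuity of $\be N$ into the discrete space $A^N$ to fix the prefix locally, then pass to a convergent (sub)sequence, use continuity of multiplication and the uniqueness from Corollary~\ref{c:cancelation-property} to identify every accumulation point of the cancelled sequence, and invoke compactness to upgrade this to convergence. Your extra check that the accumulation point is infinite is harmless but not needed, since the cancellation in Corollary~\ref{c:cancelation-property} applies to arbitrary $\pi,\rho\in\Om AV$.
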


  \begin{proof}
    Suppose that $(u_n)_n$ converges to $u$ in $\Om AV\setminus A^+$.
    As $\be N$ is continuous,
    we have $\be N(u_n)=\be N(u)$ for all large enough $n$.
    Therefore, we have $u_n=\be N(u)v_n$ for all large enough $n$,
    where $v_n=\be N(u_n)^{-1}\cdot u_n$. Every accumulation
    point $v$ of $(v_n)_n$ is such that $u=\be N(u)v$,
    that is, $v=\be N(u)u^{-1}$.
    Since we are dealing with a compact space, this means
    that $\be N(u_n)^{-1}\cdot u_n\to \be N(u)^{-1}\cdot u$.
    Similarly, we have $u_n\cdot \te N(u_n)^{-1}\to u\cdot \te N(u)^{-1}$.
  \end{proof}

\subsection{Some properties of pseudoword block codes}

We now introduce some useful properties of pseudoword block codes.
Until the end of this section, we work with a fixed pseudovariety
of semigroups such that $\pv V=\pv V\ast\pv D$ and $\Lo I\subseteq\pv V$.
In the absence
  of confusion, we may denote a pseudoword block code 
  $\overline{\Psi}_{\pv V}$, from $\Om AV^I$ to $\Om BV^I$,
  simply by $\overline{\Psi}$, dropping the subscript $\pv V$.

\begin{Lemma}\label{l:composition-of-block-maps}
  Consider a pseudovariety of semigroups $\pv V$
  such that $\pv V=\pv V\ast\pv D$ and $\Lo I\subseteq\pv V$.    
  Let $\varphi\colon\Cl X\to\Cl Y$
  be a morphism of subshifts with central block map
  $\Phi\colon A^{2k+1}\to B$.
  Take a morphism of subshifts $\psi\colon\Cl Y\to\Cl Z$
  with central block map $\Psi\colon B^{2l+1}\to C$.
  Then the map $\Lambda\colon A^{2k+2l+1}\to C$
  defined by $\Lambda(u)=\Psi\circ\overline{\Phi}(u)$
  is a central block map for $\psi\circ\varphi$.
  Moreover, the equality
  \begin{equation}\label{eq:composition-of-block-maps}
    \overline{\Lambda}(u)=\overline{\Psi}\circ\overline{\Phi}(u)
  \end{equation}
  holds for every $u\in\Om AV$.
\end{Lemma}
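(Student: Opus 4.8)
The plan is to prove the two assertions in order: first that $\Lambda$ is a central block map for $\psi\circ\varphi$, and then the identity \eqref{eq:composition-of-block-maps} relating the induced pseudoword block codes. For the first part, I would unwind the definitions: $\Lambda$ has window size $2k+2l+1$, so its wing is $k+l$, and I must check that for every $x\in\Cl X$ and every $i\in\ZZ$, one has $\Lambda(x_{[i-k-l,\,i+k+l]}) = (\psi\circ\varphi(x))_i$. Writing $y=\varphi(x)\in\Cl Y$ and $z=\psi(y)\in\Cl Z$, Example~\ref{eg:word-code-versus-block-code} applied to $\varphi$ gives $\overline{\Phi}(x_{[i-k-l,\,i+k+l]}) = y_{[i-l,\,i+l]}$ (since the window of length $2k+2l+1$ contracts by $2k$ under $\overline\Phi$, centered appropriately). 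Then, by definition of $\psi$ as a sliding block code with central block map $\Psi$ of wing $l$, we get $\Psi(y_{[i-l,\,i+l]}) = z_i$. Chaining these, $\Lambda(x_{[i-k-l,\,i+k+l]}) = \Psi(\overline{\Phi}(x_{[i-k-l,\,i+k+l]})) = \Psi(y_{[i-l,\,i+l]}) = z_i$, which is exactly what is needed; the memory and anticipation are both $k+l$ by the symmetry of the construction.

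For the identity \eqref{eq:composition-of-block-maps}, the natural strategy is to verify it first for finite words $u\in A^+$ and then invoke density and continuity. For finite words, I would fix $u = a_1a_2\cdots a_M \in A^+$ and compute both sides directly from formula~\eqref{eq:definition-of-sliding-word-code}. If $M < 2k+2l+1$ then $\overline\Lambda(u) = 1$; on the other side, $\overline{\Phi}(u)$ is a word of length $\max\{0, M-2k\}$, and applying $\overline{\Psi}$ to a word of length less than $2l+1$ again yields $1$, so both sides agree (one should check the boundary lengths carefully). For $M \geq 2k+2l+1$, both $\overline\Lambda(u)$ and $\overline{\Psi}(\overline{\Phi}(u))$ are words of length $M - 2k - 2l$, and the $j$-th letter of each is $\Psi(\overline{\Phi}(u_{[j,\,j+2k+2l]}))$ in the first case versus $\Psi\bigl((\overline{\Phi}(u))_{[j,\,j+2l]}\bigr)$ in the second; these coincide because $\overline\Phi$ restricted to a length-$(2k+2l+1)$ subword of $u$ produces exactly the length-$(2l+1)$ subword $(\overline\Phi(u))_{[j,\,j+2l]}$, which is the ``locality'' of the word block code (essentially the same computation as in equality~\eqref{eq:detail} of the proof of Theorem~\ref{t:block-coding-pseudovarieties}, iterated). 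Alternatively, and perhaps more cleanly, one can observe that $\overline\Lambda = \widehat\Lambda_{\pv V}\circ\Upsilon_{2k+2l+1}^{\pv V}$ and $\overline{\Psi}\circ\overline{\Phi} = \widehat\Psi_{\pv V}\circ\Upsilon_{2l+1}^{\pv V}\circ\widehat\Phi_{\pv V}\circ\Upsilon_{2k+1}^{\pv V}$ using the factorizations from the proof of Theorem~\ref{t:block-coding-pseudovarieties}, reducing the problem to an identity among continuous maps that already holds on the dense subset $A^+$ by the definition of $\Lambda$.

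Once the identity holds on $A^+$, the passage to all of $\Om AV$ is immediate: both $\overline{\Lambda}$ and $\overline{\Psi}\circ\overline{\Phi}$ are continuous maps $\Om AV^I\to\Om CV^I$ (by Theorem~\ref{t:block-coding-pseudovarieties}\eqref{item:block-coding-pseudovarieties-3}, using $\pv V=\pv V\ast\pv D$ and $\Lo I\subseteq\pv V$, and the fact that a composition of continuous maps is continuous), they agree on the dense subsemigroup $A^+$, and $\Om CV^I$ is Hausdorff, so they agree everywhere, in particular on $\Om AV$. The main obstacle I anticipate is purely bookkeeping: getting the index ranges exactly right in the word-level computation — in particular verifying that the window of length $2k+2l+1$ is the correct one (so that the composed memory and anticipation are both $k+l$ rather than something asymmetric) and handling the short-word boundary cases where one or both of $\overline\Phi(u)$, $\overline\Lambda(u)$ collapse to the empty word. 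These are routine but error-prone, and the factorization approach via $\Upsilon_N^{\pv V}$ sidesteps most of the delicate index-chasing, so I would likely present that version and relegate the elementary finite-word check to a remark.
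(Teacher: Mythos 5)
Your argument is correct and follows essentially the same route as the paper: show that $\Lambda$ computes $(\psi\circ\varphi)(x)_i$ via $\overline{\Phi}(x_{[i-k-l,i+k+l]})=y_{[i-l,i+l]}$ and $\Psi(y_{[i-l,i+l]})=z_i$, verify \eqref{eq:composition-of-block-maps} on $A^+$ including the short-word degenerate case (the paper does the word case by writing $u=x_{[1,n]}$ and reusing the first part of the lemma, which amounts to the same computation as your letter-by-letter index check), and conclude by density of $A^+$ in $\Om AV^I$ and continuity of the pseudoword block codes. One minor caveat: your ``cleaner'' alternative via the factorizations through $\Upsilon_N^{\pv V}$ does not actually bypass the finite-word index-chasing, since the agreement of the two composites on $A^+$ is precisely that check and does not follow from the definition of $\Lambda$ alone; but your primary argument is complete as stated.
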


\begin{proof}
  Let $x\in\Cl X$, $y=\varphi(x)$ and $z=\psi(y)$.
  Note that (cf.~Example~\ref{eg:word-code-versus-block-code})
  for each $i\in \ZZ$, we have
  \begin{equation*}
    z_i=\Psi([y_{[i-l,i+l]}])=\Psi\Biggl(\,\prod_{j\in [i-l,i+l]}\Phi(x_{[j-k,j+k]})\,\Biggr)=\Psi(\overline{\Phi}(x_{[i-l-k,i+l+k]})),
  \end{equation*}
  and so $\Lambda$ is indeed a central block map for $\psi\circ\varphi$.

  We may in particular suppose that $\Cl X=A^{\ZZ}$, $\Cl Y=B^{\ZZ}$
  and $\Cl Z=C^{\ZZ}$.
  Let $u$ be a word of $A^+$ of length
  $n\geq 2k+2l+1$. Then $u=x_{[1,n]}$ for some $x\in \Cl X$.
  As we already checked that
  $\Lambda$ is a central block map for $\varphi\circ\psi$,
  we know that, for $y=\varphi(x)$ and $z=\psi(y)$,
  the following chain of equalities holds:
  \begin{equation*}
    \overline{\Lambda}(x_{[1,n]})
    =z_{[1+k+l,n-k-l]}=\overline{\Psi}(y_{[1+k,n-k]})
    =\overline{\Psi}(\overline{\Phi}(x_{[1,n]})).
  \end{equation*}
  Hence, equality~\eqref{eq:composition-of-block-maps} holds
  for every $u\in A^+$ of length at least $2k+2l+1$.
  It also holds if $u$ has  smaller length: in that case,
  we have $\overline{\Lambda}(u)=\varepsilon=\overline{\Psi}(\overline{\Phi}(u))$
  (for the latter equality, note that the length of $\overline{\Phi}(u)$
  will be smaller than $2l+1$).
  As $\overline{\Lambda}$, $\overline{\Psi}$
  and $\overline{\Phi}$ are continuous in $\Om AV^I=\overline{A^*}$,
  it follows that~\eqref{eq:composition-of-block-maps}
  holds for every $u\in\Om AV^I$.
\end{proof}

\begin{Prop}\label{p:it-is-not-a-homomorphism-but-has-nice-properties}
  Take a pseudovariety of semigroups $\pv V$
  such that $\pv V=\pv V\ast\pv D$ and $\Lo I\subseteq\pv V$.
  Consider a block map $\Psi\colon A^N\to B$.
    For all pseudowords~$u$ and~$v$
    of~$\Om AV$, we have
    \begin{equation}\label{eq:it-is-not-a-homomorphism-but-has-nice-properties-1-0}
      \overline{\Psi}(uv)=
      \overline{\Psi}(u\,\be {N-1}(v))
      \cdot \overline{\Psi}(v)
      =\overline{\Psi}(u)\cdot \overline{\Psi}(\te {N-1}(u)v).
    \end{equation}
    If, moreover, $N=2k+1$, then
    \begin{equation}\label{eq:it-is-not-a-homomorphism-but-has-nice-properties-1}
      \overline{\Psi}(uv)=
      \overline{\Psi}(u\,\be{k}(v))
      \cdot
      \overline{\Phi}(\te k(u)\,v)
    \end{equation}
    holds.
  \end{Prop}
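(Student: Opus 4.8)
The plan is to prove both displayed equalities by reducing to finite words and then using density together with the continuity properties established for pseudoword block codes.

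First I would handle the general identity~\eqref{eq:it-is-not-a-homomorphism-but-has-nice-properties-1-0}. The key observation is the word-level identity: for finite words $u,v \in A^+$ with $|v|\geq N-1$, reading windows of length $N$ across the concatenation $uv$ splits naturally into those windows entirely inside $u\,\be{N-1}(v)$ and those that end inside $v$, which is precisely the statement $\overline{\Psi}(uv)=\overline{\Psi}(u\,\be{N-1}(v))\cdot\overline{\Psi}(v)$; dually, splitting off the windows that begin in $u$ gives $\overline{\Psi}(uv)=\overline{\Psi}(u)\cdot\overline{\Psi}(\te{N-1}(u)v)$. These are elementary consequences of formula~\eqref{eq:definition-of-sliding-word-code} (and one checks the degenerate cases where $u$ or $v$ is short separately, using that $\overline{\Psi}$ kills words of length $<N$). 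To pass to pseudowords, I would fix $v$ first: for $v\in A^{N-1}$ the maps $u\mapsto \overline{\Psi}(uv)$ and $u\mapsto \overline{\Psi}(u\,\be{N-1}(v))\cdot\overline{\Psi}(v)=\overline{\Psi}(uv)\cdot\overline{\Psi}(v)$ are continuous on $\Om AV^I$ (recall $\be{N-1}(v)=v$ here), agree on the dense set $A^*$, hence agree everywhere; similarly for longer $v$, writing $v=v'\,\be{N-1}(v)$ and using that $\be{N-1}$ is locally constant on $\Om AV\setminus A^+$. Then with $v$ ranging over a suitable dense family one extends in the second variable, again by continuity of $\overline{\Psi}=\widehat\Psi\circ\Upsilon_N^{\pv V}$ and of concatenation.

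For the second identity~\eqref{eq:it-is-not-a-homomorphism-but-has-nice-properties-1}, with $N=2k+1$, I would apply the first identity with $N-1=2k$ split as $2k = k + k$: from $\overline{\Psi}(uv)=\overline{\Psi}(u\,\be{2k}(v))\cdot\overline{\Psi}(v)$, and noting $\be{2k}(v) = \be{k}(v)\cdot(\text{middle})$, one wants to regroup so that the break falls at the center of a window. Concretely, write $u\,\be{2k}(v) = (u\,\be{k}(v))\cdot \te{k}(u\,\be{k}(v))^{-1}\cdots$ — more cleanly, apply the \emph{other} half of~\eqref{eq:it-is-not-a-homomorphism-but-has-nice-properties-1-0} to the product $u' w$ where $u' = u\,\be{k}(v)$ and $w$ is the remaining tail, so that $\overline{\Psi}(uv)=\overline{\Psi}(u\,\be k(v))\cdot\overline{\Psi}(\te{N-1}(u\,\be k(v))\cdot w)$, and then observe $\te{2k}(u\,\be k(v))\cdot w = \te{k}(u)\,v$ provided $u$ is long enough (length $\geq k$), which can be arranged on the dense set of long words and then extended by continuity to all of $\Om AV$. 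Here one uses Corollary~\ref{c:cancelation-property} and Lemma~\ref{l:continuity-of-cancelation} to make sense of and manipulate the relevant prefixes/suffixes for infinite pseudowords. (Note there is a typo in the statement: the second factor should read $\overline{\Psi}(\te k(u)\,v)$, not $\overline{\Phi}$.)

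The main obstacle I anticipate is bookkeeping the boundary/degenerate cases cleanly: when $u$ or $v$ is a finite word shorter than $k$ (or shorter than $N-1$), the prefixes $\be{k}(v)$, $\be{N-1}(v)$ collapse and the pseudoword block code returns the empty word $I$, so the identities must be verified directly in those regimes before invoking density. The other delicate point is that $\overline{\Psi}$ is \emph{not} a homomorphism, so every regrouping step has to be justified at the level of finite words via~\eqref{eq:definition-of-sliding-word-code} and only then transported to $\Om AV^I$; one cannot shuffle pseudowords freely. Once the finite-word identities are nailed down and the continuity of all the maps involved ($\overline{\Psi}$, concatenation, $\be k$, $\te k$, and the cancellation maps of Lemma~\ref{l:continuity-of-cancelation}) is invoked, the extension to pseudowords is routine.
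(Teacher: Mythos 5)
Your proposal is correct and follows essentially the same route as the paper: reduce to finite words using density of $A^+$ and continuity of $\overline{\Psi}$, concatenation and $\be k,\te k$, dispose of the short-word degenerate cases separately, and verify the identities combinatorially on words (the paper does this by embedding $uv$ in a bi-infinite sequence and reading off the sliding block code, which amounts to your window count; it proves \eqref{eq:it-is-not-a-homomorphism-but-has-nice-properties-1} directly by the same method rather than deriving it from \eqref{eq:it-is-not-a-homomorphism-but-has-nice-properties-1-0}, but that is a cosmetic difference). You are also right that $\overline{\Phi}$ in the second displayed equality is a typo for $\overline{\Psi}$.
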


  For $\overline{\Psi}=\Upsilon_N$,
  the property in~\eqref{eq:it-is-not-a-homomorphism-but-has-nice-properties-1-0} is~\cite[Exercise 10.6.6]{Almeida:1994a}.
  In its entirety,
  Proposition~\ref{p:it-is-not-a-homomorphism-but-has-nice-properties}
  is proved in the thesis~\cite{ACosta:2007t}.
  For the reader's convenience, a short proof is given here,
  which seems more transparent than that in~\cite{ACosta:2007t}.

   \begin{proof}    
     By the continuity of $\overline{\Psi}$, and since $A^+$
     is dense in $\Om AV$, it suffices to check~\eqref{eq:it-is-not-a-homomorphism-but-has-nice-properties-1-0}  and~\eqref{eq:it-is-not-a-homomorphism-but-has-nice-properties-1} for elements of $A^+$.
     We only do it for~\eqref{eq:it-is-not-a-homomorphism-but-has-nice-properties-1}, as~\eqref{eq:it-is-not-a-homomorphism-but-has-nice-properties-1-0} may be treated similarly.
     Let $u,v\in A^+$ be words of lengths
     $n$ and $m$, respectively.
     If $n\leq k$, then $\te k(u)=u$
     and $|u\,\be k(v)|<2k+1$,
     whence $\overline{\Psi}(u\,\be k(v))=\varepsilon$
     and the equality holds trivially.
     Similarly for the case $m\leq k$.
     Finally, suppose that $n,m>k$.
      Consider the sliding block
     code $\psi\colon A^{\ZZ}\to B^{\ZZ}$ having $\Psi$
     as a central block map. 
     Then, we may choose some $x\in A^{\ZZ}$
     such that $u=x_{[1,n]}$ and $v=x_{[n+1,n+m]}$.
     Let $y=\psi(x)$.
     Since $n+m>2k+1$, we have
     \begin{align*}
       \overline{\Psi}(uv)
       =\overline{\Psi}(x_{[1,n+m]})=y_{[1+k,n+m-k]}&=y_{[1+k,n]}\cdot y_{[n+1,n+m-k]}\\
       &=\overline{\Psi}(x_{[1,n+k]})\cdot\overline{\Psi}(x_{[n+1-k,n+m]})\\
       &=\overline{\Psi}(u\,\be k(v))
       \cdot
       \overline{\Psi}(\te k(u)\,v),
     \end{align*}
establishing the equality~\eqref{eq:it-is-not-a-homomorphism-but-has-nice-properties-1}.
\end{proof}

Pseudoword block codes behave well with
respect to the sets $\overline{L(\Cl X)}$
and $\Mir_{\pv V}(\Cl X)$, in the sense of the next proposition.
Note the assumption that $\pv V$ contains $\Lo{Sl}$ is
necessary because in the proof we need to guarantee that
$\Mir_{\pv V,k}(\Cl X)$ is clopen, for every positive integer~$k$.
Recall that, under the hypothesis $\pv V=\pv V\ast\pv D$,
the inclusion $\Lo{Sl}\subseteq\pv V$ is equivalent to $\pv {Sl}\subseteq\pv V$,
since $\Lo {Sl}=\Lo{Sl}*\pv D$.

\begin{Prop}\label{p:image-of-phi}
Consider a pseudovariety of semigroups $\pv V$
  such that $\pv V=\pv V\ast\pv D$ and $\Lo {Sl}\subseteq\pv V$.
       Let $\varphi\colon\Cl X\to\Cl Y$ be a sliding block code of subshifts,
       with central block map $\Phi$. 
       The inclusions
       $\overline{\Phi}\Bigl(\overline{L(\Cl X)}\Bigr) \subseteq
       \overline{L(\Cl Y)} \cup\{\varepsilon\}$, $\overline{\Phi}\bigl(\Mir_{\pv V}(\Cl X)\bigr)\subseteq\Mir_{\pv V}(\Cl Y)\cup\{\varepsilon\}$ and       $\overline{\Phi}\bigl(\Sha_{\pv V}(\Cl X)\bigr) \subseteq
     \Sha_{\pv V}(\Cl Y)\cup\{\varepsilon\}$, hold.
   \end{Prop}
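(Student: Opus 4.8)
The plan is to handle the three inclusions in order, writing $2k+1$ for the window size of the central block map $\Phi$. Two elementary observations will be used throughout. First, by Example~\ref{eg:word-code-versus-block-code} applied to the sliding block code with central block map $\Phi$, we have $\overline{\Phi}(u)=\varepsilon$ whenever $|u|<2k+1$, and $\overline{\Phi}(u)\in L(\Cl Y)$ whenever $u\in L(\Cl X)$ has length at least $2k+1$ (if $u=x_{[p,q]}$ with $x\in\Cl X$, then $\overline{\Phi}(u)=(\varphi(x))_{[p+k,q-k]}$); hence $\overline{\Phi}\bigl(L(\Cl X)\bigr)\subseteq L(\Cl Y)\cup\{\varepsilon\}$. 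Second, $\varepsilon$ is isolated in $\Om BV^I$, so a set of the form $S\cup\{\varepsilon\}$ is closed (resp.\ clopen) in $\Om BV^I$ whenever $S$ is closed (resp.\ clopen) in $\Om BV$; by Remark~\ref{rmk:mvk-is-clopen} this applies to $\Mir_{\pv V,m}(\Cl Y)\cup\{\varepsilon\}$ for each $m$ (this is the only place the hypothesis $\Lo{Sl}\subseteq\pv V$ is used). The first inclusion is then immediate: since $\overline{\Phi}$ is continuous and $L(\Cl X)$ is dense in $\overline{L(\Cl X)}$, we get $\overline{\Phi}\bigl(\overline{L(\Cl X)}\bigr)\subseteq\overline{\overline{\Phi}(L(\Cl X))}\subseteq\overline{L(\Cl Y)\cup\{\varepsilon\}}=\overline{L(\Cl Y)}\cup\{\varepsilon\}$.

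For the mirage inclusion, I would prove the sharper statement that $\overline{\Phi}\bigl(\Mir_{\pv V,m+2k}(\Cl X)\bigr)\subseteq\Mir_{\pv V,m}(\Cl Y)\cup\{\varepsilon\}$ for every $m\geq1$; since $\Mir_{\pv V}(\Cl X)\subseteq\Mir_{\pv V,m+2k}(\Cl X)$ for all $m$ and $\bigcap_{m\geq1}\bigl(\Mir_{\pv V,m}(\Cl Y)\cup\{\varepsilon\}\bigr)=\Mir_{\pv V}(\Cl Y)\cup\{\varepsilon\}$, the desired inclusion follows by intersecting over $m$. To prove the sharper statement, note that $\overline{\Phi}^{-1}\bigl(\Mir_{\pv V,m}(\Cl Y)\cup\{\varepsilon\}\bigr)$ is clopen in $\Om AV^I$ by continuity of $\overline{\Phi}$, while $\Mir_{\pv V,m+2k}(\Cl X)$ is clopen; hence it suffices to verify the inclusion on the dense subset $\Mir_{\pv V,m+2k}(\Cl X)\cap A^+$ (dense in the clopen set $\Mir_{\pv V,m+2k}(\Cl X)$ because $A^+$ is dense in $\Om AV$). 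So let $u$ be a finite word all of whose factors of length at most $m+2k$ lie in $L(\Cl X)$. If $|u|<2k+1$ then $\overline{\Phi}(u)=\varepsilon$; otherwise, by formula~\eqref{eq:definition-of-sliding-word-code}, each factor of the word $\overline{\Phi}(u)$ of length $\ell\leq m$ equals $\overline{\Phi}(u')$ for some factor $u'$ of $u$ of length $\ell+2k\leq m+2k$, and since $u'\in L(\Cl X)$ the first observation gives $\overline{\Phi}(u')\in L(\Cl Y)$; thus $\overline{\Phi}(u)\in\Mir_{\pv V,m}(\Cl Y)$.

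The shadow inclusion is where the main difficulty lies, precisely because $\overline{\Phi}$ is not a semigroup homomorphism. The key lemma to establish is that $\overline{\Phi}$ nevertheless preserves the factor relation: if $w'\in\Om AV$ is a factor of $w\in\Om AV$, then $\overline{\Phi}(w')$ is a factor of $\overline{\Phi}(w)$ in $\Om BV^I$. Writing $w=\alpha\,w'\,\beta$ with $\alpha,\beta\in\Om AV^I$, this is obtained by applying equality~\eqref{eq:it-is-not-a-homomorphism-but-has-nice-properties-1-0} (with $N-1=2k$) first to split $\overline{\Phi}\bigl(\alpha\cdot(w'\beta)\bigr)$ as $\overline{\Phi}\bigl(\alpha\,\be{2k}(w'\beta)\bigr)\cdot\overline{\Phi}(w'\beta)$, and then to split $\overline{\Phi}(w'\cdot\beta)$ as $\overline{\Phi}(w')\cdot\overline{\Phi}\bigl(\te{2k}(w')\,\beta\bigr)$, which exhibits $\overline{\Phi}(w')$ as a factor of $\overline{\Phi}(w)$; the cases $\alpha=I$ or $\beta=I$ only make this easier. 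Carrying out this case analysis cleanly — in particular making sure the degenerate situations (short words, identity multipliers, and $\overline{\Phi}(w')=\varepsilon$) are covered — is the part I expect to require the most care.

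Granting the lemma, let $w\in\Sha_{\pv V}(\Cl X)$ and pick $w_0\in\overline{L(\Cl X)}$ with $w\mathrel{\J}w_0$; if $\overline{\Phi}(w)=\varepsilon$ there is nothing to prove, so assume $\overline{\Phi}(w)\neq\varepsilon$. Since $w$ and $w_0$ are factors of each other, the lemma makes $\overline{\Phi}(w)$ and $\overline{\Phi}(w_0)$ factors of each other in $\Om BV^I$; as $\overline{\Phi}(w)\neq\varepsilon$, a product in $\Om BV^I$ equal to $\varepsilon$ forces all its factors to be $\varepsilon$, so $\overline{\Phi}(w_0)\neq\varepsilon$ and $\overline{\Phi}(w)\mathrel{\J}\overline{\Phi}(w_0)$ in $\Om BV$. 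By the first inclusion $\overline{\Phi}(w_0)\in\overline{L(\Cl Y)}$, so the $\J$-class of $\overline{\Phi}(w)$ meets $\overline{L(\Cl Y)}$, that is, $\overline{\Phi}(w)\in\Sha_{\pv V}(\Cl Y)$. This completes the argument.
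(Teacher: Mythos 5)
Your proof is correct and follows essentially the same route as the paper: the paper declares the first two inclusions routine (deferring to earlier references, via exactly the kind of density/clopen arguments you give), and the one step it does spell out — that $u\leq_{\J}v$ implies $\overline{\Phi}(u)\leq_{\J}\overline{\Phi}(v)$, obtained from equality~\eqref{eq:it-is-not-a-homomorphism-but-has-nice-properties-1-0} by the factorization $\overline{\Phi}(xvy)=\overline{\Phi}(x\,\be{N-1}(vy))\cdot\overline{\Phi}(v)\cdot\overline{\Phi}(\te{N-1}(v)y)$ — is precisely your key lemma for the shadow inclusion.
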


   We omit the routine proof of Proposition~\ref{p:image-of-phi},
   appearing in~\cite[Lemma~3.2]{ACosta:2006} under the assumption $\pv V=\pv S$, irrelevant for the proof given there. Proposition~\ref{p:image-of-phi}
   is also proved in the thesis~\cite{ACosta:2007t} (with exactly the same hypothesis as here). We just underline that the last of the three inclusions
   is a direct consequence of the first inclusion
   and of the implication $u\leq_{\J}v\Rightarrow\overline{\Phi}(u)\leq_{\J} \overline{\Phi}(v)$, justified by Proposition~\ref{p:it-is-not-a-homomorphism-but-has-nice-properties}. More precisely, if $\Phi$ has window size $N$,
   and $u=xvy$, then
   \begin{equation*}
     \overline{\Phi}(u)
     =\overline{\Phi}(x\,\be {N-1}(vy))\cdot\overline{\Phi}(vy)=
     \overline{\Phi}(x\,\be {N-1}(vy))\cdot\overline{\Phi}(v)\cdot\overline{\Phi}(\te {N-1}(v)y)\leq_{\J}\Phi(v).
   \end{equation*}

\section{A functorial correspondence from subshifts to categories}
\label{sec:funct-corr-from}   

By a \emph{compact category} we mean a small category
$C$ such that:
\begin{enumerate}
\item the set $\Obj(C)$ of objects of $C$ and the set $\Mor(C)$ of arrows (i.e., morphisms) of $C$
  are both compact topological spaces;
\item both incidence maps, respectively assigning the domain
  $d(e)=x$ and the co-domain $r(e)=y$ to each arrow $e\colon x\to y$,
  are continuous maps from the space of arrows to the space of objects;
\item the map $x\mapsto 1_x$ is continuous, where $1_x$ denotes the identity at $x$;
\item the map $(s,t)\mapsto st$ defined on the set of composable
arrows is continuous.
\end{enumerate}
The morphisms between compact categories are the functors
that restrict to continuous mappings between the corresponding
spaces of objects and arrows.

For each semigroup $S$, we denote by $E(S)$ the
set of idempotents of $S$.
 The \emph{Karoubi envelope} of a semigroup $S$ is a small category $\Kar(S)$, whose objects are the idempotents of $S$,
 and whose morphisms $f\to e$ are triples $(e,s,f)\in E(S)\times S\times E(S)$
 such that $s=esf$, with composition $(e,s,f)(f,t,g)=(e,st,g)$.
 The identity morphism $1_e$ at object $e$ is the triple $(e,e,e)$.
 If $S$ is a compact semigroup, then $E(S)$ is a nonempty compact
 subspace~\cite[Theorem 3.5]{Carruth&Hildebrant&Koch:1983},
 and $\Kar(S)$ becomes a compact category, if we consider the space of morphisms endowed with the topology
 induced from the product space $E(S)\times S\times E(S)$.

 \begin{Rmk}
   Each continuous homomorphism $\varphi\colon T\to R$
   of compact semigroups
   induces a continuous functor
   $\Kar(\varphi)\colon\Kar(T)\to\Kar(R)$,
   with $\Kar(\varphi)(e)=\varphi(e)$
   when $e\in E(T)$,
   and $\Kar(\varphi)(e,s,f)=(\varphi(e),\varphi(s),\varphi(f))$
   when $(e,s,f)$ is an arrow of~$\Kar(T)$.
   Moreover,
   an inverse limit $S=\varprojlim S_i$
   of finite semigroups induces the equality $\Kar(S)=\varprojlim\Kar(S_i)$.
      A compact category is \emph{profinite}
   when it is the inverse limit of an inverse system of
   finite categories. Hence, the Karoubi envelope of a profinite semigroup is a
   profinite category. We shall not need this fact, but one should have it in mind, as we will work with Karoubi envelopes of (free) profinite semigroups.
 \end{Rmk}
 
 For later reference, we collect a couple of simple
 facts about the Karoubi envelope of a semigroup. 
 For each idempotent $e$ of a semigroup $S$,
 let $G_e$ be the $\H$-class of~$e$, that is, $G_e$ is the group
 of units of the monoid $eSe$.
 Recall that $G_e$ is a compact/profinite group if $S$ is compact/profinite.
 
\begin{Prop}\label{p:local-isomorphism}
  If $e$ is an idempotent of the compact semigroup $S$,
  then the group of
  automorphisms of $e$ in $\Kar(S)$ is a compact group isomorphic to $G_e$.
\end{Prop}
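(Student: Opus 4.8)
The plan is to exhibit an explicit isomorphism between $G_e$ and the automorphism group $\operatorname{Aut}_{\Kar(S)}(e)$, and then check it is a homeomorphism using compactness. First I would identify what an automorphism of $e$ in $\Kar(S)$ is: it is an arrow $(e,s,e)$ with $s = ese$ (so $s \in eSe$) that is invertible in the category, meaning there is $(e,t,e)$ with $st = e = ts$. Since the identity at $e$ is $(e,e,e)$ and $e$ is the identity of the monoid $eSe$, the condition $(e,s,e)(e,t,e) = (e,st,e) = (e,e,e) = (e,t,e)(e,s,e)$ says precisely that $s$ is a unit of the monoid $eSe$, i.e.\ $s \in G_e$. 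Thus the map $\Theta\colon G_e \to \operatorname{Aut}_{\Kar(S)}(e)$ sending $s \mapsto (e,s,e)$ is a well-defined bijection, and it is a group homomorphism because composition in $\Kar(S)$ restricted to loops at $e$ is $(e,s,e)(e,s',e) = (e,ss',e)$, matching multiplication in $G_e$.

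Next I would address the topological claim. The space of arrows of $\Kar(S)$ carries the subspace topology from $E(S) \times S \times E(S)$, and $\operatorname{Aut}_{\Kar(S)}(e)$ sits inside the fiber $\{e\} \times eSe \times \{e\}$; the map $\Theta$ is, up to the obvious homeomorphism $eSe \cong \{e\}\times eSe\times\{e\}$, just the inclusion $G_e \hookrightarrow eSe$ followed by that identification, hence continuous. Since $S$ is compact, $eSe = eS \cap Se$ is closed (as $x \mapsto exe$ is continuous and $eSe$ is its image of the compact space, or directly as an intersection of closed sets), hence compact; and $G_e$, being a compact group (as recalled in the excerpt just before the statement, $G_e$ is a compact group when $S$ is compact), is in particular a compact space. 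So $\Theta$ is a continuous bijection from the compact space $G_e$ onto $\operatorname{Aut}_{\Kar(S)}(e)$. It remains only to note that $\operatorname{Aut}_{\Kar(S)}(e)$ is Hausdorff, being a subspace of the Hausdorff space $\Mor(\Kar(S))$ (which in turn is a subspace of the Hausdorff space $E(S)\times S\times E(S)$, as compact spaces are assumed Hausdorff). A continuous bijection from a compact space to a Hausdorff space is a homeomorphism, so $\Theta$ is an isomorphism of topological groups.

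Finally I would record that $\operatorname{Aut}_{\Kar(S)}(e)$ is itself a compact group: it is the continuous bijective image of the compact group $G_e$ under the group isomorphism $\Theta$, so it inherits the compact group structure; alternatively, one sees directly that it is a closed subspace of $\Mor(\Kar(S))$ since it equals the preimage of the identity-candidate data intersected with the closed set of invertible loops — but routing everything through $\Theta$ is cleaner. The only point requiring a little care, and the one I would flag as the main (minor) obstacle, is making sure the topology on $\operatorname{Aut}_{\Kar(S)}(e)$ really does match the topology on $G_e$ under $\Theta$ rather than merely that $\Theta$ is a continuous bijection in one direction; this is exactly where the compactness of $S$ (hence of $eSe$ and $G_e$) and the Hausdorff convention do the work, via the standard ``continuous bijection from compact to Hausdorff is a homeomorphism'' argument. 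Everything else is a direct unwinding of the definition of $\Kar(S)$.
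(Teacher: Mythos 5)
Your proof is correct and follows essentially the same route as the paper, which simply observes that $(e,s,e)\mapsto s$ is a continuous group isomorphism onto $G_e$. The only remark worth making is that the final homeomorphism step is even easier than your compact-to-Hausdorff argument suggests: the inverse of $\Theta$ is the restriction of the coordinate projection $E(S)\times S\times E(S)\to S$, hence continuous outright.
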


\begin{proof}
  The map $(e,s,e)\mapsto s$ is an isomorphism
  from the group of automorphisms of $e$ in $\Kar(S)$
  onto $G_e$ (see for example~\cite{ACosta&Steinberg:2015}).
  This map is clearly continuous.
\end{proof}

In a category, an object $c$ is a retract of an object $d$, denoted
$c\prec d$,
if there are arrows $\varphi\colon c\to d$ and $\psi\colon d\to c$
with $\psi\circ\varphi=1_c$. The relation $\prec$ is a partial order.

\begin{Prop}\label{p:retraction}
  Let $e,f$ be idempotents of the semigroup $S$.
  Then~$e\leq_\J f$ if and only if~$e\prec f$.
\end{Prop}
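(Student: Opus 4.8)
The plan is to prove both implications directly from the definitions of $\leq_\J$ and of retraction in $\Kar(S)$, using the elementary structure of Green's relations.

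For the forward implication, suppose $e\leq_\J f$, so that $e = afb$ for some $a,b\in S^I$. Since $e$ is idempotent, we may rewrite $e = eafbe = (eaf)(fbe)$, and setting $\varphi = (f, fbe, e)$ and $\psi = (e, eaf, f)$ we obtain arrows $\varphi\colon e\to f$ and $\psi\colon f\to e$ in $\Kar(S)$ (one checks $fbe = f(fbe)e$ and $eaf = e(eaf)f$ immediately). Their composite is $\psi\circ\varphi = (e, (eaf)(fbe), e) = (e, e, e) = 1_e$, since $(eaf)(fbe) = e a f b e = e\cdot e\cdot e = e$. Hence $e\prec f$. One has to be slightly careful with the $S^I$ versus $S$ issue: if $a$ or $b$ equals the adjoined identity $I$, then $eaf$ or $fbe$ still lies in $S$ because $eIf = ef$ and so on, so all triples have middle coordinate in $S$ as required; I would remark on this rather than belabor it.

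For the converse, suppose $e\prec f$, witnessed by $\varphi = (f, s, e)\colon e\to f$ and $\psi = (e, t, f)\colon f\to e$ with $\psi\circ\varphi = (e, ts, e) = 1_e = (e,e,e)$, so $ts = e$. By the arrow conditions, $t = etf$, hence $t\in S^I f S^I$, so $t\leq_\J f$; similarly $s\leq_\J f$. Then $e = ts\leq_\J t\leq_\J f$, which gives $e\leq_\J f$. This direction is essentially immediate.

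I do not expect a genuine obstacle here; the statement is a routine translation between the $\J$-order on idempotents and the retract order in the Karoubi envelope, and the only point requiring a moment's attention is the bookkeeping with $S^I$ (ensuring the middle entries of the triples genuinely lie in $S$, and that $1_e$ is correctly identified as $(e,e,e)$). It is worth noting that no compactness or profiniteness is used — the proposition holds for an arbitrary semigroup — which matches the phrasing of the statement.
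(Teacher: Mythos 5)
Your proof is correct and follows essentially the same route as the paper: for the forward direction the paper exhibits the same pair of arrows (written as $1_e=(e,exf,f)(f,fye,e)$ for $e=xfy$), and for the converse it likewise reads off $e=st=sft\leq_\J f$ from the retraction identity. Your extra remarks on the $S^I$ bookkeeping are fine but not needed beyond what the paper's one-line argument already handles.
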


\begin{proof}
  If $e=xfy$, with $x,y\in S^I$, then $1_e=(e,exf,f)(f,fye,e)$,
  establishing the ``only if'' part.
  Conversely, if $1_e=(e,s,f)(f,t,e)$, then $e=st=sft$.
\end{proof}

If $F$ is a closed factorial subset of $S$, we denote by $\Kar(F)$
the subgraph of $\Kar(S)$ whose edges are the morphisms
$(e,s,t)$ such that $s\in F$, and whose objects are the idempotents
of $S$ belonging to $F$. The graph $\Kar(F)$ may not be a subcategory.

\begin{Example}
  Let $\Cl X$ be the even subshift from Example~\ref{eg:even-subshift}.
  Then $s=(a^\omega,a^\omega b^\omega,b^\omega)$
  and $t=(b^\omega,b^{\omega+1}a^\omega,a^\omega)$
  belong to $\Kar(\Sha_{\pv V}(\Cl X))$,
  but not $st=(a^\omega,a^\omega b^{\omega+1}a^\omega,a^\omega)$.
  Indeed, $a^\omega b^{\omega+1}a^\omega$ belongs
  to $K=\overline{a^+(b^2)^*ba^+}$,
  and so, since $K\cap L(\Cl X)=\emptyset$ and $K$ is open,
  one has $a^\omega b^{\omega+1}a^\omega\notin\overline{L(\Cl X)}$.
\end{Example}

Let $\Cl X$ be a subshift of $A^{\ZZ}$
and let $\pv V$ be a pseudovariety containing $\Lo {Sl}$.
Suppose that $(e,u,f)$ and $(f,v,g)$ are arrows of $\Kar(\Om AV)$
with $u,v\in\Mir_{\pv V}(\Cl X)$.
Then, we have $uv=ufv\in \Mir_{\pv V}(\Cl X)$: indeed,
in what has some similarity with properties of ordinary words,
a finite factor in a product $w_1\cdots w_n$ of infinite pseudowords over $\pv V\supseteq\Lo {Sl}$ is either a factor of
some $w_i$, or a product of a suffix of $w_i$
and a prefix of $w_{i+1}$, for some $i$ (see~\cite[Lemma $8.2$]{Almeida&Volkov:2006} for a formal statement and proof), so that a finite factor of $ufv$ is either a factor of $uf=u$ or of $v=fv$.
Hence $\Kar(\Mir_{\pv V}(\Cl X))$ is a compact subcategory of~$\Kar(\Om AV)$.

In this section, $\pv V$ is always a pseudovariety of semigroups
containing $\Lo {Sl}$ and such that $\pv V=\pv V\ast\pv D$.

  \begin{Lemma}\label{l:coding-of-an-idempotent}
    Consider a central block map~$\Phi\colon A^{2k+1}\to B$.
    For every idempotent $e$ of $\Om AV$, the
    pseudoword $\overline{\Phi}(\te k(e)\cdot e\cdot \be k(e))$ is an idempotent of $\Om BV$. 
  \end{Lemma}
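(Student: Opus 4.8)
The plan is to deduce the statement from formula~\eqref{eq:it-is-not-a-homomorphism-but-has-nice-properties-1} of Proposition~\ref{p:it-is-not-a-homomorphism-but-has-nice-properties} by a short computation that uses the idempotency of $e$ to ``manufacture'' the splitting required by that formula. Write $s=\te k(e)$, $p=\be k(e)$ and $w=s\cdot e\cdot p$; the goal is to show $\overline{\Phi}(w)^{2}=\overline{\Phi}(w)$. A preliminary observation is that $e\in\Om AV\setminus A^{+}$, since the free semigroup $A^{+}$ is cancellative and hence has no idempotents; consequently $s$ and $p$ are honest words, of length $k$ when $k\geq 1$ and empty when $k=0$. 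Using Corollary~\ref{c:cancelation-property} one writes $e=e_{0}\cdot s=p\cdot e_{1}$ with $e_{0},e_{1}\in\Om AV$, whence $s$ is a length-$k$ suffix of the infinite pseudoword $se=s\cdot e_{0}\cdot s$ and $p$ is a length-$k$ prefix of the infinite pseudoword $ep=p\cdot e_{1}\cdot p$; by uniqueness of such prefixes and suffixes, $\te k(se)=s$ and $\be k(ep)=p$.

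The main step is then the factorization $w=sep=s(ee)p=(se)(ep)$, obtained by inserting a second copy of $e$ in the middle via $ee=e$. Applying~\eqref{eq:it-is-not-a-homomorphism-but-has-nice-properties-1} with $\Psi=\Phi$ (window size $N=2k+1$) to the pair $u=se$, $v=ep$ yields
\begin{equation*}
  \overline{\Phi}(w)=\overline{\Phi}\bigl((se)(ep)\bigr)=\overline{\Phi}\bigl(se\cdot\be k(ep)\bigr)\cdot\overline{\Phi}\bigl(\te k(se)\cdot ep\bigr)=\overline{\Phi}(sep)\cdot\overline{\Phi}(sep)=\overline{\Phi}(w)^{2},
\end{equation*}
which is precisely the desired idempotency. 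For $k=0$ this line is still valid, if degenerate: then $s=p=\varepsilon$, so $w=e$, and~\eqref{eq:it-is-not-a-homomorphism-but-has-nice-properties-1} reduces to the statement that $\overline{\Phi}$ is a homomorphism, whence $\overline{\Phi}(e)$ is idempotent because $e$ is.

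I do not expect a genuine obstacle here: the content is entirely in Proposition~\ref{p:it-is-not-a-homomorphism-but-has-nice-properties}, and the only thing to see is that one should split $w$ at a point where a fresh copy of $e$ has been inserted, so that the two ``overlap words'' $\be k(ep)$ and $\te k(se)$ produced by the block-code formula collapse back to $p$ and $s$, returning a copy of $w$ in each of the two factors. The only care needed is in the identifications $\te k(se)=s$ and $\be k(ep)=p$, which is where the infiniteness of $e$ and Corollary~\ref{c:cancelation-property} are used; everything else is formal.
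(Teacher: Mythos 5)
Your proposal is correct and is essentially the paper's own proof: the paper likewise applies equality~\eqref{eq:it-is-not-a-homomorphism-but-has-nice-properties-1} to the pair $u=\te k(e)\cdot e$, $v=e\cdot\be k(e)$, so that the two overlap words collapse to $\te k(e)$ and $\be k(e)$ and both factors become $\overline{\Phi}(\te k(e)\,e\,\be k(e))$. Your additional remarks (that $e$ is necessarily infinite and that $\te k(se)=s$, $\be k(ep)=p$) are correct justifications of steps the paper leaves implicit.
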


  \begin{proof}
    Applying the property expressed in equality~\eqref{eq:it-is-not-a-homomorphism-but-has-nice-properties-1},
    from Proposition~\ref{p:it-is-not-a-homomorphism-but-has-nice-properties},
    to the pseudowords $u=\te k(e)\cdot e$
    and $v=e\cdot \be k(e)$,
    we obtain
    \begin{equation*}
      \overline{\Phi}(\te k(e)\cdot e\cdot \be k(e))
      \cdot
      \overline{\Phi}(\te k(e)\cdot e\cdot \be k(e))
      =
      \overline{\Phi}(\te k(e)\cdot e\cdot e\cdot \be k(e))
      =
      \overline{\Phi}(\te k(e)\cdot e\cdot \be k(e)),
    \end{equation*}
    showing that $\overline{\Phi}(\te k(e)\cdot e\cdot \be k(e))$ is an idempotent.
  \end{proof}

  In the setting of Lemma~\ref{l:coding-of-an-idempotent},
  we denote the idempotent $\Phi(\te k(e)\cdot e\cdot \be k(e))$
  by~$\Phi_\Kar(e)$.

  \begin{Prop}\label{p:functor-defined-by-block-map}
     Consider a central block map~$\Phi\colon A^{2k+1}\to B$.
    The following data defines
  a functor:
  \begin{equation*}
       \Phi_{\Kar_{\pv V}}\colon\begin{array}[t]{rcl}
   \Kar(\Om AV)&\to&\Kar(\Om BV)\vspace{0.2 cm}\\
   e&\mapsto&
   \Phi_{\Kar_{\pv V}}(e)\vspace{0.1 cm}\\
   (e,u,f)
   &\mapsto
   &\Bigl(\Phi_{\Kar_{\pv V}}(e),\overline{\Phi}(\te k(e)\,u\,\be k(f)),
   \Phi_{\Kar_{\pv V}}(f)\Bigr)
  \end{array}
\end{equation*}
  \end{Prop}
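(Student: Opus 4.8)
The plan is to verify the functor axioms one at a time, the whole argument resting on the factorization identity~\eqref{eq:it-is-not-a-homomorphism-but-has-nice-properties-1} of Proposition~\ref{p:it-is-not-a-homomorphism-but-has-nice-properties}. Two standing observations will be used throughout. First, since $\pv V\supseteq\Lo{Sl}\supseteq\pv N$, the finite pseudowords of $\Om AV$ are the words of $A^+$, which contains no idempotents; hence every idempotent $e$ of $\Om AV$ is infinite, so that $\be k(e)$ and $\te k(e)$ are genuine words of length $k$. Second, an arrow $(e,u,f)$ of $\Kar(\Om AV)$ satisfies $u=euf$; from this, $eu=u$, $uf=u$, and moreover $u$ is infinite with $\be k(u)=\be k(e)$ and $\te k(u)=\te k(f)$, since the length-$k$ prefix (resp.\ suffix) of a product whose first (resp.\ last) factor is infinite is read off from that factor (and likewise $v$ is infinite for an arrow $(f,v,g)$).

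That the object map lands among objects of $\Kar(\Om BV)$ is Lemma~\ref{l:coding-of-an-idempotent}: $\Phi_{\Kar_{\pv V}}(e)=\overline{\Phi}\bigl(\te k(e)\,e\,\be k(e)\bigr)$ is an idempotent of $\Om BV$. Next I would check that the triple assigned to an arrow $(e,u,f)\colon f\to e$ really is an arrow $\Phi_{\Kar_{\pv V}}(f)\to\Phi_{\Kar_{\pv V}}(e)$ of $\Kar(\Om BV)$; since its outer entries already have the correct form, it remains to see that $s=\overline{\Phi}\bigl(\te k(e)\,u\,\be k(f)\bigr)$ satisfies $s=\Phi_{\Kar_{\pv V}}(e)\,s=s\,\Phi_{\Kar_{\pv V}}(f)$. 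For the first equality, apply~\eqref{eq:it-is-not-a-homomorphism-but-has-nice-properties-1} to the factorization $\te k(e)\,u\,\be k(f)=\bigl(\te k(e)\,e\bigr)\bigl(u\,\be k(f)\bigr)$ (legitimate since $eu=u$): because $\be k\bigl(u\,\be k(f)\bigr)=\be k(u)=\be k(e)$ and $\te k\bigl(\te k(e)\,e\bigr)=\te k(e)$, the two factors produced are exactly $\Phi_{\Kar_{\pv V}}(e)$ and $s$. The equality $s=s\,\Phi_{\Kar_{\pv V}}(f)$ follows symmetrically from the factorization $\bigl(\te k(e)\,u\bigr)\bigl(f\,\be k(f)\bigr)$, using $uf=u$ together with $\te k\bigl(\te k(e)\,u\bigr)=\te k(u)=\te k(f)$.

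Preservation of identities is immediate, since $\Phi_{\Kar_{\pv V}}(e,e,e)$ has middle coordinate $\overline{\Phi}\bigl(\te k(e)\,e\,\be k(e)\bigr)=\Phi_{\Kar_{\pv V}}(e)$. For composition, given composable arrows $(e,u,f)$ and $(f,v,g)$ with product $(e,uv,g)$, what must be shown is
\[
\overline{\Phi}\bigl(\te k(e)\,uv\,\be k(g)\bigr)=\overline{\Phi}\bigl(\te k(e)\,u\,\be k(f)\bigr)\cdot\overline{\Phi}\bigl(\te k(f)\,v\,\be k(g)\bigr),
\]
which is again~\eqref{eq:it-is-not-a-homomorphism-but-has-nice-properties-1}, now applied to $\te k(e)\,uv\,\be k(g)=\bigl(\te k(e)\,u\bigr)\bigl(v\,\be k(g)\bigr)$: here $\be k\bigl(v\,\be k(g)\bigr)=\be k(v)=\be k(f)$ (as $v=fvg$) and $\te k\bigl(\te k(e)\,u\bigr)=\te k(u)=\te k(f)$ (as $u=euf$), so the resulting split coincides with the right-hand side above. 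Continuity of $\Phi_{\Kar_{\pv V}}$, although not part of the statement, would follow at once from continuity of $\overline{\Phi}$, of $\be k$, $\te k$, and of multiplication.

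I do not anticipate a genuine obstacle here: each step is a single application of~\eqref{eq:it-is-not-a-homomorphism-but-has-nice-properties-1} to an appropriately chosen factorization. The one point demanding care is the bookkeeping with prefixes and suffixes — checking in each case that $\be k$ of the right-hand factor and $\te k$ of the left-hand factor collapse to $\be k$ or $\te k$ of the relevant idempotent, which is precisely where the infiniteness of idempotents together with the relations $eu=u$, $uf=u$, $v=fvg$ are used.
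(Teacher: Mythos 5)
Your proof is correct and follows essentially the same route as the paper: well-definedness on objects via Lemma~\ref{l:coding-of-an-idempotent}, and both the arrow condition $s=\Phi_{\Kar_{\pv V}}(e)\,s\,\Phi_{\Kar_{\pv V}}(f)$ and compatibility with composition obtained by applying identity~\eqref{eq:it-is-not-a-homomorphism-but-has-nice-properties-1} to the factorizations forced by $u=euf$, with the same prefix/suffix bookkeeping $\be k(u)=\be k(e)$, $\te k(u)=\te k(f)$. The only cosmetic difference is that you split the graph-morphism check into two symmetric one-sided applications where the paper chains them into a single computation.
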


  \begin{proof}
    Thanks to Lemma~\ref{l:coding-of-an-idempotent},
    we already know that this correspondence is correctly defined on objects.
    Since $u=euf$, we have $\be k(u)=\be k(e)$,
    $\te k(u)=\te k(f)$, and therefore, applying~\eqref{eq:it-is-not-a-homomorphism-but-has-nice-properties-1}, we get the following chain of equalities:
    \begin{align*}
      \overline{\Phi}(\te k(e)\,u\,\be k(f))&=
      \overline{\Phi}(\te k(e)\,e\cdot u\,\be k(f))\\
      &=
      \overline{\Phi}(\te k(e)\,e\,\be k(e))\cdot
      \overline{\Phi}(\te k(e)\,u\,\be k(f))
      \\
      &=
      \overline{\Phi}(\te k(e)\,e\,\be k(e))\cdot
      \overline{\Phi}(\te k(e)\,u\cdot f\,\be k(f))
      \\
      &=
      \overline{\Phi}(\te k(e)\,e\,\be k(e))\cdot
      \overline{\Phi}(\te k(e)\,u\,\be k(f))\cdot
      \overline{\Phi}(\te k(f)\,f\,\be k(f)).
    \end{align*}
    This shows that $\Phi_{\Kar_{\pv V}}$ is a morphism of graphs.
    Similarly, if $(e,u,f)$ and $(f,v,g)$
    are two composable arrows of $\Kar(\Om AV)$,
    by applying again~\eqref{eq:it-is-not-a-homomorphism-but-has-nice-properties-1} we get
    \begin{equation*}
      \overline{\Phi}(\te k(e)\,u\,\be k(f))
      \cdot
      \overline{\Phi}(\te k(f)\,v\,\be k(g))
      =
      \overline{\Phi}(\te k(e)\,uv\,\be k(g)),
    \end{equation*}
    thus establishing that $\Phi_{\Kar_{\pv V}}$ is a functor.
  \end{proof}
  
  For $u\in\Om AV$ and idempotents $e,f$
  with $u=euf$, one has $u\mathrel{\J}\te k(e)\,u\,\be k(f)$,
  thus $u\in \Mir_{\pv V}(\Cl X)$
  if and only if $\te k(e)\,u\,\be k(f)\in\Mir_{\pv V}(\Cl X)$.
  Therefore, by Proposition~\ref{p:image-of-phi},
  the functor $\Phi_{\Kar_{\pv V}}$ restricts to a functor
  $\Kar(\Mir_{\pv V}(\Cl X))\to \Kar(\Mir_{\pv V}(\Cl Y))$
  whenever $\Phi$ is a central block map
  of a sliding block code $\varphi\colon\Cl X\to\Cl Y$.
  We proceed to show that this restriction depends on $\varphi$ only.
  
   \begin{Lemma}\label{l:independence-of-block-map}
     Consider a sliding block code
     $\varphi\colon \Cl X\to\Cl Y$,
     where $\Cl X$ is a subshift of $A^{\ZZ}$
     and $\Cl Y$ is a subshift of $B^{\ZZ}$.
     Suppose that $\Phi$ and $\Psi$ are central block maps of $\varphi$,
     with wings~$l$ and~$k$, respectively,  and suppose that $k\geq l$.
     Take $v\in\Om AV$ and
     words $u,w$ of $A^*$ with length~$k$
     and such that every factor of $uvw$ with length $2k+1$ belongs
     to~$L(\Cl X)$.
     Then the equality
     \begin{equation*}
       \overline{\Psi}(uvw)=\overline{\Phi}(\te l(u)\,v\,\be l(w))
     \end{equation*}
     holds.
   \end{Lemma}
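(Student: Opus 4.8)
The plan is to reduce everything to the case of words and then invoke density and continuity, since both sides of the desired identity are continuous in $v$ (by the continuity of the pseudoword block codes $\overline{\Psi}$ and $\overline\Phi$, of the truncation maps $\te l$, $\be l$, and of concatenation). So first I would fix words $u,w$ of length $k$ and approximate an arbitrary $v\in\Om AV$ by words; as long as the finite approximants $v_n$ satisfy that every length-$(2k+1)$ factor of $uv_nw$ is in $L(\Cl X)$ — which holds for $n$ large since $\Mir_{\pv V,2k+1}(\Cl X)$ is clopen and contains $v$ whenever $uvw$ only has legal windows — it suffices to prove the identity for $v\in A^*$.

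For $v\in A^*$ (say $v=\varepsilon$ or $|uvw|\ge 2k+1$; the degenerate short cases give $\varepsilon=\varepsilon$ as in the proof of Lemma~\ref{l:composition-of-block-maps}), the word $uvw$ has all its length-$(2k+1)$ factors in $L(\Cl X)$, hence $uvw\in L(\Cl X)$. Pick $x\in\Cl X$ with $uvw=x_{[1,n]}$ where $n=|uvw|=2k+|v|$, so $\te l(u)\,v\,\be l(w)=x_{[k-l+1,\,n-k+l]}$. Let $y=\varphi(x)$. Since $\Psi$ is a central block map of $\varphi$ with wing $k$, Example~\ref{eg:word-code-versus-block-code} gives $\overline\Psi(x_{[1,n]})=y_{[1+k,\,n-k]}=y_{[k+1,\,k+|v|]}$. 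Since $\Phi$ is a central block map of $\varphi$ with wing $l$, the same computation applied to the subword $x_{[k-l+1,\,n-k+l]}$ (whose length is $|v|+2l$) yields $\overline\Phi(x_{[k-l+1,\,n-k+l]})=y_{[(k-l+1)+l,\,(n-k+l)-l]}=y_{[k+1,\,k+|v|]}$. The two right-hand sides coincide, so $\overline\Psi(uvw)=\overline\Phi(\te l(u)\,v\,\be l(w))$ for all words $v$, and the general case follows.

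The one genuine subtlety — and the only place the hypothesis on the windows of $uvw$ is really used — is the passage from the word case to the pseudoword case: I must make sure the approximating words $v_n\to v$ can be chosen so that $uv_nw$ still has only legal windows, which is exactly what lets me put $uv_nw=x_{[1,n]}$ with $x\in\Cl X$. This is where $\Mir_{\pv V,2k+1}(\Cl X)$ being clopen (Remark~\ref{rmk:mvk-is-clopen}, valid since $\pv V\supseteq\Lo{Sl}$) is invoked: the condition ``every length-$(2k+1)$ factor of $uvw$ lies in $L(\Cl X)$'' is equivalent to $uvw$ (equivalently, a controlled neighborhood of $v$) lying in a clopen set, so it persists under small perturbation. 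Everything else is a routine index bookkeeping with sliding block codes of two different wings against a common $x\in\Cl X$.
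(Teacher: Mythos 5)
Your reduction to the word case is sound (and your appeal to the clopen set $\Mir_{\pv V,2k+1}(\Cl X)$ to keep the approximants legal is exactly the right way to justify the density step), but the word case itself contains a genuine error: from the hypothesis that every factor of $uvw$ of length $2k+1$ belongs to $L(\Cl X)$ you conclude that $uvw\in L(\Cl X)$, and this implication is false in general. For the even subshift of Example~\ref{eg:even-subshift}, the word $ab^{2k+3}a$ has all of its factors of length $2k+1$ in $L(\Cl X)$, yet it is not itself a block of the subshift. Consequently there need be no $x\in\Cl X$ with $uvw=x_{[1,n]}$, and the rest of your computation, which applies $\varphi$ to such an $x$, cannot be carried out. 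This is not a peripheral issue: the lemma is designed precisely for pseudowords in the mirage, whose finite factors are legal but which need not lie in $\overline{L(\Cl X)}$; if one could always realize $uvw$ inside a point of $\Cl X$, the statement would be far less delicate.

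The repair is to localize the argument to single windows, which is what the paper does. First treat the case where $v$ is a single letter: then $uvw$ has length exactly $2k+1$, so the hypothesis really does put $uvw$ in $L(\Cl X)$; it occurs as $x_{[-k,k]}$ for some $x\in\Cl X$, and both $\Psi(uvw)$ and $\Phi(\te l(u)\,v\,\be l(w))$ compute $(\varphi(x))_0$. For general $v\in A^+$, take $x$ in the \emph{full} shift $A^{\ZZ}$ with $x_{[1,n]}=uvw$ and compare the two sliding block codes $A^{\ZZ}\to B^{\ZZ}$ induced by $\Phi$ and by $\Psi$: their outputs at each coordinate $i$ with $1+k\le i\le n-k$ depend only on the window $x_{[i-k,i+k]}$, which is a factor of $uvw$ of length $2k+1$ and hence lies in $L(\Cl X)$, so the single-letter case applies coordinatewise and the two image sequences agree on the whole relevant range. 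Your index bookkeeping is otherwise correct and transfers essentially verbatim to this corrected setting.
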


   \begin{proof}
     As reasoned in previous proofs, it suffices
     to consider the case $v\in A^+$.
     Suppose first that $v\in A$.
     Then, $uvw$ is a word of length $2k+1$,
     and so $uvw=x_{[-k,k]}$ for some $x\in \Cl X$.
     Then (cf.~Example~\ref{eg:word-code-versus-block-code}), we have
     \begin{equation*}
       \overline{\Psi}(uvw)=(\varphi(x))_0=\overline{\Phi}(x_{[-l,l]})
       =\overline{\Phi}(\te l(u)\,v\,\be l(w)),
     \end{equation*}
     settling the case where $v$ is a letter.
     Let $\varphi'$ (respectively, $\varphi''$)
     be the sliding block code $A^{\ZZ}\to B^{\ZZ}$
     having $\Phi$ (respectively, $\Psi$) as a central block map.
     Let $x\in A^{\ZZ}$ and $n\geq 2k+1$ be such that
     $x_{[1,n]}=uvw$. Take $y=\varphi'(x)$ and $z=\varphi''(x)$.
     Assuming $1+k\leq i\leq n-k$,
     the word $x_{[i-k,i+k]}$
     belongs to $L(\Cl X)$, as it has length $2k+1$.
     By the already settled case, we know that
     \begin{equation*}
       y_i=\Psi(x_{[i-k,i+k]})=\overline{\Phi}(x_{[i-l,i+l]})=z_i,
     \end{equation*}
     whenever $1+k\leq i\leq n-k$.
     Finally, we have
     \begin{align*}
       \overline{\Phi}(\te l(u)\,v\,\be l(w))
       =\overline{\Phi}(x_{[1+(k-l),n-(k-l)]})
       &=z_{[1+k,n-k]}\\
       &=y_{[1+k,n-k]}
       =\overline{\Psi}(x_{[1,n]})
       =\overline{\Psi}(uvw),
     \end{align*}
     establishing the result.
   \end{proof}
   
   \begin{Cor}\label{c:independence-of-block-map}
     Consider a sliding block code
     $\varphi\colon \Cl X\to\Cl Y$,
     with $\Cl X\subseteq A^{\ZZ}$
     and~\mbox{$\Cl Y\subseteq B^{\ZZ}$}.
     If $\Phi$ and $\Psi$ are central block maps of $\varphi$,
     then the restrictions
     of $\Phi_{\Kar_{\pv V}}$ and $\Psi_{\Kar_{\pv V}}$
     to $\Kar(\Mir_{\pv V}(\Cl X))$ are equal.
   \end{Cor}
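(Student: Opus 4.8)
The plan is to deduce the corollary directly from Lemma~\ref{l:independence-of-block-map}, the standing hypotheses $\pv V=\pv V\ast\pv D$ and $\Lo{Sl}\subseteq\pv V$ being in force. Write $\Phi\colon A^{2l+1}\to B$ and $\Psi\colon A^{2k+1}\to B$; since the assertion is symmetric in the two block maps, we may choose the notation so that $k\ge l$. As $A^+$ is free it contains no idempotents, so every idempotent $e$ of $\Om AV$ is infinite, whence $\te k(e)$, $\be k(e)$, $\te l(e)$, $\be l(e)$ are genuine words, and moreover $\te l(\te k(e))=\te l(e)$ and $\be l(\be k(e))=\be l(e)$ because $k\ge l$. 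It then suffices to check that $\Phi_{\Kar_{\pv V}}$ and $\Psi_{\Kar_{\pv V}}$ agree on the objects and on the arrows of $\Kar(\Mir_{\pv V}(\Cl X))$.

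First I would handle objects. Let $e$ be an idempotent of $\Om AV$ with $e\in\Mir_{\pv V}(\Cl X)$. By the observation recorded just after Proposition~\ref{p:functor-defined-by-block-map} (taken with the pseudoword $e$ and both flanking idempotents equal to $e$), the pseudoword $\te k(e)\,e\,\be k(e)$ is $\J$-equivalent to $e$ and therefore again lies in $\Mir_{\pv V}(\Cl X)$; in particular, by the very definition of the mirage, all of its finite factors — hence, a fortiori, all of its factors of length $2k+1$ — belong to $L(\Cl X)$. Thus Lemma~\ref{l:independence-of-block-map} applies to the pseudoword $e$ flanked by the length-$k$ words $\te k(e)$ and $\be k(e)$, yielding
\[
\overline{\Psi}\bigl(\te k(e)\,e\,\be k(e)\bigr)
=\overline{\Phi}\bigl(\te l(\te k(e))\,e\,\be l(\be k(e))\bigr)
=\overline{\Phi}\bigl(\te l(e)\,e\,\be l(e)\bigr),
\]
which is exactly the equality $\Psi_{\Kar_{\pv V}}(e)=\Phi_{\Kar_{\pv V}}(e)$.

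For arrows I would argue in the same way. Let $(e,u,f)$ be an arrow of $\Kar(\Mir_{\pv V}(\Cl X))$, so $u=euf\in\Mir_{\pv V}(\Cl X)$. By the same observation (now with the pseudoword $u$ and the idempotents $e,f$), the pseudoword $\te k(e)\,u\,\be k(f)$ is $\J$-equivalent to $u$, hence lies in $\Mir_{\pv V}(\Cl X)$, so all of its factors of length $2k+1$ are in $L(\Cl X)$. Lemma~\ref{l:independence-of-block-map}, applied to $u$ flanked by $\te k(e)$ and $\be k(f)$, then gives
\[
\overline{\Psi}\bigl(\te k(e)\,u\,\be k(f)\bigr)
=\overline{\Phi}\bigl(\te l(e)\,u\,\be l(f)\bigr).
\]
Combined with the object equality already established, this shows that the triple defining $\Psi_{\Kar_{\pv V}}(e,u,f)$ and the one defining $\Phi_{\Kar_{\pv V}}(e,u,f)$ coincide. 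Hence the restrictions of $\Phi_{\Kar_{\pv V}}$ and $\Psi_{\Kar_{\pv V}}$ to $\Kar(\Mir_{\pv V}(\Cl X))$ agree on objects and on arrows, so they are equal.

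Once Lemma~\ref{l:independence-of-block-map} is in hand the argument is essentially bookkeeping; the only point requiring a moment's care is checking the lemma's hypothesis, namely that the flanked pseudowords $\te k(e)\,e\,\be k(e)$ and $\te k(e)\,u\,\be k(f)$ remain inside $\Mir_{\pv V}(\Cl X)$ (equivalently, have all their length-$(2k+1)$ factors in $L(\Cl X)$), which is precisely the $\J$-invariance of the mirage noted after Proposition~\ref{p:functor-defined-by-block-map}. I do not anticipate any genuine obstacle.
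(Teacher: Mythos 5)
Your proof is correct and follows essentially the same route as the paper: both reduce the statement to Lemma~\ref{l:independence-of-block-map} applied to the flanked pseudoword $\te k(e)\,u\,\be k(f)$, using the $\J$-equivalence $u\mathrel{\J}\te k(e)\,u\,\be k(f)$ to see that its finite factors lie in $L(\Cl X)$. You merely spell out a few hypotheses the paper leaves implicit (the symmetric reduction to $k\ge l$, the fact that idempotents are infinite so the truncations are honest length-$k$ words, and the identities $\te l(\te k(e))=\te l(e)$, $\be l(\be k(f))=\be l(f)$), which is fine.
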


   \begin{proof}
     Suppose that the wings of $\Phi$ and $\Psi$
     are respectively $l$ and $k$.
     Let $u$ be an element of $\Mir_{\pv V}(\Cl X)$ such that
     $u=euf$ for some idempotents~$e$ and $f$ of~$\Om AV$.
     Then we have
     $ \overline{\Psi}(\te k(e)\,u\,\be k(f))
     =\overline{\Phi}(\te l(e)\,u\,\be l(f))$
     by Lemma~\ref{l:independence-of-block-map}.
   \end{proof}

   \begin{Def}
     Let $\varphi\colon\ci X\to\ci Y$ be a sliding block code, with $\Cl X\subseteq A^{\ZZ}$
     and~\mbox{$\Cl Y\subseteq B^{\ZZ}$}.
     We define the functor
     $\varphi_{\Kar_{\pv V}}\colon\Kar(\Mir_{\pv V}(\ci X))\to\Kar(\Mir_{\pv V}(\ci Y))$
     as being the restriction
     to $\Kar(\Mir_{\pv V}(\ci X))$
     of the functor 
     $\Phi_{\Kar_{\pv V}}\colon \Kar(\Om AV)\to\Kar(\Om BV)$,
     whenever $\Phi$ is a central block map of $\varphi$ (remember that $\Phi_{\Kar_{\pv V}}\bigl(\Kar(\Mir_{\pv V}(\ci X))\bigr)$
     is indeed contained in $\Kar(\Mir_{\pv V}(\ci Y))$,
     as observed in the paragraph before Lemma~\ref{l:independence-of-block-map}).
     By~Corollary~\ref{c:independence-of-block-map},
     the map $\varphi_{\Kar_{\pv V}}$ does not depend on the choice
     of~$\Phi$.
   \end{Def}

   \begin{Rmk}
     Let $\Cl X$ be a subshift of
     $A^{\ZZ}$.
     The identity $1_A\colon A\to A$
     is a central block map of the identity $1_{\Cl X}\colon\Cl X\to\Cl X$,
     and so the formula
     $(1_{\Cl X})_{\Kar_{\pv V}}=1_{\Mir_{\pv V}(\Cl X)}$ holds.
   \end{Rmk}

   \begin{Rmk}\label{rmk:restriction-phik-to-shadow}
  By Proposition~\ref{p:image-of-phi},
$\varphi_{\Kar_{\pv V}}(\Sha_{\pv V}(\Cl X))\subseteq \Sha_{\pv V}(\Cl Y)$
for every morphism $\varphi\colon\Cl X\to\Cl Y$.
\end{Rmk}

The next proposition
is the first step
to show that the correspondence $\varphi\mapsto\varphi_{\Kar_{\pv V}}$
defines a functor.

   \begin{Prop}\label{p:1-code-case}
     Let $\varphi\colon\Cl X\to\Cl Z$
     and $\psi\colon\Cl Z\to\Cl Y$
     be sliding block codes such that
     either~$\varphi$ or~$\psi$
     is a $1$-code.
     Then $\psi_{\Kar_{\pv V}}\circ\varphi_{\Kar_{\pv V}}=(\psi\circ\varphi)_{\Kar_{\pv V}}$.
\end{Prop}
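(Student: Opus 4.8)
The plan is to reduce to the computation of central block maps for the composite. Suppose first that $\varphi\colon\Cl X\to\Cl Z$ is a $1$-code, so that $\varphi$ has a central block map $\Phi\colon A\to C$ of wing $0$ (i.e., $\Phi$ is essentially a letter-to-letter substitution $\Phi\colon A\to C$, extended by $\overline\Phi$ to a continuous homomorphism $\Om AV\to\Om CV$). Let $\Psi\colon C^{2l+1}\to B$ be a central block map of $\psi$. By Lemma~\ref{l:composition-of-block-maps}, the map $\Lambda\colon A^{2l+1}\to B$ given by $\Lambda(w)=\Psi(\overline\Phi(w))$ is a central block map of $\psi\circ\varphi$, with the same wing $l$, and $\overline\Lambda=\overline\Psi\circ\overline\Phi$ on all of $\Om AV^I$. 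Now I would simply unwind both sides on an arrow $(e,u,f)$ of $\Kar(\Mir_{\pv V}(\Cl X))$. On the object level, since $\overline\Phi$ is a homomorphism, $\te l(\overline\Phi(w))=\overline\Phi(\te l(w))$ when $|w|\ge l$ (because $\Phi$ is letter-to-letter), and more to the point $\overline\Phi(e)$ is an idempotent with $\te l(\overline\Phi(e))=\overline\Phi(\te l(e))$; chasing the definition of $\Phi_{\Kar_{\pv V}}$ with wing $0$, one gets $\varphi_{\Kar_{\pv V}}(e)=(\overline\Phi(e),\overline\Phi(u_e),\overline\Phi(e))$-type data, and then applying $\psi_{\Kar_{\pv V}}$ (using $\Psi$) reproduces exactly $\Lambda_{\Kar_{\pv V}}(e,u,f)$ because $\overline\Psi(\te l(\overline\Phi(e))\,\overline\Phi(u)\,\be l(\overline\Phi(f)))=\overline\Psi(\overline\Phi(\te l(e)\,u\,\be l(f)))=\overline\Lambda(\te l(e)\,u\,\be l(f))$, where the first equality uses that $\overline\Phi$ is a homomorphism and the second is Lemma~\ref{l:composition-of-block-maps}.

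For the other case, suppose $\psi\colon\Cl Z\to\Cl Y$ is a $1$-code, with letter-to-letter central block map $\Psi\colon C\to B$, and let $\Phi\colon A^{2k+1}\to C$ be a central block map of $\varphi$. Then $\Lambda=\Psi\circ\Phi\colon A^{2k+1}\to B$ is a central block map of $\psi\circ\varphi$ of wing $k$, and again $\overline\Lambda=\overline\Psi\circ\overline\Phi$ by Lemma~\ref{l:composition-of-block-maps}. Here the argument is even more direct: $\psi_{\Kar_{\pv V}}$ has wing $0$, so it just applies the homomorphism $\overline\Psi$ to the middle coordinate (and to the idempotent objects), and on objects $\psi_{\Kar_{\pv V}}(\varphi_{\Kar_{\pv V}}(e))=\overline\Psi(\Phi_{\Kar_{\pv V}}(e))=\overline\Psi(\overline\Phi(\te k(e)\,e\,\be k(e)))=\overline\Lambda(\te k(e)\,e\,\be k(e))=\Lambda_{\Kar_{\pv V}}(e)$, and likewise on arrows $\overline\Psi(\overline\Phi(\te k(e)\,u\,\be k(f)))=\overline\Lambda(\te k(e)\,u\,\be k(f))$. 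Since $(\psi\circ\varphi)_{\Kar_{\pv V}}$ is by definition the restriction of $\Lambda_{\Kar_{\pv V}}$ (Corollary~\ref{c:independence-of-block-map} guaranteeing independence of the chosen central block map), both cases are settled.

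The only mild subtlety — and the step I would be most careful with — is verifying the interaction between the truncation operators $\te k,\be k$ and a letter-to-letter pseudoword block code $\overline\Phi$: namely that $\overline\Phi(\te k(w))=\te k(\overline\Phi(w))$ for a $1$-code block map $\Phi$ and $|w|\ge k$, and similarly for $\be k$. This holds because for such $\Phi$ the map $\overline\Phi$ is an honest continuous semigroup homomorphism $\Om AV\to\Om CV$ sending letters to letters, hence it commutes with the continuous extensions of the prefix/suffix maps (one checks it on $A^+$, where it is obvious, and extends by density and continuity, using Lemma~\ref{l:continuity-of-cancelation} or directly the continuity of $\te k,\be k$). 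Once this bookkeeping identity is in hand, both halves of the proposition are routine diagram chases combining it with Lemma~\ref{l:composition-of-block-maps} and the definitions of $\Phi_{\Kar_{\pv V}}$ and $\varphi_{\Kar_{\pv V}}$. I do not expect any genuine obstacle; the content is entirely in Lemma~\ref{l:composition-of-block-maps} plus the observation that a $1$-code contributes a homomorphism at wing $0$, which makes the two $\te{}/\be{}$-truncations telescope correctly.
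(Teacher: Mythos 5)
Your proof is correct and follows essentially the same route as the paper's: split into the two cases according to which map is the $1$-code, use Lemma~\ref{l:composition-of-block-maps} to identify $\overline{\Lambda}=\overline{\Psi}\circ\overline{\Phi}$, and exploit the fact that a wing-$0$ block map yields a continuous homomorphism. The only difference is cosmetic: you spell out the commutation $\te k(\overline{\Phi}(w))=\overline{\Phi}(\te k(w))$ for a letter-to-letter $\overline{\Phi}$ (checked on words and extended by density and continuity), a step the paper compresses into the remark that the last equality holds ``because $\overline{\Phi}$ is a homomorphism.''
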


   \begin{proof}
     Note that it suffices to prove
     that the restrictions of
     $\psi_{\Kar_{\pv V}}\circ\varphi_{\Kar_{\pv V}}$ and $(\psi\circ\varphi)_{\Kar_{\pv V}}$
     to the set of morphisms of $\Kar(\Mir_{\pv V}(\Cl X))$ are equal.
     Let $(e,u,f)$ be a morphism of $\Kar(\Mir_{\pv V}(\Cl X))$.
      Suppose that
      $\Phi$ and $\Psi$ are central block maps
      of $\varphi$ and $\psi$, respectively.

      Suppose first that $\Psi$ has wing $0$,
      and let $k$ be the wing of $\Phi$.      
      Then $\psi_{\Kar_{\pv V}}\circ\varphi_{\Kar_{\pv V}}(e,u,f)$ is equal to the triple
      \begin{equation*}
      \Bigl(\overline{\Psi}\circ \bar{\Phi}[\te k(e)\,e\,\be k(e)],
      \overline{\Psi}\circ \overline{\Phi}[\te k(e)\,euf\,\be k(f)],
      \overline{\Psi}\circ \overline{\Phi}[\te k(f)\,f\,\be k(f)]\Bigr).
      \end{equation*}
      By Lemma~\ref{l:composition-of-block-maps},
      the latter is equal to $(\psi\circ\varphi)_{\Kar_{\pv V}}(e,u,f)$.

It remains to consider the case in
which $\Phi$ has wing $0$. Let $k$ be the wing of $\Psi$.
Then
      \begin{align*}
      &\psi_{\Kar_{\pv V}}\circ\varphi_{\Kar_{\pv V}}(e,u,f)=\\
      &
      =\Psi_{\Kar_{\pv V}}\Bigl(\overline{\Phi}(e),
      \overline{\Phi}(u),
      \overline{\Phi}(f)\Bigr)\\
      &=\Bigl(\overline{\Psi}[\te k(\overline{\Phi}(e))
      \cdot
      \overline{\Phi}(e)\cdot
      \be k(\overline{\Phi}(e))],\,
      \overline{\Psi}[\te k(\overline{\Phi}(e))\cdot \overline{\Phi}(u)\cdot
      \be k(\overline{\Phi}(f))],\,
\overline{\Psi}[\te k(\overline{\Phi}(f))\cdot \overline{\Phi}(f)\cdot
\be k(\overline{\Phi}(f))]
      \Bigr)\\
&=\Bigl(\overline{\Psi}\circ \overline{\Phi}[\te k(e)\cdot e\cdot \be k(e)],\,
      \overline{\Psi}\circ \overline{\Phi}[\te k(e)\cdot u\cdot \be k(f)],\,
      \overline{\Psi}\circ \overline{\Phi}[\te k(f)\cdot f\cdot\be k(f)]\Bigr),
      \end{align*}
where the last equality holds  because $\overline{\Phi}$ is a homomorphism.
Again by Lemma~\ref{l:composition-of-block-maps},
we conclude that
$\psi_{\Kar_{\pv V}}\circ\varphi_{\Kar_{\pv V}}(e,u,f)=(\psi\circ\varphi)_{\Kar_{\pv V}}(e,u,f)$.
\end{proof}

\begin{Prop}\label{p:inverse-functoriality}
  If the sliding block code $\varphi\colon\ci X\to\ci Y$ is a conjugacy of subshifts,
  then the functor $\varphi_{\Kar_{\pv V}}\colon\Kar(\Mir_{\pv V}(\Cl X))\to\Kar(\Mir_{\pv V}(\Cl Y))$
  is an isomorphism of compact categories, and the equality
  \begin{equation*}
    (\varphi^{-1})_{\Kar_{\pv V}}=(\varphi_{\Kar_{\pv V}})^{-1}
  \end{equation*}
  holds.
\end{Prop}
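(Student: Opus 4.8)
The plan is to reduce everything to the case of $1$-codes, where Proposition~\ref{p:1-code-case} already controls how the assignment $\varphi\mapsto\varphi_{\Kar_{\pv V}}$ interacts with composition, and to feed into this the canonical factorisation of a conjugacy through $1$-codes supplied by Proposition~\ref{p:decomposition-of-morphisms}. The essential point is that $\varphi$ and $\varphi^{-1}$ need not themselves be $1$-codes, so Proposition~\ref{p:1-code-case} cannot be applied directly to the pair $(\varphi,\varphi^{-1})$; but once $\varphi$ is routed through $1$-codes, both $\varphi_{\Kar_{\pv V}}$ and its inverse can be assembled from pieces to which Proposition~\ref{p:1-code-case} does apply.

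First I would record the following observation: if $\gamma\colon\Cl U\to\Cl W$ is a $1$-conjugacy, then $\gamma_{\Kar_{\pv V}}$ is an isomorphism of compact categories with $(\gamma_{\Kar_{\pv V}})^{-1}=(\gamma^{-1})_{\Kar_{\pv V}}$. Indeed, $\gamma^{-1}\colon\Cl W\to\Cl U$ is a morphism of subshifts, hence a sliding block code by Theorem~\ref{t:morphisms-are-sliding-block-codes}, so $(\gamma^{-1})_{\Kar_{\pv V}}$ is defined; and since $\gamma$ is a $1$-code, Proposition~\ref{p:1-code-case} applies to $\gamma^{-1}\circ\gamma=1_{\Cl U}$ (with $\gamma$ in the first slot) and to $\gamma\circ\gamma^{-1}=1_{\Cl W}$ (with $\gamma$ in the second slot), giving $(\gamma^{-1})_{\Kar_{\pv V}}\circ\gamma_{\Kar_{\pv V}}=1_{\Mir_{\pv V}(\Cl U)}$ and $\gamma_{\Kar_{\pv V}}\circ(\gamma^{-1})_{\Kar_{\pv V}}=1_{\Mir_{\pv V}(\Cl W)}$, using the Remark that the $\Kar_{\pv V}$-functor of an identity is an identity functor. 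Note that $\gamma^{-1}$ need not be a $1$-code for this; only $\gamma$ must be. Both functors are continuous, being restrictions of the continuous functors attached by Proposition~\ref{p:functor-defined-by-block-map} to central block maps of $\gamma$ and of $\gamma^{-1}$ (continuity being visible in the defining formulas, which involve only the continuous maps $\te k$, $\be k$ and the relevant word block code).

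Now let $\varphi\colon\Cl X\to\Cl Y$ be an arbitrary conjugacy. By Proposition~\ref{p:decomposition-of-morphisms} there are a subshift $\Cl Z$, a $1$-conjugacy $\alpha\colon\Cl Z\to\Cl X$, and a $1$-code $\beta\colon\Cl Z\to\Cl Y$ with $\varphi=\beta\circ\alpha^{-1}$; since $\varphi$ and $\alpha$ are conjugacies, so is $\beta=\varphi\circ\alpha$, hence $\beta$ is a $1$-conjugacy. By the observation, $\alpha_{\Kar_{\pv V}}$ and $\beta_{\Kar_{\pv V}}$ are isomorphisms of compact categories with inverses $(\alpha^{-1})_{\Kar_{\pv V}}$ and $(\beta^{-1})_{\Kar_{\pv V}}$. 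Applying Proposition~\ref{p:1-code-case} to $\beta=\varphi\circ\alpha$ (with the $1$-code $\alpha$ in the first slot) yields $\varphi_{\Kar_{\pv V}}\circ\alpha_{\Kar_{\pv V}}=\beta_{\Kar_{\pv V}}$, so that $\varphi_{\Kar_{\pv V}}=\beta_{\Kar_{\pv V}}\circ(\alpha_{\Kar_{\pv V}})^{-1}$ is a composite of isomorphisms of compact categories, hence one itself. For the inverse, write $\varphi^{-1}=\alpha\circ\beta^{-1}$ (valid since $\alpha,\beta$ are conjugacies); as $\alpha$ is a $1$-code occupying the second slot of $\alpha\circ\beta^{-1}$, Proposition~\ref{p:1-code-case} gives $(\varphi^{-1})_{\Kar_{\pv V}}=\alpha_{\Kar_{\pv V}}\circ(\beta^{-1})_{\Kar_{\pv V}}=\alpha_{\Kar_{\pv V}}\circ(\beta_{\Kar_{\pv V}})^{-1}=\bigl(\beta_{\Kar_{\pv V}}\circ(\alpha_{\Kar_{\pv V}})^{-1}\bigr)^{-1}=(\varphi_{\Kar_{\pv V}})^{-1}$, as claimed.

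The whole argument is bookkeeping around two genuine inputs: Proposition~\ref{p:1-code-case} (composition is respected when one factor is a $1$-code) and Proposition~\ref{p:decomposition-of-morphisms} (factorisation of a conjugacy through $1$-codes). The only delicate point is to keep track, in each appeal to Proposition~\ref{p:1-code-case}, of which of the two composable sliding block codes is the $1$-code and in which slot it sits — in particular one must not treat $\varphi$, $\varphi^{-1}$, or $\alpha^{-1}$ as $1$-codes, since in general they are not; what makes the computation go through is that $\alpha$ (and $\beta$) are $1$-codes and they can always be placed, in the relevant composites, in whichever slot Proposition~\ref{p:1-code-case} requires.
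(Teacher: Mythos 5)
Your proof is correct and takes essentially the same route as the paper: factor $\varphi$ as $\beta\circ\alpha^{-1}$ via Proposition~\ref{p:decomposition-of-morphisms} and repeatedly apply Proposition~\ref{p:1-code-case}, always placing the $1$-code in the slot that proposition permits. The only difference is organizational — you isolate the $1$-conjugacy case as a preliminary observation and then assemble $\varphi_{\Kar_{\pv V}}=\beta_{\Kar_{\pv V}}\circ(\alpha_{\Kar_{\pv V}})^{-1}$, whereas the paper verifies the two identities $(\varphi^{-1})_{\Kar_{\pv V}}\circ\varphi_{\Kar_{\pv V}}=1$ and $\varphi_{\Kar_{\pv V}}\circ(\varphi^{-1})_{\Kar_{\pv V}}=1$ directly by two chains of equalities.
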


\begin{proof}
  By Proposition~\ref{p:decomposition-of-morphisms}, if the sliding block code $\varphi$ is a conjugacy, then
  there are $1$-conjugacies~$\alpha$ and~$\beta$ such that the following diagram commutes:
    \begin{equation*}
    \xymatrix{
      &\Cl Z\ar[rd]^{\beta}\ar@{=>}[ld]_{\alpha}&\\
      \Cl X\ar[rr]^{\varphi}&&\Cl Y
    }
  \end{equation*}
  Using several times Proposition~\ref{p:1-code-case},
  we deduce the following chain of equalities:
  \begin{align*}
    1_{\Kar(\Mir_{\pv V}(\Cl X))}&=(1_{\Cl X})_{\Kar_{\pv V}}\\
    &=(\alpha\circ\alpha^{-1})_{\Kar_{\pv V}}\\
    &=\alpha_{\Kar_{\pv V}}\circ (\alpha^{-1})_{\Kar_{\pv V}}\qquad\text{(since $\alpha$ is a $1$-code)}\\
    &=\alpha_{\Kar_{\pv V}}\circ 1_{\Kar(\Mir_{\pv V}(\Cl Z))} \circ(\alpha^{-1})_{\Kar_{\pv V}}\\
    &=\alpha_{\Kar_{\pv V}}\circ (1_{\Cl Z})_{\Kar_{\pv V}} \circ(\alpha^{-1})_{\Kar_{\pv V}}\\
    &=\alpha_{\Kar_{\pv V}}\circ (\beta^{-1}\circ\beta)_{\Kar_{\pv V}} \circ(\alpha^{-1})_{\Kar_{\pv V}}\\
    &=\alpha_{\Kar_{\pv V}}\circ (\beta^{-1})_{\Kar_{\pv V}}\circ\beta_{\Kar_{\pv V}} \circ (\alpha^{-1})_{\Kar_{\pv V}}\qquad\text{(since $\beta$ is a $1$-code)}\\
    &=(\alpha\circ \beta^{-1})_{\Kar_{\pv V}}\circ(\beta\circ\alpha^{-1})_{\Kar_{\pv V}}\qquad\text{(because $\beta$ and $\alpha$ are $1$-codes)}\\
    &=(\varphi^{-1})_{\Kar_{\pv V}}\circ\varphi_{\Kar_{\pv V}}.
  \end{align*}
  Similarly, the next chain of equalities is valid:
  \begin{align*}
    1_{\Kar(\Mir_{\pv V}(\Cl Y))}&=(1_{\Cl Y})_{\Kar_{\pv V}}\\
    &=(\beta\circ\beta^{-1})_{\Kar_{\pv V}}\\
    &=\beta_{\Kar_{\pv V}}\circ (\beta^{-1})_{\Kar_{\pv V}}\qquad\text{(since $\beta$ is a $1$-code)}\\
    &=\beta_{\Kar_{\pv V}}\circ 1_{\Kar(\Mir_{\pv V}(\Cl Z))} \circ(\beta^{-1})_{\Kar_{\pv V}}\\
    &=\beta_{\Kar_{\pv V}}\circ (1_{\Cl Z})_{\Kar_{\pv V}} \circ(\beta^{-1})_{\Kar_{\pv V}}\\
    &=\beta_{\Kar_{\pv V}}\circ (\alpha^{-1}\circ\alpha)_{\Kar_{\pv V}} \circ(\beta^{-1})_{\Kar_{\pv V}}\\
    &=\beta_{\Kar_{\pv V}}\circ (\alpha^{-1})_{\Kar_{\pv V}}\circ \alpha_{\Kar_{\pv V}}\circ(\beta^{-1})_{\Kar_{\pv V}}\qquad\text{(since $\alpha$ is a $1$-code)}\\
    &=(\beta\circ\alpha^{-1})_{\Kar_{\pv V}}\circ(\alpha\circ\beta^{-1})_{\Kar_{\pv V}}\qquad\text{(because $\alpha$ and $\beta$ are $1$-codes)}\\
    &=\varphi_{\Kar_{\pv V}}\circ(\varphi^{-1})_{\Kar_{\pv V}}.
  \end{align*}
  Therefore, the proposition holds.  
\end{proof}

The following statement is now obvious.

\begin{Cor}\label{c:splitting-category-free-profinite-is-conj-invariant}
  Let $\pv V$ be a pseudovariety
  of semigroups containing $\Lo {Sl}$
  and such that $\pv V=\pv V\ast\pv D$.
  The compact category $\Kar(\Mir_{\pv V}(\Cl X))$
  is a conjugacy invariant, up to isomorphism of compact categories.
\end{Cor}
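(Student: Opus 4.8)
The plan is to read the statement off directly from Proposition~\ref{p:inverse-functoriality}. By definition, two subshifts $\Cl X$ and $\Cl Y$ are conjugate exactly when there is a sliding block code $\varphi\colon\Cl X\to\Cl Y$ that is an isomorphism of symbolic dynamical systems. Given such a $\varphi$, Proposition~\ref{p:inverse-functoriality} yields that the induced functor $\varphi_{\Kar_{\pv V}}\colon\Kar(\Mir_{\pv V}(\Cl X))\to\Kar(\Mir_{\pv V}(\Cl Y))$ is an isomorphism of compact categories, with $(\varphi^{-1})_{\Kar_{\pv V}}$ as its inverse. Hence $\Kar(\Mir_{\pv V}(\Cl X))$ and $\Kar(\Mir_{\pv V}(\Cl Y))$ are isomorphic as compact categories, which is precisely the claimed conjugacy invariance.

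It is worth recalling why the hypotheses are the right ones, even though nothing further has to be checked at this point. The inclusion $\Lo{Sl}\subseteq\pv V$ ensures, via Remark~\ref{rmk:mvk-is-clopen} and the discussion preceding Lemma~\ref{l:coding-of-an-idempotent}, that $\Kar(\Mir_{\pv V}(\Cl X))$ is a well-defined compact subcategory of $\Kar(\Om AV)$; the identity $\pv V=\pv V\ast\pv D$ is exactly what Theorem~\ref{t:block-coding-pseudovarieties} requires in order for the pseudoword block codes $\overline{\Phi}_{\pv V}$ to exist and be continuous, hence for the functors $\varphi_{\Kar_{\pv V}}$ of the preceding Definition to be defined and to be morphisms of compact categories.

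As for where the real work sits: it has all been done upstream. The construction of the functor $\Phi_{\Kar_{\pv V}}$ (Proposition~\ref{p:functor-defined-by-block-map}), its independence of the chosen central block map (Corollary~\ref{c:independence-of-block-map}), its compatibility with composition through $1$-codes (Proposition~\ref{p:1-code-case}), and the passage from a conjugacy to an isomorphism of compact categories (Proposition~\ref{p:inverse-functoriality}) together make this corollary a purely formal consequence. Accordingly, I expect no obstacle in the corollary itself; the one-line argument of the first paragraph is all that is needed.
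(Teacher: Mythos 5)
Your proof is correct and is exactly the paper's argument: the paper states the corollary as an immediate ("now obvious") consequence of Proposition~\ref{p:inverse-functoriality}, which is precisely the one-line deduction in your first paragraph. The additional commentary on the role of the hypotheses is accurate but not needed for the corollary itself.
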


Assuming $\Cl X$ is irreducible, we may consider the $\J$-minimum $\J$-class
$J_{\pv V}(\Cl X)$ of $\Sha_{\pv V}(\Cl X)$
and the $\J$-minimum $\J$-class
$\widetilde J_{\pv V}(\Cl X)$ of $\Mir_{\pv V}(\Cl X)$.
To such $\Cl X$ we associate two profinite groups: the Sch\"utzenberger group
$G_{\pv V}(\Cl X)$
of $J_{\pv V}(\Cl X)$ and the Sch\"utzenberger group
$\widetilde G_{\pv V}(\Cl X)$
of $\widetilde J_{\pv V}(\Cl X)$,
respectively
isomorphic to the maximal subgroups
of $J_{\pv V}(\Cl X)$ and to the maximal subgroups of $\widetilde J_{\pv V}(\Cl X)$. The following straightforward consequence
of
Proposition~\ref{p:inverse-functoriality}
was first established in~\cite{ACosta:2006}.

 \begin{Cor}\label{c:invariance-of-GX}
   Let $\pv V$ be a pseudovariety
   of semigroups containing $\Lo {Sl}$
   and such that $\pv V=\pv V\ast\pv D$.   
   Suppose that the subshift $\Cl X$ is irreducible.
  Then the profinite groups
  $G_{\pv V}(\Cl X)$ and $\widetilde G_{\pv V}(\Cl X)$ are conjugacy invariants,
  up to isomorphism of profinite groups.
\end{Cor}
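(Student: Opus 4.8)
The plan is to extract Corollary~\ref{c:invariance-of-GX} almost formally from Proposition~\ref{p:inverse-functoriality}, using only the functorial behaviour of $\varphi_{\Kar_{\pv V}}$ together with the preservation of the relevant $\J$-classes. First I would observe that, by Proposition~\ref{p:inverse-functoriality}, a conjugacy $\varphi\colon\Cl X\to\Cl Y$ induces an isomorphism of compact categories $\varphi_{\Kar_{\pv V}}\colon\Kar(\Mir_{\pv V}(\Cl X))\to\Kar(\Mir_{\pv V}(\Cl Y))$ with inverse $(\varphi^{-1})_{\Kar_{\pv V}}$. The key point is then that $\varphi_{\Kar_{\pv V}}$ is the restriction of $\Phi_{\Kar_{\pv V}}\colon\Kar(\Om AV)\to\Kar(\Om BV)$, which on arrows acts (up to the idempotent modifications at the endpoints) via the pseudoword block code $\overline{\Phi}$; by Proposition~\ref{p:it-is-not-a-homomorphism-but-has-nice-properties} (in the form used just after Proposition~\ref{p:image-of-phi}) one has $u\leq_{\J}v\Rightarrow\overline{\Phi}(u)\leq_{\J}\overline{\Phi}(v)$, and since $\te k(e)\,u\,\be k(f)\mathrel{\J}u$ the block-coding operation sends $\leq_{\J}$-smaller arrows to $\leq_{\J}$-smaller arrows. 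Applying the same to $\varphi^{-1}$ shows that $u\mapsto\overline{\Phi}(\te k(e)\,u\,\be k(f))$ induces an order isomorphism, for $\leq_{\J}$, between the pseudowords appearing as arrows of $\Kar(\Mir_{\pv V}(\Cl X))$ and those of $\Kar(\Mir_{\pv V}(\Cl Y))$.

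Next I would use the fact, recalled before Theorem~\ref{t:bijection-minimal-shifts-maximal-j-classes}, that when $\Cl X$ is irreducible, $\Mir_{\pv V}(\Cl X)$ (being closed, irreducible and factorial) contains a $\J$-minimum $\J$-class $\widetilde J_{\pv V}(\Cl X)$, which is regular, and similarly $\Sha_{\pv V}(\Cl X)$ contains its $\J$-minimum regular $\J$-class $J_{\pv V}(\Cl X)$; moreover, since $\varphi$ is a conjugacy, $\Cl Y$ is irreducible as well, so the analogous $\J$-classes exist for $\Cl Y$. Because $\varphi_{\Kar_{\pv V}}$ and its inverse preserve the partial order $\leq_{\J}$ on arrows with fixed behaviour on the underlying pseudowords, and because an idempotent $e$ lies in $\widetilde J_{\pv V}(\Cl X)$ exactly when the identity arrow $1_e=(e,e,e)$ is $\prec$-minimal among objects of $\Kar(\Mir_{\pv V}(\Cl X))$ meeting a regular $\J$-class — here one invokes Proposition~\ref{p:retraction}, which translates $\leq_{\J}$ between idempotents into the categorical retract order $\prec$ — the isomorphism $\varphi_{\Kar_{\pv V}}$ must carry the $\prec$-minimum isomorphism class of objects of $\Kar(\Mir_{\pv V}(\Cl X))$ onto the $\prec$-minimum isomorphism class of objects of $\Kar(\Mir_{\pv V}(\Cl Y))$, i.e.\ it sends an idempotent of $\widetilde J_{\pv V}(\Cl X)$ to an idempotent of $\widetilde J_{\pv V}(\Cl Y)$, and the same holds for the shadow, using Remark~\ref{rmk:restriction-phik-to-shadow}, which says $\varphi_{\Kar_{\pv V}}(\Sha_{\pv V}(\Cl X))\subseteq\Sha_{\pv V}(\Cl Y)$. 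Finally, by Proposition~\ref{p:local-isomorphism}, for any idempotent $e$ the automorphism group of $e$ in $\Kar(\Om AV)$ is isomorphic as a compact group to the maximal subgroup $G_e$; restricting to $\Kar(\Mir_{\pv V}(\Cl X))$ one still obtains these automorphism groups (since $e\in\Mir_{\pv V}(\Cl X)$ forces all arrows $(e,s,e)$ with $s\in G_e$ to lie in the mirage, as $s\mathrel{\J}e$). Since the isomorphism of categories $\varphi_{\Kar_{\pv V}}$ carries an idempotent $e\in\widetilde J_{\pv V}(\Cl X)$ to an idempotent $\varphi_{\Kar_{\pv V}}(e)\in\widetilde J_{\pv V}(\Cl Y)$ and induces an isomorphism of their automorphism groups, we get $\widetilde G_{\pv V}(\Cl X)\cong\widetilde G_{\pv V}(\Cl Y)$ as profinite groups; the argument for $G_{\pv V}(\Cl X)$ is identical, working inside $\Kar(\Sha_{\pv V}(\Cl X))$.

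The main obstacle I anticipate is the second step: making rigorous the claim that $\varphi_{\Kar_{\pv V}}$ identifies $\widetilde J_{\pv V}(\Cl X)$ with $\widetilde J_{\pv V}(\Cl Y)$ (and $J_{\pv V}(\Cl X)$ with $J_{\pv V}(\Cl Y)$). The subtlety is that $\varphi_{\Kar_{\pv V}}$ is only defined on the \emph{subcategory} $\Kar(\Mir_{\pv V}(\Cl X))$, not on all of $\Kar(\Om AV)$, and one has to be careful that the $\J$-order used to single out the minimum $\J$-class is the one inherited inside $\Mir_{\pv V}(\Cl X)$; one must check that $\prec$-minimality of an object's isomorphism class within $\Kar(\Mir_{\pv V}(\Cl X))$, together with lying in a regular $\J$-class, really does pin down the idempotents of $\widetilde J_{\pv V}(\Cl X)$, and that $\varphi_{\Kar_{\pv V}}$ being a categorical isomorphism (plus, by Proposition~\ref{p:inverse-functoriality}, a homeomorphism on objects and arrows) genuinely transports regularity and this minimality. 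Once this dictionary between the $\J$-structure of $\Mir_{\pv V}$ and the retract order of $\Kar(\Mir_{\pv V})$ is in place — which is essentially the content of Propositions~\ref{p:local-isomorphism} and~\ref{p:retraction} — the corollary follows without further computation.
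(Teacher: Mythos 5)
Your proposal is correct and follows essentially the same route as the paper: identify the idempotents of $\widetilde J_{\pv V}(\Cl X)$ as the $\prec$-minimum objects via Proposition~\ref{p:retraction}, note that isomorphisms of categories preserve $\prec$, and conclude via Proposition~\ref{p:local-isomorphism} (with Remark~\ref{rmk:restriction-phik-to-shadow} handling the shadow case). Your intermediate detour through the $\leq_{\J}$-preservation of $\overline{\Phi}$ on pseudowords is harmless but unnecessary, since the purely categorical argument with the retract order already does the job.
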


\begin{proof}
      The idempotents of $\widetilde J_{\pv V}(\Cl X)$ are
    the minimal objects of $\Kar(\Mir_{\pv V}(\Cl X))$
    with respect to the retraction order $\prec$, by Proposition~\ref{p:retraction}.
    The partial order $\prec$ is clearly preserved by isomorphisms
    of categories.
    Hence,
    in view of Proposition~\ref{p:inverse-functoriality},
    if $\varphi\colon\Cl X\to\Cl Y$  is a conjugacy,
    then each idempotent~$e$ in $\widetilde J_{\pv V}(\Cl X)$
    is mapped via $\varphi_{\Kar_{\pv V}}$
    to an idempotent $f=\varphi_{\Kar_{\pv V}}(e)$ in $\widetilde J_{\pv V}(\Cl Y)$.
    In particular, the profinite groups $G_e$ and $G_f$
    are isomorphic, by Proposition~\ref{p:local-isomorphism},
    establishing the conjugacy invariance of $\widetilde G_{\pv V}(\Cl X)$.

    As $\varphi_{\Kar_{\pv V}}(\Sha_{\pv V}(\Cl X))\subseteq \Sha_{\pv V}(\Cl Y)$
    and $\varphi_{\Kar_{\pv V}}^{-1}(\Sha_{\pv V}(\Cl Y))\subseteq \Sha_{\pv V}(\Cl X)$
    (Remark~\ref{rmk:restriction-phik-to-shadow}),
    the arguments in the previous paragraph also
    yeld the conjugacy invariance of the profinite group $G_{\pv V}(\Cl X)$.
\end{proof}

We now establish the functoriality
of the correspondence \mbox{$\Cl X\mapsto\Kar(\Mir_{\pv V}(\Cl X))$}.

\begin{Thm}\label{t:1-conjugacy-case}
     Let $\pv V$ be a pseudovariety
   of semigroups containing $\Lo {Sl}$
   and such that $\pv V=\pv V\ast\pv D$.   
     The following data defines a functor from
     the category of shifts to the category of compact categories.
     \begin{equation*}
       \xymatrix{
         \Cl X\ar[r]\ar[d]_{\varphi}&\Kar(\Mir_{\pv V}(\Cl X))\ar[d]^{\varphi_{\Kar_{\pv V}}}\\
         \Cl Y\ar[r]&\Kar(\Mir_{\pv V}(\Cl Y))
         }
     \end{equation*}
   \end{Thm}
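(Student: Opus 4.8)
The plan is to verify the functor axioms one at a time; nearly all of them have in effect been prepared by the earlier results of this section, so the proof is largely a matter of assembling them. For each subshift $\Cl X$ the target $\Kar(\Mir_{\pv V}(\Cl X))$ is a compact category, as recorded just before Lemma~\ref{l:coding-of-an-idempotent}. Given a morphism $\varphi\colon\Cl X\to\Cl Y$, the Curtis--Hedlund--Lyndon theorem (Theorem~\ref{t:morphisms-are-sliding-block-codes}) lets us regard $\varphi$ as a sliding block code, which has a central block map by Fact~\ref{fact:central-sliding-block-codes}, so $\varphi_{\Kar_{\pv V}}$ is defined; by Corollary~\ref{c:independence-of-block-map} it is independent of the chosen central block map, by Proposition~\ref{p:functor-defined-by-block-map} together with Proposition~\ref{p:image-of-phi} (as in the discussion preceding Lemma~\ref{l:independence-of-block-map}) it is a functor landing inside $\Kar(\Mir_{\pv V}(\Cl Y))$, and it is continuous since each component of $\Phi_{\Kar_{\pv V}}$ is built from the continuous maps $\overline{\Phi}$, $\te k$, $\be k$ and multiplication. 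Preservation of identities is exactly the Remark asserting $(1_{\Cl X})_{\Kar_{\pv V}}=1_{\Kar(\Mir_{\pv V}(\Cl X))}$.

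It then remains to prove functoriality on composition, i.e.\ $(\psi\circ\varphi)_{\Kar_{\pv V}}=\psi_{\Kar_{\pv V}}\circ\varphi_{\Kar_{\pv V}}$ for composable sliding block codes $\varphi\colon\Cl X\to\Cl Z$ and $\psi\colon\Cl Z\to\Cl Y$. Proposition~\ref{p:1-code-case} already gives this when $\varphi$ or $\psi$ is a $1$-code, and the idea is to reduce the general case to that one through Proposition~\ref{p:decomposition-of-morphisms}. I would decompose \emph{only} $\psi$: write $\psi=\beta\circ\alpha^{-1}$ with $\alpha\colon\Cl W\to\Cl Z$ a $1$-conjugacy and $\beta\colon\Cl W\to\Cl Y$ a $1$-code, and set $\delta=\alpha^{-1}\circ\varphi\colon\Cl X\to\Cl W$, so that $\varphi=\alpha\circ\delta$ and $\psi\circ\varphi=\beta\circ\delta$. (Decomposing both $\varphi$ and $\psi$ would instead produce a composition of the shape $\alpha_2^{-1}\circ\beta_1$ which is in general neither a $1$-code nor the inverse of one, so it is important to keep $\varphi$ whole.)

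With this setup, everything follows from three applications of Proposition~\ref{p:1-code-case}, each with a genuine $1$-code ($\alpha$ or $\beta$) in the required slot, together with Proposition~\ref{p:inverse-functoriality}. From $\varphi=\alpha\circ\delta$ with $\alpha$ a $1$-code we get $\varphi_{\Kar_{\pv V}}=\alpha_{\Kar_{\pv V}}\circ\delta_{\Kar_{\pv V}}$, and since $\alpha$ is a conjugacy $\alpha_{\Kar_{\pv V}}$ is invertible, so $\delta_{\Kar_{\pv V}}=(\alpha_{\Kar_{\pv V}})^{-1}\circ\varphi_{\Kar_{\pv V}}$. From $\psi=\beta\circ\alpha^{-1}$ with $\beta$ a $1$-code we get $\psi_{\Kar_{\pv V}}=\beta_{\Kar_{\pv V}}\circ(\alpha^{-1})_{\Kar_{\pv V}}=\beta_{\Kar_{\pv V}}\circ(\alpha_{\Kar_{\pv V}})^{-1}$, using Proposition~\ref{p:inverse-functoriality}. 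From $\psi\circ\varphi=\beta\circ\delta$ with $\beta$ a $1$-code we get $(\psi\circ\varphi)_{\Kar_{\pv V}}=\beta_{\Kar_{\pv V}}\circ\delta_{\Kar_{\pv V}}$. Combining these,
\[
\psi_{\Kar_{\pv V}}\circ\varphi_{\Kar_{\pv V}}
=\beta_{\Kar_{\pv V}}\circ(\alpha_{\Kar_{\pv V}})^{-1}\circ\varphi_{\Kar_{\pv V}}
=\beta_{\Kar_{\pv V}}\circ\delta_{\Kar_{\pv V}}
=(\psi\circ\varphi)_{\Kar_{\pv V}},
\]
which finishes the argument. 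I do not expect a genuine obstacle: the substance of the theorem has already been absorbed into Propositions~\ref{p:1-code-case} and~\ref{p:inverse-functoriality}, and the only care needed is the bookkeeping that keeps a true $1$-code in the hypothesis of Proposition~\ref{p:1-code-case} at each of the three steps.
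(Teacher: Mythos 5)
Your proof is correct. It rests on the same pillars as the paper's argument --- reduce composition to Proposition~\ref{p:1-code-case} via the decomposition of Proposition~\ref{p:decomposition-of-morphisms}, and use Proposition~\ref{p:inverse-functoriality} to invert the conjugacy leg --- but the reduction itself is organised differently. The paper decomposes \emph{both} $\varphi$ and $\psi$, assembles the two triangles into a larger commutative diagram over a common top vertex, and then works through a ten-line chain of equalities in which every map appearing is a genuine $1$-code. You instead decompose only $\psi=\beta\circ\alpha^{-1}$ and absorb $\varphi$ into the auxiliary map $\delta=\alpha^{-1}\circ\varphi$, which is a sliding block code but typically \emph{not} a $1$-code; the point that makes this work, and which you correctly isolate, is that Proposition~\ref{p:1-code-case} only requires \emph{one} of the two composed maps to be a $1$-code, and in each of your three applications the $1$-code ($\alpha$ or $\beta$) sits in the required slot. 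This buys a substantially shorter bookkeeping at no cost in rigour; the paper's symmetric double decomposition is arguably more transparent to picture but does more work than is strictly needed. Your preliminary paragraph on well-definedness, identity preservation and continuity also correctly accounts for the remaining functor axioms, which the paper leaves implicit in the build-up to the theorem.
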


   \begin{proof}
     Consider composable sliding block codes
     $\varphi\colon \Cl X_1\to\Cl X_2$
     and
     \mbox{$\psi\colon \Cl X_2\to\Cl X_3$}.
     Then, by Proposition~\ref{p:decomposition-of-morphisms}, we      
     may build a commutative diagram of sliding block codes,
     displayed in Figure~\ref{fig:functoriality}, such that
     all the maps not in the base of the outer triangle
     (the maps $\alpha$, $\beta$, $\gamma$, $\delta$, $\mu$ and $\nu$)
     are $1$-codes, with $\alpha$, $\gamma$ and $\mu$
     being conjugacies.
     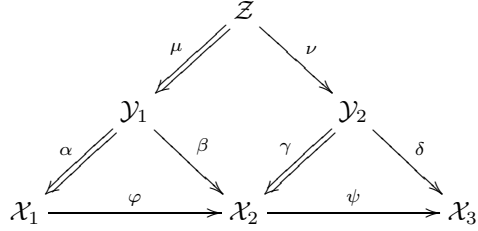
\begin{figure}[h]
       \centering
            \begin{equation*}
     \xymatrix{
       &&\Cl Z\ar@{=>}[ld]_\mu\ar[rd]^{\nu} &&\\
       &\Cl Y_1\ar@{=>}[ld]_{\alpha}
       \ar[rd]^{\beta}
       &&\Cl Y_2\ar[rd]^{\delta}\ar@{=>}[ld]_{\gamma}&\\
      \Cl X_1\ar[rr]^\varphi
      &&\Cl X_2\ar[rr]^\psi
      &&\Cl X_3
     }
   \end{equation*}
       \caption{Triangle with base $\psi\circ\varphi$.}
       \label{fig:functoriality}
     \end{figure}

     Applying several times Proposition~\ref{p:1-code-case}
     and~\ref{p:inverse-functoriality}, we deduce the following chain of
     equalities:     
     \begin{align*}
       (\psi\circ\varphi)_{\Kar_{\pv V}}
       &=((\delta\circ\nu)\circ (\alpha\circ\mu)^{-1})_{\Kar_{\pv V}}\\
       &=(\delta\circ\nu)_{\Kar_{\pv V}}\circ ((\alpha\circ\mu)^{-1})_{\Kar_{\pv V}}
       \qquad\text{(by Proposition~\ref{p:1-code-case},
         since $\delta\circ\nu$ is a $1$-code)}
       \\
       &=(\delta\circ\nu)_{\Kar_{\pv V}}\circ ((\alpha\circ\mu)_{\Kar_{\pv V}})^{-1}
       \qquad\text{(by Proposition~\ref{p:inverse-functoriality})}\\
       &=\delta_{\Kar_{\pv V}}\circ\nu_{\Kar_{\pv V}}\circ (\alpha_{\Kar_{\pv V}}\circ\mu_{\Kar_{\pv V}})^{-1}
       \qquad\text{(by Proposition~\ref{p:1-code-case}, as
         $\delta$, $\nu$, $\alpha$ and
         $\mu$ are $1$-codes)}
       \\
       &=\delta_{\Kar_{\pv V}}\circ\nu_{\Kar_{\pv V}}\circ (\mu_{\Kar_{\pv V}})^{-1}\circ(\alpha_{\Kar_{\pv V}})^{-1}\\
       &=\delta_{\Kar_{\pv V}}\circ\nu_{\Kar_{\pv V}}\circ (\mu^{-1})_{\Kar_{\pv V}}\circ(\alpha^{-1})_{\Kar_{\pv V}}\qquad\text{(by Proposition~\ref{p:inverse-functoriality})}\\
       &=\delta_{\Kar_{\pv V}}\circ(\nu\circ \mu^{-1})_{\Kar_{\pv V}}\circ(\alpha^{-1})_{\Kar_{\pv V}}\qquad\text{(by Proposition~\ref{p:1-code-case}, as $\nu$ is a $1$-code)}\\
       &=\delta_{\Kar_{\pv V}}\circ(\gamma^{-1}\circ \beta)_{\Kar_{\pv V}}\circ(\alpha^{-1})_{\Kar_{\pv V}}\\
       &=\delta_{\Kar_{\pv V}}\circ(\gamma^{-1})_{\Kar_{\pv V}}\circ \beta_{\Kar_{\pv V}}\circ(\alpha^{-1})_{\Kar_{\pv V}}
       \qquad\text{(by Proposition~\ref{p:1-code-case},
         as $\beta$ is a $1$-code)}\\
       &=(\delta\circ\gamma^{-1})_{\Kar_{\pv V}}\circ (\beta\circ\alpha^{-1})_{\Kar_{\pv V}}
       \qquad\text{(by Proposition~\ref{p:1-code-case},
         as $\delta$ and $\beta$ are $1$-codes)}\\
       &=\psi_{\Kar_{\pv V}}\circ \varphi_{\Kar_{\pv V}},\\
     \end{align*}     
   thus establishing the result.    
 \end{proof}

 Our proof of Theorem~\ref{t:1-conjugacy-case}
 depends on the Curtis--Hedlund--Lyndon theorem (Theorem~\ref{t:morphisms-are-sliding-block-codes}). It may be interesting
 to obtain a more direct proof, not depending on the use of block maps. We leave that as an open problem.
 
\section{Flow equivalence}
\label{sec:flow-equivalence}

We turn our attention to flow equivalence, having~\cite[Section 13.7]{Lind&Marcus:1996} and \cite{Beal&Berstel&Eilers&Perrin:2010arxiv} as guiding references.
Two discrete-time dynamical systems are \emph{flow equivalent} if their
suspension flows (or mapping tori) are conjugate modulo a time change.
Parry and Sullivan showed that within the class of subshifts, flow equivalence
is the equivalence relation between subshifts generated by conjugacy
and~\emph{symbol expansion}~\cite{Parry&Sullivan:1975},
described next. Fix an alphabet $A$ and a letter $\alpha$ of $A$.
Let $\dia$ be a letter not in~$A$,
and let $B=A\cup\{\dia\}$.
The \emph{symbol expansion of $A$ associated to $\alpha$}
is the homomorphism $\ci E\colon A^+\to B^+$
such that $\ci E(\alpha)=\alpha\dia$ and $\ci E(a)=a$ for all
$a\in A\setminus\{\alpha\}$.
The \emph{symbol expansion of a subshift \Cl X of $A^{\ZZ}$ relative to $\alpha$}
is the least subshift $\Cl X'$ of $B^{\ZZ}$ such that
$L(\Cl X')$
contains $\ci E(L(\Cl X))$.
A \emph{symbol expansion of $\Cl X$} is a symbol expansion of~$\Cl X$
relative to some letter.

\begin{Rmk}\label{r:image-of-E}
Using induction on the length of
words, one verifies that
\begin{equation*}
  \ci E(A^+)=B^+
  \setminus
  \Bigl(\dia B^\ast
  \cup
  B^\ast \alpha
  \cup
  \bigcup_{x\in A}\!{B^\ast \alpha xB^\ast}
  \cup
  \bigcup_{x\in B\setminus \{\alpha\}}\!\!\!\!{B^\ast x\dia B^\ast}
  \Bigr).
\end{equation*}
In particular, one sees that $\ci E(A^+)$ is a locally testable language.
\end{Rmk}

Throughout this section, as in Section~\ref{sec:funct-corr-from},
$\pv V$ will always be a pseudovariety of semigroups
containing $\Lo {Sl}$
such that $\pv V=\pv V\ast\pv D$,
but (unlike  Section~\ref{sec:funct-corr-from}) with the additional requirement that $\pv V$ is monoidal.
It is folklore that if  $\pv V$ is monoidal and contains~$\pv{Sl}$,
then $S\in\pv V$ if and only if $S^I\in\pv V$~\cite{Eilenberg:1976}.
From that it follows that $\Om AV^I$ is pro-$\pv V$ whenever $\pv V$ is monoidal and contains $\pv {Sl}$,
a property that we shall need.

Let us return to the symbol expansion homomorphism $\ci E\colon A^+\to B^+$
introduced in the first paragraph of this section.
The unique extension of $\ci E$ to a continuous homomorphism
$\Om AV\to \Om BV$ will also be denoted by $\ci E$.  We let $\ci E(I)=I=\varepsilon$. Because $\Om AV^I$ is a pro-$\pv V$ semigroup, as we are assuming $\pv V$ to be monoidal, we may consider the unique continuous semigroup homomorphism $\ci C\colon \Om BV\to \Om AV^I$ such that
$\ci C(\dia)=\varepsilon=I$ and $\ci C(a)=a$ for all $a\in A$.
    The notation $\mathcal C$ is used because its restriction
    to $B^+$ is said to be a \emph{symbol contraction}.
    Note that $\ci C\circ\ci E(u)=u$ for all $u\in \Om AV$, since this is
  clearly true for finite words and $\ci C\circ\ci E$ is continuous.
  In particular, $\ci E$ is injective, and we may use
  the notation $\ci E^{-1}$ for the restriction
  of $\ci C$ to $\ci E(\Om AV)$. Observe that $\ci E(\Om AV)$
  is clopen by Remark~\ref{r:image-of-E}.

\begin{Lemma}\label{l:symbol-exp-3}
  Let $v\in \Om AV$. The following properties hold:
  \begin{enumerate}
  \item For $x,y,u\in \Om BV^I$, if
$x\cdot \ci E(v)\cdot y=\ci E(u)$ then
$x,y\in \ci E(\Om AV^I)$ and
$u=\ci E^{-1}(x)v\ci E^{-1}(y)$.
\item If $\ci E(v)\in \overline{L(\Cl X')}$, then we have $v\in \overline{L(\Cl X)}$.
  \end{enumerate}
\end{Lemma}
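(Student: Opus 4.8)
For part~(1), the plan is to exploit the structure of $\ci E(\Om AV)$ described in Remark~\ref{r:image-of-E}, namely that it is the complement of a finite union of locally testable ``forbidden pattern'' sets, together with the retraction $\ci C$. First I would observe that since $x\cdot\ci E(v)\cdot y=\ci E(u)$ lies in $\ci E(\Om AV^I)$ and this set is factorial (being, by Remark~\ref{r:image-of-E}, the complement of a set of the form $B^*WB^*\cup \dia B^*\cup B^*\alpha$, which is ``co-factorial'' in the appropriate one-sided sense — here I would spell out that $\ci E(A^+)$ is closed under taking factors, which follows directly from the explicit description since none of the forbidden patterns can be created by passing to a factor), each of $x$, $\ci E(v)$, $y$ is a factor of an element of $\ci E(\Om AV^I)$ and hence lies in $\ci E(\Om AV^I)$. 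Strictly I only need that $x$ and $y$ are factors of $\ci E(u)$, which is immediate from the factorization. Applying $\ci C$ to the equation and using $\ci C\circ\ci E=\mathrm{id}$ on $\Om AV$ (noted in the excerpt) together with the fact that $\ci C$ is a homomorphism, I get $\ci C(x)\cdot v\cdot \ci C(y)=u$; writing $\ci C(x)=\ci E^{-1}(x)$ and $\ci C(y)=\ci E^{-1}(y)$ (legitimate once we know $x,y\in\ci E(\Om AV^I)$, since there $\ci C$ agrees with $\ci E^{-1}$ by definition) yields the claimed formula $u=\ci E^{-1}(x)\,v\,\ci E^{-1}(y)$.

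For part~(2), the plan is to pass from the closure statement to a statement about finite blocks, via a density/approximation argument. Suppose $\ci E(v)\in\overline{L(\Cl X')}$. I would pick a sequence of words $w_n\in L(\Cl X')$ converging to $\ci E(v)$ in $\Om BV$. Since $\ci E(\Om AV)$ is clopen (noted in the excerpt, via Remark~\ref{r:image-of-E}), for large $n$ we have $w_n\in\ci E(A^+)$, say $w_n=\ci E(z_n)$ with $z_n\in A^+$; applying the continuous map $\ci C=\ci E^{-1}$ on this clopen set gives $z_n\to v$ in $\Om AV$. So it suffices to show each $z_n\in L(\Cl X)$, i.e. that $\ci E(z_n)\in L(\Cl X')$ forces $z_n\in L(\Cl X)$. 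This is where I would invoke (or re-prove) the combinatorial fact about the symbol expansion of a subshift: $L(\Cl X')$ is the least factorial prolongable language containing $\ci E(L(\Cl X))$, and one checks directly that $L(\Cl X'') := \{\,w\in B^+ : w \text{ is a factor of some } \ci E(t),\ t\in L(\Cl X)\,\}$ is already factorial, prolongable (using prolongability of $L(\Cl X)$ and the shape of $\ci E$), and contains $\ci E(L(\Cl X))$, hence equals $L(\Cl X')$. Then $\ci E(z_n)\in L(\Cl X')$ means $\ci E(z_n)$ is a factor of some $\ci E(t)$ with $t\in L(\Cl X)$; by part~(1) applied in the free \emph{semigroup} case (or directly, since $\ci E$ is an injective homomorphism and a factor of $\ci E(t)$ lying in $\ci E(A^+)$ is of the form $\ci E$ of a factor of $t$), we conclude $z_n$ is a factor of $t$, hence $z_n\in L(\Cl X)$ by factoriality of $L(\Cl X)$. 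Therefore $v=\lim z_n\in\overline{L(\Cl X)}$.

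The main obstacle I anticipate is not any single hard step but rather getting the bookkeeping around $\ci E(A^+)$ versus $\ci E(\Om AV)$ exactly right: one must be careful that the ``factor of $\ci E(t)$ that happens to lie in $\ci E(A^+)$ is $\ci E$ of a factor of $t$'' claim is genuinely elementary (it is — inspect which positions of $\ci E(t)=\ci E(t_1)\cdots\ci E(t_m)$ a sub-block can start and end at, ruling out starts at a $\dia$ or ends before a forced $\dia$ exactly by the forbidden patterns of Remark~\ref{r:image-of-E}), and that the explicit identification $L(\Cl X')=L(\Cl X'')$ is done cleanly so that part~(2) does not secretly circular-reference part~(1) at the profinite level when the finite-word level suffices. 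A secondary point of care is justifying $x,y\in\ci E(\Om AV^I)$ in part~(1): the cleanest route is the factoriality of the clopen set $\ci E(\Om AV^I)$, which I would state as a short preliminary observation (any factor of $\ci E(w)$ is again in $\ci E(\Om AV^I)$, because none of the forbidden one- or two-letter configurations $\dia$-at-the-start, $\alpha$-at-the-end, $\alpha x$, $x\dia$ with $x\neq\alpha$ can appear inside $\ci E(w)$).
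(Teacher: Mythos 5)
The central step of your part~(1) --- the claim that $x,y\in \ci E(\Om AV^I)$ --- rests on the assertion that $\ci E(A^+)$ (equivalently $\ci E(\Om AV^I)$) is factorial, and that assertion is false. For instance $\ci E(\alpha)=\alpha\dia$ lies in $\ci E(A^+)$, but its factors $\alpha$ and $\dia$ do not: the first ends with $\alpha$ and the second begins with $\dia$. More generally, for $a,b\in A\setminus\{\alpha\}$ the word $\ci E(a\alpha b)=a\alpha\dia b$ has the factors $a\alpha$ and $\dia b$, neither of which is in $\ci E(A^+)$. The forbidden configurations $\dia B^*$ and $B^*\alpha$ of Remark~\ref{r:image-of-E} are precisely the ones that \emph{can} be created by passing to a factor, so ``none of the forbidden patterns can be created by passing to a factor'' is exactly where the argument breaks. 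The conclusion you want is still true, but only because the middle piece of the factorization $x\cdot\ci E(v)\cdot y=\ci E(u)$ is itself of the form $\ci E(v)$, hence neither begins with $\dia$ nor ends with $\alpha$: this is what rules out $x$ ending in $\alpha$ (otherwise $\alpha\,\be 1(\ci E(v))$ would be a forbidden two-letter factor of $\ci E(u)$) and $y$ beginning with $\dia$ (otherwise $\te 1(\ci E(v))\,\dia$ would be forbidden). The paper avoids doing this analysis for pseudowords: it quotes the finite-word case (Lemma~12.3 of the earlier flow-equivalence paper) and then transfers it to $\Om BV$ by approximating $x,v,y$ by words, using that $\ci E(\Om AV)$ is open to produce words $u_n$ with $x_n\ci E(v_n)y_n=\ci E(u_n)$, and that $\ci E^{-1}$ is continuous on the closed set $\ci E(\Om AV^I)$ to pass to the limit. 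If you want to argue directly at the profinite level, you would additionally need the description of finite factors of a product of pseudowords (a factor of $w_1w_2$ is a factor of $w_1$, a factor of $w_2$, or a suffix of $w_1$ times a prefix of $w_2$), which is a nontrivial ingredient you do not invoke.

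Once the membership $x,y\in\ci E(\Om AV^I)$ is correctly established, your derivation of the formula $u=\ci E^{-1}(x)v\ci E^{-1}(y)$ by applying the homomorphism $\ci C$ is clean and slightly slicker than the paper's limit argument. Your part~(2) is essentially the paper's proof (approximate $\ci E(v)$ by words of $L(\Cl X')$, use clopenness of $\ci E(\Om AV)$ to write them as $\ci E(z_n)$, and reduce to the finite-word case), and your identification of $L(\Cl X')$ with the set of factors of elements of $\ci E(L(\Cl X))$ is correct; that part goes through once the finite-word version of~(1) is available.
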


\begin{proof}
  The case where $x,y,v,u$
  are finite words is Lemma 12.3 in~\cite{ACosta&Steinberg:2016},
  following very easily from Remark~\ref{r:image-of-E}.

  For the general case, suppose that
  $x\cdot \ci E(v)\cdot y=\ci E(u)$
  and let $(x_n)_n$, $(v_n)_n$, $(y_n)_n$
  be sequences of words respectively converging
  to $x$, $v$ and $y$.
  Since $\ci E(\Om AV)$ is open,
  for all large enough $n$
  there is a word $u_n$
  such that $x_n\cdot \ci E(v_n)\cdot y_n=\ci E(u_n)$.
  Taking subsequences, we may as well suppose that
  $u_n\to u$.
  By the special case for words,
  we have $x_n,y_n\in \ci E(A^*)$
  and $u_n=\ci E^{-1}(x_n)v_n\ci E^{-1}(y_n)$ (bear in mind that $\ci E$ is injective).
  Because the mapping~$\ci E^{-1}$, from the closed space
  $\ci E(\Om AV^I)$ to $\Om AV^I$, is continuous,
  we deduce that $u=\ci E^{-1}(x)v\ci E^{-1}(y)$.

  Suppose now that $\ci E(v)\in \overline{L(\Cl X')}$.
  There is a sequence $(u_n)_n$ of elements of $L(\Cl X')$
  converging to $\ci E(v)$.
  Because $\ci E(\Om AV)$ is clopen,
  and by compactness, by taking subsequences we may in
  fact suppose that $u_n=\ci E(w_n)\in L(\Cl X')$
  for a sequence $(w_n)_n$ converging to some $w\in\Om AV$.
  Again by the case for words, we get $w_n\in L(\Cl X)$,
  thus $w\in\overline{L(\Cl X)}$.
  Since $\ci E$ is continuous, we have $\ci E(v)=\lim u_n=\ci E(w)$,
  whence $v=w\in \overline{L(\Cl X)}$, as $\ci E$ is injective.  
\end{proof}

In what follows, $\Cl X$ is a subshift of $A^{\ZZ}$.
We begin to record that the shadow of~$\Cl X$ is preserved by the symbol expansion.

\begin{Lemma}\label{l:expansion-shadow-is-also-preserved}
   The inclusion $\ci E(\Sha_{\pv V}(\Cl X))\subseteq \Sha_{\pv V}(\Cl X')$
  holds.
\end{Lemma}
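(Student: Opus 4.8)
## Proof proposal for Lemma~\ref{l:expansion-shadow-is-also-preserved}

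The plan is to show that $\ci E$ maps each $\J$-class of $\Om AV$ meeting $\overline{L(\Cl X)}$ into a $\J$-class of $\Om BV$ meeting $\overline{L(\Cl X')}$, which suffices since $\Sha_{\pv V}(\Cl X)$ is by definition the union of the former and $\Sha_{\pv V}(\Cl X')$ contains the latter. So let $w\in \Sha_{\pv V}(\Cl X)$; then $w \mathrel{\J} v$ for some $v\in \overline{L(\Cl X)}$, say $w = s v t$ and $v = s'w t'$ with $s,t,s',t'\in \Om AV^I$. The first thing to check is the easy half: $\ci E(v)\in \overline{L(\Cl X')}$ whenever $v\in\overline{L(\Cl X)}$. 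This is immediate, since $\ci E$ is continuous and $\ci E(L(\Cl X))\subseteq L(\Cl X')$ by the very definition of the symbol expansion $\Cl X'$; so $\ci E$ carries the closure of $L(\Cl X)$ into the closure of $L(\Cl X')$.

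Next I would transport the $\J$-equivalence through $\ci E$. Applying the homomorphism $\ci E$ to $w = svt$ and $v = s'wt'$ gives $\ci E(w) = \ci E(s)\,\ci E(v)\,\ci E(t)$ and $\ci E(v) = \ci E(s')\,\ci E(w)\,\ci E(t')$ in $\Om BV^I$ (here I use the convention $\ci E(I)=I$), so $\ci E(w)\mathrel{\J}\ci E(v)$ in $\Om BV$. Since $\ci E(v)\in\overline{L(\Cl X')}$, the $\J$-class of $\ci E(w)$ meets $\overline{L(\Cl X')}$, hence $\ci E(w)\in\Sha_{\pv V}(\Cl X')$. As $w$ was an arbitrary element of $\Sha_{\pv V}(\Cl X)$, this proves the claimed inclusion.

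The only point that needs a little care is the treatment of the identity-adjoined monoid: the factorizations witnessing $w\mathrel{\J}v$ live in $\Om AV^I$, not $\Om AV$, and one applies the extended map $\ci E\colon\Om AV^I\to\Om BV^I$ (with $\ci E(I)=I$) — which is legitimate precisely because $\pv V$ is monoidal and contains $\pv{Sl}$, so that $\Om AV^I$ is pro-$\pv V$ and the homomorphism $\ci E$ extends canonically, exactly as set up in the paragraph preceding Lemma~\ref{l:symbol-exp-3}. I do not expect any genuine obstacle here; the statement is essentially the observation that a homomorphism preserves $\J$-order together with the definitional inclusion $\ci E(L(\Cl X))\subseteq L(\Cl X')$. (One does not even need the harder direction from Lemma~\ref{l:symbol-exp-3}; that is reserved for the reverse inclusion, which presumably appears separately.)
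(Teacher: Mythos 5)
Your proof is correct and is essentially the argument the paper has in mind: the paper's proof simply declares the inclusion "immediate" from the facts that $\ci E(L(\Cl X))\subseteq L(\Cl X')$ by definition of $\Cl X'$ and that $\ci E$ is a continuous homomorphism, and your write-up just makes explicit the two steps this compresses (closure of the definitional inclusion under continuity, and preservation of $\J$-equivalence by a homomorphism of the identity-adjoined monoids).
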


\begin{proof}
  This is immediate, since $\ci E(L(\Cl X))\subseteq L(\Cl X')$
  by the definition of $\Cl X'$
  and because $\ci E\colon \Om AV\to\Om BV$
  is a continuous homomorphism.
\end{proof}

We next prove that the mirage is also preserved by the symbol expansion.

  \begin{Lemma}\label{l:E-preserves-the-mirage}
    The inclusion
    \begin{equation*}
      \ci E(\Mir_{\pv V}(\Cl X))\subseteq \Mir_{\pv V}(\Cl X')
    \end{equation*}
    holds. More precisely, one has
    \begin{equation*}
    \ci E(\Mir_{\pv V,k}(\Cl X))\subseteq \Mir_{\pv V,k}(\Cl X')  
    \end{equation*}
    for every positive integer~$k$.
  \end{Lemma}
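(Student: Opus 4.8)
The plan is to reduce to finite words and then run an elementary combinatorial argument that exploits the fact that $\ci E$ substitutes for each letter a word of length $1$ or $2$.

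First, since $\Mir_{\pv V}(\Cl X)=\bigcap_{k\ge 1}\Mir_{\pv V,k}(\Cl X)$ and likewise for $\Cl X'$, and since $\ci E\bigl(\bigcap_k S_k\bigr)\subseteq\bigcap_k\ci E(S_k)$ for any family of sets, the first inclusion is an immediate consequence of the ``more precise'' family of inclusions $\ci E(\Mir_{\pv V,k}(\Cl X))\subseteq\Mir_{\pv V,k}(\Cl X')$, $k\ge 1$; so I would prove those. I would then reduce each of them to finite words. By Remark~\ref{rmk:mvk-is-clopen}, the sets $\Mir_{\pv V,k}(\Cl X)$ and $\Mir_{\pv V,k}(\Cl X')$ are clopen. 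Given $v\in\Mir_{\pv V,k}(\Cl X)$, density of $A^+$ in the metrizable space $\Om AV$ provides a sequence of words tending to $v$, and, $\Mir_{\pv V,k}(\Cl X)$ being open, this sequence eventually consists of words $v_n\in A^+\cap\Mir_{\pv V,k}(\Cl X)$; since $\ci E$ is continuous and $\Mir_{\pv V,k}(\Cl X')$ is closed, it suffices to show $\ci E(v_n)\in\Mir_{\pv V,k}(\Cl X')$. Thus one is reduced to proving: if $v=a_1a_2\cdots a_m\in A^+$ has all factors of length at most $k$ in $L(\Cl X)$, then every factor $w$ of $\ci E(v)=\ci E(a_1)\ci E(a_2)\cdots\ci E(a_m)$ with $|w|\le k$ lies in $L(\Cl X')$ (here using that the finite factors of a word of $B^+$, viewed in $\Om BV$, coincide with its factors in the free semigroup $B^+$, since finite words are isolated in $\Om BV$).

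For this word case, fix an occurrence of such a $w$ inside the concatenation $\ci E(a_1)\ci E(a_2)\cdots\ci E(a_m)$. Because each $\ci E(a_i)$ is a nonempty word, the blocks $\ci E(a_i)$ partition the positions of $\ci E(v)$ into consecutive intervals, so the occurrence of $w$ is contained in $\ci E(a_p)\ci E(a_{p+1})\cdots\ci E(a_q)=\ci E(a_p a_{p+1}\cdots a_q)$, where $[p,q]$ is the smallest interval of indices met by the occurrence; moreover the occurrence meets at least one position of each $\ci E(a_i)$ with $p\le i\le q$, whence $q-p+1\le|w|\le k$. Therefore $a_p a_{p+1}\cdots a_q$ is a factor of $v$ of length at most $k$, so it belongs to $L(\Cl X)$; consequently $w$ is a factor of the word $\ci E(a_p a_{p+1}\cdots a_q)\in\ci E(L(\Cl X))\subseteq L(\Cl X')$, and since $L(\Cl X')$ is factorial we conclude $w\in L(\Cl X')$. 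This gives $\ci E(v)\in\Mir_{\pv V,k}(\Cl X')$ and completes the argument.

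I do not expect a genuine obstacle. The only steps needing a little care are the reduction to words — namely the use of the clopenness of $\Mir_{\pv V,k}(\Cl X)$ and the identification of the finite factors of a word with its factors in the free semigroup — and the $\pm 1$ bookkeeping in the length estimate $q-p+1\le|w|$; the rest is routine.
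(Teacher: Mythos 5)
Your proof is correct, but it takes a different route from the paper's. You first reduce the full membership statement $\ci E(v)\in\Mir_{\pv V,k}(\Cl X')$ to the case where $v$ is a word (using clopenness of both $\Mir_{\pv V,k}(\Cl X)$ and $\Mir_{\pv V,k}(\Cl X')$, density of $A^+$, and continuity of $\ci E$), and then run a purely combinatorial block-decomposition argument: an occurrence of a factor $w$ of $\ci E(a_1)\cdots\ci E(a_m)$ with $|w|\le k$ meets an interval of at most $|w|$ blocks, so $w$ is a factor of $\ci E(a_p\cdots a_q)$ with $a_p\cdots a_q\in L(\Cl X)$, and factoriality of $L(\Cl X')$ finishes. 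The paper instead fixes a finite factor $w$ of $\ci E(u)$ for the pseudoword $u$ directly, approximates $u$ by words in the clopen set $\Mir_{\pv V,k}(\Cl X)\cap\ci E^{-1}(\overline{B^*wB^*})$, and then uses the explicit description of $\mathrm{Im}\,\ci E$ (Remark~\ref{r:image-of-E}) to extend the occurrence of $w$ to a factor $xwy\in\mathrm{Im}\,\ci E$, pulling back via Lemma~\ref{l:symbol-exp-3} and bounding $|\ci E^{-1}(xwy)|\le|w|$. Your argument is more elementary --- it avoids Remark~\ref{r:image-of-E} and Lemma~\ref{l:symbol-exp-3} entirely and would work verbatim for any nonerasing letter-to-word substitution --- while the paper's version stays closer to the machinery it needs anyway for the converse direction (Lemma~\ref{l:xi-preserves-the-mirage}). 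One very minor point: the reason the $\Om BV$-factors of a finite word coincide with its $B^+$-factors is not really that finite words are isolated, but that a factorization of a finite word cannot involve infinite pseudowords (project to $\Om B{\pv N}$, where every infinite pseudoword maps to the zero) together with injectivity of $B^+\hookrightarrow\Om BV$; this standard fact is used implicitly in the paper's proof as well, so it is not a gap.
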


  \begin{proof}
    Clearly, it suffices to show the second
    inclusion, as $\Mir_{\pv V}(\Cl Z)=\bigcap_{k\geq 1}\Mir_{\pv V,k}(\Cl Z)$
    for every subshift $\Cl Z$.
    
  Let $u$ be an element of $\Mir_{\pv V,k}(\Cl X)$. Suppose that
  $w\in B^+$ is a finite factor of $\ci E(u)$ with length at most $k$.
  Let $(u_n)_n$ be a sequence of elements of $A^+$ converging to~$u$. Then
  $u_n\in\Mir_{\pv V,k}(\Cl X)$
  for all sufficiently large $n$, as $\Mir_{\pv V,k}(\Cl X)$
  is a (clopen) neighborhood of $u$ (cf.~Remark~\ref{rmk:mvk-is-clopen}).
  On the other hand, $\overline{B^*wB^*}$
  is a clopen of $\Om BV$ containing $\ci E(u)$.
  Since $\ci E$ is continuous,
  we also have $\ci E(u_n)\in B^*wB^*$
  for all sufficiently large $n$.
  Therefore, we may take some word $u_m$
  in the intersection $\Mir_{\pv V,k}(\Cl X)\cap \ci E^{-1}(B^*wB^*)$.
  Since $w$ is a factor of $\ci E(u_m)$,
  in view of the equality in Remark~\ref{r:image-of-E},
  we see that 
  there are words $p,q\in B^\ast$,
  $x\in\{\alpha,\varepsilon\}$ and $y\in\{\dia,\varepsilon\}$
  such that $\ci E(u_m)=pxwyq$ and
  $xwy$ belongs to the image $\mathrm{Im} {\ci E}$, the possibilities
  for~$x$ and~$y$ depending on whether $w$ starts with $\dia$ or not, and
  whether $w$ ends with $\alpha$ or~not.
  By Lemma~\ref{l:symbol-exp-3}, the words  $p$ and $q$ also belong to $\mathrm{Im} {\ci E}$ and
  \begin{equation}\label{eq:E-preserves-the-mirage}
    u_m=\ci E^{-1}(p)\cdot \ci E^{-1}(xwy)\cdot \ci E^{-1}(q).
  \end{equation}
  Moreover, if
  $x=\alpha$, then $w$ starts with the letter $\dia$.
  Hence, if $x=\alpha$ or $y=\dia$,
  then $xwy$ has at least one occurrence
  of the letter $\dia$, and it has at least two occurrences if
  $x=\alpha$ and $y=\dia$. Therefore,
  for whatever possibility for $x\in\{\alpha,\varepsilon\}$ and $y\in\{\dia,\varepsilon\}$,
  it follows from the definition of the symbol
  contraction~$\ci C$ that $|\ci E^{-1}(xwy)|\leq |w|\leq k$. 
  Since $u_m\in \Mir_{\pv V,k}(\Cl X)$ and~\eqref{eq:E-preserves-the-mirage}
  holds, it follows that $\ci E^{-1}(xwy)\in L(\Cl X)$. Therefore,
  $xwy\in L(\Cl X')$, and
  so $w\in L(\Cl X')$. This proves that $\ci E(u)\in \Mir_{\pv V,k}(\Cl X')$.
\end{proof}

Concerning the contraction homomorphism, we first note the following fact.

\begin{Lemma}\label{l:contraction-shadow-is-also-preserved}
  The inclusion $\ci C(L(\Cl X')\setminus\{\dia\})\subseteq L(\Cl X)$
  holds, and so does the inclusion $\ci C(\Sha_{\pv V}(\Cl X')\setminus\{\dia\})\subseteq \Sha_{\pv V}(\Cl X)$.
\end{Lemma}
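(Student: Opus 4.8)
The plan is to establish the two inclusions in turn: the first is purely combinatorial, and the second then reduces to transporting $\J$-relations along the continuous homomorphism $\ci C$, the delicate point being that $\ci C$ must not collapse anything relevant to the adjoined identity.

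For $\ci C(L(\Cl X')\setminus\{\dia\})\subseteq L(\Cl X)$, I would first record that every block of $\Cl X'$ is a factor of some $\ci E(u)$ with $u\in L(\Cl X)$. Indeed, the set of such factors is nonempty and factorial, and it is prolongable with respect to $B$: a factor $w$ of $\ci E(u)$ either extends to the right inside $\ci E(u)$, or is a suffix of $\ci E(u)$, in which case one prolongs $u$ to the right in $L(\Cl X)$ and uses that $\ci E(a)\neq\varepsilon$; the left side is symmetric. Hence this set is the language of a subshift, and by minimality of $\Cl X'$ it contains $L(\Cl X')$. Now, for $w\in L(\Cl X')$ with $w\neq\dia$, write $\ci E(u)=pwq$ with $u\in L(\Cl X)$ and $p,q\in B^*$; applying $\ci C$ and using $\ci C\circ\ci E=\mathrm{id}$ gives $u=\ci C(p)\,\ci C(w)\,\ci C(q)$, so $\ci C(w)$ is a factor of $u$. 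Finally $\ci C(w)\neq\varepsilon$: otherwise $w\in\{\dia\}^+$, but by Remark~\ref{r:image-of-E} the word $\ci E(u)$ has no factor $\dia\dia$, forcing $w=\dia$, which is excluded. Factoriality of $L(\Cl X)$ then yields $\ci C(w)\in L(\Cl X)$.

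For the second inclusion, take $s\in\Sha_{\pv V}(\Cl X')$ with $s\neq\dia$, so $s\mathrel{\J}t$ in $\Om BV$ for some $t\in\overline{L(\Cl X')}$. Since $\ci C$ is a homomorphism, $\ci C(s)\mathrel{\J}\ci C(t)$ in $\Om AV^I$, and by the first inclusion together with continuity of $\ci C$ and $\ci C(\dia)=\varepsilon$ one gets $\ci C(t)\in\overline{L(\Cl X)}\cup\{I\}$. The crux is to rule out $\ci C(t)=I$. Here I would use that $\Om AV^I$ has trivial unit group (as $\pv{Sl}\subseteq\pv V$, the semigroup $\Om AV$ has no identity), so that a product equals $I$ only if all factors do; hence $\ci C(t)=I$ would force $\ci C(w)=\varepsilon$, i.e.\ $w\in\{\dia\}^+$, for every finite factor $w$ of $t$. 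But $\overline{L(\Cl X')}\subseteq\Mir_{\pv V}(\Cl X')$, so the finite factors of $t$ lie in $L(\Cl X')$, which contains no $\dia\dia$; since an infinite pseudoword has finite factors of length $2$, this forces $t$ to be the word $\dia$, and then $s$ is a factor of the letter $\dia$, so $s=\dia$ (the only nonempty factor of the word $\dia$ in $\Om BV$ being itself, as finite words are isolated), contradicting $s\neq\dia$. Therefore $\ci C(t)\in\overline{L(\Cl X)}$; as $\ci C(s)\mathrel{\J}\ci C(t)$, this also gives $\ci C(s)\neq I$, so $\ci C(s)\mathrel{\J}\ci C(t)$ holds in $\Om AV$, whence $\ci C(s)\in\Sha_{\pv V}(\Cl X)$.

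The step I expect to be the main obstacle is precisely the exclusion of $\ci C(t)=I$ — that is, showing that contraction does not collapse any $\J$-class meeting $\overline{L(\Cl X')}$ down to the adjoined identity. This is exactly what makes the removal of $\dia$ in the statement necessary, and it combines the finite-factor control provided by $\overline{L(\Cl X')}\subseteq\Mir_{\pv V}(\Cl X')$, the absence of $\dia\dia$ from $L(\Cl X')$, and the rigidity of the letter $\dia$ inside $\Om BV$.
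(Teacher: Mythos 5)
Your proof is correct and follows essentially the same route as the paper: every block of $\Cl X'$ is a factor of some $\ci E(u)$ with $u\in L(\Cl X)$, applying $\ci C$ and using $\ci C\circ\ci E=\mathrm{id}$ together with the absence of $\dia\dia$ from $L(\Cl X')$ gives the first inclusion, and the second follows by transporting the $\J$-relation along the continuous homomorphism $\ci C$. The only difference is that you make explicit the step the paper treats as immediate, namely ruling out $\ci C(t)=I$ for $t\in\overline{L(\Cl X')}\setminus\{\dia\}$; this is a worthwhile clarification, and it can even be shortened: since $I$ is isolated in $\Om AV^I$ and $\ci C(t)=\lim \ci C(u_n)$ for words $u_n\in L(\Cl X')$ with $\ci C(u_n)=\varepsilon$ only when $u_n=\dia$, the equality $\ci C(t)=I$ would force $u_n=\dia$ eventually and hence $t=\dia$.
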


\begin{proof}
  If $u\in L(\Cl X')$,
  then $u$ is a factor of $\ci E(v)$ for some $v\in L(\Cl X)$,
  whence $\ci C(u)$ is  a factor of $\ci C\circ\ci E(v)=v$,
  showing that $\ci C(L(\Cl X'))\subseteq L(\Cl X)\cup\{\varepsilon\}$.
  Moreover, if $u\in L(\Cl X')\setminus\{\dia\}$,
  then at least one letter appearing in $u$ is not $\dia$ (as $\dia\dia\not\in L(\Cl X')$), thus $\ci C(u)\neq \varepsilon$.
  Since $\ci C$ is a continuous
  homomorphism from $\Om BV$ to $\Om AV^I$,
  we immediately obtain
  $\ci C(\Sha_{\pv V}(\Cl X')\setminus\{\dia\})\subseteq \Sha_{\pv V}(\Cl X)$.
  \end{proof}

  We will need the following lemma.
  
\begin{Lemma}\label{l:possibilities}
  Every pseudoword $u$ in $\Mir_{\pv V,2}(\Cl X')$
  is of one, and only one, of the following four types:
  \begin{enumerate}
  \item $u\in\{\alpha,\dia\}$\label{item:possibilities-1}
  \item $u\in \ci E(\Om AV)$\label{item:possibilities-2}
  \item $u=\dia v$ for some $v\in \ci E(\Om AV)$\label{item:possibilities-3}
  \item $u=v\alpha$ for some $v\in \ci E(\Om AV)$\label{item:possibilities-4}
  \item $u=\dia v\alpha$ for some $v\in\ci E(\Om AV)\cup\{\varepsilon\}$\label{item:possibilities-5}
  \end{enumerate}
\end{Lemma}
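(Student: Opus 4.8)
The plan is to extract from membership in $\Mir_{\pv V,2}(\Cl X')$ two elementary ``local'' conditions governing where the letters $\alpha$ and $\dia$ may occur, to recognise these conditions as being exactly the defining conditions of the clopen set $\ci E(\Om AV)$ apart from a possible leading $\dia$ and a possible trailing $\alpha$, and then to peel those two letters off using the cancellation property of Corollary~\ref{c:cancelation-property}.

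First I would pin down the length-two blocks that $\Cl X'$ lacks. By Remark~\ref{r:image-of-E}, no word of $\ci E(A^+)$ — hence no word of $\ci E(L(\Cl X))$ — contains a factor $\alpha b$ with $b\in A$ or a factor $b\dia$ with $b\in B\setminus\{\alpha\}$ (in particular neither $\alpha\alpha$ nor $\dia\dia$ occurs, while $\alpha\dia$ may); since $\Cl X'$ is the \emph{least} subshift whose language contains $\ci E(L(\Cl X))$, comparing $\Cl X'$ with the shift of finite type obtained by forbidding precisely these blocks shows that $L(\Cl X')$ avoids them too. Hence any $u\in\Mir_{\pv V,2}(\Cl X')$ satisfies: (a) $u$ has no factor $\alpha b$ with $b\in A$, i.e. every occurrence of $\alpha$ in $u$ is immediately followed by $\dia$, unless $\alpha=\te 1(u)$; and (b) $u$ has no factor $b\dia$ with $b\in B\setminus\{\alpha\}$, i.e. every occurrence of $\dia$ in $u$ is immediately preceded by $\alpha$, unless $\dia=\be 1(u)$. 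I would then observe that, in view of Remark~\ref{r:image-of-E} and the equality $\ci E(\Om AV)=\overline{\ci E(A^+)}$, the set $\ci E(\Om AV)$ is precisely the clopen set of those $w\in\Om BV$ with $\be 1(w)\neq\dia$, $\te 1(w)\neq\alpha$, no factor $\alpha b$ with $b\in A$, and no factor $b\dia$ with $b\in B\setminus\{\alpha\}$; that is, (a) and (b) are what separates $\Mir_{\pv V,2}(\Cl X')$ from $\ci E(\Om AV)$, up to the two boundary conditions on $\be 1$ and $\te 1$.

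The classification is then quick. If $u$ is a single letter, it is a block of $\Cl X'$, so $u\in A\cup\{\dia\}$; if $u\in\{\alpha,\dia\}$ we are in case~\eqref{item:possibilities-1}, and otherwise $u=\ci E(u)\in\ci E(\Om AV)$, case~\eqref{item:possibilities-2}. If $u$ is not a single letter, I would write $u=\dia\, u_0$ with $u_0\in\Om BV$ when $\be 1(u)=\dia$ and $u_0=u$ otherwise; since $u_0\neq\varepsilon$ we have $\te 1(u_0)=\te 1(u)$, so I would further write $u_0=v\alpha$ when $\te 1(u_0)=\alpha$ and $v=u_0$ otherwise. A short verification gives $v\in\ci E(\Om AV)$, unless $v=\varepsilon$, which happens exactly when $u=\dia\alpha$: conditions (a) and (b) descend from $u$ to its factor $v$; $\be 1(v)\neq\dia$, for otherwise $u$ would have $\dia\dia$ as a factor; and $\te 1(v)\neq\alpha$, for otherwise $u$ would have $\alpha\alpha$ as a factor. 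According to which of $\dia$, $\alpha$ were removed, $u$ then has one of the forms $v$, $\dia v$, $v\alpha$, $\dia v\alpha$, i.e. one of the cases \eqref{item:possibilities-2}--\eqref{item:possibilities-5}. For the ``only one'' assertion, case~\eqref{item:possibilities-1} is disjoint from the others because $\alpha$ and $\dia$ lie in none of $\ci E(\Om AV)$, $\dia\,\ci E(\Om AV)$, $\ci E(\Om AV)\,\alpha$, $\dia\,\ci E(\Om AV)\,\alpha$; and cases \eqref{item:possibilities-2}--\eqref{item:possibilities-5} are pairwise disjoint because an element of $\ci E(\Om AV)$ neither begins with $\dia$ nor ends with $\alpha$, so that the pair of answers to ``is $\be 1(u)=\dia$?'' and ``is $\te 1(u)=\alpha$?'' selects the case, inside which the ``middle'' pseudoword is uniquely determined, again by Corollary~\ref{c:cancelation-property}.

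The step I expect to be the main obstacle is the very first one: justifying rigorously that $L(\Cl X')$ contains none of the words $\alpha b$ ($b\in A$) and $b\dia$ ($b\in B\setminus\{\alpha\}$). This is visually obvious from the shape of the symbol expansion homomorphism, but because $\Cl X'$ is defined by a minimality property one must argue through the shift of finite type cut out by these forbidden blocks and check that every word of $\ci E(L(\Cl X))$ is a block of it, i.e. extends to a bi-infinite point avoiding them. After that, all that remains are routine manipulations of prefixes and suffixes together with the cancellation property already in hand from Corollary~\ref{c:cancelation-property}.
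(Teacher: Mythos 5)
Your proof is correct, but it takes a genuinely different route from the paper's. The paper first establishes the classification for finite words of $\Mir_{\pv V,2}(\Cl X')\cap B^+$ by induction on length, and then transfers it to arbitrary pseudowords by a compactness argument: each of the five types describes a closed subset of $\Om BV$, there are only finitely many types, so a convergent sequence of words may be assumed to be of constant type and the limit inherits it. You instead argue directly at the level of pseudowords: you upgrade Remark~\ref{r:image-of-E} to a description of $\ci E(\Om AV)$ as the clopen set of $w\in\Om BV$ with $\be 1(w)\neq\dia$, $\te 1(w)\neq\alpha$, and no finite factor $\alpha b$ ($b\in A$) or $b\dia$ ($b\in B\setminus\{\alpha\}$), observe that membership in $\Mir_{\pv V,2}(\Cl X')$ imposes exactly the two ``interior'' conditions, and then peel off a possible leading $\dia$ and trailing $\alpha$ via Corollary~\ref{c:cancelation-property}. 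This dispenses with both the induction and the limiting argument and makes the mechanism of the lemma transparent; the price is a piece of Stone-duality bookkeeping that you should make explicit, namely that $\overline{B^+\setminus K}=\Om BV\setminus\overline{K}$ for the locally testable language $K$ of Remark~\ref{r:image-of-E} (this follows from Theorem~\ref{t:rational-open}, the density of $B^+$, and $\Lo{Sl}\subseteq\pv V$, and is what legitimises reading the combinatorial description of $\ci E(A^+)$ as a description of $\ci E(\Om AV)$). Your flagged ``main obstacle'' --- that $\alpha b$ ($b\in A$) and $b\dia$ ($b\neq\alpha$) are not blocks of $\Cl X'$ --- is handled correctly by your suggestion: the shift of finite type forbidding these blocks has $\ci E(L(\Cl X))$ in its language, hence contains $\Cl X'$ by minimality; note, though, that the paper's own proof uses these same facts about $L(\Cl X')$ without comment, so here you are matching rather than exceeding its level of rigour.
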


\begin{proof}
  We assume $u\in B^+$ first. We prove, by induction
  on the length of the word~$u$,
  that if $u$ belongs to $\Mir_{\pv V,2}(\Cl X')\cap B^+$,
  then $u$ is of one of five types~\eqref{item:possibilities-1}-\eqref{item:possibilities-5}. The base step is immediate: if the length of $u$ is one, then $u$ is of
  type~\eqref{item:possibilities-1} or~\eqref{item:possibilities-2}.
  
  Suppose
  that $u$ is a word with length at least two and that the lemma holds
  for words of smaller length.
  Consider first the case in which $u$ starts with the letter $\dia$,
  and take a factorization $u=\dia w$.
  Since~$w$ also belongs to the factorial set $\Mir_{\pv V,2}(\Cl X')$,
  we may apply the induction hypothesis
  to $w$. Let us see what happens in each case:
  \begin{itemize}
  \item If $w$ is of type \eqref{item:possibilities-1},
    then from $u=\dia w\in L(\Cl X')$
    we get $w=\alpha$, and so $u$ is of type \eqref{item:possibilities-5}.
  \item If $w$ falls into type~\eqref{item:possibilities-2},
    then $u$ is of type~\eqref{item:possibilities-3}.
  \item It is impossible that $w$ falls into
    types~\eqref{item:possibilities-3} or~\eqref{item:possibilities-5},
    otherwise $\dia\,\dia$ would be a prefix of~$u$, contradicting
    that every factor of length two of $u$ is in $L(\Cl X')$.
  \item If $w$ falls into type~\eqref{item:possibilities-4},
    then $u$ is of type~\eqref{item:possibilities-5}.
  \end{itemize}
  Therefore, in all possible cases, $u$ is of one of the listed types, whenever $u\in\dia B^*$.

  Suppose now that $u$ starts with the letter $\alpha$.
  Since the factors of length two of $u$
  belong to $L(\Cl X')$,
  we must have $u=\alpha\dia w=\ci E(\alpha)\cdot w$ for some $w\in B^*\setminus \dia B^*$. Applying the induction hypothesis
  to $w$, one sees that $u$ must be either
  of type~\eqref{item:possibilities-2} or~\eqref{item:possibilities-4}.
  A similar reasoning is valid if $u$ starts with a letter
  $a\in A\setminus\{\alpha\}$, as we then have $u=aw=\ci E(a)\cdot w$ for
  some $w\in B^*\setminus \dia B^*$. We have thus concluded that the inductive step holds, and that the lemma is valid for every $u\in \Mir_{\pv V,2}(\Cl X')\cap B^+$.

  Now, let $u$ be a pseudoword
  belonging to $\Mir_{\pv V,2}(\Cl X')$.
  Since $\Mir_{\pv V,2}(\Cl X')$ is clopen,
  there is a sequence $(u_n)_n$
  of elements of $\Mir_{\pv V,2}(\Cl X')\cap B^+$
  converging to $u$.
  As the number of possible types is finite, taking subsequences,
  we may as well suppose that
  all elements of $(u_n)_n$
  are of the same type, among the five
  possible types~\eqref{item:possibilities-1}-\eqref{item:possibilities-5}.
  Since $\ci E(\Om AV)$ is a closed set and the multiplication is continuous,
  it follows that $u$ is of the same type as that of
  the terms $u_n$.

  We end by observing that no pseudoword can be of more than one
  of the five types~\eqref{item:possibilities-1}-\eqref{item:possibilities-5},
  since no element of $\ci E(\Om AV)$ starts with $\dia$ or ends with $\alpha$.
\end{proof}

Next is a sort of weak converse of Lemma~\ref{l:E-preserves-the-mirage}.
  
  \begin{Lemma}\label{l:xi-preserves-the-mirage}
    The inclusion
    \begin{equation*}
      \ci C (\Mir_{\pv V}(\Cl X')\setminus \{\dia \})\subseteq \Mir_{\pv V}(\Cl X)
    \end{equation*}
    holds. More precisely, one has
    \begin{equation*}
    \ci C (\Mir_{\pv V,2k}(\Cl X')\setminus \{\dia \})\subseteq \Mir_{\pv V,k}(\Cl X)  
    \end{equation*}
    for every positive integer~$k$.
  \end{Lemma}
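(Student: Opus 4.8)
The plan is to prove the sharper inclusion $\ci C(\Mir_{\pv V,2k}(\Cl X')\setminus\{\dia\})\subseteq\Mir_{\pv V,k}(\Cl X)$, the first inclusion then following by intersecting over $k$, since $\Mir_{\pv V}(\Cl Z)=\bigcap_{k\geq 1}\Mir_{\pv V,k}(\Cl Z)$ for every subshift $\Cl Z$. So fix $k\geq 1$ and $u\in\Mir_{\pv V,2k}(\Cl X')\setminus\{\dia\}$; I must show that every finite factor $w$ of $\ci C(u)$ with $1\leq|w|\leq k$ lies in $L(\Cl X)$. Since $\Mir_{\pv V,2k}(\Cl X')\subseteq\Mir_{\pv V,2}(\Cl X')$, Lemma~\ref{l:possibilities} applies to $u$; inspecting the five possibilities there (and discarding $u=\dia$), one sees that in every case $u=x\cdot\ci E(v')\cdot y$ for some $x\in\{\dia,\varepsilon\}$, $y\in\{\alpha,\varepsilon\}$ and $v'\in\Om AV^I$, where $v'=\ci E^{-1}(v)$ in the notation of that lemma and $\ci E(v')=\varepsilon$ is allowed (this covers the degenerate cases $u=\alpha$ and $u=\dia\alpha$). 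Applying $\ci C$ and using $\ci C\circ\ci E(v')=v'$, $\ci C(\dia)=\varepsilon$ and $\ci C(\alpha)=\alpha$, we get $\ci C(u)=v'y$.

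The heart of the argument is a lifting step: given a finite factor $w$ of $\ci C(u)=v'y$ with $1\leq|w|\leq k$, I produce a finite factor $z$ of $u$ with $z\neq\dia$, $|z|\leq 2k$ and $\ci C(z)=w$. If $w$ is a factor of $v'$, write $v'=pwq$ with $p,q\in\Om AV^I$; applying the continuous homomorphism $\ci E$ gives $\ci E(v')=\ci E(p)\,\ci E(w)\,\ci E(q)$, so $\ci E(w)$ is a factor of $\ci E(v')$, hence of $u$. Since $\ci E$ replaces each letter by one or two letters, $|\ci E(w)|\leq 2|w|\leq 2k$; moreover $\ci E(w)$ begins with a letter of $A$, so $\ci E(w)\neq\dia$, and $\ci C(\ci E(w))=w$. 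Thus $z=\ci E(w)$ works. Otherwise $w$ is not a factor of $v'$, which forces $y=\alpha$; analysing the suffixes of $v'\alpha$ by means of the cancellation property (Corollary~\ref{c:cancelation-property}, which uses $\pv V=\pv V\ast\pv D$), one finds $w=w_0\alpha$ with $w_0$ a possibly empty suffix of $v'$. Writing $v'=pw_0$, the word $\ci E(w_0)$ is a suffix of $\ci E(v')$, so $z:=\ci E(w_0)\,\alpha$ is a suffix of $\ci E(v')\,\alpha$ and therefore a factor of $u$; moreover $|z|=|\ci E(w_0)|+1\leq 2|w_0|+1=2|w|-1\leq 2k$, $z$ ends in $\alpha$ so $z\neq\dia$, and $\ci C(z)=\ci C(\ci E(w_0))\,\alpha=w_0\alpha=w$ (when $w_0=\varepsilon$ this reads $z=\alpha$, $\ci C(\alpha)=\alpha=w$).

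With such a $z$ in hand, $z$ is a finite factor of $u\in\Mir_{\pv V,2k}(\Cl X')$ of length at most $2k$, hence $z\in L(\Cl X')$, and since $z\neq\dia$ Lemma~\ref{l:contraction-shadow-is-also-preserved} yields $w=\ci C(z)\in\ci C(L(\Cl X')\setminus\{\dia\})\subseteq L(\Cl X)$. As $w$ was an arbitrary finite factor of $\ci C(u)$ of length at most $k$, this proves $\ci C(u)\in\Mir_{\pv V,k}(\Cl X)$. I expect the only genuinely delicate point to be the case in which $u$ ends in $\alpha$ (types (4) and (5) of Lemma~\ref{l:possibilities}): there, for a $w$ ending in $\alpha$, the word $\ci E(w)$ would carry a trailing $\dia$ absent from $u$, so one cannot lift $w$ directly along $\ci E$ and must instead lift to $\ci E(w_0)\alpha$ — and this is exactly where the factor of two in the window size is needed, where it comfortably suffices.
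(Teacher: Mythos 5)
Your proof is correct, and its skeleton --- reducing to the quantitative inclusion, decomposing $u=x\,\ci E(v)\,y$ via Lemma~\ref{l:possibilities}, so that $\ci C(u)=vy$, and then lifting each finite factor $w$ of $vy$ of length at most $k$ to a finite factor of length at most $2k$ whose contraction is $w$ --- is the same as the paper's. The one point of genuine divergence is the case you yourself flag as delicate, namely $y=\alpha$ with $w$ ending in $\alpha$: the paper lifts $w$ to $\ci E(w)$, which then ends in $\dia$ and is only a factor of $u\,\dia$, and it justifies passing to $u\,\dia$ by prolongability of $\Mir_{\pv V,2k}(\Cl X')$ together with the observation that only $\dia$ can follow $\alpha$ in $L(\Cl X')$; you instead invoke the cancellation property (Corollary~\ref{c:cancelation-property}) to show that such a $w$ must be of the form $w_0\alpha$ with $w_0$ a suffix of $v$, and lift to $z=\ci E(w_0)\,\alpha$, which is already a factor of $u$ itself. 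Your route avoids enlarging $u$, at the cost of a slightly finer analysis of the factors of $v\alpha$ (where cancellation is genuinely needed to rule out $w$ being an interior factor, and where one should also note, as you implicitly do, that a pseudoword with last letter $\alpha$ factors as something times $\alpha$); both arguments are sound and of comparable length. Correspondingly, you conclude with Lemma~\ref{l:contraction-shadow-is-also-preserved} where the paper invokes Lemma~\ref{l:symbol-exp-3}; in this situation the two are interchangeable.
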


  \begin{proof}
    Because
    $\Mir_{\pv V}(\Cl Z)=\bigcap_{k\geq 1}\Mir_{\pv V,2k}(\Cl Z)$
    for every subshift $\Cl Z$, we are reduced to showing the second inclusion.
    
    Let $u\in \Mir_{\pv V,2k}(\Cl X')\setminus \{\dia \}$.
    Let $w$ be a finite factor of $\ci C(u)$ of length at most~$k$.
    By Lemma~\ref{l:possibilities},
    there are $x\in\{\dia,\varepsilon\}$, $y\in\{\alpha,\varepsilon\}$ and $v\in \Om AV^I$ such
    that
    \begin{equation*}
    u=x\ci E(v)y.  
    \end{equation*}
    Since $\ci C\circ \ci E$ is the identity, we have
    \begin{equation*}
    \ci C (u)=vy.  
    \end{equation*}
    Hence $\ci E(w)$ is a finite factor of $\ci E(v)\cdot \ci E(y)$.
    Observe that $|\ci E(w)|\leq 2|w|\leq 2k$.

    Suppose that $y=\varepsilon$.
    Then $\ci E(w)$ is a factor of $\ci E(v)$,
    and so it is a factor of~$u$.
    Since $u\in \Mir_{\pv V,2k}(\Cl X')$, it follows that
    $\ci E(w)\in L(\Cl X')$. Applying Lemma~\ref{l:symbol-exp-3},
    we then get~$w\in L(\Cl X)$.

    Finally, suppose that $y=\alpha$.
    Then we have
    $u\,\dia=x\ci E(v)\alpha\,\dia=x\ci E(v)\ci E(\alpha)$,
    and $\ci E(w)$ is a factor of $u\,\dia$.
    As discussed in Section~\ref{sec:conn-with-symb},
    the set $\Mir_{\pv V,2k}(\Cl X')$
    is prolongable, whence $ub\in \Mir_{\pv V,2k}(\Cl X')$
    for some letter $b$.
    But $y=\alpha$ is a suffix of $u$,
    and so $\alpha b$ is a finite suffix of $u\dia$.
    In particular, $\alpha b\in L(\Cl X')$,
    implying $b=\dia$.
    Therefore, we have $u\dia\in\Mir_{\pv V,2k}(\Cl X')$.
    Since $\ci E(w)$ is a finite factor of $u\dia$
    of length at most $2k$,
    we must have $\ci E(w)\in L(\Cl X')$.
    Again by Lemma~\ref{l:symbol-exp-3}, we conclude that $w\in L(\Cl X)$.
  \end{proof}

  The following improvement of Lemma~\ref{l:possibilities}
  is not necessary for the sequel, but it may be worthwhile to have it in mind.
  
  \begin{Cor}\label{c:refined-possibilities}
    The equality 
      \begin{equation}\label{eq:refined-possibilities}
    \Mir_{\pv V}(\Cl X')\cap \ci E(\Om AV)=\ci E(\Mir_{\pv V}(\Cl X)).
  \end{equation}
  holds. Consequently, every pseudoword $u$ in $\Mir_{\pv V}(\Cl X')$
  is of one, and only one, of the following four types:
  \begin{enumerate}
  \item $u\in \{\alpha,\dia\}$\label{item:refined-possibilities-1}
  \item $u\in \ci E(\Mir_{\pv V}(\Cl X))$\label{item:refined-possibilities-2}
  \item $u=\dia v$ for some $v\in \ci E(\Mir_{\pv V}(\Cl X))$\label{item:refined-possibilities-3}
  \item $u=v\alpha$ for some $v\in \ci E(\Mir_{\pv V}(\Cl X))$\label{item:refined-possibilities-4}
  \item $u=\dia v\alpha$
    for some $v\in\ci E(\Mir_{\pv V}(\Cl X))\cup\{\varepsilon\}$\label{item:refined-possibilities-5}
  \end{enumerate}
\end{Cor}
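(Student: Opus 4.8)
The plan is to obtain the equality~\eqref{eq:refined-possibilities} directly from Lemmas~\ref{l:E-preserves-the-mirage} and~\ref{l:xi-preserves-the-mirage}, and then to read off the refined list of types by substituting that equality into Lemma~\ref{l:possibilities}. The inclusion $\ci E(\Mir_{\pv V}(\Cl X))\subseteq\Mir_{\pv V}(\Cl X')\cap\ci E(\Om AV)$ is immediate, since Lemma~\ref{l:E-preserves-the-mirage} gives $\ci E(\Mir_{\pv V}(\Cl X))\subseteq\Mir_{\pv V}(\Cl X')$ and obviously $\ci E(\Mir_{\pv V}(\Cl X))\subseteq\ci E(\Om AV)$. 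For the reverse inclusion I would take $u\in\Mir_{\pv V}(\Cl X')\cap\ci E(\Om AV)$, write $u=\ci E(v)$ with $v\in\Om AV$, and note that $u\neq\dia$ because no element of $\ci E(\Om AV)$ begins with the letter $\dia$ (as observed at the end of the proof of Lemma~\ref{l:possibilities}); hence $u\in\Mir_{\pv V}(\Cl X')\setminus\{\dia\}$ and Lemma~\ref{l:xi-preserves-the-mirage} yields $\ci C(u)\in\Mir_{\pv V}(\Cl X)$. Since $\ci C\circ\ci E$ is the identity on $\Om AV$, one has $\ci C(u)=v$, so $v\in\Mir_{\pv V}(\Cl X)$ and $u=\ci E(v)\in\ci E(\Mir_{\pv V}(\Cl X))$, establishing~\eqref{eq:refined-possibilities}.

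For the list of types, I would take an arbitrary $u\in\Mir_{\pv V}(\Cl X')$ and apply Lemma~\ref{l:possibilities}, which is legitimate because $\Mir_{\pv V}(\Cl X')\subseteq\Mir_{\pv V,2}(\Cl X')$. Thus $u$ is of exactly one of the five types there; apart from the first, each of them writes $u$ as $\dia v$, $v\alpha$ or $\dia v\alpha$ with $v\in\ci E(\Om AV)$ (with $v=\varepsilon$ permitted only in the last case, where then $u=\dia\alpha$ outright). When $v$ is nonempty it is a factor of $u$, hence lies in $\Mir_{\pv V}(\Cl X')$ by factoriality of the mirage, and therefore in $\Mir_{\pv V}(\Cl X')\cap\ci E(\Om AV)=\ci E(\Mir_{\pv V}(\Cl X))$ by~\eqref{eq:refined-possibilities}. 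This places $u$ in one of the refined types~\eqref{item:refined-possibilities-1}--\eqref{item:refined-possibilities-5}, and uniqueness is inherited from Lemma~\ref{l:possibilities}, since $\ci E(\Mir_{\pv V}(\Cl X))\subseteq\ci E(\Om AV)$ forces each refined type to sit inside the corresponding (mutually exclusive) original one.

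I do not expect a genuine obstacle: the argument is a short combination of the two preceding lemmas. The only points requiring a little care are the bookkeeping that rules out $u=\dia$ before invoking Lemma~\ref{l:xi-preserves-the-mirage}, and the degenerate subcase $v=\varepsilon$ of the type $\dia v\alpha$, both of which are handled above.
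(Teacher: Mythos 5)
Your proposal is correct and follows essentially the same route as the paper: the equality~\eqref{eq:refined-possibilities} is obtained by combining Lemma~\ref{l:E-preserves-the-mirage} with Lemma~\ref{l:xi-preserves-the-mirage} (via $\ci C\circ\ci E=\mathrm{id}$), and the refined type list then follows from Lemma~\ref{l:possibilities} together with the factoriality of $\Mir_{\pv V}(\Cl X')$. Your explicit check that $u\neq\dia$ before invoking Lemma~\ref{l:xi-preserves-the-mirage} is a point the paper leaves implicit, and it is handled correctly.
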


\begin{proof}
  The inclusion
  $\ci E(\Mir_{\pv V}(\Cl X))\subseteq\Mir_{\pv V}(\Cl X')\cap \ci E(\Om AV)$
  is in Lemma~\ref{l:E-preserves-the-mirage}.
  Conversely, if $v\in\Mir_{\pv V}(\Cl X')\cap \ci E(\Om AV)$,
  then, by Lemma~\ref{l:xi-preserves-the-mirage},
  we have $\ci E^{-1}(v)=\ci C(v)\in \Mir_{\pv V}(\Cl X)$,
  and so~\eqref{eq:refined-possibilities} holds.

  Let $u\in\Mir_{\pv V}(\Cl X')$.
  Then $u$ is in one of the situations of Lemma~\ref{l:possibilities}.
  Since $\Mir_{\pv V}(\Cl X')$ is factorial and the equality~\eqref{eq:refined-possibilities} is valid, we conclude that in
  the list given for such~$u$ by Lemma~\ref{l:possibilities},
  we may replace $\ci E(\Om AV)$ by $\ci E(\Mir_{\pv V}(\Cl X))$.
\end{proof}

We adapt to compact categories
the notions of isomorphism of functors and of equivalence of categories.
For that purpose, the following simple fact is needed.

\begin{Lemma}\label{l:continuity-of-inversion}
  In a compact category $C$, the set of isomorphisms is a closed subspace of $\Mor (C)$, and the mapping $\varphi\mapsto \varphi^{-1}$
  is continuous on this subspace.
\end{Lemma}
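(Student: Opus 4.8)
The plan is to prove Lemma~\ref{l:continuity-of-inversion} by a compactness argument, exploiting that $\Mor(C)$, $\Obj(C)$, the incidence maps $d,r$, the identity-assigning map $x\mapsto 1_x$, and composition are all continuous, with the spaces involved compact Hausdorff.

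First I would characterize the set of isomorphisms as a closed condition. An arrow $\varphi\colon x\to y$ is an isomorphism if and only if there exists $\psi\colon y\to x$ with $\psi\varphi=1_x$ and $\varphi\psi=1_y$; moreover such a $\psi$ is unique. Consider the set $Z$ of composable pairs $(\varphi,\psi)\in\Mor(C)\times\Mor(C)$ such that $d(\psi)=r(\varphi)$, $r(\psi)=d(\varphi)$, $\psi\varphi=1_{d(\varphi)}$ and $\varphi\psi=1_{r(\varphi)}$. Each of these is a closed condition: $d(\psi)=r(\varphi)$ and $r(\psi)=d(\varphi)$ are closed because $d,r$ are continuous maps into a Hausdorff space; on the (closed) set where these hold, composition is defined and continuous, and $x\mapsto 1_x$ is continuous, so $\psi\varphi=1_{d(\varphi)}$ and $\varphi\psi=1_{r(\varphi)}$ are equalities of continuous functions into the Hausdorff space $\Mor(C)$, hence closed. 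Thus $Z$ is closed in the compact Hausdorff space $\Mor(C)\times\Mor(C)$, so $Z$ is compact. The set of isomorphisms is the image of $Z$ under the (continuous) first projection $\pi_1$; since $Z$ is compact and the codomain is Hausdorff, $\pi_1(Z)$ is compact, hence closed. This gives the first assertion.

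Next I would handle continuity of inversion. By uniqueness of two-sided inverses, the map $\pi_1\colon Z\to \pi_1(Z)$ is a continuous \emph{bijection} from a compact space onto a Hausdorff space, hence a homeomorphism. Its inverse sends an isomorphism $\varphi$ to the pair $(\varphi,\varphi^{-1})$, and composing with the second projection $\pi_2$ (continuous) yields exactly $\varphi\mapsto\varphi^{-1}$. Therefore $\varphi\mapsto\varphi^{-1}$ is continuous on the subspace of isomorphisms, as desired.

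The only mildly delicate point — the main obstacle, such as it is — is being careful that composition is a continuous map only on the (closed) subspace of composable pairs, so one must restrict attention there before invoking continuity of $(s,t)\mapsto st$; but this is automatic once the incidence-matching conditions have been imposed, and those conditions are themselves closed. Everything else is the standard fact that a continuous bijection from a compact space to a Hausdorff space is a homeomorphism. No further machinery from the paper is needed beyond the definition of compact category.
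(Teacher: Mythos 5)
Your proof is correct. It takes a slightly different route from the paper's: the paper argues directly with nets, observing that the set of identities $\{1_c\mid c\in\Obj(C)\}$ is closed (as the continuous image of the compact space $\Obj(C)$ in the Hausdorff space $\Mor(C)$), so that if a net of isomorphisms $(\varphi_i)$ converges to $\varphi$, then by compactness every convergent subnet of $(\varphi_i^{-1})$ converges to an inverse of $\varphi$, and uniqueness of two-sided inverses forces the whole net $(\varphi_i^{-1})$ to converge to $\varphi^{-1}$. You instead package the same ingredients (compactness of $\Mor(C)$, Hausdorffness, continuity of composition and of $c\mapsto 1_c$, uniqueness of inverses) into the auxiliary compact set $Z$ of mutually inverse pairs, and then invoke the standard facts that a continuous image of a compact set in a Hausdorff space is closed and that a continuous bijection from a compact space onto a Hausdorff space is a homeomorphism. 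The two arguments are equivalent in substance; yours avoids nets and makes the role of uniqueness of inverses (injectivity of $\pi_1$ on $Z$) slightly more visible, while the paper's is shorter. Your attention to the point that composition is continuous only on the closed set of composable pairs is well placed and handled correctly.
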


\begin{proof}
  Observe first that the set of identities $\{1_c\mid c\in\Obj(C)\}$
  is a closed subspace of $\Mor(C)$,
  since the map $c\in\Obj(C)\mapsto 1_c$
  is continuous and $\Obj(C)$ is compact.
  Therefore, if the net $(\varphi_i)_{i\in I}$ of isomorphisms of $C$ converges
  to $\varphi$ then, by the continuity of the composition, every convergent subnet of $(\varphi_i^{-1})_{i\in I}$
  converges to an inverse of~$\varphi$. As $\Mor(C)$ is compact,
  we deduce that $(\varphi_i^{-1})_{i\in I}$ converges
  to $\varphi^{-1}$.
\end{proof}

Two continuous functors $F,G\colon C\to D$
between compact categories are \emph{continuously isomorphic},
written $F\cong G$,
when there is a \emph{continuous} natural isomorphism~$\eta\colon F\Rightarrow G$, which we define as natural isomorphism~$\eta\colon F\Rightarrow G$
such that the function $\Obj(C)\to \Mor(D)$ mapping each object $c$ of $C$
to the morphism $\eta_c\colon F(c)\to G(c)$ is continuous. By Lemma~\ref{l:continuity-of-inversion},
the inverse of a continuous natural isomorphism is a continuous
natural isomorphism, and so the relation $\cong$ is symmetric.
Moreover, it is straightforward that for all continuous functors $F,G\colon C\to D$ and $H,K\colon D\to E$ of compact categories,
if $F\cong G$ and $H\cong K$ then $H\circ F\cong K\circ G$.

A functor $F\colon C\to D$ between
compact categories $C$ and $D$ is a
\emph{continuous equivalence} if there is a continuous functor $G\colon D\to C$, such that $F\circ G \cong 1_D$
and $G\circ F \cong 1_C$.
Such $G$ is a \emph{continuous pseudo-inverse} of $F$.
We say that $C$ and $D$ are \emph{continuously equivalent} if there is a continuous equivalence $F\colon C\to D$. Note that the continuous equivalence of compact categories is an equivalence relation.

We are now ready to state the next theorem.
We mention that it applies in particular
when $\pv V=\overline{\pv{H}}$,
for a pseudovariety of groups $\pv H$,
as the equality $\overline{\pv{H}}=\overline{\pv{H}}*\pv D$ holds, and $\overline{\pv{H}}\supseteq\pv A\supseteq \Lo {Sl}$~\cite{Eilenberg:1976,Rhodes&Steinberg:2009qt}.

\begin{Thm}\label{t:splitting-category-free-profinite-is-flow-invariant}
  Let $\pv V$ be a monoidal pseudovariety
  of semigroups containing $\Lo {Sl}$
  and such that $\pv V=\pv V\ast\pv D$.
  With respect to the continuous equivalence of compact categories,
  the equivalence class of the compact category $\Kar(\Mir_{\pv V}(\Cl X))$ is a flow equivalence invariant.
\end{Thm}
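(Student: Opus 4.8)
The plan is to invoke the theorem of Parry and Sullivan~\cite{Parry&Sullivan:1975} that, within subshifts, flow equivalence is the equivalence relation generated by conjugacy and symbol expansion. By Corollary~\ref{c:splitting-category-free-profinite-is-conj-invariant}, $\Kar(\Mir_{\pv V}(\Cl X))$ is a conjugacy invariant up to isomorphism of compact categories, and isomorphic compact categories are a fortiori continuously equivalent; as continuous equivalence is an equivalence relation, it thus suffices to prove that $\Kar(\Mir_{\pv V}(\Cl X))$ and $\Kar(\Mir_{\pv V}(\Cl X'))$ are continuously equivalent whenever $\Cl X'$ is a symbol expansion of $\Cl X$, say relative to $\alpha\in A$, with $\dia\notin A$ the new letter and $B=A\cup\{\dia\}$.

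To exhibit the continuous equivalence I would use the homomorphisms already at hand: the symbol expansion $\ci E\colon\Om AV\to\Om BV$ and the symbol contraction $\ci C\colon\Om BV\to\Om AV^I$. By Lemma~\ref{l:E-preserves-the-mirage}, $\Kar(\ci E)$ restricts to a continuous functor $F\colon\Kar(\Mir_{\pv V}(\Cl X))\to\Kar(\Mir_{\pv V}(\Cl X'))$. Going back, note that an idempotent of $\Mir_{\pv V}(\Cl X')$ is an infinite pseudoword (the finite pseudowords are words, which have no idempotents), so it cannot equal the letter $\dia$; likewise the middle component of an arrow of $\Kar(\Mir_{\pv V}(\Cl X'))$ cannot be $\dia$, since $\dia$ sandwiched between infinite idempotents is impossible; and, because $\dia$ is always immediately preceded by $\alpha$ in $\Cl X'$ (so $\dia\dia\notin L(\Cl X')$), no idempotent of $\Mir_{\pv V}(\Cl X')$ can be a power of $\dia$, whence $\ci C$ never sends such an idempotent to the adjoined identity. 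Using these remarks together with Lemma~\ref{l:xi-preserves-the-mirage}, $\Kar(\ci C)$ restricts to a continuous functor $G\colon\Kar(\Mir_{\pv V}(\Cl X'))\to\Kar(\Mir_{\pv V}(\Cl X))$. Since $\ci C\circ\ci E$ is the identity on $\Om AV$, one has the equality $G\circ F=1_{\Kar(\Mir_{\pv V}(\Cl X))}$, so the whole matter reduces to constructing a continuous natural isomorphism $\eta\colon F\circ G\Rightarrow 1_{\Kar(\Mir_{\pv V}(\Cl X'))}$.

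Here Corollary~\ref{c:refined-possibilities} (or Lemma~\ref{l:possibilities} refined by Lemmas~\ref{l:E-preserves-the-mirage} and~\ref{l:xi-preserves-the-mirage}) is the organizing principle. An idempotent $e'$ of $\Mir_{\pv V}(\Cl X')$ is not a letter, and the shapes $\dia v$ and $v\alpha$ are incompatible with idempotency, as they force one of the forbidden factors $\dia\dia$, $b\dia$ (with $b\neq\alpha$) or $\alpha b$ (with $b\neq\dia$) to occur in $e'$. Hence either $e'=\ci E(z)$ with $z$ an idempotent of $\Mir_{\pv V}(\Cl X)$, in which case $F(G(e'))=\ci E(\ci C(e'))=e'$ and I would set $\eta_{e'}=1_{e'}$; or $e'=\dia\,\ci E(z)\,\alpha$ with $\ci E(z)\,\alpha\,\ci E(z)=\ci E(z)$, equivalently $z\alpha z=z$ in $\Om AV$, and then $F(G(e'))=\ci E(z\alpha)=\ci E(z)\,\alpha\,\dia$ differs from $e'$. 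For such $e'$ I would take $\eta_{e'}=\bigl(e',\,e'\dia,\,\ci E(z)\alpha\dia\bigr)$: its middle component $e'\dia=\dia\,\ci E(z)\,\alpha\,\dia$ belongs to $\Mir_{\pv V}(\Cl X')$ because that set is prolongable and a terminal $\alpha$ in $\Cl X'$ can be followed only by $\dia$; and a direct computation with $z\alpha z=z$ and the cancellation property of Corollary~\ref{c:cancelation-property} shows that $\bigl(\ci E(z)\alpha\dia,\,\ci E(z)\alpha,\,e'\bigr)$ — whose middle component $\ci E(z)\alpha$ is a factor of $e'$, hence lies in $\Mir_{\pv V}(\Cl X')$ — is a two-sided inverse of $\eta_{e'}$ in $\Kar(\Mir_{\pv V}(\Cl X'))$. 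The idempotents of the first type form $E(\Mir_{\pv V}(\Cl X'))\cap\ci E(\Om AV)$, which is clopen by Remark~\ref{r:image-of-E}, so the partition into the two types is clopen and $\eta$ is given by a continuous formula on each piece; naturality against an arbitrary arrow $(e',u',f')$ reduces, after substituting the shapes of $e',u',f'$ given by Corollary~\ref{c:refined-possibilities} and using $u'=e'u'f'$ with the idempotent relations $z_e\alpha z_e=z_e$ and $z_f\alpha z_f=z_f$, to a short identity in $\Om AV$. This makes $F$ a continuous equivalence with continuous pseudo-inverse $G$.

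I expect the main obstacle to be exactly this final layer of computation: keeping the boundary letters $\dia$ and $\alpha$ under control while checking that $\eta_{e'}$ and its candidate inverse really lie in $\Kar(\Mir_{\pv V}(\Cl X'))$, that they are mutually inverse, and that each naturality square commutes. The surrounding architecture — the reduction to symbol expansion, the continuity of $F$, $G$ and $\eta$, and the equality $G\circ F=1$ — is routine.
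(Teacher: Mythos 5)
Your proposal is correct and follows essentially the same route as the paper's proof: reduction to symbol expansion via Corollary~\ref{c:splitting-category-free-profinite-is-conj-invariant}, the functors induced by $\ci E$ and $\ci C$ with $G\circ F=1$, and a continuous natural isomorphism $F\circ G\Rightarrow 1$ built from the arrow $(e',e'\dia,\ci E(z)\alpha\dia)$ on the clopen set of idempotents of the form $\dia\,\ci E(z)\,\alpha$, exactly as in the paper. The only cosmetic difference is that you rule out the shapes $\dia v$ and $v\alpha$ for idempotents by exhibiting forbidden two-letter factors directly, where the paper argues from $e=e\cdot e$ that both boundary conditions must occur simultaneously; these are the same observation.
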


\begin{proof}
  Thanks to Corollary~\ref{c:splitting-category-free-profinite-is-conj-invariant}, to show the flow invariance
  of the continuous equivalence class of $\Kar(\Mir_{\pv V}(\Cl X))$, it only remains
  to show that it is invariant under symbol expansion.
      
  Lemmas~\ref{l:E-preserves-the-mirage}
  and~\ref{l:xi-preserves-the-mirage}
  guarantee the correctness
  of the choice of the co-domains
  in the definition of both of the continuous
  functors $F\colon \Kar(\Mir_{\pv V}(\Cl X))\to \Kar(\Mir_{\pv V}(\Cl X'))$
  and $G\colon \Kar(\Mir_{\pv V}(\Cl X'))\to \Kar(\Mir_{\pv V}(\Cl X))$
  given by the rules
  \begin{equation*}
    F(e,u,f)=(\ci E(e),\ci E (u),\ci E(f))
    \quad
    \text{and}
    \quad
  G(e,u,f)=(\ci C (e),\ci C (u),\ci C (f)).  
  \end{equation*}  
  We prove the theorem
  by showing that $F$ and $G$ are continuous pseudo-inverses.
  Clearly, $1_{\Kar(\Mir_{\pv V}(\Cl X))}=G\circ F$.

  In the next lines, we use the notation
  $u'$ for the pseudoword $(\be 1(u)^{-1}u)\cdot\te 1(u)^{-1}$,
  where $u$ is an infinite pseudoword. Note that the map
  $u'\mapsto u$ is continuous, by Lemma~\ref{l:continuity-of-cancelation}.
  Suppose that $e$ is an idempotent
  of $\Mir_{\pv V}(\Cl X')$ not
  belonging to the image of $\ci E$.
  Then, by Lemma~\ref{l:possibilities},
  either the first letter of $e$ is $\dia$,
  or the last letter of $e$ is $\alpha$.
  Since every finite factor of~$e$
  belongs to $L(\Cl X')$ and $e=e\cdot e$,
  if follows that in fact both situations happen,
  entailing $e=\dia\, e'\alpha$.
  Note that $e'\in\mathrm{Im}{\ci E}$ according to Lemma~\ref{l:possibilities} and the definition of the pseudoword $e'$.
  Since $\ci C\circ \ci E$ is the identity,
  and $e'\in\mathrm{Im}{\ci E}$,
  we have $e'=\ci E\circ\ci C(e')$, and so
    \begin{equation}\label{eq:splitting-category-free-profinite-is-flow-invariant-1}
      F(G(e))=\ci E(\ci C (\dia\, e'\alpha))
      =\ci E(\ci C(e')\cdot \alpha)
      =\ci E(\ci C(e'))\cdot \ci E(\alpha)
      =e'\alpha\dia.
    \end{equation}
    
    If the idempotent  $e$
    of $\Mir_{\pv V}(\Cl X')$
    belongs to the image of $\ci E$,
    define $\eta_e=(e,e,e)$; if $e\notin \mathrm{Im}{\ci E}$,
    then we define $\eta_e=(e,e\dia,e'\alpha\dia)$.
    Note that in both cases $\eta_e$ is an isomorphism
    of $\Kar(\Mir_{\pv V}(\Cl X'))$, in the second case the inverse being
    $(e'\alpha\dia,e'\alpha e,e)$.

    Let $(e_n)_{n\in\NN}$ be a sequence of idempotents of
    $\Mir_{\pv V}(\Cl X)$, converging to the idempotent~$e$.
    Since $\ci E(A^+)$ is a locally testable set (cf.~Remark~\ref{r:image-of-E}),
    we know that $\ci E(\Om AV)=\overline{\ci E (A^+)}$
    is clopen.
    Therefore, there is $p\in\NN$
    such that either $e_n\in\mathrm{Im}\ci E$
    for all $n\geq p$,
    or $e_n\notin\mathrm{Im}\ci E$
    for all $n\geq p$.
    Since $\lim (e_n)'=e'$,
    by continuity of the operator $u\mapsto u'$,
    we conclude that the mapping $e\mapsto\eta_e$
    is continuous, viewing $\eta_e$ as an element of the space
    $\Om BV\times\Om BV\times \Om BV$.

    Let $(e,u,f)$ be a morphism of $\Kar_{\pv V}(\Mir(\Cl X))$.
    The proof of the theorem is now reduced to
    showing that Diagram~\ref{eq:natural-isomorphism}
    commutes.
    \begin{equation}\label{eq:natural-isomorphism}
    \begin{split}
      \xymatrix@C=1.2cm{
        e&F\circ G(e)\ar[l]_(0.55){\eta_e}\\
        \ar[u]^(0.45){(e,u,f)}f&F\circ G(f)\ar[l]^(0.55){\eta_f}\ar[u]_(0.45){F\circ G(e,u,f)}
      }
    \end{split}
  \end{equation}

    We have several cases to consider:
    \begin{enumerate}
      [label=(\roman*),series=axioms]
    \item Suppose first that $e\in \mathrm {Im} {\ci E}$.
      If $b$ is the first letter of $\ci E^{-1}(e)$,
      then the first letter of $\ci E(b)$
      is the first letter of $u=eu$. Hence,
      the first letter of $u$ is not~$\dia$.
      We have two subcases to consider:\label{item:splitting-category-free-profinite-is-flow-invariant-1}
      \begin{enumerate}
      \item If $f\in \mathrm {Im} {\ci E}$,
        then just as we reasoned for the first letter of $u$,
        we see that the last letter of $u=uf$ is not~$\alpha$.
            Since $u\in\Mir_{\pv V}(\Cl X')$,
      it follows from Lemma~\ref{l:possibilities}
      that $u\in \mathrm {Im} {\ci E}$.
      Therefore, as $\ci E\circ\ci C$ restricts to the identity
      on $\mathrm {Im} {\ci E}$, we have
      $F\circ G(e,u,f)=(\ci E(\ci C(e)),\ci E(\ci C(u)),\ci E(\ci C(f)))=(e,u,f)$. And since in this case $\eta_e=1_e$ and $\eta_f=1_f$,
      the commutativity of Diagram~\ref{eq:natural-isomorphism}
      is immediate.
      \item If $f\notin \mathrm {Im} {\ci E}$, then we have the factorization $f=\dia\, f'\alpha$,
        entailing  $F(G(f))=f'\alpha\dia$
        (cf.~\eqref{eq:splitting-category-free-profinite-is-flow-invariant-1}).
        The last letter of $u=uf$
        is $\alpha$, while first letter is not $\dia$,
        and so by Lemma~\ref{l:possibilities}
        we have $u=\ci E(w)\alpha$ for some $w\in\Om AV$.
        It follows that $\ci E\circ \ci C(u)=\ci E\circ \ci C(\ci E(w)\alpha)=\ci E(w)\alpha\dia=u\dia$ and $F\circ G(e,u,f)=(e,u\dia,f'\alpha\dia)$.
        Then, by the definition of $\eta_f$ when $f\notin \mathrm {Im} {\ci E}$,
        we have
      \begin{align*}
        \qquad\eta_e\circ (F\circ G)(e,u,f)
        &=1_e\circ (e,u\dia,f'\alpha\dia)=(e,uf\dia,f'\alpha\dia)\\
        &=(e,u,f)\circ(f,f\dia,f'\alpha\dia)=(e,u,f)\circ\eta_f,
      \end{align*}
      establishing that Diagram~\ref{eq:natural-isomorphism}
      is commutative in this case.
      \end{enumerate}
    \item Suppose now that $e\notin \mathrm {Im} {\ci E}$,
      so that $e=\dia\, e'\alpha$. 
      As $u=eu\in\Mir_{\pv V}(\Cl X)$, the first letter
      of $u$ must be $\dia$.
      Recall also that $F(G(e))=e'\alpha\dia$
      (cf.~\eqref{eq:splitting-category-free-profinite-is-flow-invariant-1}).
      Again, we have two subcases to consider:
      \begin{enumerate}
      \item If $f\in \mathrm {Im} {\ci E}$,
        then, as seen in case~\ref{item:splitting-category-free-profinite-is-flow-invariant-1}, the last letter of $u=uf$
        is not~$\alpha$. It follows from Lemma~\ref{l:possibilities}
        that $u=\dia\,\ci E(w)$ for some $w\in\Om AV$.
        Then we have
        $\ci E(\ci C(u))=\ci E(\ci C(\dia\,\ci E(w)))
        =\ci E(\ci C(\ci E(w)))=\ci E(w)$.
        On the other hand, $\dia\,\ci E(w)=u=eu=\dia\, e'\alpha u$,
        thus $\ci E(w)=e'\alpha u$ (cf.~Corollary~\ref{c:cancelation-property}). We conclude that $F\circ G(e,u,f)=(e'\alpha\dia,e'\alpha u,f)$,
        thus
        \begin{align*}
      \qquad\qquad\qquad\eta_e\circ (F\circ G)(e,u,f)&=(e,e\dia,e'\alpha\dia)
      (e'\alpha\dia,e'\alpha u,f)\\
      &= (e,e\cdot \underset{=e}{\underbrace{(\dia e'\alpha)}}\cdot u,f)=(e,u,f)=(e,u,f)\circ\eta_f,
    \end{align*}
    proving that Diagram~\ref{eq:natural-isomorphism}
    commutes in this case also.
      \item If $f\notin \mathrm {Im} {\ci E}$, then
        we have the factorization $f=\dia\, f'\alpha$.
        Since the first and last letters of $u=euf$
        are respectively $\dia$ and $\alpha$,
        applying Lemma~\ref{l:possibilities}
        we conclude that
        $u=\dia\,\ci E(w)\alpha$
        for some $w\in\Om AV$.
        Therefore,
        $\ci E\circ \ci C(u)=\ci E(w\alpha)=\ci E(w)\alpha\dia$.
        On the other hand, because $\dia\,\ci E(w)\alpha=u=eu=\dia\, e'\alpha u$,
        we have $\ci E(w)\alpha=e'\alpha u$ by
        Corollary~\ref{c:cancelation-property},
        and so we get $F\circ G(e,u,f)=(e'\alpha\dia,e'\alpha u\dia,f'\alpha\dia)$.
   Finally, we have     
         \begin{align*}
      \qquad&\eta_e\circ (F\circ G)(e,u,f)\\
      &=(e,e\dia,e'\alpha\dia)
      (e'\alpha\dia,e'\alpha u\dia,f'\alpha\dia)
      = (e,e(\dia e'\alpha)u\dia,f'\alpha\dia)\\
      &=(e,u\dia,f'\alpha\dia)=(e,u(f\dia),f'\alpha\dia)=(e,u,f)\circ(f,f\dia,f'\alpha\dia)\\
      &=(e,u,f)\circ\eta_f.
    \end{align*}
      \end{enumerate}
    \end{enumerate}
    With all cases having been exhausted, the proof is concluded.
  \end{proof}

  \begin{Rmk}
    By Lemmas~\ref{l:expansion-shadow-is-also-preserved}
    and~\ref{l:contraction-shadow-is-also-preserved},
    the functors $F$ and $G$
    in the proof of Theorem~\ref{t:splitting-category-free-profinite-is-flow-invariant}
    restrict to graph homomorphisms
    $\Kar(\Sha_{\pv V}(\Cl X))\to \Kar(\Sha_{\pv V}(\Cl X'))$
    and
    $\Kar(\Sha_{\pv V}(\Cl X'))\to \Kar(\Sha_{\pv V}(\Cl X))$,
    respectively.
  \end{Rmk}

  In the appendix section at the end of this paper
  we describe a labeled poset considered in~\cite{ACosta:2006},
  and check that it is encapsulated in $\Kar(\Mir_{\pv V}(\Cl X))$.
  The invariance under flow equivalence
  of such labeled poset then follows from
  Theorem~\ref{t:splitting-category-free-profinite-is-flow-invariant}.
  A direct proof of the conjugacy invariance was given in \cite{ACosta:2006}.
    The description of the labeled poset and the proof of its invariance
    are somewhat technical. The most interesting
  information associated to that labeled poset is
  the following more palatable result, which we next
  easily deduce directly from the proof of Theorem~\ref{t:splitting-category-free-profinite-is-flow-invariant}. 
  
    \begin{Cor}\label{c:splitting-category-free-profinite-is-flow-invariant}
  Suppose that $\Cl X$ is an irreducible subshift.
  Let $\pv V$ be a monoidal pseudovariety
  of semigroups containing $\Lo {Sl}$
  and such that $\pv V=\pv V\ast\pv D$.
  The profinite groups $G_{\pv V}(\Cl X)$ and
  $\widetilde G_{\pv V}(\Cl X)$ are flow equivalence invariants.
\end{Cor}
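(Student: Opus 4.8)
The plan is to use the Parry--Sullivan description of flow equivalence as the equivalence relation generated by conjugacy and symbol expansion. Conjugacy is already covered by Corollary~\ref{c:invariance-of-GX}, so it suffices to show that $G_{\pv V}(\Cl X)\cong G_{\pv V}(\Cl X')$ and $\widetilde G_{\pv V}(\Cl X)\cong \widetilde G_{\pv V}(\Cl X')$ when $\Cl X'\subseteq B^{\ZZ}$ is a symbol expansion of $\Cl X\subseteq A^{\ZZ}$. I would work with the concrete functors from the proof of Theorem~\ref{t:splitting-category-free-profinite-is-flow-invariant}, namely $F=\Kar(\ci E)|_{\Kar(\Mir_{\pv V}(\Cl X))}$ and $G=\Kar(\ci C)|_{\Kar(\Mir_{\pv V}(\Cl X'))}$, which are continuous pseudo-inverses with $G\circ F=1$ and $F\circ G\cong 1$ via a continuous natural isomorphism, and, crucially, which (by the Remark following that theorem) also restrict to graph homomorphisms between $\Kar(\Sha_{\pv V}(\Cl X))$ and $\Kar(\Sha_{\pv V}(\Cl X'))$. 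Since $F$ is a continuous equivalence it is, as a functor, fully faithful; being in addition continuous and a group homomorphism on each automorphism group $\mathrm{Aut}(e)$ --- a compact group --- it restricts to a topological isomorphism $\mathrm{Aut}(e)\to\mathrm{Aut}(F(e))$. As $\widetilde J_{\pv V}(\Cl X)$ and $J_{\pv V}(\Cl X)$ are whole $\J$-classes contained in $\Mir_{\pv V}(\Cl X)$, for an idempotent $e$ of either one the automorphism group of $e$ in $\Kar(\Mir_{\pv V}(\Cl X))$ is the full automorphism group of $e$ in $\Kar(\Om AV)$, which by Proposition~\ref{p:local-isomorphism} is isomorphic to $G_e$; likewise over $\Cl X'$.

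For $\widetilde G_{\pv V}(\Cl X)$: recall from the proof of Corollary~\ref{c:invariance-of-GX} that the idempotents of $\widetilde J_{\pv V}(\Cl X)$ are exactly the minimal objects of $\Kar(\Mir_{\pv V}(\Cl X))$ for the retraction order $\prec$. Using $G\circ F=1$ and $F\circ G\cong 1$, together with the elementary facts that functors preserve retractions and that isomorphic objects are mutual retracts, one checks that $c\prec d$ if and only if $F(c)\prec F(d)$, and hence that $F$ carries $\prec$-minimal objects to $\prec$-minimal objects. Thus, for an idempotent $e$ of $\widetilde J_{\pv V}(\Cl X)$, the idempotent $F(e)=\ci E(e)$ is a $\prec$-minimal object of $\Kar(\Mir_{\pv V}(\Cl X'))$, that is, an idempotent of $\widetilde J_{\pv V}(\Cl X')$, and therefore $\widetilde G_{\pv V}(\Cl X)=G_e\cong G_{F(e)}=\widetilde G_{\pv V}(\Cl X')$.

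For $G_{\pv V}(\Cl X)$: let $e$ be an idempotent of the $\J$-minimum $\J$-class $J_{\pv V}(\Cl X)$ of $\Sha_{\pv V}(\Cl X)$. Then $\ci E(e)=F(e)$ is an idempotent lying in $\Sha_{\pv V}(\Cl X')$, by the Remark (or by Lemma~\ref{l:expansion-shadow-is-also-preserved}). To see that it is $\J$-minimum there, take any idempotent $f'$ of $\Sha_{\pv V}(\Cl X')$; since $A^{+}$ and $B^{+}$ are free, $f'$ is an infinite pseudoword, in particular $f'\neq\dia$, so $\ci C(f')=G(f')$ is an idempotent of $\Sha_{\pv V}(\Cl X)$ by Lemma~\ref{l:contraction-shadow-is-also-preserved}. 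Minimality of $e$ gives $e\leq_{\J}\ci C(f')$ in $\Om AV$, and applying the homomorphism $\ci E$ yields $\ci E(e)\leq_{\J}\ci E(\ci C(f'))=F(G(f'))$ in $\Om BV$. Since $F(G(f'))$ is isomorphic to $f'$ in $\Kar(\Mir_{\pv V}(\Cl X'))$, and an isomorphism between two idempotents in a Karoubi envelope forces them to be $\J$-equivalent, we get $\ci E(e)\leq_{\J}f'$. Hence $\ci E(e)$ is an idempotent of $J_{\pv V}(\Cl X')$, and $G_{\pv V}(\Cl X)=G_e\cong G_{\ci E(e)}=G_{\pv V}(\Cl X')$, which completes the proof.

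The point that needs care --- and the reason the argument must use the concrete functors $F$ and $G$ rather than just the continuous equivalence class of $\Kar(\Mir_{\pv V}(\Cl X))$ --- is that $\Sha_{\pv V}(\Cl X)$ is not, a priori, determined by the category $\Kar(\Mir_{\pv V}(\Cl X))$; it is precisely the compatibility of $F$ and $G$ with the shadows (the Remark after Theorem~\ref{t:splitting-category-free-profinite-is-flow-invariant}) that makes the case of $G_{\pv V}(\Cl X)$ go through. A secondary point to verify is that automorphism groups computed inside the subcategory $\Kar(\Mir_{\pv V}(\Cl X))$ coincide with the relevant Schützenberger groups, which rests on $\widetilde J_{\pv V}(\Cl X)$ and $J_{\pv V}(\Cl X)$ being entire $\J$-classes sitting inside $\Mir_{\pv V}(\Cl X)$.
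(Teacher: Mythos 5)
Your proof is correct and follows essentially the same route as the paper: reduce to symbol expansion via Corollary~\ref{c:invariance-of-GX}, use the concrete equivalence $F$ induced by $\ci E$, and exploit preservation of the retraction order $\prec$ (Proposition~\ref{p:retraction}) and of automorphism groups (Proposition~\ref{p:local-isomorphism}). Your handling of the $G_{\pv V}(\Cl X)$ case is in fact more explicit than the paper's one-line appeal to Lemma~\ref{l:expansion-shadow-is-also-preserved}: by pulling an arbitrary idempotent of $\Sha_{\pv V}(\Cl X')$ back through $\ci C$ (Lemma~\ref{l:contraction-shadow-is-also-preserved}) and using $F\circ G\cong 1$, you correctly justify why $\ci E(e)$ lands in the $\J$-minimum class $J_{\pv V}(\Cl X')$ and not merely in the shadow.
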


    \begin{proof}
    By Corollary~\ref{c:invariance-of-GX},
    to show the flow invariance of $\widetilde G_{\pv V}(\Cl X)$
    we only need to check that $\widetilde G_{\pv V}(\Cl X)$ and $\widetilde G_{\pv V}(\Cl X')$
    are isomorphic profinite groups.
    By Theorem~\ref{t:splitting-category-free-profinite-is-flow-invariant},
    there is a continuous equivalence $F\colon \Kar(\Mir_{\pv V}(\Cl X))\to \Kar(\Mir_{\pv V}(\Cl X'))$.
     In every category, the retraction order $\prec$
    is preserved by every equivalence functor,
    and so if $e$ is an idempotent
    in $\widetilde J_{\pv V}(\Cl X)$,
    then $F(e)$ is an idempotent in $\widetilde J_{\pv V}(\Cl X')$,
    by Proposition~\ref{p:retraction}.    
    Also, every continuous equivalence functor
    of compact categories
    preserves the compact group of automorphisms
    in each object, so that $G_e$ and $G_{F(e)}$
    are isomorphic compact groups,
    establishing the flow invariance of~$\widetilde G_{\pv V}(\Cl X)$.

    For what follows we use the specific functor $F\colon \Kar(\Mir_{\pv V}(\Cl X))\to \Kar(\Mir_{\pv V}(\Cl X'))$ given by $F(e,u,f)=(\ci E(e),\ci E (u),\ci E(f))$,
    already met in the proof of Theorem~\ref{t:splitting-category-free-profinite-is-flow-invariant}, where we saw that it is indeed a continuous equivalence.
    By Lemma~\ref{l:expansion-shadow-is-also-preserved},
    if $e$ is an idempotent in $J_{\pv V}(\Cl X)$,
    then $F(e)$ is an idempotent in $J_{\pv V}(\Cl X')$,
    also because of the preservation of the retraction order by equivalence
    functors.
    And as $G_e$ will then be isomorphic
    to $G_{F(e)}$,
    we get the flow invariance of~$G_{\pv V}(\Cl X)$.
  \end{proof}

     \begin{Rmk}
    In the paper~\cite{Almeida&ACosta:2016b}
a sort of geometric interpretation was
given to $G_{\pv S}(\Cl X)$
when~$\Cl X$ is minimal, in which case $G_{\pv S}(\Cl X)=\widetilde G_{\pv S}(\Cl X)$: there it  was shown that the profinite group~$G_{\pv S}(\Cl X)$ is an inverse limit of profinite completions of fundamental groups
in an inverse system of the so called \emph{Rauzy graphs} of $\Cl X$.
A geometric interpretation of
this sort is yet to be obtained in the general case in which~$\Cl X$ is irreducible but may be non-minimal. The approach followed
in~\cite{Almeida&ACosta:2016b} was
based on exploring a profinite semigroupoid (a semigroupoid is a ``category possibly without identities''), there denoted
$\widehat\Sigma_{\infty}(\Cl X)$, and already considered in~\cite{Almeida&ACosta:2007a}, which is determined by the infinite
paths in the free profinite semigroupoid generated
by the inverse limit of the Rauzy graphs of $\Cl X$.
The proof for the geometric interpretation
made in~\cite{Almeida&ACosta:2016b}
included the proof that if $\Cl X$ is minimal then $\Kar(\Mir_{\pv S}(\Cl X))$
and $\widehat\Sigma_{\infty}(\Cl X)$ are isomorphic compact categories. But
that no longer holds if $\Cl X$ is not minimal,
as then $\widehat\Sigma_{\infty}(\Cl X)$
is not a category.
\end{Rmk}
  
  \section{Relationship with the zeta function}
  \label{sec:relat-with-zeta}

  The \emph{orbit} of an element $x$  of $A^\ZZ$ is the set $\mathcal O(x)=\{\sigma^n(x)\mid n\in\ZZ\}$.
  An element $x$ of $A^\ZZ$ is said to be a \emph{periodic} point,
  if $\sigma^n(x)=x$ for some positive integer~$n$, equivalently, if
  $\mathcal O(x)$ is finite.
  A positive integer $n$ such that $\sigma^n(x)=x$ is a \emph{period} of~$x$.
  The \emph{least period} of a periodic point $x$ is the
  smallest positive integer $n$ such that $\sigma^n(x)=x$,
  that is, the least period of such $x$ is the cardinal of~$\mathcal O(x)$. A subshift $\Cl X$ of $A^{\ZZ}$ is said to be a \emph{periodic subshift} if $\Cl X=\mathcal O(x)$
  for some periodic point $x$ of~$A^\ZZ$.
  Every periodic subshift is both minimal and of finite type.

  Given a subshift  $\Cl X$ of $A^\ZZ$,
  we denote by $p_{\Cl X}(n)$ the number of periodic points with period $n$ (i.e., with least period dividing $n$),
  and by $q_{\Cl X}(n)$ the number of periodic points with least period $n$.
   The sequences $(p_{\Cl X}(n))_{n\geq 1}$ and $(q_{\Cl X}(n))_{n\geq 1}$
 determine each other~\cite[Exercise 6.3.1]{Lind&Marcus:1996}.
  The \emph{zeta function} of $\Cl X$,
 defined by
 \begin{equation*}
   \zeta_{\Cl X}(t)=\exp\Bigl(\sum_{n=1}^{+\infty}\frac{p_{\Cl X}(n)}{n}t^n\Bigl)
 \end{equation*}
 encodes the sequence $(p_{\Cl X}(n))_{n\geq 1}$ enumerating
 the number of periods,
 and so it also encodes the sequence $(q_{\Cl X}(n))_{n\geq 1}$ enumerating
 the number of least periods.
 The zeta function is an important
 conjugacy invariant, namely of sofic subshifts (cf.~\cite{Lind&Marcus:1996}).  In this section, we show that the zeta function of~$\Cl X$ is encoded in $\Kar(\Mir_{\pv V}(\Cl X))$ as an invariant of isomorphism of compact categories (Corollary~\ref{c:complexity-function-is-inside-karoubi-envelope}).
  
 Two elements $u$ and $v$ in a semigroup $S$ are said to be \emph{conjugate}, and we write $u\sim_c v$, if
  there are elements $x,y\in S^I$ such that
  $u=xy$ and $v=yx$. For each $u\in S$, the elements~$v$
  such that $v\sim_c u$ are the \emph{conjugates} of $u$.
  In the next few lines, we focus on $S=A^+$, in which case $\sim_c$ is an equivalence relation.
  Indeed, the words conjugate to $u\in A^+$
  are those of the form $v=p^{-1}up$, for some
  prefix $p$ of~$u$. A word $v\in A^+$ is \emph{primitive} if $v=w^k$ implies that $v=w$.  
  Every conjugate of a primitive word is primitive, and the number of conjugates
  of a primitive word~$v$ is the length of $v$. The latter fact may be seen
  as a consequence of one of the most basic properties of combinatorics
  of words (cf.~\cite[Proposition 1.3.2]{Lothaire:1983}):
  if $x,y\in A^*$ are words such that
  $xy=yx$, then there is $z\in A^*$
  such that $x,y\in z^*$. 

  Given a word $v$ of length $n$ of $A^+$, we denote by $v^\infty$
  the unique element $x$ of $A^\ZZ$ such that
  $x_{[0,n-1]}=v$ and $\sigma^n(x)=x$.
  Likewise, we shall also use the notation $v^{+\infty}$
  for the right infinite sequence $x\in A^{\NN}$
  such that $x_{[kn,(k+1)n-1]}=v$ for every $k\geq 0$,
  and $v^{-\infty}$
  will be the left infinite sequence $x\in A^{\ZZ^-}$
  such that $x_{[kn,(k+1)n-1]}=v$ for every $k\leq -1$.
  When $y\in A^{\NN}$ and $x\in A^{\ZZ^-}$,
  we use the notation $z=x.y$
  for $z\in A^{\ZZ}$ such that $z_i=x_i$
  and $z_j=y_j$ for every $i\in\ZZ^-$ and $j\in \NN$.
  Hence, $v^{\infty}=v^{-\infty}.v^{+\infty}$ if $v$ is a word.
  
  The notion of primitive word is useful for dealing with periodic points, because of the following simple fact.
  
   \begin{Fact}\label{fact:periodic-points}
     Let $x$ be a periodic element of $A^{\ZZ}$. Then,
     there is a unique primitive word $v\in A^+$ such that $x=v^\infty$.
     Moreover, we have the equality $\mathcal O(x)=\{u^\infty\mid u\sim_c v\}$,
     and the least period of $x$ is the length of $v$.
   \end{Fact}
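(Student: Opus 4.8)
The plan is to anchor everything to the least period $n$ of $x$. First I would note that $\mathcal O(x)=\{x,\sigma(x),\dots,\sigma^{n-1}(x)\}$, with these $n$ points pairwise distinct: a coincidence $\sigma^i(x)=\sigma^j(x)$ with $0\le i<j<n$ would exhibit the period $j-i<n$, contradicting minimality. Next, for \emph{any} primitive word $v$ with $x=v^\infty$, I would argue that $|v|=n$. Indeed, $|v|$ is a period of $x$, and the least period of a bi-infinite point divides every period (from $\sigma^p(x)=x=\sigma^q(x)$ one obtains $\sigma^{\gcd(p,q)}(x)=x$ by writing $\gcd(p,q)$ as an integer combination of $p$ and $q$, using that negative shifts are available), so $n\mid|v|$; but $v=x_{[0,|v|-1]}=\bigl(x_{[0,n-1]}\bigr)^{|v|/n}$, and primitivity forces $|v|/n=1$. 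Hence necessarily $v=x_{[0,n-1]}$, which gives uniqueness and also shows that the least period of $x$ equals $|v|$. For existence it then remains to check that $w:=x_{[0,n-1]}$ is primitive: if $w=z^k$ with $k\ge 2$, then $|z|=n/k$ would be a period of $x$, contradicting minimality. So $x=w^\infty$ with $w$ primitive and $|w|=n$.

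For the orbit description, I would compute $\sigma^k(x)$ for $0\le k<n$. It has period $n$, since $\sigma^n\sigma^k(x)=\sigma^k\sigma^n(x)=\sigma^k(x)$, and its length-$n$ block based at position $0$ is the cyclic rotation of $v$ by $k$ letters, that is, the conjugate $p^{-1}vp$, where $p$ is the prefix of $v$ of length $k$. Since $|p^{-1}vp|=n$, the definition of the $\infty$-power gives $\sigma^k(x)=(p^{-1}vp)^\infty$. As $k$ runs through $\{0,\dots,n-1\}$, the word $p$ runs through the prefixes of $v$ of every length $0,\dots,n-1$, and by the facts recalled just before the statement the resulting words $p^{-1}vp$ are precisely the $n$ conjugates of the primitive word $v$, each of them primitive of length $n$; in particular $u^\infty$ is well defined for every $u\sim_c v$. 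Therefore $\mathcal O(x)=\{\sigma^k(x)\mid 0\le k<n\}=\{u^\infty\mid u\sim_c v\}$, as required.

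No step here is a genuine obstacle; the argument is essentially a bookkeeping exercise. The only points that warrant a little care are: (i) the elementary lemma that the least period of a bi-infinite point divides every period; and (ii) identifying the ``$k$-th cyclic rotation of $v$'' with the description $p^{-1}vp$ of conjugates, together with the observation that $u^\infty$ is defined for each conjugate $u$ of $v$, which holds because all conjugates of $v$ have length $|v|$ and are primitive.
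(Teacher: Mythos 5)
Your argument is correct and complete: reducing to the least period $n$ via the gcd argument, identifying the unique primitive root as $x_{[0,n-1]}$, and matching $\sigma^k(x)$ with the conjugate $p^{-1}vp$ for $p$ the length-$k$ prefix of $v$ is exactly the intended reasoning. The paper states this as a ``simple fact'' without proof, so there is nothing to compare against; your write-up supplies the standard elementary verification, and the only ingredients it leans on (that conjugates of a primitive word $v$ are the $|v|$ distinct rotations $p^{-1}vp$) are precisely the combinatorics-on-words facts the paper recalls immediately before the statement.
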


   We collect some more properties of primitive words.
 
    \begin{Lemma}\label{l:v-plus-locally-testable}
   If $v$ is a primitive word of $A^+$, then the language $v^+$
   is locally testable.
 \end{Lemma}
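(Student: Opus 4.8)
The plan is to show that membership of a word in $v^+$ can be detected by looking only at its factors of bounded length, which is the combinatorial meaning of local testability. Write $n = |v|$ and recall (from Fact~\ref{fact:periodic-points} and the combinatorics-of-words fact cited above) that $v$ primitive means $v$ has no nontrivial period, i.e.\ $v$ occurs in $v^{+\infty}$ at positions exactly the multiples of $n$. First I would fix the window length $N = 2n$ and describe the language $v^+$ by a Boolean combination of the basic locally testable languages $uA^*$, $A^*u$, $A^*uA^*$ with $|u| \le N$. The guiding idea: a long word $w$ lies in $v^+$ if and only if (i) $w$ starts with $v$, (ii) $w$ ends with $v$, and (iii) every factor of $w$ of length $2n$ is a factor of $v^{+\infty}$ (equivalently, is one of the $n$ conjugates of $v$, read as a window of $v^{\infty}$). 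The forcing of the period $n$ by primitivity is exactly what makes (iii) enough: once every length-$2n$ window agrees with a sliding window of $v^{\infty}$, the starting window determines the alignment, and primitivity guarantees there is no ambiguity in that alignment, so $w$ is forced to be an initial segment of $v^{+\infty}$; condition (ii) then pins down that this segment ends exactly at a multiple of $n$.

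The main steps in order would be: \textbf{(1)} Let $F$ be the finite set of words of length $2n$ that occur in $v^{\infty}$ (there are at most $n$ of them), and consider
\[
  L \;=\; vA^* \;\cap\; A^*v \;\cap\; \Bigl(A^+ \setminus \bigcup_{u \in A^{2n}\setminus F} A^*uA^*\Bigr),
\]
together with the finitely many short words $v, v^2, \dots$ of length $< 2n$ that happen to lie in $v^+$, adjoined by a finite union of singletons $\{v^k\}$ (each a locally testable language, being a finite Boolean combination of prefix/suffix/factor conditions). This $L \cup \{v, v^2, \dots\}$ is visibly a finite Boolean combination of the allowed basic languages, hence locally testable. \textbf{(2)} Show $v^+ \subseteq L \cup \{\text{short exceptions}\}$: clear, since any $v^k$ starts and ends with $v$ and all its length-$2n$ factors occur in $v^{\infty}$. \textbf{(3)} The substantive inclusion $L \subseteq v^+$ for words of length $\ge 2n$: take $w \in L$ with $|w| = m \ge 2n$; by (iii) each window $w_{[i,i+2n-1]}$ equals some window of $v^{\infty}$, and by (i) $w_{[1,2n]} = v^2$, so $w_{[1,2n]}$ is the window of $v^{\infty}$ at position $0$; then induct on $i$, using that consecutive windows of $w$ overlap in $2n-1$ letters and that a window of $v^{\infty}$ has a \emph{unique} predecessor window (this is where primitivity, via the no-nontrivial-period property, is invoked), to conclude $w_{[i,i+2n-1]}$ is the window of $v^{\infty}$ at position $i-1$ for all $i$; hence $w$ is a length-$m$ factor of $v^{+\infty}$ starting at position $0$, i.e.\ $w = v^{+\infty}_{[0,m-1]}$. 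Finally condition (ii), $w$ ends with $v$, together with primitivity forces $n \mid m$, so $w = v^{m/n} \in v^+$.

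The delicate point — the step I expect to be the real obstacle — is (3), specifically the claim that a length-$2n$ window of $v^{\infty}$ has a unique predecessor window, and, relatedly, that the suffix condition (ii) forces $n \mid m$ rather than merely forcing $w$ to end at some position congruent to $0$ modulo the \emph{smallest} period of $v^{\infty}$. Here one must use primitivity honestly: if $v = z^k$ with $k > 1$ the argument collapses because $v^{\infty} = z^{\infty}$ has period $|z| < n$ and the alignment is only determined modulo $|z|$, so one could have $w$ ending with $v$ at a position that is a multiple of $|z|$ but not of $n$. With $v$ primitive, the smallest period of $v^{\infty}$ is exactly $n$ (by the Fine–Wilf / combinatorics-of-words fact), windows are in bijection with residues modulo $n$, predecessors are unique, and "ends with $v$" means the last window is the residue-$0$ window, forcing $n \mid m$. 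I would state this period-$n$ fact explicitly as the hinge of the argument and cite \cite[Proposition 1.3.2]{Lothaire:1983} for it.
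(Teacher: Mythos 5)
There is a genuine gap in your Boolean decomposition: the claimed identity $v^+ = L\cup\{v\}$ is false as written, because $L$ carries no lower bound on length and your forbidden-factor condition is vacuous for words shorter than $2n$. Concretely, take $v=aba$ (primitive, $n=3$) and $w=ababa$: then $w\in vA^*\cap A^*v$ and $w$ has no factor of length $6$ at all, so $w\in L$, yet $w\notin v^+$ (its length is not a multiple of $3$, and it is not even a factor of $v^\infty$). This failure occurs precisely when $v$ has a nontrivial border, which primitivity does not exclude. Your step (3) proves only $L\setminus A^{(<2n)}\subseteq v^+$ and silently leaves the words of $L$ of length between $n+1$ and $2n-1$ unaccounted for; these are exactly the troublemakers. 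The repair is easy: replace $L$ by $L\setminus A^{(<2n)}$ and adjoin the singleton $\{v\}$. Both $A^{(<2n)}$ and $\{v\}$ are finite, and every finite language is locally testable (for a single word $u$ one has $\{u\}=uA^*\setminus\bigcup_{a\in A}uaA^*$), so the corrected combination is still locally testable, and with it your step (3) --- which is correct, including the two uses of primitivity you single out --- completes the proof.

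For comparison, the paper reaches the same characterization much faster: it writes
\[
v^+=\bigl(L(\mathcal O(v^\infty))\setminus A^{(<n)}\bigr)\setminus\bigcup_{u,w\in[v]_{\sim_c}\setminus\{v\}}(uA^*\cup A^*w),
\]
and then invokes two facts already recorded earlier in the text: periodic subshifts are of finite type, and the language of a finite type subshift is locally testable (being of the form $A^+\setminus A^*WA^*$ for a finite set $W$). Your sliding-window argument essentially re-proves that second fact by hand for the periodic shift $\mathcal O(v^\infty)$; the anchoring of both ends at the conjugate $v$ itself, and the role of primitivity in making that anchoring force $n\mid |w|$, are the same in both proofs.
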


 \begin{proof}
   Let $\Cl X=\mathcal O (v^\infty)$.   
   Then, denoting by $[v]_{\sim_c}$ the $\sim_c$-class of $v$, we have the equality
   \begin{equation*}
     v^+=(L(\Cl X)\setminus A^{(<|v|)})\setminus
     \bigcup_{u,w\in [v]_{\sim_c}\setminus\{v\}}(uA^*\cup A^*w).
   \end{equation*}
   Since $\Cl X$ is of finite type, the
   language $L(\Cl X)$ is locally testable.
   As $A^{(<|v|)}$, $uA^*$
   and $A^*w$ are also locally testable,
   we conclude that $v^+$ is locally testable.
 \end{proof}
 
 Lemma~\ref{l:v-plus-locally-testable}
 may be seen as an application of the main
 result of~\cite{Restivo:1974}, a more general result
 about very pure codes (see also~\cite[Proposition 7.1.1]{Berstel&Perrin&Reutenauer:2010}).
 
   \begin{Lemma}\label{l:a-technical-consequence-of-primitivity}
   If $v$ is a primitive word of $A^+$ with length $n$,
   then the inclusion
   $ v^*\cdot v^2\cdot A^{(<n)}\cap A^*\cdot v^2\subseteq v^+$
   holds.
 \end{Lemma}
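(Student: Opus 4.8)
The plan is to reduce the claimed inclusion to a commutation identity between $v$ and one of its prefixes, and then exploit primitivity through the standard combinatorics‑on‑words fact cited above.

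First I would take an arbitrary $w$ in $v^*\cdot v^2\cdot A^{(<n)}\cap A^*\cdot v^2$. From membership in $v^*\cdot v^2\cdot A^{(<n)}$ there are an integer $k\geq 0$ and a word $s$ with $|s|<n$ such that $w=v^{k+2}s$; from membership in $A^*\cdot v^2$ the word $v^2$ is a suffix of $w$. Put $j=|s|$. Since $w=v^{k}\cdot(v^2s)$ and $|v^2s|=2n+j\geq 2n$, the suffix of $w$ of length $2n$ coincides with the suffix of $v^2s$ of length $2n$, and the latter must equal $v^2$; hence $v^2s=\be j(v)\cdot v^2$, where the leading factor is the length‑$j$ prefix of $v^2s$, namely $\be j(v)$ (this is legitimate because $j<n$).

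Next I would compare the prefixes of length $n+j$ on the two sides of $v^2s=\be j(v)\cdot v^2$; reading them off (again the bound $j<n$ is what makes the bookkeeping come out) yields $v\cdot\be j(v)=\be j(v)\cdot v$. By the fact that two commuting words are powers of a common word (cf.~\cite[Proposition~1.3.2]{Lothaire:1983}), there is $z\in A^*$ with $v,\be j(v)\in z^*$. Since $v$ is a nonempty primitive word, $v\in z^*$ forces $z=v$; then $\be j(v)\in v^*$ together with $|\be j(v)|=j<n=|v|$ forces $\be j(v)=\varepsilon$, i.e.\ $j=0$ and $s=\varepsilon$. Consequently $w=v^{k+2}\in v^+$, which is the desired conclusion.

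The only genuinely substantive step is the passage to the commutation identity $v\,\be j(v)=\be j(v)\,v$; everything else is elementary manipulation of prefixes and suffixes, and I anticipate no real obstacle beyond keeping careful track of lengths — in particular handling transparently the degenerate case $s=\varepsilon$, in which the identity holds trivially and $w=v^{k+2}\in v^+$ outright.
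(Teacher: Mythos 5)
Your proof is correct and takes essentially the same route as the paper's: both reduce the containment to a commutation identity extracted from the overlap equation $v^2s=\be{j}(v)\cdot v^2$ (the paper writes it as $v^2q=pv^2$) and then invoke the fact that commuting words are powers of a common word together with primitivity. The only cosmetic difference is that you show the length-$j$ prefix of $v$ commutes with $v$, whereas the paper shows the appended word $q$ itself commutes with $v$; either way primitivity forces the commuting partner to be empty.
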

 
 \begin{proof}
   Let $w\in v^*\cdot v^2\cdot A^{(<n)}\cap A^*\cdot v^2$.
   Then $w=v^kq$ for some $k\geq 2$ and some (possibly empty) word $q$
   of length at most $n-1$, and $v^2$ is a suffix of $v^2q$.
   We are reduced to showing that $q=\varepsilon$.
   Take the word $p$ such that $v^2q=pv^2$.
   Then we have $|p|=|q|<|v|$, and $v=pv'=v''q$ for some $v',v''\in A^+$
   such that $|v'|=|v''|$. As the following chain of equalities 
   \begin{equation*}
     p\cdot v''\cdot qv=p\cdot (v''q)\cdot v=pv^2=v^2q=p\cdot v'\cdot vq
   \end{equation*}
   holds, comparing the extremities of the chain, we deduce from $|v''|=|v'|$
   that $qv=vq$. Therefore, by the aforemention property
   of commuting words,  one concludes that $v,q\in w^*$ for some word $w$.
   But as $v$ is primitive and $|q|<|v|$, one must have $w=v$ and~$q=\varepsilon$.
 \end{proof}

 We remark, \emph{en passant}, that the word $v^2$ is really relevant
 in Lemma~\ref{l:a-technical-consequence-of-primitivity}.
 More precisely, the inclusion 
 $v^*\cdot v\cdot A^{(<|v|)}\cap A^*\cdot v\subseteq v^+$
 fails, for example, for $A=\{a,b\}$ and the primitive word $v=bab$,
 since $(bab)ab=ba(bab)$ is not a power of $bab$.
   
   We turn now our attention to pseudowords.
   Let $\pv V$ be a pseudovariety of semigroups
   containing $\Lo I$. Suppose that $u\in\Om AV\setminus A^+$.
   We denote by $\ori u$ the unique element $x=(x_i)_{i\in \NN}$ of $A^{\NN}$
   such that
   $x_{[0,n-1]}$ is the prefix of length $n$ of $u$, whenever~$n$
   is a positive integer.
   We say that $\ori u$ is the \emph{positive ray} of $u$.
   Symmetrically, the \emph{negative ray} of $u$,
   denoted $\ole u$, is the unique element $x=(x_i)_{i\in \ZZ^-}$ of $A^{\ZZ^-}$ such that
   $x_{[-n,-1]}$ is the suffix of length $n$ of $u$, whenever $n$ is a positive integer. Let $u$ and $v$ be elements of
   $\Om AV\setminus A^+$. Note that if $u=vw$ for some $w\in\Om AV^I$,
   then $\ori u=\ori v$, but the converse is not true: $u=a^\omega b$
   and $v=a^\omega c$ are such that $\ori u=\ori v$, but neither $u\leq_\R v$
   nor $v\leq_\R u$. In contrast, we have the following proposition.   

   \begin{Prop}[{\cite[Lemma~6.6]{Almeida&ACosta:2007a}
       and \cite[Lemma 5.3]{Almeida&ACosta:2012}}]\label{p:parametrization-of-JX}
     Consider a pseudovariety of semigroups~$\pv V$ containing
     $\Lo {Sl}$.
     Let $\Cl X$ be a minimal subshift. For every $u,v\in J_{\pv V}(\Cl X)$,
     the equivalences
     \begin{equation*}
       u\mathrel{\R}v\Leftrightarrow \ori u=\ori v\qquad
     \text{and}\qquad u\mathrel{\L}v\Leftrightarrow\ole u=\ole v
     \end{equation*}
     hold,    and therefore so does the equivalence
     \begin{equation*}
       u\mathrel{\H}v\Leftrightarrow\li u=\li v.
     \end{equation*}
     Moreover, the $\H$-class of $u\in J_{\pv V}(\Cl X)$ is
     a maximal subgroup of $J_{\pv V}(\Cl X)$
     if and only if $\li u\in \Cl X$.
   \end{Prop}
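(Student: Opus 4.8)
The plan is to prove the stated equivalences and the characterization of group $\H$-classes by reducing to the two cited lemmas from~\cite{Almeida&ACosta:2007a} and~\cite{Almeida&ACosta:2012}, while supplying the symbolic-dynamical bridge that makes those statements applicable. The first observation is that the implications $u\leq_\R v\Rightarrow\ori u=\ori v$ and $u\leq_\L v\Rightarrow\ole u=\ole v$ are formal (a left factor determines the same positive ray, a right factor the same negative ray), so only the reverse implications require work, and only for $u,v$ inside the single $\J$-class $J_{\pv V}(\Cl X)$. I would first record that $J_{\pv V}(\Cl X)$ is a $\J$-maximal infinite $\J$-class by Theorem~\ref{t:bijection-minimal-shifts-maximal-j-classes}, so its elements are infinite pseudowords all of whose finite factors lie in $L(\Cl X)$, and, by the minimality of $\Cl X$, $L(\Cl X)$ is uniformly recurrent; this is the hypothesis under which the cited lemmas are stated.

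The core step is the reverse implication $\ori u=\ori v\Rightarrow u\mathrel{\R}v$ for $u,v\in J_{\pv V}(\Cl X)$. Here I would invoke \cite[Lemma~6.6]{Almeida&ACosta:2007a} directly: it asserts precisely that two elements of the $\J$-maximal $\J$-class attached to a minimal subshift are $\R$-equivalent if and only if they have the same infinite prefix (positive ray). The only thing to check before quoting it is that the ambient pseudovariety hypothesis matches; that paper works under $\pv V\supseteq\Lo{Sl}$, which is exactly our standing assumption. The $\L$-statement is the left-right dual, and the $\H$-statement follows by combining the two, since $\H=\R\cap\L$ and $\li u=(\ole u,\ori u)$ records exactly the pair of rays.

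For the last sentence — the $\H$-class of $u$ is a maximal subgroup of $J_{\pv V}(\Cl X)$ iff $\li u\in\Cl X$ — I would appeal to \cite[Lemma~5.3]{Almeida&ACosta:2012}. The underlying idea is that $H_u$ is a group precisely when $u\mathrel{\H}u^2$ (equivalently $u$ lies in the same $\H$-class as an idempotent), and by the ray characterization just established this holds iff $\li{u^2}=\li u$; unwinding, $\ori{u^2}=\ori u$ forces the left-infinite word $\ole u$ and the right-infinite word $\ori u$ to "match up" into a bi-infinite sequence that is shift-invariant in the appropriate sense, which is exactly the condition $\ole u.\ori u\in A^{\ZZ}$ being a point of $\Cl X$. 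Since every finite factor of $u$ is in $L(\Cl X)$ and $\Cl X$ is closed, $\li u$ is automatically a point of $A^{\ZZ}$ whose blocks lie in $L(\Cl X)$, hence in $\Cl X$; the real content is the converse direction, that membership $\li u\in\Cl X$ is equivalent to $H_u$ being a group, which is the substance of the cited lemma.

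The main obstacle I anticipate is not any single computation but making sure the hypotheses of the two external lemmas are met verbatim — in particular that \cite{Almeida&ACosta:2012} phrases its criterion in terms of the pair of rays rather than in some equivalent but differently packaged form, and that no extra assumption (such as $\pv V=\pv V\ast\pv D$, or $\pv V$ monoidal) is silently needed. If it turns out an auxiliary hypothesis is required, the fallback is to reprove the group criterion from scratch: show $H_u$ is a group $\iff$ $u\mathrel{\R}u^\omega$ $\iff$ $\ori u$ is a fixed point of the "shift by $|{\cdot}|$" operation induced by $u$, and then translate this into $\li u\in\Cl X$ using the uniform recurrence of $L(\Cl X)$ and a compactness argument on finite blocks. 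But given that the statement is quoted as a Proposition attributed to those two papers, I expect the clean route — cite Lemma~6.6 for the ray equivalences, cite Lemma~5.3 for the group criterion — to go through with only a sentence of hypothesis-checking.
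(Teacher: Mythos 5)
The paper itself gives no proof of this proposition: it is stated purely as a quotation of Lemma~6.6 of \cite{Almeida&ACosta:2007a} and Lemma~5.3 of \cite{Almeida&ACosta:2012}, which is exactly the route you propose, so at the level of strategy you and the paper coincide, and the easy implications you verify ($u\leq_\R v\Rightarrow\ori u=\ori v$ for infinite pseudowords, and $\H=\R\cap\L$) are the same observations the paper records in the surrounding text.

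There is, however, a genuine error in your elaboration of the last assertion. You claim that, since every finite factor of $u$ lies in $L(\Cl X)$ and $\Cl X$ is closed, the point $\li u=\ole u.\ori u$ is ``automatically'' in $\Cl X$, so that only one direction of the final equivalence carries content. This is false: the blocks of $\li u$ that straddle the seam between the two rays have the form $\te j(u)\,\be k(u)$ (a suffix of $u$ followed by a prefix of $u$), and such words need not be factors of $u$, hence need not lie in $L(\Cl X)$. Concretely, for the periodic subshift $\Cl X=\mathcal O((ab)^\infty)$ the pseudoword $u=a(ba)^\omega\in J_{\pv V}(\Cl X)$ has $\ole u=\cdots abab\,a$ and $\ori u=ab\,ab\cdots$, so $\li u$ contains the block $aa\notin L(\Cl X)$ and $\li u\notin\Cl X$; correspondingly $u^2$ has $aa$ as a factor, so $u^2<_{\J}u$ and the $\H$-class of $u$ is not a group. (This is also forced by Corollary~\ref{c:number-of-RLH-classes-in-minimal-case}: a periodic subshift of least period $n$ produces $n^2$ $\H$-classes in $J_{\pv V}(\Cl X)$ but only $n$ idempotents.) If your claim were true, every $\H$-class of $J_{\pv V}(\Cl X)$ would be a maximal subgroup, contradicting the very statement being proved. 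As long as you cite \cite[Lemma~5.3]{Almeida&ACosta:2012} verbatim the proposition stands, but the heuristic justification you attach to it is internally inconsistent, and your proposed ``fallback'' reproof of the group criterion would have to be rebuilt on the correct observation that both directions of the equivalence are substantive.
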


   In other words, Proposition~\ref{p:parametrization-of-JX} states
   in particular that if $\Cl X$ is minimal then
   the $\R$-classes and the $\L$-classes
   of $J_\pv V(\Cl X)$ are respectively parameterized
   by the positive rays of $\Cl X$ and the negative rays of $\Cl X$, provided
   $\pv V$ contains $\Lo {Sl}$.
   
   \begin{Cor}\label{c:number-of-RLH-classes-in-minimal-case}
     Consider a pseudovariety of semigroups $\pv V$ containing
     $\Lo {Sl}$.
     Let $\Cl X$ be a minimal subshift.
     If $\Cl X$ is not a periodic subshift, then $J_{\pv V}(\Cl X)$
     contains $2^{\aleph_0}$ many $\R$-classes
     and $2^{\aleph_0}$ many $\L$-classes.
     If $\Cl X$ is a periodic subshift of least period $n$,
     then $\Cl X$ contains precisely $n$ $\R$-classes, $n$ $\L$-classes,
     $n^2$ $\H$-classes and $n$ idempotents, and these
     idempotents are the pseudowords
     of the form $u^\omega$ with $u$ a conjugate word
     of~$v$, where $v$ is a primitive word
     of length $n$ such that $\Cl X=\mathcal O(v^\infty)$.
   \end{Cor}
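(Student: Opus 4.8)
The plan is to combine Proposition~\ref{p:parametrization-of-JX} with the structural facts about primitive words collected in Lemmas~\ref{l:v-plus-locally-testable} and~\ref{l:a-technical-consequence-of-primitivity}. First I would treat the non-periodic case: if $\Cl X$ is minimal but not periodic, then $\Cl X$ is infinite, and since $\Cl X$ is a closed shift-invariant subset of $A^{\ZZ}$ with no isolated points, the set of positive rays $\{\ori x\mid x\in\Cl X\}$ has cardinality $2^{\aleph_0}$ (it is a nonempty perfect subset of $A^{\NN}$, or one argues directly from the fact that a minimal infinite subshift has uncountably many points); symmetrically for negative rays. By Proposition~\ref{p:parametrization-of-JX}, the $\R$-classes of $J_{\pv V}(\Cl X)$ are parameterized by the positive rays of $\Cl X$ and the $\L$-classes by the negative rays, so each of these families has cardinality $2^{\aleph_0}$. (One should note that $J_{\pv V}(\Cl X)$ is nonempty and sits inside a metrizable profinite semigroup, so $2^{\aleph_0}$ is also an upper bound, giving the exact value.)

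Next I would handle the periodic case $\Cl X=\mathcal O(v^\infty)$ with $v$ primitive of length $n$. By Fact~\ref{fact:periodic-points}, $\Cl X$ consists of the $n$ points $u^\infty$ as $u$ ranges over the conjugates of $v$, and the positive rays of $\Cl X$ are exactly the $n$ right-infinite sequences $u^{+\infty}$, one for each conjugate $u$; likewise the negative rays are the $n$ sequences $u^{-\infty}$. Proposition~\ref{p:parametrization-of-JX} then gives exactly $n$ $\R$-classes and $n$ $\L$-classes, hence at most $n^2$ $\H$-classes. To see that all $n^2$ $\H$-classes are realized — equivalently that $J_{\pv V}(\Cl X)$ meets every $\R$-class/$\L$-class pair — I would exhibit, for conjugates $u,u'$ of $v$, a pseudoword in $J_{\pv V}(\Cl X)$ with positive ray $u^{+\infty}$ and negative ray $(u')^{-\infty}$: the natural candidate is the pseudoword $(u')^{\omega}u^{\omega}$ (or more carefully an accumulation point of $(u')^{k!}u^{k!}$), whose finite factors are factors of powers of conjugates of $v$ and hence lie in $L(\Cl X)$, so it lies in $\Mir_{\pv V}(\Cl X)$ and, being infinite, its $\J$-class lies above $J_{\pv V}(\Cl X)$; minimality of $J_{\pv V}(\Cl X)$ together with Theorem~\ref{t:bijection-minimal-shifts-maximal-j-classes} forces it into $J_{\pv V}(\Cl X)$ after multiplying by suitable idempotents. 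Counting idempotents: by Proposition~\ref{p:parametrization-of-JX} the $\H$-class of $w\in J_{\pv V}(\Cl X)$ is a group iff $\li w\in\Cl X$, i.e. iff the negative ray and positive ray glue to a point of $\Cl X$; for conjugates this happens precisely when $\ole w=u^{-\infty}$ and $\ori w=u^{+\infty}$ for the \emph{same} conjugate $u$, giving exactly $n$ maximal subgroups, hence $n$ idempotents. Finally, since $Su^{\omega}u^{+\infty}$-type arguments identify $u^\omega$ as the idempotent in the group $\H$-class with ray $u^{\pm\infty}$, these $n$ idempotents are exactly the $u^\omega$ with $u\sim_c v$.

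The main obstacle I anticipate is the surjectivity/realization step in the periodic case: showing that the candidate pseudoword $(u')^{\omega}u^{\omega}$ (suitably sandwiched between idempotents) actually lands in the minimum $\J$-class $J_{\pv V}(\Cl X)$ rather than strictly above it, and that it has the prescribed positive and negative rays. Here is where Lemma~\ref{l:a-technical-consequence-of-primitivity} and Lemma~\ref{l:v-plus-locally-testable} do the real work: local testability of $v^+$ lets one show (via Theorem~\ref{t:rational-open}) that the relevant pseudowords have $\omega$-power equal to the expected idempotent $u^\omega$ and that products collapse correctly, while Lemma~\ref{l:a-technical-consequence-of-primitivity} is exactly what prevents an unwanted "phase shift'' in the periodic structure when two powers of conjugates are concatenated, pinning down the ray data precisely. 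Once those pseudoword identities are in hand, the count $n$, $n$, $n^2$, $n$ falls out of Proposition~\ref{p:parametrization-of-JX} purely formally, and the identification of the idempotents with the $u^\omega$ is immediate from the uniqueness of the idempotent in a group $\H$-class.
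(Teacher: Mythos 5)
Your overall route is the paper's route: parametrize the $\R$-classes and $\L$-classes of $J_{\pv V}(\Cl X)$ by positive and negative rays via Proposition~\ref{p:parametrization-of-JX}, count rays ($2^{\aleph_0}$ in the nonperiodic minimal case, $n$ of each in the periodic case), and use the last assertion of that proposition to see that the group $\H$-classes correspond to the points $u^{-\infty}.u^{+\infty}$ of $\Cl X$, i.e.\ to the $n$ conjugates of $v$, with $u^\omega$ the idempotent of the corresponding maximal subgroup. That part is fine.

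The problem is the ``realization'' step you single out as the main obstacle. First, it is not needed: $\Om AV$ is a compact semigroup, so $\J=\D$ and the semigroup is stable, and within a single $\D$-class every $\R$-class meets every $\L$-class in a (single, nonempty) $\H$-class. Hence $n$ $\R$-classes and $n$ $\L$-classes in the regular $\J$-class $J_{\pv V}(\Cl X)$ give exactly $n^2$ $\H$-classes with no further work; this is the standard eggbox picture, and the paper's proof silently relies on it. Second, the construction you propose in its place is actually false: for conjugates $u\neq u'$ of $v$, the pseudoword $(u')^{\omega}u^{\omega}$ need \emph{not} have all its finite factors in $L(\Cl X)$, because factors straddling the junction are a suffix of a power of $u'$ followed by a prefix of a power of $u$, and such words are generally not factors of any power of a conjugate of $v$. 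Concretely, for $v=ab$, $u'=ab$, $u=ba$, the word $abba$ is a factor of $(ab)^{\omega}(ba)^{\omega}$ but contains $bb\notin L(\Cl X)$; so $(u')^{\omega}u^{\omega}\notin\Mir_{\pv V}(\Cl X)$ and in particular does not lie in $J_{\pv V}(\Cl X)$. The appeals to Lemmas~\ref{l:v-plus-locally-testable} and~\ref{l:a-technical-consequence-of-primitivity} cannot repair this (those lemmas are used elsewhere, in the proof of Proposition~\ref{p:J-finite-number-idempotents}, and play no role here). Deleting this whole step and invoking the eggbox fact yields a correct proof that matches the paper's.
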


   \begin{proof}
     It suffices to combine Proposition~\ref{p:parametrization-of-JX}
     with the following facts that we recall.
     First, it is known that
     a nonperiodic minimal subshift has $2^{\aleph_0}$ many negative rays,
     and $2^{\aleph_0}$ many positive rays (cf.~\cite[Chapter 2]{Lothaire:2001}).
     Second, if we assume that $\Cl X$ is a periodic subshift of period~$n$,
     with $\Cl X=\mathcal O(v^\infty)$ for some primitive word
     $v$ of length~$n$, then it is clear that $\Cl X$  has $n$ positive rays,
     namely those of the form $u^{-\infty}$
     with $u$ a conjugate of $v$.
     And whenever $u$ and $w$ are conjugates of $v$,
     one has $u^{-\infty}.w^{+\infty}\in\Cl X$
     if and only if $u=w$, since periodic shifts are minimal and hence Proposition~\ref{p:parametrization-of-JX} applies.
     Finally, if $u$ is conjugate with the primitive word $v$,
     then $u^\omega$ is an idempotent in $J_{\pv V}(\Cl X)$,
     the one in the unique maximal subgroup of
     $J_{\pv V}(\Cl X)$ whose elements have negative ray
     $u^{-\infty}$ and positive ray $u^{+\infty}$.
   \end{proof}

   For later reference, we state the
   next well known and easy to prove lemma.

   \begin{Lemma}\label{l:conjugation-of-idempotents}
     Suppose that $xy$ is an idempotent in a semigroup $S$,
     and consider the conjugate $yx$.
     Then $(yx)^2$ is an idempotent of $S$
     which is $\J$-equivalent to $xy$.
   \end{Lemma}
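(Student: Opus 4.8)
The plan is to prove both assertions by direct computation, using only the single identity $xyxy=xy$ supplied by the hypothesis that $e:=xy$ is idempotent. I would set $f:=(yx)^2=yxyx$ and proceed in two steps: first showing $f\in E(S)$, then showing $e\mathrel{\J}f$.

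For the first step I would bracket the words $(yx)^n$ so as to expose the collapsing pattern: inserting parentheses around the central block of four letters gives $(yx)^3=y(xyxy)x=y(xy)x=(yx)^2$, and iterating once more $f^2=(yx)^4=(yx)^3\cdot(yx)=(yx)^2\cdot(yx)=(yx)^3=(yx)^2=f$. Hence $f$ is idempotent.

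For the second step I would exhibit each of $e$ and $f$ as a factor of the other. On one side, $f=(yx)^2=yxyx=y(xy)x=yex$, so $f\leq_{\J}e$. On the other side, $xfy=x(yx)^2y=xyxyxy=(xy)^3$, and $(xy)^3=(xy)^2\cdot(xy)=(xy)(xy)=(xy)^2=xy=e$ (again by idempotency of $e$); thus $e=xfy$ lies in $S^{I}fS^{I}$, giving $e\leq_{\J}f$. Combining the two inequalities yields $e\mathrel{\J}f$, which is what is claimed.

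I do not expect any genuine obstacle here: the whole argument is a bracketing exercise on the words $(yx)^n$ and $(xy)^n$, aimed at revealing the pattern $xyxy$ that collapses to $xy$. The only point deserving a little care is keeping track of the sides on which the extra factors $x$ and $y$ sit, together with the observation that one must pass to the square $(yx)^2$ rather than to $yx$ itself, since $yx$ need not be idempotent even when $xy$ is; once the correct bracketings are written down, the computations above are forced.
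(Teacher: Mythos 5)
Your computation is correct and is exactly the routine verification the paper has in mind: the paper states this lemma as ``well known and easy to prove'' and omits the proof entirely. Both halves of your argument ($f^2=f$ via collapsing the central $xyxy$, and the mutual $\J$-inequalities $f=yex$ and $e=xfy$) are valid, so there is nothing to add.
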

   
   Next is
   another well known fact
   (cf.~\cite[Propositions A.1.15 and 3.1.10]{Rhodes&Steinberg:2009qt})
   that we shall use.
   
      \begin{Lemma}\label{l:conjugation-in-a-same-J-class}
         In a compact semigroup, every two $\J$-equivalent
         idempotents are conjugate.
       \end{Lemma}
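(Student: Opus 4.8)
The plan is to reduce the statement to the classical algebraic fact that two $\D$-equivalent idempotents of an arbitrary semigroup are conjugate, and to bridge the gap between $\J$ and $\D$ using compactness. So the argument has two parts: first, deduce that the $\J$-equivalent idempotents $e$ and $f$ are in fact $\D$-equivalent; second, build the conjugating pair from a suitably chosen element of $R_e\cap L_f$.

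For the first part I would invoke that every compact semigroup is stable, i.e.\ $s\mathrel{\J}st\Rightarrow s\mathrel{\R}st$ and $s\mathrel{\J}ts\Rightarrow s\mathrel{\L}ts$ (cf.~\cite[Proposition A.1.15]{Rhodes&Steinberg:2009qt}); the routine implication that stability forces $\J=\D$ then gives $e\mathrel{\D}f$ from $e\mathrel{\J}f$. Hence there is an element $a$ of $S$ with $e\mathrel{\R}a$ and $a\mathrel{\L}f$. Because $e$ is an idempotent with $e\mathrel{\R}a$ we have $ea=a$ and $e=ab$ for some $b\in S^I$, and because $f$ is an idempotent with $a\mathrel{\L}f$ we have $af=a$ and $f=ca$ for some $c\in S^I$.

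For the second part, replacing $b$ by $fbe$ leaves the identity $ab=e$ untouched (indeed $a(fbe)=(af)be=abe=ee=e$) while additionally securing $fb=b$ and $be=b$. Then
\[
ce=c(ab)=(ca)b=fb=b,
\]
and consequently
\[
ba=(ce)a=c(ea)=ca=f .
\]
Thus $e=ab$ and $f=ba$ with $a,b\in S^I$, which is exactly the assertion that $e$ and $f$ are conjugate (this is the algebraic content recorded in \cite[Proposition 3.1.10]{Rhodes&Steinberg:2009qt}).

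The only real point of attention is the reduction $\J=\D$, which is precisely where the compactness hypothesis is used; beyond that, the computation is elementary once one commits to the normalization $b=fbe$, the sole mild subtlety being to keep track of which of the six identities $ea=a$, $af=a$, $ab=e$, $ca=f$, $fb=b$, $be=b$ is applied at each step. I do not expect a genuine obstacle here.
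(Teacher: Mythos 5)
Your proof is correct and follows exactly the route the paper intends: the paper gives no written proof, simply citing \cite[Propositions A.1.15 and 3.1.10]{Rhodes&Steinberg:2009qt}, which are precisely the two ingredients you use (stability of compact semigroups forcing $\J=\D$, and the algebraic fact that $\D$-equivalent idempotents are conjugate). Your normalization $b\mapsto fbe$ and the ensuing computation are a correct write-out of the second ingredient.
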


  In what follows, $J_e$ denotes the $\J$-class of $e$.

 \begin{Prop}\label{p:J-finite-number-idempotents}
   Let $\pv V$ be a pseudovariety of semigroups
   containing~$\Lo{Sl}$.
   Let $e$ be an idempotent of $\Om AV$.
   The following conditions are equivalent:
   \begin{enumerate}
   \item $e=u^\omega$ for some $u\in A^+$;\label{item:J-finite-number-idempotents-1}
   \item $J_e$ contains a finite number of $\H$-classes;\label{item:J-finite-number-idempotents-2}
   \item $J_e$ contains a finite number of $\R$-classes;\label{item:J-finite-number-idempotents-3}
   \item $J_e$ contains a finite number of $\L$-classes;\label{item:J-finite-number-idempotents-4}
   \item $J_e$ contains a finite number of idempotents.\label{item:J-finite-number-idempotents-5}
   \end{enumerate}
 \end{Prop}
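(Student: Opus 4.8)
The plan is to prove the chain of implications
\eqref{item:J-finite-number-idempotents-1} $\Rightarrow$ \eqref{item:J-finite-number-idempotents-2} $\Rightarrow$ \eqref{item:J-finite-number-idempotents-3},\eqref{item:J-finite-number-idempotents-4} $\Rightarrow$ \eqref{item:J-finite-number-idempotents-5} $\Rightarrow$ \eqref{item:J-finite-number-idempotents-1}, using the machinery already assembled. For \eqref{item:J-finite-number-idempotents-1} $\Rightarrow$ \eqref{item:J-finite-number-idempotents-2}: if $e = u^\omega$ with $u \in A^+$ primitive of length $n$ (we may assume $u$ primitive, replacing $u$ by its primitive root), then $\mathcal O(u^\infty)$ is a periodic subshift of least period $n$, its $J$-class $J_{\pv V}(\mathcal O(u^\infty))$ is regular, and $e = u^\omega$ lies in it. By Corollary~\ref{c:number-of-RLH-classes-in-minimal-case}, $J_{\pv V}(\mathcal O(u^\infty))$ has exactly $n$ idempotents, $n$ $\R$-classes, $n$ $\L$-classes and $n^2$ $\H$-classes, so $J_e$ is finite. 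The implications \eqref{item:J-finite-number-idempotents-2} $\Rightarrow$ \eqref{item:J-finite-number-idempotents-3} and \eqref{item:J-finite-number-idempotents-2} $\Rightarrow$ \eqref{item:J-finite-number-idempotents-4} are trivial, and \eqref{item:J-finite-number-idempotents-3} $\Rightarrow$ \eqref{item:J-finite-number-idempotents-5} (likewise \eqref{item:J-finite-number-idempotents-4} $\Rightarrow$ \eqref{item:J-finite-number-idempotents-5}) holds because each $\R$-class of a regular $\J$-class contains at least one and at most $|E(S)|$-many idempotents; more simply, a $\J$-class with finitely many $\R$-classes has finitely many $\H$-classes exactly when it has finitely many $\L$-classes, but for the idempotent count it suffices that idempotents are distributed among $\R$-classes with each $\R$-class containing finitely many (indeed, in a regular $\J$-class each $\H$-class contains at most one idempotent, and each $\R$-class meets only finitely many $\H$-classes in the finite case — one must be slightly careful here since finitely many $\R$-classes does not a priori bound the number of $\L$-classes, so the cleanest route is \eqref{item:J-finite-number-idempotents-3} $\Rightarrow$ \eqref{item:J-finite-number-idempotents-1} directly).

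The substantive implication is the converse direction, establishing that finiteness of any of the combinatorial data forces $e$ to be of the form $u^\omega$ with $u$ a finite word; I would prove \eqref{item:J-finite-number-idempotents-5} $\Rightarrow$ \eqref{item:J-finite-number-idempotents-1} (and argue \eqref{item:J-finite-number-idempotents-3}, \eqref{item:J-finite-number-idempotents-4} $\Rightarrow$ \eqref{item:J-finite-number-idempotents-5} along the way, or fold them in). Suppose $e$ is infinite; the goal is to produce infinitely many idempotents in $J_e$. The idea is to exploit the conjugation structure: by Lemma~\ref{l:conjugation-in-a-same-J-class}, all idempotents $\J$-equivalent to $e$ are conjugate to $e$, and conversely, by Lemma~\ref{l:conjugation-of-idempotents}, if $e = xy$ then $(yx)^2$ is an idempotent $\J$-equivalent to $e$. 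So it suffices to find infinitely many pairwise distinct idempotents of the form $(yx)^2$ with $e = xy$. Write $e = xy$ in infinitely many "genuinely different" ways: since $e$ is an infinite pseudoword, by Remark~\ref{rmk:always-a-factor} (or by the structure of $\overline{A^+}$) we can factor $e$ as $e = x_k y_k$ where $x_k$ is the prefix of $e$ of length $k$, for every $k \geq 1$. Then the conjugates $y_k x_k$ have $y_k x_k \J e$, and $(y_k x_k)^2$ is an idempotent $\J$-equivalent to $e$. The point to extract is that for infinitely many $k$ these idempotents lie in pairwise distinct $\R$-classes (or $\H$-classes), which would immediately contradict finiteness of the number of $\R$-classes; and the idempotent count follows similarly. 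To see the $\R$-classes differ: the $\R$-class of $(y_k x_k)^2$, being that of $y_k x_k$, determines the positive ray $\overrightarrow{y_k x_k}$, which begins with the suffix information of $e$ shifted; as $k$ varies over an infinite set, if these rays coincided for infinitely many values one would force a strong periodicity on the finite prefixes and suffixes of $e$, and the combinatorics-of-words fact quoted before Lemma~\ref{l:a-technical-consequence-of-primitivity} (commuting words are powers of a common word) together with Corollary~\ref{c:cancelation-property} would pin $e$ down to be $u^\omega$ for a finite word $u$.

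More precisely, here is the cleaner argument I would actually write for \eqref{item:J-finite-number-idempotents-3} $\Rightarrow$ \eqref{item:J-finite-number-idempotents-1} (with \eqref{item:J-finite-number-idempotents-4} dual and \eqref{item:J-finite-number-idempotents-5} handled via Lemmas~\ref{l:conjugation-of-idempotents} and~\ref{l:conjugation-in-a-same-J-class}): assume $J_e$ has finitely many $\R$-classes. The positive rays $\overrightarrow{w}$ of elements $w \in J_e$ take finitely many values by Proposition-style reasoning on $\R$ (two $\R$-related elements in the same $\J$-class have the same positive ray, since $w \leq_\R w'$ forces $\overrightarrow{w} = \overrightarrow{w'}$); hence there are only finitely many such rays. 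Now $e$ infinite gives a ray $\overrightarrow{e}$; consider the factorizations $e = x_k y_k$ with $|x_k| = k$. Each $y_k x_k \in J_e$ (it is conjugate to $e$, hence $\J$-equivalent by Lemma~\ref{l:conjugation-in-a-same-J-class} applied in reverse — actually directly, $e=x_k y_k$ and $y_k x_k$ are conjugate so $y_k x_k \leq_\J e$ and symmetrically $e = x_k y_k \leq_\J y_k x_k$, giving $\J$-equivalence). The positive ray $\overrightarrow{y_k x_k}$ starts with the negative-ray data of $e$ read forward beginning $k$ letters before the "end": precisely, the suffix of $e$ of length $k$ is $\te k(e)$, which is a prefix of $y_k$, hence of $y_k x_k$. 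So $\overrightarrow{y_k x_k}$ has $\te k(e) = t_k(e)$ as a length-$k$ prefix. Since there are only finitely many possible rays, some infinite fixed ray $r$ equals $\overrightarrow{y_k x_k}$ for infinitely many $k$ in an infinite set $K$; thus $r$ has $\te k(e)$ as prefix for all $k \in K$, which determines $r = \overleftarrow{e}$ read forward, i.e.\ $r$ is the "reverse-ray" of $e$. This gives strong self-similarity; combined with the same argument on the $\L$-side (or by noting the $\H$-class of $(y_k x_k)^2$ is a group whose idempotent count is bounded), a standard combinatorics-of-words extraction using the commuting-words fact and Corollary~\ref{c:cancelation-property} yields a finite word $u$ with $\overrightarrow{e} = u^{+\infty}$, $\overleftarrow{e} = u^{-\infty}$, whence $\overleftarrow{e}.\overrightarrow{e} = u^\infty \in \mathcal O(u^\infty)$ and by Theorem~\ref{t:bijection-minimal-shifts-maximal-j-classes} / Proposition~\ref{p:parametrization-of-JX}-type identification $e$ lies in $J_{\pv V}(\mathcal O(u^\infty))$ and $e = u^\omega$. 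The main obstacle I anticipate is making the ray-coincidence argument rigorous enough to conclude $e$ is literally $u^\omega$ rather than merely "eventually periodic on both rays": I would need Corollary~\ref{c:cancelation-property} to cancel finite words from infinite pseudowords and the commuting-words lemma to force the periods to be a common primitive word, and then invoke that a pseudoword with periodic finite prefixes and suffixes matching a fixed word and lying in the appropriate regular $\J$-class must equal $u^\omega$ (using that $u^\omega$ is the unique idempotent with those rays in that group, via Corollary~\ref{c:number-of-RLH-classes-in-minimal-case}).
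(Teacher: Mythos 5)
Your handling of the easy implications is fine, but the substantive direction --- that finitely many $\R$-classes (or idempotents) in $J_e$ forces $e=u^\omega$ --- is not established, and the point where you stop (``the main obstacle I anticipate'') is exactly where all of the difficulty lives. Two concrete problems. First, a local error: with $x_k=\be k(e)$ the length-$k$ prefix and $e=x_ky_k$, the word $\te k(e)$ is \emph{not} a prefix of $y_k$; the length-$k$ prefix of $y_k$ is the segment of $e$ in positions $k+1,\dots,2k$, so the positive rays of your conjugates $y_kx_k$ are the left shifts of $\ori e$, not anything involving the suffix data of $e$. Second, and fatally: even after correcting this, your argument only yields that $\ori e$ (and, dually, $\ole e$) is eventually periodic, and the passage from ``both rays periodic with period $v$'' to ``$e=v^\omega$'' is false. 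For example, $h=(a^\omega b a^\omega)^\omega$ is an idempotent with $h=a^\omega h a^\omega$, hence $\ori h=a^{+\infty}$ and $\ole h=a^{-\infty}$, yet $h\neq u^\omega$ for any finite word $u$ (since $a^nba^n$ is a factor of $h$ for every $n$); by the proposition itself $J_h$ then has infinitely many $\R$-classes, but \emph{all} of your finite-rotation conjugates $y_kx_k$ of such an $h$ share the single positive ray $a^{+\infty}$, so your test cannot detect those $\R$-classes. Conjugation by finite prefixes is too weak a tool here.

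The paper's proof of (\ref{item:J-finite-number-idempotents-3})$\Rightarrow$(\ref{item:J-finite-number-idempotents-1}) surmounts this as follows: assuming $e\neq u^\omega$ for all $u\in A^+$, take a $\J$-maximal idempotent $f$ with $e\leq_\J f$ and transport the $\R$-classes of $J_f$ into $J_e$ via conjugate idempotents (distinguishing them by positive rays through Theorem~\ref{t:bijection-minimal-shifts-maximal-j-classes} and Proposition~\ref{p:parametrization-of-JX}); this forces $f=v^\omega$ with $v$ primitive, on pain of $2^{\aleph_0}$ many $\R$-classes. Then the idempotent $h=(fyxf)^2\in J_e$ satisfies $h\in\overline{v^2\cdot v^+\cdot A^*\cap A^*\cdot v^+\cdot v^2}$, and Lemma~\ref{l:a-technical-consequence-of-primitivity} places $h$ in $\overline{v^+\cdot z\cdot A^*}$ for some word $z\neq v$ of length $|v|$ (the alternative $h\in\overline{v^+}$ would give $e\mathrel{\J}v^\omega$). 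Writing $h=v^\alpha zw$ with $\alpha\in\widehat\NN\setminus\NN$, one finally exhibits the infinitely many pairwise non-$\R$-equivalent idempotents $(v^kzwv^{\alpha-k})^2$, separated by their prefixes of length $(k+1)|v|$. This rotation of $h$ by \emph{infinite} profinite powers of $v$ is the missing idea, and nothing in your proposal substitutes for it. (For (\ref{item:J-finite-number-idempotents-5})$\Rightarrow$(\ref{item:J-finite-number-idempotents-3}) the clean statement is that in a stable compact semigroup every $\R$-class of a regular $\J$-class contains an idempotent, so finitely many idempotents in $J_e$ immediately gives finitely many $\R$-classes.)
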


 In the following proof of Proposition~\ref{p:J-finite-number-idempotents}
 we use profinite powers $u^\nu$, with $\nu$ belonging to the
 profinite completion $\widehat\NN$ of $\NN$ (details may be found in~\cite[Section 2]{Almeida&Volkov:2006}).
 The power $u^\omega$ is an example of such powers, with $\omega=\lim n!$ in $\widehat\NN$. What is most relevant for the proof is that, for every $u\in A^+$,
 the power $u^\nu$  belongs to the maximal subgroup of $\Om AV$ containing $u^\omega$ if and only if $\nu\in\widehat\NN\setminus\NN$.
 
 \begin{proof}[Proof of Proposition~\ref{p:J-finite-number-idempotents}]
   The implication
   (\ref{item:J-finite-number-idempotents-1})
   $\Rightarrow$
   (\ref{item:J-finite-number-idempotents-2})
   is encapsulated in Corollary~\ref{c:number-of-RLH-classes-in-minimal-case}.
   The implications 
   (\ref{item:J-finite-number-idempotents-2})
   $\Rightarrow$
   (\ref{item:J-finite-number-idempotents-3})
   and
   (\ref{item:J-finite-number-idempotents-2})
   $\Rightarrow$
   (\ref{item:J-finite-number-idempotents-4})
   follow immediately from each $\R$-class and each $\L$-class being
   a union of $\H$-classes.

   (\ref{item:J-finite-number-idempotents-3})
   $\Rightarrow$
   (\ref{item:J-finite-number-idempotents-1}):
   Suppose there is no $u\in A^+$
   such that $e=u^\omega$.
   Let $f$ be a $\J$-maximal idempotent such that
   $e\leq_\J f$. Such an idempotent $f$ exists,
   as mentioned in Remark~\ref{rmk:always-a-factor}.
   Let $x,y\in\Om AV$ be such that $e=xfy$.
   Since $fy\cdot xf$ is a conjugate of $xf\cdot fy=e$,
   the pseudoword $h=(fyxf)^2$ is an idempotent in $J_e$, by Lemma~\ref{l:conjugation-of-idempotents}.
   Let $f'$ be an idempotent in $J_f$.
   Then $f=zt$ and $f'=tz$ for some $z,t\in \Om AV$
   (Lemma~\ref{l:conjugation-in-a-same-J-class}).
   Since $e=xfzf'ty$, we know that
   $h'=(f'tyxfz)^2$ is an idempotent in $J_e$ (Lemma~\ref{l:conjugation-of-idempotents}).
   By Proposition~\ref{p:parametrization-of-JX}, if $f$ and $f'$ are not $\R$-equivalent  then $\ori f\neq\ori f$.
   Since $h=fh$ and $h'=f'h'$,
   the inequality
   $\ori f\neq\ori f$
   in turn implies the inequality
   $\ori h\neq\ori h'$.
   This shows that if~$f$ and $f'$ are not $\R$-equivalent,
   then $h$ and $h'$ are not $\R$-equivalent,
   and so $J_h$ has at least as many $\R$-classes
   as $J_f$ has.
   By Corollary~\ref{c:number-of-RLH-classes-in-minimal-case},
   if $f$ is not of the form $v^\omega$, then $J_f$
   has $2^{\aleph_0}$ $\R$-classes, and
   so $J_e=J_h$ has at least $2^{\aleph_0}$ $\R$-classes.  

   From hereon, we suppose that $f=v^\omega$ for some word $v\in A^+$,
   which we may as well assume to be primitive.
   Since $h=v^\omega h v^\omega$, one has
   \begin{equation}\label{eq:J-finite-number-idempotents-0}
     h\in\overline{v^2\cdot v^+\cdot A^*\cap A^*\cdot v^+\cdot v^2},
   \end{equation}
   a fact which is the base of the reasoning that follows.   
   Let $n=|v|$.  For each $z\in A^n$,
   consider the language $K_z=v^+\cdot z\cdot A^*$.
   Note that, 
   \begin{equation}\label{eq:J-finite-number-idempotents-1}
     v^2\cdot v^+\cdot A^*\cap A^*\cdot v^+\cdot v^2\subseteq \Biggl[\,\bigcup_{z\in A^n\setminus\{v\}} K_z\,\Biggr]\cup (v^+\cdot v^2\cdot A^{(<n)}\cap A^*\cdot v^2).
   \end{equation}
   Since $v$ is primitive,
   we know by Lemma~\ref{l:a-technical-consequence-of-primitivity}
   that the inclusion
   \begin{equation}\label{eq:J-finite-number-idempotents-2}
     v^+\cdot v^2\cdot A^{(<n)}\cap A^*\cdot v^2\subseteq v^+
   \end{equation}
   holds.
   Combining~\eqref{eq:J-finite-number-idempotents-0}, \eqref{eq:J-finite-number-idempotents-1}
   and \eqref{eq:J-finite-number-idempotents-2},
      and noticing that the family $(K_z)_z$ is finite,
   we conclude that
   \begin{equation*}
     h\in\Biggl[\,\bigcup_{z\in A^n\setminus\{v\}}\overline{K_z}\,\Biggr]\cup \overline{v^+}.
   \end{equation*}

   If $h\in\overline{v^+}$, then $h$ is the unique idempotent $v^\omega$
   in $\overline{v^+}$, thus $e\in J_{v^\omega}$.
   By Corollary~\ref{c:number-of-RLH-classes-in-minimal-case},
   this contradicts our assumption
   that~$e$ is not of the form $u^\omega$ with $u\in A^+$.

   Therefore, we may take $z\in A^n\setminus\{v\}$
   such that $h\in\overline{K_z}$.
   Take a sequence $(h_k)_k=(v^{r_k}zw_k)_k$  of words
   of $K_z$ converging to $h$, with $r_k\geq 1$.
   By taking subsequences, we may as well suppose that
   $(v^{r_k})_k$ and $(w_k)_k$ respectively converge to some pseudowords $v^\alpha$
   and $w$ of $\Om AV^I$, with $\alpha\in\widehat \NN$,
   thanks to the compactness of $\Om AV$ and $\widehat\NN$.
   Note that $h=v^\alpha\cdot z\cdot w$.
   If $\alpha\in\NN$, then
   $v^\alpha\cdot z$
   is the prefix of length $(\alpha+1)\cdot n$
   of $h$. But since $h=v^\omega h$,
   the prefix of length $(\alpha+1)\cdot n$
   of $h$ is actually $v^{\alpha+1}$, and so we reached a contradiction
   with $v\neq z$.  To avoid the contradiction, we must have
   $\alpha\in\widehat{\NN}\setminus\NN$.
   Therefore, for each positive integer $k$,
   we may consider the pseudoword
   $g_k=(v^k\cdot z\cdot w\cdot v^{\alpha-k})^2$,
   which is an idempotent $\J$-equivalent to $h$ (Lemma~\ref{l:conjugation-of-idempotents}).
   If $k<\ell$, then the prefix of length $(k+1)n$ of $g_\ell$ is $v^{k+1}$,
   while the prefix of the same length of  $g_k$ is $v^kz\neq v^{k+1}$.
   Hence, we conclude that $g_k$ and $g_\ell$ are not $\R$-equivalent
   whenever $k\neq\ell$,
   thus showing that $J_e=J_h$ has at least $\aleph_0$ $\R$-classes.
   
   (\ref{item:J-finite-number-idempotents-4})
   $\Rightarrow$
   (\ref{item:J-finite-number-idempotents-1}):
   This implication holds with a proof
   entirely symmetric to the proof
   of the implication
   (\ref{item:J-finite-number-idempotents-3})
   $\Rightarrow$
   (\ref{item:J-finite-number-idempotents-1}).

   At this point, we have established the equivalences
   (\ref{item:J-finite-number-idempotents-1})
   $\Leftrightarrow$
   (\ref{item:J-finite-number-idempotents-2})
   $\Leftrightarrow$
   (\ref{item:J-finite-number-idempotents-3})
   $\Leftrightarrow$
   (\ref{item:J-finite-number-idempotents-4}).
   The implication (\ref{item:J-finite-number-idempotents-1})
   $\Rightarrow$
   (\ref{item:J-finite-number-idempotents-5})
   is also encapsulated in Corollary~\ref{c:number-of-RLH-classes-in-minimal-case}. Finally, the implication
   (\ref{item:J-finite-number-idempotents-5})
   $\Rightarrow$
   (\ref{item:J-finite-number-idempotents-1})
   follows from the well-know fact that, in a stable semigroup,
   every $\R$-class contained in a regular $\J$-class
   contains at least one idempotent.   
 \end{proof}
 
 \begin{Cor}\label{c:complexity-function-is-inside-karoubi-envelope}
   Let $\Cl X$ be a subshift of $A^\ZZ$.
   Suppose that $\pv V$ is a pseudovariety of semigroups containing
   $\Lo {Sl}$. 
   Then $q_{\Cl X}(n)$ is the number of objects of the
   category~$\Kar(\Mir_{\pv V}(\Cl X))$
   whose isomorphism class is a set of cardinal $n$.
 \end{Cor}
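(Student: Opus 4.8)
The plan is to identify which objects of $\Kar(\Mir_{\pv V}(\Cl X))$ have a finite isomorphism class, count them, and match them with periodic points of $\Cl X$; recall that these objects are exactly the idempotents of $\Om AV$ lying in $\Mir_{\pv V}(\Cl X)$. First I would settle the meaning of ``isomorphism class of an object''. For idempotents $e,f$ of a compact (hence stable) semigroup $S$, one has $e\cong f$ in $\Kar(S)$ if and only if $e\mathrel{\D}f$, equivalently $e\mathrel{\J}f$: the ``if'' part follows from Proposition~\ref{p:retraction} together with the standard fact that $\D$-equivalent idempotents are isomorphic objects of the Karoubi envelope, and for the ``only if'' part an isomorphism given by arrows $(f,s,e)$ and $(e,t,f)$ forces $e\mathrel{\R}s\mathrel{\L}f$ from the relations $e=st$, $f=ts$, $s=esf$, $t=fte$. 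Thus the isomorphism class of $e$ in $\Kar(\Om AV)$ is the set of idempotents of $J_{e}$. I would then observe that passing to the subcategory loses nothing: if $e\in\Mir_{\pv V}(\Cl X)$, then, since $\Mir_{\pv V}(\Cl X)$ is factorial and every element of $J_{e}$ is a factor of $e$, the whole $\J$-class $J_{e}$ lies in $\Mir_{\pv V}(\Cl X)$; and the same relations show that the components $s,t$ of any isomorphism $e\to f$ in $\Kar(\Om AV)$ between idempotents of $\Mir_{\pv V}(\Cl X)$ already belong to $J_{e}\subseteq\Mir_{\pv V}(\Cl X)$. Hence the isomorphism class of an object $e$ computed in $\Kar(\Mir_{\pv V}(\Cl X))$ is again the set of idempotents of $J_{e}$.

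Next I would invoke the two main tools of the section. By Proposition~\ref{p:J-finite-number-idempotents}, the isomorphism class of $e$ is finite precisely when $e=u^{\omega}$ for some $u\in A^{+}$. Since $u$ and its primitive root $w$ satisfy $u^{\omega}=w^{\omega}$ (both are the unique idempotent of the closed subsemigroup generated by $w$, which contains $u$), we may assume $u$ primitive, say $|u|=m$; then Corollary~\ref{c:number-of-RLH-classes-in-minimal-case}, applied to the periodic, hence minimal, subshift $\mathcal O(u^{\infty})$, identifies $J_{u^{\omega}}$ with $J_{\pv V}(\mathcal O(u^{\infty}))$ and tells us it has exactly $m$ idempotents. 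So the objects of $\Kar(\Mir_{\pv V}(\Cl X))$ whose isomorphism class has cardinality $n$ are exactly the pseudowords $u^{\omega}$ with $u$ primitive of length $n$ and $u^{\omega}\in\Mir_{\pv V}(\Cl X)$; moreover $u\mapsto u^{\omega}$ is injective on primitive words, because $u^{\omega}$ determines its positive ray $u^{+\infty}$, and a primitive word is recovered from $u^{+\infty}$ as the prefix whose length is the least period of $u^{+\infty}$.

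It then remains to translate the condition $u^{\omega}\in\Mir_{\pv V}(\Cl X)$. The set of finite factors of $u^{\omega}$ is exactly $L(\mathcal O(u^{\infty}))$: this is a routine argument using that each $\overline{A^{*}wA^{*}}$ is clopen and contains precisely the pseudowords admitting $w$ as a factor, together with the fact that the blocks of $u^{\infty}$ are the factors of the words $u^{k}$. Hence $u^{\omega}\in\Mir_{\pv V}(\Cl X)$ if and only if $L(\mathcal O(u^{\infty}))\subseteq L(\Cl X)$, that is, $\mathcal O(u^{\infty})\subseteq\Cl X$, that is, $u^{\infty}\in\Cl X$. Combining this with Fact~\ref{fact:periodic-points}, which gives a bijection $v\mapsto v^{\infty}$ from the primitive words of length $n$ onto the periodic points of least period $n$, I obtain a bijection between the objects of $\Kar(\Mir_{\pv V}(\Cl X))$ with isomorphism class of cardinality $n$ and the periodic points of $\Cl X$ of least period $n$; their number is $q_{\Cl X}(n)$.

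I expect the only real subtleties to be the factoriality argument guaranteeing that no isomorphisms are lost when passing to the subcategory $\Kar(\Mir_{\pv V}(\Cl X))$, and the bookkeeping that turns ``$m$ idempotents per $\J$-class'' into a count of periodic \emph{points} rather than orbits; the genuinely hard content is already packaged in Proposition~\ref{p:J-finite-number-idempotents} and Corollary~\ref{c:number-of-RLH-classes-in-minimal-case}, which I am free to assume.
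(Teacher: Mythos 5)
Your proof is correct and follows essentially the same route as the paper: both reduce the count to Proposition~\ref{p:J-finite-number-idempotents} and Corollary~\ref{c:number-of-RLH-classes-in-minimal-case}, and then match idempotents $u^{\omega}$ with $u$ primitive of length $n$ against periodic points of least period $n$ via Fact~\ref{fact:periodic-points}. You spell out two points the paper leaves implicit — that the isomorphism class of an object of the Karoubi envelope is exactly the set of idempotents in its $\J$-class (and that this is unchanged on passing to the factorial subcategory), and that $u^{\omega}\in\Mir_{\pv V}(\Cl X)$ is equivalent to $u^{\infty}\in\Cl X$ — but this is added detail, not a different argument.
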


 \begin{proof}
   Let $P$ be the set of primitive words with length $n$
   belonging to $L(\Cl X)$.
   Then the mapping $u\mapsto u^\infty$
   is a bijection between
   $P$ and the set of periodic points of~$\Cl X$ with least period $n$.
   Moreover, the mapping $\psi\colon u^\infty\mapsto u^\omega$, with $u\in P$,
   is injective, and
   for every $u\in P$ the idempotent $u^\omega$ is an object of $\Kar(\Mir_{\pv V}(\Cl X))$  whose isomorphism class is a set with $n$ elements
   (cf.~Proposition~\ref{p:parametrization-of-JX} and Corollary~\ref{c:number-of-RLH-classes-in-minimal-case}).

   On the other hand,
   by Proposition~\ref{p:J-finite-number-idempotents},
   if $e$ is an object of $\Kar(\Mir_{\pv V}(\Cl X))$
   whose isomorphism class has $n$ elements, then $e=u^\omega$
   for some primitive word $u\in L(\Cl X)$, which, by Corollary~\ref{c:number-of-RLH-classes-in-minimal-case}, has length $n$. Therefore, the image
   of the injective map $\psi$ is the set of objects of the category 
   $\Kar(\Mir_{\pv V}(\Cl X))$  whose isomorphism class is a set of cardinal $n$.
 \end{proof}

 The following
 perspective about zeta functions
 is immediate from
 Corollary~\ref{c:complexity-function-is-inside-karoubi-envelope}.
 
 \begin{Cor}\label{c:complexity-function-is-inside-karoubi-envelope}
   Let $\Cl X$
   and $\Cl Y$ be subshifts such that
   $\Kar(\Mir_{\pv V}(\Cl X))$
   and $\Kar(\Mir_{\pv V}(\Cl Y))$
   are isomorphic, where $\pv V$ is a pseudovariety of semigroups containing
   $\Lo {Sl}$. 
   Then we have $\zeta_{\Cl X}=\zeta_{\Cl Y}$.
 \end{Cor}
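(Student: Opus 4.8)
The plan is to deduce the statement directly from the preceding corollary, which identifies $q_{\Cl X}(n)$, the number of periodic points of $\Cl X$ with least period $n$, as the number of objects of $\Kar(\Mir_{\pv V}(\Cl X))$ whose isomorphism class is a set of cardinal $n$. The point to observe is that ``the isomorphism class of an object has cardinal $n$'' is a property of the abstract category alone: any isomorphism of categories $F\colon\Kar(\Mir_{\pv V}(\Cl X))\to\Kar(\Mir_{\pv V}(\Cl Y))$ is a bijection on objects that sends isomorphic objects to isomorphic objects and non-isomorphic objects to non-isomorphic objects, so it carries each isomorphism class of $\Kar(\Mir_{\pv V}(\Cl X))$ bijectively onto an isomorphism class of $\Kar(\Mir_{\pv V}(\Cl Y))$ of the same cardinality. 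No topology is involved in this observation, which is why it suffices to have an isomorphism of categories in the plain, non-topological sense.

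Given such an $F$, for every positive integer $n$ it therefore restricts to a bijection between the set of objects of $\Kar(\Mir_{\pv V}(\Cl X))$ whose isomorphism class has $n$ elements and the analogous set for $\Cl Y$. By the preceding corollary this yields $q_{\Cl X}(n)=q_{\Cl Y}(n)$ for all $n\geq 1$.

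Finally, I would invoke the elementary fact recalled at the start of this section — that the sequences $(q_{\Cl X}(n))_{n\geq 1}$ and $(p_{\Cl X}(n))_{n\geq 1}$ determine each other (cf.~\cite[Exercise 6.3.1]{Lind&Marcus:1996}) — to conclude that $p_{\Cl X}(n)=p_{\Cl Y}(n)$ for every $n$, whence $\zeta_{\Cl X}=\zeta_{\Cl Y}$ straight from the defining power series. There is no real obstacle here: the corollary is a genuine one-line consequence of the previous result, and the only thing worth flagging is that the argument uses nothing about the compact topologies on the categories, only their underlying abstract categorical structure, so the hypothesis ``isomorphic'' is being read in its weakest reasonable sense.
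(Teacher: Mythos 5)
Your argument is exactly the one the paper intends: it states this corollary as immediate from the preceding one, and your write-up just spells out the (correct) one-line deduction that an isomorphism of categories preserves the cardinalities of isomorphism classes of objects, giving $q_{\Cl X}(n)=q_{\Cl Y}(n)$ for all $n$ and hence equality of zeta functions. Nothing is missing.
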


 \appendix

 \section{A labeled topological poset}
 \label{sec:append-label-poset}

 Here a \emph{topological poset} $T$ is a partially ordered set $T$
 such that $T$ is a topological space and
 the partial order $\leq$ of $T$ is a closed subset
 of $T\times T$.
 A \emph{labeled} topological poset
 is a topological poset $T$
 together with a labeling map $\lambda$,
 of domain $T$, assigning
 to each element $t$ of $T$ its \emph{label}, denoted $\lambda(t)$.

 Consider labeled topological posets $T$ and $R$,
 respectively with partial orders $\leq_T$
 and $\leq_R$, and labeling maps
 $\lambda_T$ and $\lambda_R$.
 An isomorphism of labeled topological posets between $T$
 and $R$ is a homeomorphism
 $\varphi\colon T\to R$
 that preserves orders
 (that is, $t_1\leq_T t_2\Leftrightarrow \varphi(t_1)\leq_R\varphi(t_2)$ for
 every $t_1,t_2\in T$)
 and labels (that is, $\lambda_R(\varphi(t))=\lambda_T(t)$ for
 every $t\in T$).
 Naturally, $T$ and $R$ are said to be isomorphic labeled topological posets
 when such an isomorphism exists.

 An element $s$ of a semigroup $S$ is said to have \emph{local units} in $S$ if $s=esf$ for some idempotents $e,f$ of $S$.

 Suppose that $s\in S$ has local units and let $t\in S$ be $\J$-equivalent
 to $s$. Since $s\in SsS$,
 there are $x,y\in S$ such that $t=xty$, whence $t=x^kty^k$
 for every $k\geq 1$. Since in a compact semigroup the closure
 of a monogenic semigroup contains an idempotent~\cite[Theorem 3.5]{Carruth&Hildebrant&Koch:1983},
 we conclude that $t$ also has local units.
 Therefore, in a compact semigroup, the set of local units is a union of $\J$-classes.
 
 For each subset $K$ of a semigroup $S$, we denote by $LU(K)$
 the set of elements of $K$ which have local units in $S$.
 Suppose that $S$ is a compact semigroup
 and that $K$ is a closed subset of $S$ which is factorial.
 We associate to $LU(K)$ a labeled topological poset, denoted by
 $LU(K)^\dagger$,
 as follows:
 \begin{enumerate}
 \item The underlying space of $LU(K)^\dagger$ is the quotient
   of the space $LU(K)$ by the restriction to $LU(K)$ of the relation $\J$.
   In other words, the underlying space is
   the space of $\J$-classes contained in $LU(K)$.
    \item One has $J_1\leq J_2$ in $LU(K)^\dagger$
      if and only if $u\leq_{\J} v$
      for some (equivalently, for all) elements $u\in J_1$
      and $v\in J_2$, 
      whenever $J_1$ and $J_2$ are $\J$-classes
      contained in $LU(K)$.
    \item The label of each regular $\J$-class
      contained in $K$ is the pair
      $(\varepsilon,\Gamma(J))$ such that
      $\varepsilon=1$ if $J$ is regular and
      $\varepsilon=0$ if $J$ is not regular,
      and $\Gamma(J)$ is the isomorphism class of the Sch\"utzenberger group
      of $J$, as a compact group. 
    \end{enumerate}

     \begin{Prop}\label{p:invariance-of-labeled-posets}
      Let $\Cl X,\Cl Y$ be subshifts for which there is
      a continuous equivalence functor
      $F\colon\Kar(\Mir_{\pv V}(\Cl X))\to \Kar(\Mir_{\pv V}(\Cl Y))$.
      Then the labeled topological posets
      $LU(\Mir_{\pv V}(\Cl X))^\dagger$
      and $LU(\Mir_{\pv V}(\Cl Y))^\dagger$
      are isomorphic.
      If, moreover, $F$
      is such that
      the inclusion 
      $F(\Kar(\Sha_{\pv V}(\Cl X)))\subseteq \Kar(\Sha_{\pv V}(\Cl Y))$
      holds, and for some continuous pseudo-inverse $G$ of $F$,
      the inclusion $G(\Kar(\Sha_{\pv V}(\Cl Y)))\subseteq \Kar(\Sha_{\pv V}(\Cl X))$ also holds, then the
      labeled topologically posets $LU(\Sha_{\pv V}(\Cl X))^\dagger$
      and $LU(\Sha_{\pv V}(\Cl Y))^\dagger$
      are isomorphic.
    \end{Prop}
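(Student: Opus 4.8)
The plan is to reconstruct the labeled topological poset $LU(\Mir_{\pv V}(\Cl X))^\dagger$ intrinsically from the compact category $\Kar(\Mir_{\pv V}(\Cl X))$, by a procedure that is visibly invariant under continuous equivalence, and then to run the same reconstruction for the shadow. The first ingredient is that the map $(e,u,f)\mapsto u$ is a continuous surjection from $\Mor(\Kar(\Mir_{\pv V}(\Cl X)))$ onto $LU(\Mir_{\pv V}(\Cl X))$: continuity is clear, and for surjectivity, given $u\in LU(\Mir_{\pv V}(\Cl X))$ write $u=e_0uf_0$ with $e_0,f_0\in E(\Om AV)$ and replace $e_0,f_0$ by $e:=e_0^\omega$, $f:=f_0^\omega$, so that $u=eu=uf$; then every finite factor of $e$ (resp.\ of $f$) is a finite factor of $u$, hence a block of $\Cl X$, so $e,f\in\Mir_{\pv V}(\Cl X)$ and $(e,u,f)$ is an arrow. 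On the arrows of any category, consider the divisibility preorder $\alpha\preccurlyeq\beta$ meaning $\alpha=\gamma\beta\delta$ for some composable $\gamma,\delta$; I would check that in $\Kar(\Mir_{\pv V}(\Cl X))$ one has $(e,u,f)\preccurlyeq(e',u',f')$ exactly when $u\leq_{\J}u'$ in $\Om AV$ — the easy direction is formal, and conversely a factorization $u=xu'y$ is rewritten as $u=(exe')\,u'\,(f'yf)$, where $exe'$ and $f'yf$ are factors of $u\in\Mir_{\pv V}(\Cl X)$ and hence lie in $\Mir_{\pv V}(\Cl X)$. Since $\Mor(\Kar(\Mir_{\pv V}(\Cl X)))$ is compact Hausdorff, the surjection above is a topological quotient map; passing to quotients identifies $LU(\Mir_{\pv V}(\Cl X))^\dagger$, as a topological poset, with the quotient of $\Mor(\Kar(\Mir_{\pv V}(\Cl X)))$ by ``$\preccurlyeq$ in both directions'', partially ordered by the order induced from $\preccurlyeq$.

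Next come the labels. An arrow $\alpha=(e,u,f)$ is von Neumann regular (i.e.\ $\alpha=\alpha\beta\alpha$ for some arrow $\beta$) if and only if $u$ is a regular element of $\Om AV$, equivalently $J_u$ is a regular $\J$-class, so the $\varepsilon$-coordinate of the label is categorical. If $J_u$ is regular then, writing $u=uwu$ and $u=euf$, the idempotent $uw$ lies in $J_u$ and is a factor of $u$, hence is an idempotent of $\Mir_{\pv V}(\Cl X)$ and an object of the category, so by Proposition~\ref{p:local-isomorphism} the Schützenberger group of $J_u$ is the compact group of automorphisms of that object. For non-regular $J_u$ I would use the analogue of Proposition~\ref{p:local-isomorphism} for arrows, identifying the Schützenberger group of $J_u$ with the compact group obtained from the pairs of automorphisms $(\varphi,\psi)$ of $e$ and $f$ satisfying $\varphi\circ(e,u,f)=(e,u,f)\circ\psi$, after quotienting out those that act trivially; in either case the Schützenberger group is a categorical datum of $\Kar(\Mir_{\pv V}(\Cl X))$.

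Given a continuous equivalence $F\colon\Kar(\Mir_{\pv V}(\Cl X))\to\Kar(\Mir_{\pv V}(\Cl Y))$, full faithfulness and essential surjectivity imply that $F$ preserves the divisibility preorder of arrows in both directions and preserves von Neumann regularity; continuity of $F$ (its restrictions to hom-spaces are homeomorphisms between compact Hausdorff spaces, and semigroup isomorphisms on the endomorphism monoids) gives isomorphisms of the compact automorphism groups, and of the arrow-level Schützenberger groups. A continuous pseudo-inverse $G$ together with the continuous natural isomorphisms $FG\cong 1$ and $GF\cong 1$ then make $F$ descend to a homeomorphism $LU(\Mir_{\pv V}(\Cl X))^\dagger\to LU(\Mir_{\pv V}(\Cl Y))^\dagger$ respecting the order and the labels, which is the first assertion (the homeomorphism property can be seen directly, since modulo $\preccurlyeq$-equivalence $FG(\alpha)=\eta_{r(\alpha)}^{-1}\circ\alpha\circ\eta_{d(\alpha)}$ is conjugate to $\alpha$ by isomorphisms, so the descended maps are mutually inverse).

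For the second assertion, $\Sha_{\pv V}(\Cl X)$ and $\Sha_{\pv V}(\Cl Y)$ are unions of $\J$-classes of $\Om AV$, so $LU(\Sha_{\pv V}(\Cl X))$ is a union of $\J$-classes and $LU(\Sha_{\pv V}(\Cl X))^\dagger$ is the full sub-poset of $LU(\Mir_{\pv V}(\Cl X))^\dagger$ on the $\J$-classes contained in $\Sha_{\pv V}(\Cl X)$ (the traces coincide). Working with $\preccurlyeq$, regularity and Schützenberger groups inside the ambient $\Mir$-categories as above, but using the hypotheses $F(\Kar(\Sha_{\pv V}(\Cl X)))\subseteq\Kar(\Sha_{\pv V}(\Cl Y))$ and $G(\Kar(\Sha_{\pv V}(\Cl Y)))\subseteq\Kar(\Sha_{\pv V}(\Cl X))$ to see that the descended maps carry these sub-posets onto one another, one gets that the homeomorphism from the first part restricts to an isomorphism of labeled topological posets $LU(\Sha_{\pv V}(\Cl X))^\dagger\to LU(\Sha_{\pv V}(\Cl Y))^\dagger$. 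The main obstacles I expect are, first, pinning down the Schützenberger-group label for non-regular $\J$-classes — these are not $\J$-classes of objects, so they require the arrow-level refinement of Proposition~\ref{p:local-isomorphism} and some care with the compact topology — and, second, the shadow case, where $\Kar(\Sha_{\pv V}(\Cl X))$ is merely a graph, so ``continuous equivalence'' is not available there and one must argue throughout inside the ambient $\Mir$-categories, invoking the shadow subgraphs only to control where the relevant arrows land.
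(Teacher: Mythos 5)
Your overall strategy coincides with the paper's: you rebuild $LU(\Mir_{\pv V}(\Cl X))^\dagger$ from the compact category by passing to $\J$-classes of arrows (this is the paper's poset $\Kar(\Mir_{\pv V}(\Cl X))^\dagger$ together with the isomorphism $P_{\Cl X}\colon[(e,u,f)]_\J\mapsto[u]_\J$ of Proposition~\ref{p:k-lu}); you verify the same two key identifications (the divisibility preorder of arrows matches $\leq_\J$ of the middle components, using factoriality of the mirage to place $exe'$ and $f'yf$ back inside it, and regularity of the arrow matches regularity of $u$); and you transport the structure along $F$ using a pseudo-inverse and the continuous natural isomorphisms, exactly as in Proposition~\ref{p:functoriality-dagger}. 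The treatment of the shadow by restriction inside the ambient $\Mir$-categories is also the paper's.

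The one step that does not work as written is your label for a non-regular $\J$-class. You propose to recover its Sch\"utzenberger group from the pairs $(\varphi,\psi)$ of \emph{automorphisms} of $e$ and $f$ satisfying $\varphi\circ(e,u,f)=(e,u,f)\circ\psi$. The gadget the paper uses is the right stabilizer $T(H)=\{\alpha\in C(c,c)\mid H\circ\alpha\subseteq H\}$ of the $\H$-class $H$ of the arrow, a closed submonoid of the \emph{endomorphism} monoid of the domain object, taken modulo the closed congruence $\approx_H$; its elements need not be invertible in $C(c,c)$, and for a non-regular class a nontrivial element of $\Gamma(H)=T(H)/{\approx_H}$ need not admit an invertible representative, so restricting to automorphism pairs can lose part of the group. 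Once the correct definition is adopted, two further verifications remain that your sketch leaves open: that a continuous equivalence induces an isomorphism of these compact groups (via $F(T(H))\subseteq T(H_{F(\varphi)})$, with equality for an equivalence, and a compact-quotient argument for continuity), and that $\Gamma$ of the $\H$-class of $(e,u,f)$ in $\Kar(\Mir_{\pv V}(\Cl X))$ coincides with the Sch\"utzenberger group of $u$ in $\Om AV$ --- the latter needing stability of $\Om AV$ to upgrade $ux\mathrel{\J}u$ to $ux\mathrel{\H}u$. These are exactly the contents of Propositions~\ref{p:functoriality-dagger} and~\ref{p:k-lu}, and they constitute the technical core of the result rather than a routine obstacle.
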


    The proof of Proposition~\ref{p:invariance-of-labeled-posets}
    will be later deduced as a consequence
    of some intermediate technical results.

        \begin{Rmk}
      A reader familiar with the paper \cite{ACosta&Steinberg:2015}
      will note the similarity of Proposition~\ref{p:invariance-of-labeled-posets} with the Theorem 6.3 from~\cite{ACosta&Steinberg:2015}, which
      concerns labeled posets, without topology, with
      the Sch\"utzenberger groups in the labels not being viewed as
      topological groups.
      But the techniques of~\cite{ACosta&Steinberg:2015}
      are not suitable for the topological ingredients which we add here.
      Indeed, one crucial
      step of the approach made in~\cite{ACosta&Steinberg:2015} consists of the following:
      for each element $s$
      with local units in a semigroup~$S$, choose idempotents $e_s,f_s$
      such that $s=e_ssf_s$. There is no reason to expect
      continuous choices $s\mapsto e_s$ and $s\mapsto f_s$.
    \end{Rmk}

    Combining Proposition~\ref{p:invariance-of-labeled-posets}
    with Theorems~\ref{c:splitting-category-free-profinite-is-conj-invariant}
    and~\ref{t:splitting-category-free-profinite-is-flow-invariant},
    one immediately gets the following
    consequence, which is the reason for
    this appendix.
    
    \begin{Cor}\label{c:invariance-of-labeled-posets}
      Let $\pv V$ be a pseudovariety
  of semigroups containing $\Lo {Sl}$
  and such that $\pv V=\pv V\ast\pv D$.
  The labeled topological posets
  $LU(\Mir_{\pv V}(\Cl X))^\dagger$
  and
  $LU(\Sha_{\pv V}(\Cl X))^\dagger$
  are conjugacy invariants,
  and they are invariants of flow equivalence if $\pv V$ is monoidal.
    \end{Cor}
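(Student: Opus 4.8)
The plan is to derive Corollary~\ref{c:invariance-of-labeled-posets} directly from Proposition~\ref{p:invariance-of-labeled-posets}, feeding it the functors already constructed earlier: the isomorphisms $\varphi_{\Kar_{\pv V}}$ attached to conjugacies (Proposition~\ref{p:inverse-functoriality}), and the continuous equivalences $F,G$ attached to symbol expansions in the proof of Theorem~\ref{t:splitting-category-free-profinite-is-flow-invariant}.

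\textbf{Conjugacy case.} If $\varphi\colon\Cl X\to\Cl Y$ is a conjugacy, then by Proposition~\ref{p:inverse-functoriality} the functor $\varphi_{\Kar_{\pv V}}$ is an isomorphism of compact categories with inverse $(\varphi^{-1})_{\Kar_{\pv V}}$; in particular it is a continuous equivalence, so the first assertion of Proposition~\ref{p:invariance-of-labeled-posets} yields $LU(\Mir_{\pv V}(\Cl X))^\dagger\cong LU(\Mir_{\pv V}(\Cl Y))^\dagger$. For the shadow, Remark~\ref{rmk:restriction-phik-to-shadow} applied to $\varphi$ gives $\varphi_{\Kar_{\pv V}}(\Kar(\Sha_{\pv V}(\Cl X)))\subseteq\Kar(\Sha_{\pv V}(\Cl Y))$, and the same remark applied to $\varphi^{-1}$ gives the matching inclusion for $(\varphi^{-1})_{\Kar_{\pv V}}$; thus the hypotheses of the second assertion of Proposition~\ref{p:invariance-of-labeled-posets} hold with $G=(\varphi^{-1})_{\Kar_{\pv V}}$, and $LU(\Sha_{\pv V}(\Cl X))^\dagger\cong LU(\Sha_{\pv V}(\Cl Y))^\dagger$ follows. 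The hypotheses on $\pv V$ are exactly those under which $\varphi_{\Kar_{\pv V}}$ was constructed (cf.\ Corollary~\ref{c:splitting-category-free-profinite-is-conj-invariant}).

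\textbf{Flow case.} Assume in addition that $\pv V$ is monoidal. By the Parry--Sullivan theorem, flow equivalence of subshifts is generated by conjugacy together with symbol expansion, and isomorphism of labeled topological posets is transitive; hence it suffices to handle one symbol expansion $\Cl X\mapsto\Cl X'$. Theorem~\ref{t:splitting-category-free-profinite-is-flow-invariant} provides a continuous equivalence $F\colon\Kar(\Mir_{\pv V}(\Cl X))\to\Kar(\Mir_{\pv V}(\Cl X'))$ with a continuous pseudo-inverse $G$, so the first assertion of Proposition~\ref{p:invariance-of-labeled-posets} gives $LU(\Mir_{\pv V}(\Cl X))^\dagger\cong LU(\Mir_{\pv V}(\Cl X'))^\dagger$; and by the remark immediately following the proof of Theorem~\ref{t:splitting-category-free-profinite-is-flow-invariant} this specific pair $F,G$ restricts to graph homomorphisms between $\Kar(\Sha_{\pv V}(\Cl X))$ and $\Kar(\Sha_{\pv V}(\Cl X'))$, so the second assertion of Proposition~\ref{p:invariance-of-labeled-posets} applies and $LU(\Sha_{\pv V}(\Cl X))^\dagger\cong LU(\Sha_{\pv V}(\Cl X'))^\dagger$. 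Composing these isomorphisms along a chain of conjugacies and symbol expansions witnessing a given flow equivalence finishes the argument.

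\textbf{Where the difficulty lies.} There is no genuine obstacle at this stage: all the substantive content has been placed in Proposition~\ref{p:invariance-of-labeled-posets} (whose proof is postponed) and in the constructions of $\varphi_{\Kar_{\pv V}}$ and of $F,G$. The only point demanding a moment's attention is the bookkeeping in the shadow case, namely checking that one has at hand a continuous pseudo-inverse that \emph{also} respects the shadow subcategories; this is supplied for conjugacies by applying Remark~\ref{rmk:restriction-phik-to-shadow} symmetrically to $\varphi$ and $\varphi^{-1}$, and for symbol expansions by the remark recorded right after Theorem~\ref{t:splitting-category-free-profinite-is-flow-invariant}.
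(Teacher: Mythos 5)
Your proposal is correct and follows essentially the same route as the paper, which simply combines Proposition~\ref{p:invariance-of-labeled-posets} with the conjugacy-invariance and flow-invariance results for $\Kar(\Mir_{\pv V}(\Cl X))$; your extra care in verifying the shadow hypotheses (via Remark~\ref{rmk:restriction-phik-to-shadow} applied to both $\varphi$ and $\varphi^{-1}$, and via the remark following Theorem~\ref{t:splitting-category-free-profinite-is-flow-invariant}) is exactly the bookkeeping the paper leaves implicit.
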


    \begin{Rmk}
      Note that, when $\Cl X$ is irreducible, the conjugacy/flow invariance
      of $\widetilde G_{\pv V}(\Cl X)$
      and of $G_{\pv V}(\Cl X)$,
      stated in Corollaries~\ref{c:invariance-of-GX}
      and~\ref{c:splitting-category-free-profinite-is-flow-invariant},
      may also be derived from  Corollary~\ref{c:invariance-of-labeled-posets}, because $\widetilde J(\Cl X)$ is the minimum element of $LU(\Mir_{\pv V}(\Cl X))^\dagger$
      and $J(\Cl X)$ is the minimum element
      of $LU(\Sha_{\pv V}(\Cl X))^\dagger$.
    \end{Rmk}

      In what follows, when we refer to a ``category'', we mean a ``small category''.    
The Green relations on the set of morphisms of a category $C$
may be defined by adapting in a direct and natural manner the usual definitions of the Green relations
on a monoid.
Alternatively,
one may use a classical construction, the semigroup~$C_{cd}$, which is called the \emph{consolidation}
of $C$. The elements of $C_{cd}$
are the morphisms of $C$ together with an extra element $0$, which
is as zero of~$C_{cd}$. For any morphisms $\varphi,\psi$ of $C$,
the product $\varphi\psi$ in $C_{cd}$
equals the composition $\varphi\circ\psi$ when $d(\varphi)=r(\psi)$,
and equals~$0$ when $d(\varphi)\neq r(\psi)$.
Then, for each $\mathcal K\in\{\R,\L,\D,\H,\J,\leq_{\R},\leq_{\L},\leq_{\J}\}$,
one has $\varphi\mathrel{\mathcal K}\psi$ in $C$
if and only if $\varphi\mathrel{\mathcal K}\psi$ in $C_{cd}$,
for all morphisms $\varphi,\psi$ of $C$.

\begin{Lemma}\label{l:green-relations-are-closed}
  In a compact category $C$, each relation $\R,\L,\D,\H,\J,\leq_{\R},\leq_{\L},\leq_{\J}$ is a closed set of $\Mor(C)\times \Mor(C)$. Moreover, the
  $\K$-classes of morphisms are closed sets,
  for each $\mathcal K\in\{\R,\L,\D,\H,\J\}$.
\end{Lemma}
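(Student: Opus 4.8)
The plan is to reduce everything to a statement about the consolidation semigroup $C_{cd}$, so that the work is purely about compact semigroups, and then to use two structural facts: first, that $C_{cd}$ is itself ``almost'' a compact semigroup (it is compact away from the adjoined zero, and $0$ is isolated), and second, the standard fact that in a compact semigroup Green's relations and their classes are closed. Concretely, since $\Mor(C)$ is compact and the composition map on composable pairs is continuous, the set $C_{cd}=\Mor(C)\uplus\{0\}$ with the topology making $0$ isolated is a compact space, and the multiplication $(\varphi,\psi)\mapsto\varphi\psi$ (equal to $\varphi\circ\psi$ when $d(\varphi)=r(\psi)$, and $0$ otherwise) is continuous: indeed the set of composable pairs $\{(\varphi,\psi)\mid d(\varphi)=r(\psi)\}$ is closed in $\Mor(C)\times\Mor(C)$ because $d$ and $r$ are continuous maps into the Hausdorff space $\Obj(C)$, so multiplication is continuous on that closed set by hypothesis (3)--(4) of the definition of compact category, and it is locally constant (equal to $0$) on the open complement, and also takes the value $0$ on any net approaching the fibre over $0$; a short check shows these pieces glue to a globally continuous map. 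Thus $C_{cd}$ is a compact semigroup.

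Once $C_{cd}$ is known to be a compact semigroup, I would invoke the known fact that in a compact semigroup each of $\R,\L,\J,\leq_\R,\leq_\L,\leq_\J$ is a closed subset of $S\times S$, and consequently $\H=\R\cap\L$ and $\D=\R\circ\L$ are closed as well (with $\D$ closed because $S\times S$ is compact, so the image of the closed set defining the relational composition is closed), and that each $\mathcal K$-class is closed for $\mathcal K\in\{\R,\L,\D,\H,\J\}$. Then I would transport this back to $C$ using the already-stated equivalence (from the paragraph preceding the lemma) that $\varphi\mathrel{\mathcal K}\psi$ in $C$ iff $\varphi\mathrel{\mathcal K}\psi$ in $C_{cd}$: the relation $\mathcal K$ on $\Mor(C)$ is exactly the restriction to $\Mor(C)\times\Mor(C)$ of the relation $\mathcal K$ on $C_{cd}\times C_{cd}$, and since $\Mor(C)$ is a closed (indeed clopen) subspace of $C_{cd}$, the restriction of a closed set is closed. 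The same argument applied to a single $\mathcal K$-class, intersected with the closed set $\Mor(C)$, gives closedness of $\mathcal K$-classes of morphisms.

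If it is preferred not to cite the compact-semigroup fact as a black box, the self-contained route is a direct net argument: suppose $(\varphi_i,\psi_i)\to(\varphi,\psi)$ with $\varphi_i\leq_\R\psi_i$, i.e. $\varphi_i=\psi_i\zeta_i$ or $\varphi_i=\psi_i$ in $C_{cd}$ for some $\zeta_i$; by compactness of $C_{cd}$ pass to a subnet with $\zeta_i\to\zeta$, and use continuity of multiplication to get $\varphi=\psi\zeta$, so $\varphi\leq_\R\psi$; the cases $\leq_\L,\leq_\J$ are symmetric/analogous, $\R=\leq_\R\cap\geq_\R$ and $\L,\J$ likewise, and $\H=\R\cap\L$, while $\D$ is handled by writing $\varphi\mathrel{\D}\psi$ iff there is $\chi$ with $\varphi\mathrel\R\chi\mathrel\L\psi$ and extracting a convergent subnet of the $\chi_i$. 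Closedness of a fixed $\mathcal K$-class then follows by fixing one coordinate.

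The main obstacle I anticipate is not any deep fact but the careful verification that $C_{cd}$ really is a \emph{compact} semigroup in the precise sense needed — specifically the continuity of multiplication at pairs that are limits of composable pairs but are themselves non-composable (so that the product jumps to $0$), and more importantly at pairs whose limit is composable but which are approached by non-composable pairs. The key point making this work is that the set of composable pairs is \emph{closed} (because $\Obj(C)$ is Hausdorff and $d,r$ are continuous), so a convergent net of composable pairs has a composable limit and continuity there is exactly axiom (4); and on the open set of non-composable pairs the product is constantly $0$. Handling the bookkeeping of these cases cleanly, and making sure the topology on $C_{cd}$ (with $0$ isolated) is genuinely compact and Hausdorff, is where the care is required; everything after that is a routine transfer along the clopen embedding $\Mor(C)\hookrightarrow C_{cd}$.
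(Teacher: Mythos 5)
Your primary route --- making the consolidation $C_{cd}$ into a compact semigroup and then quoting the standard facts about Green's relations in compact semigroups --- has a genuine gap: the multiplication on $C_{cd}$ is in general \emph{not} jointly continuous when $\Obj(C)$ is not discrete. Closedness of the set $K$ of composable pairs guarantees that a net of \emph{composable} pairs has a composable limit (so continuity on $K$ is just axiom (4)), but it says nothing about a net of \emph{non-composable} pairs converging to a composable pair, and that is exactly where continuity breaks. Concretely, take any compact category whose object space is not discrete and a net of objects $c_i\to c$ with $c_i\neq c$ for all $i$; then $(1_{c_i},1_c)\to(1_c,1_c)$ in $\Mor(C)\times\Mor(C)$, yet $1_{c_i}\cdot 1_c=0$ in $C_{cd}$ for all $i$ while $1_c\cdot 1_c=1_c\neq 0$, and since $0$ is isolated the products do not converge to the product of the limits. (This is the situation of the paper: $\Obj(\Kar(\Om AV))=E(\Om AV)$ is an infinite profinite space.) The pasting argument you sketch would require both pieces to be closed, or the composable locus to be open, which happens only for discrete object spaces; so $C_{cd}$ cannot be used as a black-box compact semigroup, and the ``short check'' you defer is precisely the point that fails.

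Your self-contained fallback, by contrast, is correct and is essentially the paper's own proof: take a convergent net $(\varphi_i,\psi_i)$ with $\varphi_i\leq_{\J}\psi_i$, write $\varphi_i=\alpha_i\circ\psi_i\circ\beta_i$, extract a cluster point $(\alpha,\beta)$ by compactness of $\Mor(C)$, and conclude $\varphi=\alpha\circ\psi\circ\beta$ by continuity of composition --- which is legitimate here because the approximating pairs \emph{are} composable and $K$ is closed, so the limit pair is composable and axiom (4) applies; the other relations are handled similarly, and the classes are closed because classes of a closed equivalence relation on a compact space are closed (equivalently, by fixing one coordinate, as you say). You should lead with that argument and drop the claim that $C_{cd}$ is a compact semigroup.
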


\begin{proof}
  Consider a net $(\varphi_i,\psi_i)_{i\in I}$
  of morphisms of $C$, converging to $(\varphi,\psi)$,
  such that $\varphi_i\leq_{\J} \psi_i$ for all $i\in I$.
  Then we have factorizations $\varphi_i=\alpha_i\circ\psi_i\circ\beta_i$
  for some nets $(\alpha_i)_{i\in I}$ and $(\beta_i)_{i\in I}$
  of morphisms of $C$.
  As $C$ is compact, we may take a cluster point
  $(\alpha,\beta)$ of $(\alpha_i,\beta_i)_{i\in I}$. By continuity of the composition, we get $\varphi=\alpha\circ\psi\circ\beta$,
  thus $\varphi\leq_\J\psi$. This proves that $\leq_\J$ is closed.
  The proofs for the other relations are similar.
  Since each class of a closed equivalence
  relation in a compact space
  is a closed set (cf.~\cite[Exercise 3.17]{Rhodes&Steinberg:2009qt}), we are done.
\end{proof}

As another expression of the link between $C$
and $C_{cd}$, a morphism of $C$
is said to be regular when it is a regular element of $C_{cd}$,
and a $\J$-class of $C$ is regular when it is a regular
$\J$-class of $C_{cd}$.

Let $H$ be an $\H$-class of morphisms of the category $C$.
Note that $H\subseteq C(c,d)$ for some objects $c,d$.
The \emph{Sch\"utzenberger group of $H$ in $C$}, denoted $\Gamma(H)$,
is the quotient of
the monoid
\begin{equation*}
T(H)=\{\alpha\in C(c,c) \mid H\circ\alpha\subseteq H\}  
\end{equation*}
by the monoid congruence $\approx_H$ on $T(H)$ given,
whenever $\alpha,\beta\in T(H)$, by
\begin{equation*}
\alpha\approx_H\beta\Leftrightarrow
[\forall \varphi\in H:\varphi\circ\alpha=\varphi\circ\alpha],
\end{equation*}
or, in what is easily seen to be an equivalent formulation,
\begin{equation*}
\alpha\approx_H\beta\Leftrightarrow
[\exists \varphi\in H:\varphi\circ\alpha=\varphi\circ\alpha].
\end{equation*}
The useful equality 
$T(H)=\{\alpha\in C(c,c) \mid H\circ\alpha\cap H\neq \emptyset\}$ is also easy to check.
The monoid quotient $T(H)/{\approx_H}$ is indeed a group; clearly, it coincides with the classical Sch\"utzenberger group in $C_{cd}$ of $H$, if we view $H$ as an $\H$-class of $C_{cd}$.
Moreover, if~$C$ is a compact category, then
$T(H)$ is a closed submonoid of $C(c,c)$
and $\approx_H$ is a closed relation, by the same arguments
used in the proof of Lemma~\ref{l:green-relations-are-closed},
and then the quotient $\Gamma(H)=T(H)/{\approx_H}$
becomes a compact group (cf.~\cite[Theorem 1.54]{Carruth&Hildebrant&Koch:1983}).
It is this compact group that will be for us
the \emph{Sch\"utzenberger group of $H$ in $C$}, when $C$ is a compact category.

\begin{Rmk}
  If $H$ contains some idempotent (which implies $c=d$),
  then $\Gamma(H)$ is isomorphic to $H$, via the mapping
  $\varphi\in H\mapsto [\varphi]_{\approx_H}\in\Gamma(H)$,
  and this mapping is continuous if $C$ is a compact category.
\end{Rmk}

\begin{Lemma}
  If $C$ is a profinite category, then $\Gamma(H)$ is a profinite group.
\end{Lemma}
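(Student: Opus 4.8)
The plan is to show that the underlying topological space of $\Gamma(H)$ is a Stone space and then to invoke the standard characterization of profinite groups among compact groups. Fix objects $c,d$ of $C$ with $H\subseteq C(c,d)$. The first step is to observe that, since $C$ is profinite, writing $C=\varprojlim C_i$ as an inverse limit of finite categories yields $\Mor(C)=\varprojlim\Mor(C_i)$, an inverse limit of finite discrete spaces; hence $\Mor(C)$ is a Stone space (compact, Hausdorff, totally disconnected). By Lemma~\ref{l:green-relations-are-closed}, the $\H$-class $H$ is a closed subset of $\Mor(C)$, so $H$ is itself a Stone space, being a closed subspace of one.

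The second step is to transport this zero-dimensionality from $H$ to $\Gamma(H)$ through the classical fact that the Schützenberger group acts freely and transitively on its $\H$-class. Concretely, I would fix $\varphi_0\in H$ and consider the orbit map $T(H)\to H$, $\alpha\mapsto\varphi_0\circ\alpha$: it takes values in $H$ because $H\circ\alpha\subseteq H$ for $\alpha\in T(H)$; it is surjective because $H$ is a single $\H$-class, so for each $\psi\in H$ one has $\psi\mathrel{\R}\varphi_0$, whence $\psi=\varphi_0\circ\alpha$ for some $\alpha$, and that $\alpha$ lies in $T(H)$ by the usual argument; and it is continuous, being a restriction of the composition of the compact category $C$. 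This map factors through the quotient $T(H)\to T(H)/{\approx_H}=\Gamma(H)$ by the very definition of $\approx_H$, and the induced map $\Gamma(H)\to H$ is injective, again by the definition of $\approx_H$ (this is precisely the freeness of the action). Since $\Gamma(H)$ carries the quotient topology, $\Gamma(H)\to H$ is continuous; being a continuous bijection from the compact space $\Gamma(H)$ (a continuous image of the compact monoid $T(H)$) onto the Hausdorff space $H$, it is a homeomorphism. Hence the underlying space of the compact group $\Gamma(H)$ is a Stone space.

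The final step is to conclude: a compact group whose underlying space is totally disconnected is a profinite group, a standard fact. Applying it to $\Gamma(H)$, which is already known to be a compact group, finishes the proof. I expect the only point requiring care to be the second step, namely checking that the classical regular action of the Schützenberger group on its $\H$-class is compatible with the topology — that the orbit map is continuous, descends to the quotient, and is injective there — but this is routine given the continuity axioms of a compact category together with the closedness of $H$ provided by Lemma~\ref{l:green-relations-are-closed}.
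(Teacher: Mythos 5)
Your proof is correct, but it follows a genuinely different route from the paper's. The paper works directly with the inverse system: writing $C=\varprojlim C_i$, it separates any two distinct classes $[\alpha]_{\approx_H}\neq[\beta]_{\approx_H}$ by a continuous homomorphism $\Gamma(H)\to\Gamma(H_{i_0})$ into the Sch\"utzenberger group of the $\H$-class of the projection of $\varphi$ in a suitable finite quotient $C_{i_0}$, and concludes that $\Gamma(H)$ embeds as a closed subgroup of $\varprojlim\Gamma(H_i)$. You instead exhibit the underlying space of $\Gamma(H)$ as a Stone space — via the simply transitive (continuous) action on $H$, which by Lemma~\ref{l:green-relations-are-closed} is a closed, hence Stone, subspace of $\Mor(C)$ — and then invoke the classical theorem that a compact Hausdorff totally disconnected group is profinite. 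Both arguments are sound; the orbit-map details you flag as needing care (continuity, descent to the quotient, injectivity) do go through, the injectivity coming from the ``$\exists\varphi\in H$'' formulation of $\approx_H$ and the surjectivity from Green's lemma applied in the consolidation $C_{cd}$. The trade-off: your argument yields the extra information that $\Gamma(H)$ is homeomorphic to $H$ itself, but it leans on a nontrivial structure theorem for compact groups; the paper's argument is self-contained at the level of inverse limits and, more usefully for the rest of the paper, exhibits $\Gamma(H)$ concretely as a closed subgroup of the inverse limit of the Sch\"utzenberger groups of the finite quotient categories.
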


\begin{proof}
  Let $C=\varprojlim_{i\in I} C_i$ be an inverse limit of finite categories. Let
  $\varphi,\alpha,\beta$ be morphisms of $C$
  with $\varphi\circ\alpha\mathrel{\H}\varphi\circ\beta\mathrel{\H}\varphi$
  and $\varphi\circ\alpha\neq\varphi\circ\beta$.
  For $\psi\in C$, denote by $\psi_i$
  its projection on $C_i$, where $i\in I$.
  Let $H$ be the $\H$-class of $\varphi$ and $H_i$ be the $\H$-class of
  $\varphi_i$.
  Take $i_0\in I$ such that $\varphi_{i_0}\circ\alpha_{i_0}
  \neq
  \varphi_{i_0}\circ\beta_{i_0}$.
  We have a well defined continuous homomorphism
  $\Gamma(H)\to\Gamma(H_{i_0})$
  given by the assignment $[\gamma]_{\approx_H}\mapsto [\gamma_i]_{\approx_{H_i}}$,
  and we also know that $[\alpha_{i_0}]_{\approx_{H_{i_0}}}\neq [\beta_{i_0}]_{\approx_{H_{i_0}}}$.
  Therefore $\Gamma(H)$ is profinite: in fact $\Gamma(H)$
  embeds as a closed subgroup of
  the natural inverse limit $\varprojlim_{i\in I} \Gamma(H_i)$. 
\end{proof}

We may use for compact semigroups some of the notation employed for compact categories. For example, $\Gamma(H)$ is the Sch\"utzenberger group (as a compact group)
of an $\H$-class $H$ of a compact semigroup $S$.
In fact, when $S$ is a compact semigroup, we may view $S^I$ as compact category with a unique object and the elements of $S^I$ as the morphisms.

  For any compact category $C$,
  the compact  groups $\Gamma(H)$ and $\Gamma(K)$ are isomorphic
  when $H$ and $K$ are $\H$-classes of morphisms of $C$
  contained in the same $\J$-class~(see the proof of \cite[Theorem 3.61]{Carruth&Hildebrant&Koch:1983}).  
Hence, when in a compact category $C$, we may associate
to each $\J$-class $J$ its Sch\"utzenberger group $\Gamma(J)$, which is
(the isomorphism class of) the Sch\"utzenberger group of any of
the $\H$-classes contained in $J$.
We next consider the labeled topological poset~$C^\dagger$ defined by:
    \begin{enumerate}
    \item the underlying space is the quotient space $\Obj(C)/{\J}$;
    \item one has $J_1\leq J_2$ in $C^\dagger $
      if and only if $\varphi\leq_{\J}\psi$
      for some (equivalently, for all) morphisms $\varphi\in J_1$
      and $\psi\in J_2$;
    \item the label of each element $J$ of $C^\dagger$ is the pair
      $(\varepsilon,\Gamma(J))$ such that
      $\varepsilon=1$ if $J$ is regular and
      $\varepsilon=0$ if $J$ is not regular, where $\Gamma(J)$
      is taken as an isomorphism class of compact groups.
    \end{enumerate}
    
    Let $F\colon C\to D$ be a continuous functor between compact categories.
    We define a map $F^\dagger\colon C^\dagger\mapsto D^\dagger$
    by letting $F^\dagger([\varphi]_\J)=[F(\varphi)]_\J$.
    This map is well defined,
    indeed it is immediate that $\varphi\leq_\J\psi$
    in $C$ implies $F(\varphi)\leq_\J F(\psi)$ in $D$.
    Note also that $F^\dagger$ is continuous,
    because it is the map $\Obj(C)/{\J}\to \Obj(D)/{\J}$
    naturally induced by the continuous map $F\colon \Obj(C)\to \Obj(D)$,
    and we are dealing with compact quotients of compact
      spaces (cf.~\cite[Theorems 9.2 and 9.4]{Willard:1970}). 
    
    \begin{Prop}\label{p:functoriality-dagger}
      Let $C,D,E$ be compact categories. The following hold:
      \begin{enumerate}
      \item if the continuous functors $F,G\colon C\to D$ are isomorphic, then~$F^\dagger=G^\dagger$;\label{item:functoriality-dagger-1}
      \item if $F$ is the identity functor $C\to C$,
        then $F^\dagger$ is the identity $C^\dagger\to C^\dagger$;\label{item:functoriality-dagger-2}
      \item for any functors $F\colon C\to D$ and $G\colon D\to E$,
        we have $G^\dagger\circ F^\dagger=(G\circ F)^\dagger$;\label{item:functoriality-dagger-3}
      \item if $F:C\to D$ is a continuous equivalence,
        then $F^\dagger$ preserves labels.\label{item:functoriality-dagger-4}
      \end{enumerate}
    \end{Prop}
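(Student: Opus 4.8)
The plan is to verify the four items of Proposition~\ref{p:functoriality-dagger} essentially by unwinding the definition of $F^\dagger$ and invoking the corresponding facts about the Green relations and Sch\"utzenberger groups that were assembled just above.

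\medskip

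\textbf{Approach for \eqref{item:functoriality-dagger-1}.}
Suppose $\eta\colon F\Rightarrow G$ is a continuous natural isomorphism. For each object $c$ of $C$ the component $\eta_c\colon F(c)\to G(c)$ is an isomorphism of $D$, so $F(c)$ and $G(c)$ are isomorphic objects, hence $\mathcal D$-equivalent, hence $\mathcal J$-equivalent in $D_{cd}$. Thus $[F(c)]_\J=[G(c)]_\J$, which is exactly $F^\dagger=G^\dagger$ on the underlying space; since both maps are the identity on nothing beyond that space, they coincide.

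\medskip

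\textbf{Approach for \eqref{item:functoriality-dagger-2} and \eqref{item:functoriality-dagger-3}.}
These are immediate from the formula $F^\dagger([\varphi]_\J)=[F(\varphi)]_\J$: if $F=1_C$ then $[F(\varphi)]_\J=[\varphi]_\J$; and $G^\dagger(F^\dagger([\varphi]_\J))=G^\dagger([F(\varphi)]_\J)=[G(F(\varphi))]_\J=[(G\circ F)(\varphi)]_\J=(G\circ F)^\dagger([\varphi]_\J)$. No topology is needed here beyond the already-noted continuity of $F^\dagger$.

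\medskip

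\textbf{Approach for \eqref{item:functoriality-dagger-4}, the main point.}
Let $F\colon C\to D$ be a continuous equivalence, with continuous pseudo-inverse $G$, so $G\circ F\cong 1_C$ and $F\circ G\cong 1_D$. First I would record, using \eqref{item:functoriality-dagger-1}--\eqref{item:functoriality-dagger-3}, that $G^\dagger\circ F^\dagger=1_{C^\dagger}$ and $F^\dagger\circ G^\dagger=1_{D^\dagger}$, so $F^\dagger$ is an order isomorphism of topological posets; in particular it is a bijection on $\J$-classes. It remains to check that $F^\dagger$ preserves the two components of the label. For the regularity bit: an ordinary equivalence of categories preserves regularity of morphisms (a morphism $\varphi$ is regular iff $\varphi=\varphi\psi\varphi$ for some $\psi$, a condition transported by $F$ and reflected by $G$ up to the natural isomorphisms, which are isomorphisms and hence do not affect regularity); so $J$ is regular iff $F^\dagger(J)$ is. For the Sch\"utzenberger group: pick an $\H$-class $H$ of morphisms inside $J$; since an equivalence functor is fully faithful, $F$ restricts to a bijection from $T(H)$ onto $T(F(H))$ (using fullness and faithfulness to move the defining condition $H\circ\alpha\cap H\neq\emptyset$ back and forth) that is compatible with the congruences $\approx_H$ and $\approx_{F(H)}$, hence induces an isomorphism $\Gamma(H)\to\Gamma(F(H))$ of abstract groups; continuity of $F$ makes it a homeomorphism onto its image, and the inverse is supplied by $G$ together with the continuous natural isomorphisms, so $\Gamma(H)\cong\Gamma(F(H))$ as compact groups. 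Since $\Gamma(J)$ is the isomorphism class of $\Gamma(H)$ for any $\H$-class $H$ in $J$, this gives label preservation.

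\medskip

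\textbf{Expected obstacle.}
The routine parts are \eqref{item:functoriality-dagger-1}--\eqref{item:functoriality-dagger-3}. The delicate point in \eqref{item:functoriality-dagger-4} is keeping track of the natural isomorphisms when transporting the monoid $T(H)$ and its congruence: one must check that composing with an isomorphism $\eta_c$ does not change the $\H$-class or the Sch\"utzenberger group (it does not, precisely because $\eta_c$ is invertible), and that the map $T(H)\to T(F(H))$ one writes down is genuinely a bijection and genuinely congruence-compatible, i.e.\ that full faithfulness is used correctly. The topological refinements (continuity of the induced maps on $C(c,c)$, closedness of $T(H)$ and of $\approx_H$) are handled exactly as in the proof of Lemma~\ref{l:green-relations-are-closed} and cause no real trouble.
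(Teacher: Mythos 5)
Your proof is correct in substance and follows essentially the same route as the paper's: items \eqref{item:functoriality-dagger-2}--\eqref{item:functoriality-dagger-3} are immediate from the defining formula, regularity is transported by $F$ and reflected via the pseudo-inverse $G$, and the Sch\"utzenberger groups are compared through the induced bijection $T(H_\varphi)\to T(H_{F(\varphi)})$, with continuity coming from the fact that one is dealing with compact quotients of compact spaces. The one point to repair is item \eqref{item:functoriality-dagger-1}: despite the paper's literal phrase ``$\Obj(C)/\J$'', the points of $C^\dagger$ must be $\J$-classes of \emph{morphisms} (the labels record regularity, and Proposition~\ref{p:k-lu} sends $[(e,u,f)]_\J$ to $[u]_\J$ for arbitrary $u$ with local units, including non-regular $u$, whose $\J$-classes contain no identities), so verifying $[F(c)]_\J=[G(c)]_\J$ only at objects does not cover all points of $C^\dagger$; the fix is the naturality square itself, which gives $G(\varphi)=\eta_{d}\circ F(\varphi)\circ\eta_{c}^{-1}$ and $F(\varphi)=\eta_{d}^{-1}\circ G(\varphi)\circ\eta_{c}$ for $\varphi\in C(c,d)$, hence $F(\varphi)\mathrel{\J}G(\varphi)$ for every morphism $\varphi$ --- which is exactly the paper's one-line argument.
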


    \begin{proof}
      If $\eta\colon F\Rightarrow G$
      is a natural isomorphism,
      then, for every $\varphi\in C(c,d)$,
      one has $G(\varphi)=\eta_d\circ F(\varphi)\circ\eta_c^{-1}$
      and $F(\varphi)=\eta_d^{-1}\circ G(\varphi)\circ\eta_c$,
      thus $F(\varphi)\mathrel{\J}G(\varphi)$.
      This establishes the first item.
      Items~\ref{item:functoriality-dagger-2}
      and~\ref{item:functoriality-dagger-3}
      are immediate.

      Concerning the last item, it is immediate that if $\varphi$ is regular then $F(\varphi)$  is regular, where $F\colon C\to D$ is a functor.
      If $F$ is an equivalence with pseudo-inverse
      $G$, then $G(F(\varphi))\mathrel{\J}\varphi$,
      and so if $F(\varphi)$ is regular
      then so is $\varphi$.           
      Finally, for every morphism $\varphi$
      of $C$, if $H_\varphi$
      and $H_{F(\varphi)}$
      are respectively the $\H$-classes
      of $\varphi$
      and $F(\varphi)$,
      then
      one clearly has $F(T(H_\varphi))\subseteq T(H_{F(\varphi)})$,
      with equality if $F$ is an equivalence.
      This induces
      a well defined map $\Gamma(H_\varphi)\to \Gamma(H_{F(\varphi)})$
      assigning each class $[\alpha]_{\approx_{H_\varphi}}$
      to~$[F(\alpha)]_{\approx_{H_F(\varphi)}}$,
      such map being a bijection if $F$ is an equivalence.
      This map is continuous,
      as we are dealing with compact quotients of compact spaces.
      Therefore, the Sch\"utzenberger groups
      of $\varphi$ and $F(\varphi)$
      are indeed isomorphic compact groups.
    \end{proof}
    
    \begin{Cor}\label{c:functoriality-dagger}
      If $C$ and $D$ are equivalent compact categories,
      then $C^\dagger$ and $D^\dagger$
      are isomorphic labeled topological posets.
    \end{Cor}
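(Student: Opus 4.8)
The plan is to deduce the corollary directly from Proposition~\ref{p:functoriality-dagger}, which already packages all the functorial behaviour of the construction $(-)^\dagger$ that is needed. First I would unwind the hypothesis: since $C$ and $D$ are equivalent compact categories, there is a continuous equivalence $F\colon C\to D$ with a continuous pseudo-inverse $G\colon D\to C$ satisfying $G\circ F\cong 1_C$ and $F\circ G\cong 1_D$. By the discussion preceding Proposition~\ref{p:functoriality-dagger}, these induce continuous maps $F^\dagger\colon C^\dagger\to D^\dagger$ and $G^\dagger\colon D^\dagger\to C^\dagger$ between the underlying compact quotient spaces.

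Next I would show $F^\dagger$ is a homeomorphism by exhibiting $G^\dagger$ as its inverse. Combining items~\ref{item:functoriality-dagger-1}, \ref{item:functoriality-dagger-2} and~\ref{item:functoriality-dagger-3} of Proposition~\ref{p:functoriality-dagger}, we get $G^\dagger\circ F^\dagger=(G\circ F)^\dagger=(1_C)^\dagger=1_{C^\dagger}$ and, symmetrically, $F^\dagger\circ G^\dagger=1_{D^\dagger}$; hence $F^\dagger$ is a continuous bijection with continuous inverse $G^\dagger$.

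Then I would check that $F^\dagger$ is an order isomorphism and preserves labels. Order preservation in one direction is built in: a functor maps $\leq_\J$-comparable morphisms to $\leq_\J$-comparable morphisms, so $J_1\leq J_2$ in $C^\dagger$ forces $F^\dagger(J_1)\leq F^\dagger(J_2)$ in $D^\dagger$ (this is precisely why $F^\dagger$ is well defined). For the reverse implication I would apply $G^\dagger$, which is order-preserving for the same elementary reason, and use $G^\dagger\circ F^\dagger=1_{C^\dagger}$ to recover $J_1\leq J_2$ from $F^\dagger(J_1)\leq F^\dagger(J_2)$. Label preservation is item~\ref{item:functoriality-dagger-4} of Proposition~\ref{p:functoriality-dagger}, available exactly because $F$ is a continuous equivalence. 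Assembling these facts shows $F^\dagger$ is an isomorphism of labeled topological posets, which is the assertion.

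I do not anticipate a genuine obstacle here, as the substantive content has been front-loaded into Proposition~\ref{p:functoriality-dagger}; the only point deserving a moment's care is the two-sidedness of order preservation, since a continuous order-preserving bijection of posets need not have an order-preserving inverse in general. Here this is harmless because the inverse is the explicit functor-induced map $G^\dagger$, which is itself order-preserving, so the concern evaporates.
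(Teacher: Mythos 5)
Your proposal is correct and is exactly the argument the paper intends: the corollary is stated without proof as an immediate consequence of Proposition~\ref{p:functoriality-dagger}, and your unwinding (mutually inverse continuous maps $F^\dagger$, $G^\dagger$ via items (1)--(3), two-sided order preservation via the order-preserving inverse $G^\dagger$, and label preservation via item (4)) is the intended one. Your remark that the inverse must be checked to be order-preserving, rather than inferred from bijectivity, is exactly the right point of care.
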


    Next we show the last piece needed
    for the proof of Proposition~\ref{p:invariance-of-labeled-posets}.
    
    \begin{Prop}\label{p:k-lu}
      The mapping $P_{\Cl X}\colon\Kar(\Mir_{\pv V}(\Cl X))^\dagger
      \to LU(\Mir_{\pv V}(\Cl X))^\dagger$
      defined by $P_{\Cl X}([(e,u,f)]_\J)=[u]_\J$ is an isomorphism of labeled topological posets,  for every subshift $\Cl X$ of $A^{\ZZ}$.
    \end{Prop}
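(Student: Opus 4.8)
The plan is to exhibit $P_{\Cl X}$ as the map induced on $\J$-classes by the continuous projection $\pi\colon\Mor\bigl(\Kar(\Mir_{\pv V}(\Cl X))\bigr)\to\Om AV$, $(e,u,f)\mapsto u$, and then to verify in turn that it is well defined, an order isomorphism, a homeomorphism, and label preserving. First I would note that the image of $\pi$ is exactly $LU(\Mir_{\pv V}(\Cl X))$: if $u$ has local units, say $u=euf$ with $e,f$ idempotents of $\Om AV$, then $e$ is a prefix and $f$ a suffix of $u\in\Mir_{\pv V}(\Cl X)$, hence both are factors of $u$, hence $e,f\in\Mir_{\pv V}(\Cl X)$ by factoriality of the mirage, so $(e,u,f)$ is a morphism of $\Kar(\Mir_{\pv V}(\Cl X))$.

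The central technical point is a lemma saying that Green's structure of $\Kar(\Mir_{\pv V}(\Cl X))$ is read off the middle coordinate: for morphisms $(e,u,f)$ and $(e',u',f')$, one has $(e,u,f)\leq_{\J}(e',u',f')$ in the category if and only if $u\leq_{\J}u'$ in $\Om AV$, and similarly for $\R$, $\L$, $\H$. The forward implications are immediate from the composition rule. For the converse, from $u=xu'y$ (with $x,y\in\Om AV^I$) and $u=euf$ one obtains $u=(exe')\,u'\,(f'yf)$, where $exe'$ is a prefix and $f'yf$ a suffix of $u\in\Mir_{\pv V}(\Cl X)$, hence factors of it, hence members of the mirage; thus $(e,exe',e')$ and $(f',f'yf,f)$ are morphisms of $\Kar(\Mir_{\pv V}(\Cl X))$ whose composite with $(e',u',f')$ equals $(e,u,f)$. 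The $\R$-, $\L$- and $\H$-versions are handled by the same kind of computation. Granting the lemma, $P_{\Cl X}$ is well defined and, since the orders of both $\dagger$-posets are defined from $\leq_{\J}$, it is an order isomorphism; it is surjective by the first paragraph and injective because $u\mathrel{\J}u'$ in $\Om AV$ forces $(e,u,f)\mathrel{\J}(e',u',f')$.

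For the topology, $\Mor\bigl(\Kar(\Mir_{\pv V}(\Cl X))\bigr)$ is a closed, hence compact, subspace of $E(\Om AV)\times\Om AV\times E(\Om AV)$, and $LU(\Mir_{\pv V}(\Cl X))$ is closed in $\Om AV$ (a net of elements with local units has those units in the compact space $E(\Om AV)$, so a convergent subnet produces local units for the limit); by Lemma~\ref{l:green-relations-are-closed} both $\dagger$-posets are then compact Hausdorff, and the continuous bijection $P_{\Cl X}$ between them is automatically a homeomorphism. For labels, I would check first that $[(e,u,f)]_{\J}$ is regular in $\Kar(\Mir_{\pv V}(\Cl X))$ exactly when $u$ is regular in $\Om AV$: a regular $\J$-class contains an idempotent morphism $(g,s,g)$ with $s$ idempotent, so $u\mathrel{\J}s$; conversely, if $u$ is regular pick an idempotent $g$ with $u=wg$ (so $g\mathrel{\L}u$), which lies in $\Mir_{\pv V}(\Cl X)$ as a factor of $u$, and then $(g,g,g)\mathrel{\J}(e,u,f)$ by the lemma. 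The Schützenberger group is matched as a compact group: in the regular case it equals the group of units of $g\,\Om AV\,g$ by Proposition~\ref{p:local-isomorphism}, and the automorphism group of $g$ in $\Kar(\Mir_{\pv V}(\Cl X))$ coincides with the one in $\Kar(\Om AV)$ because any $s\mathrel{\H}g$ is a factor of $g$ and so lies in the mirage; in general one unwinds the definition of $\Gamma(H)$ from the appendix along the same lines, the translations used to compute $\Gamma(H)$ again staying inside $\Kar(\Mir_{\pv V}(\Cl X))$ by factoriality.

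I expect the main obstacle to be the lemma on Green's structure, and in particular the bookkeeping it requires: every auxiliary pseudoword produced when comparing two morphisms must be shown to belong to $\Mir_{\pv V}(\Cl X)$, for which the only available tool is the factoriality of the mirage --- notably, the clopenness of $\Mir_{\pv V,k}(\Cl X)$ plays no role in this proof. The $\H$-level case feeding the Schützenberger-group computation when the $\J$-class is not regular is the most delicate instance, and would need to be written out carefully.
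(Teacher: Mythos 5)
Your proposal follows essentially the same route as the paper's proof: $P_{\Cl X}$ is induced on $\J$-classes by $(e,u,f)\mapsto u$, the order isomorphism comes from rewriting $u=xu'y$ as $u=(exe')\,u'\,(f'yf)$ with the auxiliary middles kept inside $\Mir_{\pv V}(\Cl X)$ by factoriality, continuity plus compactness of the quotients yields the homeomorphism, and the labels are matched by comparing translation monoids. Two caveats. First, the $\R$-, $\L$- and $\H$-halves of your ``central lemma'' are false as stated for arbitrary pairs of morphisms: in a category $\varphi\leq_{\R}\psi$ forces $r(\varphi)=r(\psi)$, so two morphisms $(e,u,f)$ and $(e',u,f)$ with the same middle but $e\neq e'$ (these exist, e.g.\ $u=e=(a^\omega b^\omega)^\omega$ with $e'=a^\omega$) satisfy $u\mathrel{\R}u$ without being $\R$-comparable in $\Kar(\Mir_{\pv V}(\Cl X))$. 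Only the instances with matching endpoints hold, and fortunately those are the only ones you use. Second, the Sch\"utzenberger-group comparison in the non-regular case, which you defer, is roughly half of the paper's proof: one must check that $[(f,x,f)]_{\approx_H}\mapsto[x]_{\approx_K}$ is well defined (the paper deduces $u\mathrel{\H}ux$ from $ux\mathrel{\J}u$ together with $ux=zu$, using stability of $\Om AV$) and onto (showing $(f,fxf,f)\in T(H)$ for every $x\in T(K)$, which again invokes the group structure of $\Gamma(K)$ and stability). Your mechanism --- factoriality keeps all translations inside the mirage --- is the right one, but this step genuinely needs to be written out rather than treated as a routine unwinding.
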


    \begin{proof}
      It is trivial that $P_{\Cl X}$ is surjective.
      Let us check that it satisfies the remaining conditions
      for being an isomorphism of posets.
      Take morphisms $(e,u,f)$  and $(e',v,f')$
      of $\Mir_{\pv V}(\Cl X)$.
      Suppose $u=xvy$ for some $x,y\in\Om AV$.
      As $u=euf$ and $v=e'vf'$, we may assume that
      $x=exe'$ and $y=f'yf$,
      yielding $(e,u,f)=(e,x,e')(e',v,f')(f',y,f)$
      and $(e,u,f)\leq_\J(e',v,f')$.
      Conversely, if $(e,u,f)=(e,x,e')(e',v,f')(f',y,f)$
      then $u=xvy$. This shows that $P_{\Cl X}$ is
      a well defined isomorphism of posets.
      
      Because the map $(e,u,f)\mapsto u$
      is continuous,
      and since we are dealing with compact
      spaces and their compact quotients, the map $P_{\Cl X}$ is continuous.

      Fix a morphism $(e,u,f)$ of
      $\Kar(\Mir_{\pv V}(\Cl X))$.  Suppose that $u$ is regular. Then $u=uxu$
      for some $x\in\Om AV$. Since $u=uf=eu$, we may suppose
      that $x=fxe$, thus $(e,u,f)=(e,u,f)(f,x,e)(e,u,f)$
      and so $(e,u,f)$ is regular. Conversely,
      if $(e,u,f)$ is regular then it is immediate that $u$ is regular.

      It remains to show that the Sch\"utzenberger groups
      of $(e,u,f)$ and $u$ are isomorphic compact groups.
      Let $H$ be the $\H$-class of $(e,u,f)$ in $\Kar(\Mir_{\pv V}(\Cl X))$,
      and let $K$ be the $\H$-class of $u$
      in $\Om AV$.
      Suppose that $(f,x,f)$ and $(f,y,f)$
      are elements of $T(H)$ such that $(f,x,f)\mathrel{\approx_H}(f,y,f)$.
      This means that we have
      $(e,u,f)(f,x,f)=(e,u,f)(f,y,f)\in H$.
    Hence $ux=uy=zu$ for some~$z$, and, according to what we already saw in the first paragraph  of the proof, we also know that $ux$ and $u$ are $\J$-equivalent. Since $\Om AV$ is a stable semigroup, we conclude that
      $u\mathrel{\H}ux$,
      and, similarly, $u\mathrel{\H}uy$,
      thus $x\mathrel{\approx_K}y$.
      Therefore, we have a well defined map $\varphi\colon\Gamma(H)\to\Gamma(K)$ given by $\varphi([(f,x,f)]_{\approx_H})=[x]_{\approx_{K}}$.
      This map is continuous, again because we are dealing with compact
      spaces and their compact quotients.
      Moreover, $\varphi$ is clearly a homomorphism.
      
      Suppose that $x\in T(K)$.
      Then $u=uf$ yields $f\in T(K)$ and $fxf\in T(K)$, thus
      $[x]_{\approx_{K}}=[fxf]_{\approx_{K}}$ as $f=f^2$ and $\Gamma(K)$ is a group.
      Consider the equality $(e,u,f)(f,fxf,f)=(e,ux,f)$,
      entailing $(e,u,f)\leq_{\R}(e,ux,f)$ in $\Kar(\Mir_{\pv V}(\Cl X))$.
      Since $x$ is an arbitrary element of $T(K)$,
      we know that $ux$ may be any element of $K$,
      and so, by the symmetry of the $\H$-relation, we conclude that $(e,u,f)\mathrel{\H}(e,ux,f)$.
      Therefore, $(e,u,f)(f,fxf,f)=(e,ux,f)$
      implies that $(f,fxf,f)\in T(H)$.
      We deduce that $[x]_{\approx_{K}}=\varphi([(f,fxf,f)]_{\approx_H})$,
      and so $\varphi$ is onto. 

      If $(f,x,f)$ is an element of $T(H)$ such that $[x]_{\approx_K}$
      is the identity of $\Gamma(H)$, then we have $ux=u$,
      implying $(e,u,f)(f,x,f)=(e,u,f)$.
      The latter equality entails that
      $[(f,x,f)]_{\approx_H}$ is the identity of $\Gamma(H)$.
      This shows that $\varphi$ is a continuous isomorphism of compact
      groups, concluding the proof.
    \end{proof}

    \begin{proof}[Proof of Proposition~\ref{p:invariance-of-labeled-posets}]
      According to Propositions~\ref{p:k-lu}
      and~\ref{p:functoriality-dagger},
      the mapping $P_{\Cl Y}\circ F^\dagger\circ P_{\Cl X}^{-1}$
      is an isomorphism
      between the labeled topological posets
      $LU(\Mir_{\pv V}(\Cl X))^\dagger$ and~$LU(\Mir_{\pv V}(\Cl Y))^\dagger$,
      which restricts to an isomorphism
      between
      $LU(\Sha_{\pv V}(\Cl X))^\dagger$ and $LU(\Sha_{\pv V}(\Cl Y))^\dagger$.
    \end{proof}

\bibliographystyle{amsalpha}


\begin{thebibliography}{BBEP10}

\bibitem[AC09]{Almeida&ACosta:2007a}
J.~Almeida and A.~Costa, \emph{Infinite-vertex free profinite semigroupoids and
  symbolic dynamics}, J. Pure Appl. Algebra \textbf{213} (2009), 605--631.

\bibitem[AC12]{Almeida&ACosta:2012}
\bysame, \emph{On the transition semigroups of centrally labeled {R}auzy
  graphs}, Int. J. Algebra Comput. \textbf{22} (2012), 25 pages.

\bibitem[AC13]{Almeida&ACosta:2013}
\bysame, \emph{Presentations of {S}ch\"utzenberger groups of minimal
  subshifts}, Israel J. Math. \textbf{196} (2013), 1--31.

\bibitem[AC16]{Almeida&ACosta:2016b}
\bysame, \emph{A geometric interpretation of the {S}ch\"utzenberger group of a
  minimal subshift}, Ark. Mat. \textbf{54} (2016), 243--275.

\bibitem[AK20]{Almeida&Klima:2020a}
J.~Almeida and O.~Kl{\'\i}ma, \emph{Representations of relatively free
  profinite semigroups, irreducibility, and order primitivity.}, Trans. Amer.
  Math. Soc. \textbf{373} (2020), 1941--1981.

\bibitem[Alm95]{Almeida:1994a}
J.~Almeida, \emph{Finite semigroups and universal algebra}, World Scientific,
  Singapore, 1995, {E}nglish translation.

\bibitem[Alm03]{Almeida:2003b}
\bysame, \emph{Profinite structures and dynamics}, CIM Bulletin \textbf{14}
  (2003), 8--18.

\bibitem[Alm05a]{Almeida:2004a}
\bysame, \emph{Profinite groups associated with weakly primitive
  substitutions}, Fundamentalnaya i Prikladnaya Matematika (Fundamental and
  Applied Mathematics) \textbf{11} (2005), 13--48, In Russian. English version
  in J. Math. Sciences \textbf{144}, No. 2 (2007) 3881--3903.

\bibitem[Alm05b]{Almeida:2003cshort}
\bysame, \emph{Profinite semigroups and applications}, Structural theory of
  automata, semigroups and universal algebra (New York) (V.~B. Kudryavtsev and
  I.~G. Rosenberg, eds.), Springer, 2005, pp.~1--45.

\bibitem[AV06]{Almeida&Volkov:2006}
J.~Almeida and M.~V. Volkov, \emph{Subword complexity of profinite words and
  subgroups of free profinite semigroups}, Int. J. Algebra Comput. \textbf{16}
  (2006), 221--258.

\bibitem[BBEP10]{Beal&Berstel&Eilers&Perrin:2010arxiv}
M.~P. B\'eal, J.~Berstel, S.~Eilers, and D.~Perrin, \emph{Symbolic dynamics},
  http://arxiv.org/abs/1005.1835.

\bibitem[BPR10]{Berstel&Perrin&Reutenauer:2010}
J.~Berstel, D.~Perrin, and Ch. Reutenauer, \emph{Codes and automata},
  Encyclopedia of Mathematics and its Applications, vol. 129, Cambridge
  University Press, Cambridge, 2010.

\bibitem[BS73]{Brzozowski&Simon:1973}
J.~A. Brzozowski and I.~Simon, \emph{Characterizations of locally testable
  events}, Discrete Math. \textbf{4} (1973), 243--271.

\bibitem[CHK83]{Carruth&Hildebrant&Koch:1983}
J.~H. Carruth, J.~A. Hildebrant, and R.~J. Koch, \emph{The theory of
  topological semigroups}, Pure and Applied Mathematics, no.~75, Marcel Dekker,
  New York, 1983.

\bibitem[Cos01]{Costa:2001a}
J.~C. Costa, \emph{Free profinite locally idempotent and locally commutative
  semigroups}, J. Pure Appl. Algebra \textbf{163} (2001), 19--47.

\bibitem[Cos06]{ACosta:2006}
A.~Costa, \emph{Conjugacy invariants of subshifts: an approach from profinite
  semigroup theory}, Int. J. Algebra Comput. \textbf{16} (2006), 629--655.

\bibitem[Cos07a]{ACosta:2007a}
\bysame, \emph{Pseudovarieties defining classes of sofic subshifts closed under
  taking shift equivalent subshifts}, J. Pure Appl. Algebra \textbf{209}
  (2007), 517--530.

\bibitem[Cos07b]{ACosta:2007t}
\bysame, \emph{Semigrupos profinitos e din\^amica simb\'olica}, Ph.D. thesis,
  Univ. Porto, 2007.

\bibitem[Cos18]{ACosta:2018}
\bysame, \emph{Symbolic dynamics and semigroup theory}, CIM Bulletin
  \textbf{40} (2018), 54--59.

\bibitem[CS11]{ACosta&Steinberg:2011}
A.~Costa and B.~Steinberg, \emph{Profinite groups associated to sofic shifts
  are free}, Proc. London Math. Soc. \textbf{102} (2011), 341--369.

\bibitem[CS15]{ACosta&Steinberg:2015}
\bysame, \emph{The {S}ch\"utzenberger category of a semigroup}, Semigroup Forum
  \textbf{91} (2015), 543--559.

\bibitem[CS16]{ACosta&Steinberg:2016}
\bysame, \emph{A categorical invariant of flow equivalence of shifts}, Ergodic
  Theory Dynam. Systems \textbf{36} (2016), 470--513.

\bibitem[Eil76]{Eilenberg:1976}
S.~Eilenberg, \emph{Automata, languages and machines}, vol.~B, Academic Press,
  New York, 1976.

\bibitem[Fog02]{Fogg:2002}
N.~Pytheas Fogg, \emph{Substitutions in dynamics, arithmetics and
  combinatorics}, Lect. Notes in Math., vol. 1794, Springer-Verlag, Berlin,
  2002.

\bibitem[Hed69]{Hedlund:1969}
G.~A. Hedlund, \emph{Endomorphims and automorphisms of the shift dynamical
  system}, Math. Systems Theory \textbf{3} (1969), 320--375.

\bibitem[LM95]{Lind&Marcus:1996}
D.~Lind and B.~Marcus, \emph{An introduction to symbolic dynamics and coding},
  Cambridge University Press, Cambridge, 1995.

\bibitem[Lot83]{Lothaire:1983}
M.~Lothaire, \emph{Combinatorics on words}, Addison-Wesley, Reading, Mass.,
  1983.

\bibitem[Lot02]{Lothaire:2001}
\bysame, \emph{Algebraic combinatorics on words}, Cambridge University Press,
  Cambridge, UK, 2002.

\bibitem[McN74]{McNaughton:1974}
R.~McNaughton, \emph{Algebraic decision procedures for local testability},
  Math. Syst. Theory \textbf{8} (1974), 60--76.

\bibitem[PS75]{Parry&Sullivan:1975}
B.~Parry and D.~Sullivan, \emph{A topological invariant of flows on
  {$1$}-dimensional spaces}, Topology \textbf{14} (1975), no.~4, 297--299.

\bibitem[PW02]{Pin&Weil:2002b}
J.-E. Pin and P.~Weil, \emph{The wreath product principle for ordered
  semigroups}, Comm. Algebra \textbf{30} (2002), no.~12, 5677--5713.

\bibitem[Res74]{Restivo:1974}
A.~Restivo, \emph{On a question of {M}c{N}aughton and {P}apert}, Inform. and
  Control \textbf{25} (1974), 93--101.

\bibitem[RS08]{Rhodes&Steinberg:2008}
J.~Rhodes and B.~Steinberg, \emph{Closed subgroups of free profinite monoids
  are projective profinite groups}, Bull. London Math. Soc. \textbf{40} (2008),
  375--383.

\bibitem[RS09]{Rhodes&Steinberg:2009qt}
\bysame, \emph{The $q$-theory of finite semigroups}, Springer Monographs in
  Mathematics, Springer, 2009.

\bibitem[Til87]{Tilson:1987}
B.~Tilson, \emph{Categories as algebra: an essential ingredient in the theory
  of monoids}, J. Pure Appl. Algebra \textbf{48} (1987), 83--198.

\bibitem[Wil70]{Willard:1970}
S.~Willard, \emph{General topology}, Addison-Wesley, Reading, Mass., 1970.

\bibitem[Zal72]{Zalcstein:1972}
Y.~Zalcstein, \emph{Locally testable languages}, J. Comput. System Sci.
  \textbf{6} (1972), 151--167.

\bibitem[Zal73]{Zalcstein:1973a}
\bysame, \emph{Locally testable semigroups}, Semigroup Forum \textbf{5} (1973),
  216--227.

\end{thebibliography}

\end{document}